\newcommand{\circledbullet}{
\tikz{
\pgfsetbaselinepointlater{\pgfpointanchor{X}{base}}
\pgfcircle{\pgfpointorigin}{0.12cm}
\pgfusepath{stroke}
\node (X) {$\bullet$};
}}
\newcommand{\cbull}{ 
\text{\footnotesize \circledbullet}}
\definecolor{dark-red}{rgb}{0.5,0.15,0.15}
\definecolor{dark-blue}{rgb}{0.15,0.15,0.6}
\definecolor{dark-green}{rgb}{0.15,0.6,0.15}
\newcommand{\spec}{\mathrm{Spec}}
\newcommand{\proj}{\mathrm{Proj}}
\newcommand{\Proj}{\mathrm{Proj}}
\newcommand{\md}{\mathrm{Mod}}
\newcommand{\Z}{\mathbb{Z}}
\newcommand{\loc}{\mathrm{Loc}}
\DeclareMathOperator{\im}{im}
\DeclareMathOperator{\Tot}{Tot}
\newcommand{\pic}{\mathrm{Pic}}
\newcommand{\gl}{\mathfrak{gl}}
\newcommand{\qcoh}{\mathrm{QCoh}}
\renewcommand{\mod}{\mathrm{Mod}}
\newcommand{\pics}{\mathfrak{pic}}
\renewcommand{\sp}{\mathrm{Sp}}
\newcommand{\otop}{\mathcal{O}^{{top}}}
\newcommand{\mell}{M_{{ell}}}
\newcommand{\mellb}{\mathfrak{M}_{{ell}}}
\newcommand{\mellc}{\overline{M}_{ell}}
\newcommand{\mellbc}{\overline{\mathfrak{M}}_{ell} }
\renewcommand{\hom}{\mathrm{Hom}}
\newcommand{\map}{\mathrm{Map}}
\numberwithin{equation}{section}
\newtheorem{lm}{lm}[subsection]
\newtheorem{lemma}[lm]{Lemma}
\newtheorem{corollary}[lm]{Corollary}
\newtheorem{theorem}[lm]{Theorem}
\newtheorem{proposition}[lm]{Proposition}
\newtheorem{tool}[lm]{Comparison Tool}
\newtheorem{abc}{abc}
\newtheorem{ABCthm}[abc]{Theorem}
\newtheorem*{quest}{Question}
\renewcommand{\rightrightarrows}{\begin{smallmatrix} \to \\
\to \end{smallmatrix} }
\newcommand{\triplearrows}{\begin{smallmatrix} \to \\ \to \\ 
\to \end{smallmatrix} }
\theoremstyle{definition}
\newtheorem{definition}[lm]{Definition}
\newtheorem{question}[lm]{Question}
\newcommand{\Tmf}{{Tmf}}
\newcommand{\TMF}{{TMF}}
\renewcommand{\ell}{\mathrm{Ell}}
\theoremstyle{definition}
\newtheorem{construction}[lm]{Construction}
\newtheorem{remark}[lm]{Remark}
\newtheorem{example}[lm]{Example}
\newcommand{\noteV}[1]{{\color{blue} \bf #1}} 
\newcommand{\e}[1]{\mathbf{E}_{#1}}
\begin{document}

\title{The Picard group of topological modular forms via descent theory}
\date{\today}
\author{Akhil Mathew}
\address{Department of Mathematics, Harvard University, Cambridge, MA}
\email{amathew@math.harvard.edu} 

\author{Vesna Stojanoska}
\address{Department of Mathematics, University of Illinois at Urbana-Champaign, Urbana, IL}
\email{vesna@illinois.edu}
\thanks{The first author is partially supported by the NSF Graduate Research
Fellowship under grant DGE-110640, and the second author is partially
supported by NSF grant DMS-1307390.}

\begin{abstract}
This paper starts with an exposition of descent-theoretic techniques in the
study of Picard groups of $\e{\infty}$-ring spectra, which naturally lead to
the study of Picard spectra. We then develop tools for the efficient and
explicit determination of differentials in the associated descent spectral
sequences for the Picard spectra thus obtained. As a major application, we
calculate the Picard groups of the periodic spectrum of
topological modular forms $\TMF$ and  the
non-periodic and non-connective $Tmf$. We find that $\pic (\TMF)$ is cyclic of
order 576, generated by the suspension $\Sigma\TMF $ (a result originally due
to Hopkins), while $\pic(\Tmf) = \Z\oplus \Z/24$. In particular, we show that  there exists an invertible $\Tmf$-module which is not equivalent to a suspension of $\Tmf$.
\end{abstract}

\maketitle
\tableofcontents
\listoffigures


\section{Introduction}

Elliptic curves and modular forms occupy a central role in modern stable
homotopy theory in the guise of the variants of \emph{topological modular
forms}: the connective $tmf$, the periodic $\TMF$, and $Tmf$ which interpolates
between them. These are structured ring spectra which have demonstrated
surprising connections between the arithmetic of elliptic curves and
$v_2$-periodicity in stable homotopy. For example, $tmf$ detects a number of
$2$-torsion and $3$-torsion classes in the stable homotopy groups of spheres
through the Hurewicz image. Even more interestingly, the more geometric-natured
$\TMF$ can be used to detect and describe, using congruences between modular
forms, the $2$-line of the Adams-Novikov spectral sequence at primes $p\geq 5$, according to \cite{behrensCongruences}.

From a different perspective, the structure of topological modular forms as $\e{\infty}$-ring spectra leads to symmetric monoidal $\infty$-categories of modules which give rise to well-behaved invariants of algebraic or algebro-geometric type. For instance, \cite{meier} has studied $TMF$-modules which become free when certain level structures are introduced; these can be thought of as locally free sheaves with respect to a pre-determined cover.

Our goal in this paper is to understand another such invariant, the Picard
group. Any symmetric monoidal category has an associated group of
isomorphism classes of objects
invertible under the tensor product, which is called the \emph{Picard group}.
The classical examples are the Picard group $\pic(R)$ of a ring $R$, i.e., of the
category $\md(R)$ of $R$-modules, or the Picard group of a scheme $X$, i.e., of
the category $\md({\mathcal{O}_X})$ of quasi-coherent modules over its structure sheaf. In
homotopy theory, the interest in Picard groups arose when Mike Hopkins made the
observation that the homotopy categories of $E_n$-local and $K(n)$-local
spectra have interesting Picard groups, particularly when the prime
at hand is small in comparison with $n$. Here, $E_n$ is the Lubin-Tate spectrum
and $K(n)$ is the Morava $K$-theory spectrum at height $n$. In the few existing
computations of such groups, notably \cite{HMS, HoveySadofsky,
KamiyaShimomura, GHMR, Heard}, one often uses that an invertible $E_n$-module must be a suspension of $E_n$ itself. 

The $K(2)$-localization of any of the three versions of topological modular
forms gives a spectrum closely related to the Lubin-Tate spectrum $E_2$;
namely, this localization is a finite product of homotopy fixed point spectra of  finite
group actions on $E_2$ (or slight variants of $E_2$ with larger residue fields). More generally, each $E_n$ is an $\e{\infty}$-ring
spectrum with an action, through $\e{\infty}$-ring maps, by a profinite group
$\mathbb{G}_n$ called the Morava stabilizer group (see \cite{rezknotes} for
the $\e{1}$-ring case). The $K(n)$-local sphere is obtained then as
the Devinatz-Hopkins homotopy fixed points. However, $\mathbb{G}_n$ also has interesting finite
subgroups when the prime is relatively small with respect to $n$. If $G $ is
such a subgroup, the homotopy fixed points $E_n^{h G}$ are an $\e{\infty}$-ring spectrum, which is in theory easier to study than the $K(n)$-local sphere, but hopefully contains a lot of information about the $K(n)$-local sphere. 
For instance, Hopkins has observed that in all known examples, the Picard group of $E_n^{hG}$ (unlike that of the
$K(n)$-local category) is very simple as it only contains suspensions of $E_n^{hG}$, and raised the following natural question.

\begin{quest}[Hopkins]
Let $G$ be a finite subgroup of the Morava stabilizer group $\mathbb{G}_n$ at height $n$. 
Is it true that any invertible $K(n)$-local module over $E_n^{hG}$ is a suspension of $E_n^{hG}$? 
\end{quest}

The periodic $\TMF$ is closer to its $K(2)$-localization than $Tmf$, and this is demonstrated by the following result, originally due to Hopkins but unpublished.
\begin{ABCthm}[Hopkins]
\label{hopkins1}
The Picard 
group of $\TMF$ is isomorphic to $\mathbb{Z}/576$, generated by the
suspension $\Sigma \TMF$. 
\end{ABCthm}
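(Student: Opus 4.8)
The plan is to identify $\pic(\TMF)$ with $\pi_0$ of the Picard spectrum $\pics(\TMF)$ and to compute the latter by descent along the moduli stack of elliptic curves. Writing $\TMF = \Gamma(\mell, \otop)$, I would apply the Picard functor to the defining sheaf $\otop$ and invoke the descent spectral sequence developed earlier in the paper,
\[
E_2^{s,t} = H^s(\mell; \pi_t \pics(\otop)) \Longrightarrow \pi_{t-s}\pics(\TMF),
\]
reading off $\pic(\TMF)$ from the zero stem $t=s$ of the abutment. The first task is to identify the homotopy sheaves of $\pics(\otop)$: since $\otop$ is a sheaf of weakly even periodic $\e{\infty}$-rings, one has $\pi_0\pics(\otop) = \underline{\Z/2}$ (generated fiberwise by the suspension), $\pi_1\pics(\otop) \cong \mathbb{G}_m$, and $\pi_t\pics(\otop) \cong \pi_{t-1}\otop$ for $t \geq 2$, the latter being $\omega^{\otimes (t-1)/2}$ for $t$ odd and $0$ for $t$ even.

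The key structural observation is that for $t \geq 2$ the Picard sheaf agrees with a shift of $\otop$, so in that range both the $E_2$-page and the differentials of the Picard spectral sequence coincide with those of the ordinary descent spectral sequence $H^s(\mell; \omega^{\otimes t/2}) \Rightarrow \pi_{t-s}\TMF$, whose differentials are completely known. The only subtlety is the \emph{first} differential leaving this stable range, which need not agree with the additive one; pinning it down is precisely what the Comparison Tool for differentials is designed to do. Under this identification the contributions to the zero stem are: a $\Z/2$ at $(0,0)$ from suspensions; the algebraic Picard group $H^1(\mell; \mathbb{G}_m) \cong \Z/12$ at $(1,1)$; and the groups $H^{s}(\mell; \omega^{\otimes (s-1)/2})$ for odd $s \geq 3$, which coincide with the entries of the additive spectral sequence contributing to $\pi_{-1}\TMF$.

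I would then run the spectral sequence using the known computation of the descent spectral sequence for $\pi_*\TMF$: the corrected differentials eliminate exactly the unstable classes that have no additive counterpart, and the surviving zero-stem entries, once the multiplicative extension problems are resolved, assemble into a single cyclic group of order $576 = 2^6 \cdot 3^2$. The crux of the argument is twofold: determining the differentials off the stable range via the Comparison Tool, where the Picard and additive sequences genuinely differ; and solving the extension problem to see that the associated graded ($\Z/2$, $\Z/12$, and the higher torsion pieces) splices into a \emph{cyclic} group rather than a nontrivial product. I expect the extension analysis to be the main obstacle, since cyclicity is exactly what distinguishes $\TMF$ from $\Tmf$.

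Finally, to identify the generator, I note that $\Delta^{24} \in \pi_{576}\TMF$ is invertible, so $\Sigma^{576}\TMF \simeq \TMF$ and $[\Sigma\TMF]$ has order dividing $576$ in $\pic(\TMF)$; comparing with the order count from the spectral sequence forces $\pic(\TMF)$ to be cyclic of order exactly $576$ and generated by $[\Sigma\TMF]$. In particular every invertible $\TMF$-module is a suspension, so there are no exotic elements, in contrast to the case of $\Tmf$ treated later.
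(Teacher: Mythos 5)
Your overall strategy is the paper's: \'etale descent over $\mell$, the Picard descent spectral sequence of Theorem~\ref{descentss}, identification of the homotopy sheaves of $\pics(\otop)$, comparison with the additive descent spectral sequence in the stable range, and periodicity of $\TMF$ to pin down the answer. But two of your steps, as stated, would fail. First, you assign the determination of the first differential off the stable range to the Comparison Tool; that tool (Comparison Tool~\ref{toolGalois}) only applies for $r \le t-1$ and says nothing about the $d_3$ on $E_3^{3,3}$. This is not cosmetic: at the prime $2$ the group $E_2^{3,3}$ is the \emph{infinite} group $\Z/2[j]$ generated by $h_1^3 c_4 c_6/\Delta$, so no cardinality count can dispose of it. The paper needs the separate universal formula $\overline{d}_{t+1}(\overline{x}) = \overline{d_{t+1}(x)+x^2}$ of Theorem~\ref{difftheorem}, which together with the cup-square computation $(h_1^3 c_4c_6/\Delta)^2 = j\,h_1^6 c_4^2/\Delta$ shows that $d_3(f(j)\cdot h_1^3c_4c_6/\Delta)=(f(j)+jf(j)^2)\,h_1^6c_4^2/\Delta$ is \emph{injective} on $E_3^{3,3}$, so $E_4^{3,3}=0$. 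Without that formula your spectral sequence does not close up.

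Second, your endgame has a gap. From invertibility of $\Delta^{24}$ you get only that $[\Sigma\TMF]$ has order \emph{dividing} $576$; combined with the upper bound $|\pic(\TMF)|\le 576$ this does not exclude, say, $\Z/288\oplus\Z/2$ with $[\Sigma\TMF]$ of order $288$. What is actually needed, and what the paper uses, is that $576$ is the \emph{minimal} periodicity of $\TMF$ (read off from $\pi_*\TMF$), so that $[\Sigma\TMF]$ has order exactly $576$; the cardinality bound then forces $\pic(\TMF)$ to be the cyclic group it generates. This also makes your anticipated ``extension problem'' evaporate: no splicing of the associated graded is required, since a cyclic subgroup of order $576$ inside a group of order at most $576$ is everything. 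The surviving zero-stem entries (a $\Z/2$ at $s=0$, $\Z/12$ at $s=1$, $\Z/12$ at $s=5$, $\Z/2$ at $s=7$, everything above filtration $7$ being wiped out by the differentials imported from the $\TMF(2)$ and $\TMF(3)$ computations) then multiply to exactly $576$.
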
 
\newtheorem*{introtheoremA}{Theorem~\ref{hopkins1}}
 In the paper at hand, we prove Theorem~\ref{hopkins1} using a
 descent-theoretic approach. In particular, our method is different
 from Hopkins's.
The descent-theoretic approach also enables us to prove that, nonetheless, the non-connective, non-periodic flavor of topological modular forms  
$\Tmf$ behaves differently and has a more interesting Picard group. 

\begin{ABCthm}
\label{thm:PicTmf}
The Picard 
group of $\Tmf$ is isomorphic to $\mathbb{Z} \oplus \mathbb{Z}/24$, generated by the
suspension $\Sigma \Tmf$ and a certain 24-torsion invertible object. 
\end{ABCthm}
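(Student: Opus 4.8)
The strategy is to run the descent spectral sequence for the Picard spectrum attached to the sheaf $\otop$ of $\e{\infty}$-rings on the compactified moduli stack $\mellbc$, whose global sections recover $\Tmf$. Applying $\pics(-)$ to the cosimplicial $\e{\infty}$-ring associated to an affine cover of $\mellbc$ by even-periodic pieces produces, after the connectivity bookkeeping that underlies the descent machinery developed above, a spectral sequence
\begin{equation}
E_2^{s,t} = H^s\bigl(\mellbc,\, \underline{\pi}_t\, \pics(\otop)\bigr) \Longrightarrow \pi_{t-s}\, \pics(\Tmf),
\end{equation}
with $\pic(\Tmf) = \pi_0\, \pics(\Tmf)$ read off along the line $t = s$. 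The first step is to identify the homotopy sheaves $\underline{\pi}_t\,\pics(\otop)$. Since the charts are even-periodic, $\underline{\pi}_0\,\pics(\otop)$ is the constant sheaf $\Z/2$ (local invertible modules are recorded by a parity), $\underline{\pi}_1\,\pics(\otop) = \mathbb{G}_m = (\underline{\pi}_0\otop)^\times$, and $\underline{\pi}_t\,\pics(\otop) = \underline{\pi}_{t-1}\otop$ for $t\geq 2$, which is $\omega^{\otimes (t-1)/2}$ when $t$ is odd and $0$ when $t$ is even.

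Reading off the $t=s$ line, the $E_2$-contributions to $\pi_0$ are $H^0(\mellbc,\Z/2)=\Z/2$ at $(0,0)$, the algebraic Picard group $H^1(\mellbc,\mathbb{G}_m)=\pic(\mellbc)=\Z\langle\omega\rangle$ at $(1,1)$, and the torsion groups $H^{2k+1}(\mellbc,\omega^{\otimes k})$ for $k\geq 1$ at $(2k+1,2k+1)$. I expect the infinite cyclic summand to emerge from the filtration-$0$ and filtration-$1$ pieces: the parity class $\Z/2$ and the Hodge bundle $\omega$ should assemble, through a nonsplit extension $0\to\Z\xrightarrow{2}\Z\to\Z/2\to 0$, into a single copy of $\Z$ generated by $\Sigma\Tmf$, with $(\Sigma\Tmf)^{\otimes 2}=\Sigma^2\Tmf$ corresponding to $\omega$. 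Establishing this extension — in particular that $\Sigma\Tmf$ has infinite order, which reflects the non-periodicity of $\pi_*\Tmf$ and is exactly what distinguishes $\Tmf$ from the periodic $\TMF$ of Theorem~\ref{hopkins1} — is the first thing I would pin down.

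The torsion summand is the heart of the matter, and here the Comparison Tool is the essential device. Because of the shift $\underline{\pi}_t\,\pics=\underline{\pi}_{t-1}\otop$, the diagonal terms $H^{2k+1}(\mellbc,\omega^{\otimes k})$ for $k\geq 1$ are precisely the $E_2$-terms of the ordinary (additive) descent spectral sequence for $\Tmf$ feeding into $\pi_{-1}\Tmf$; in this stable range the Picard differentials agree with the additive ones, which are completely known from the computation of $\pi_*\Tmf$. I would therefore import those differentials wholesale and compute the surviving subquotient on the relevant diagonal.

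The principal obstacle is the interface between this stable range and the non-stable rows $t=0,1$, where the multiplicative structure of $\pics$ takes over and the Picard differentials genuinely differ from the additive ones. Concretely, the differential emanating from or landing near the $\mathbb{G}_m$-line is governed not by the additive formula but by its multiplicative correction — exactly the phenomenon the general differential formulas proved above are designed to control — and it is this non-stable differential, together with the final extension problems, that cuts the a priori contributions down to $\Z\oplus\Z/24$ rather than something larger. I would organize the endgame by treating the $2$-primary and $3$-primary parts separately, so that the factors $8$ and $3$ combine to give a cyclic group of order $24$, and then resolve the remaining extensions to see that the answer splits as $\Z\oplus\Z/24$. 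Having identified the surviving class of order $24$, I would finally exhibit an explicit invertible $\Tmf$-module representing it and verify that it is not a suspension, completing the proof.
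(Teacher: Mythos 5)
Your setup, the identification of the homotopy sheaves, the assembly of the $\Z$ summand out of the $\Z/2$ at $(0,0)$ and $\pic(\mellc)=\Z\langle\omega\rangle$ at $(1,1)$, and the plan to import differentials from the additive descent spectral sequence via the Comparison Tool all match the paper's argument for the \emph{upper} bound: after the imported differentials wipe out everything above $s=7$, the torsion $A$ in the extension $0\to A\to\pic(\Tmf)\to\Z\to 0$ has order at most $12\cdot 2=24$. One small correction along the way: for $\mellbc$ the term $E_2^{3,3}$ vanishes (the troublesome $\Z/2[j]$ groups of the $\TMF$ computation become finite groups here), so the ``first unstable differential'' formula of \Cref{difftheorem} is not actually needed for $\Tmf$, contrary to what you suggest when you make the non-stable differential the crux of the endgame.

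The genuine gap is the lower bound. You write as though the spectral sequence will ``identify the surviving class of order $24$,'' with the explicit module appearing afterwards only as a representative to be checked against suspensions. But the spectral sequence by itself only bounds $A$ from above: it cannot tell you whether the classes at $(0,5)$ and $(0,7)$ actually survive, nor resolve the extensions among them (e.g.\ $\Z/24$ versus $\Z/12\oplus\Z/2$ or a proper quotient), and — unlike for $KO$ or $\TMF$ — there is no periodicity of $\Tmf$ to supply a lower bound for free. The paper therefore \emph{constructs} the torsion element first: the invertible sheaf $\mathcal{J}$ on $\mellbc$ obtained by gluing trivial bundles over $\mellc[\Delta^{-1}]$ and $\mellc[c_4^{-1}]$ along the clutching function $j=c_4^3/\Delta$ (\Cref{construct}). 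Since $\pi_0\mathcal{J}\simeq\omega^{-12}$ by \Cref{lem:omega12} while $\mathcal{J}|_{\mell}$ is trivial, $\mathcal{J}^{\otimes m}$ can only be a suspension if it is $\Sigma^{-24m}\otop$, and by Theorem~\ref{hopkins1} (i.e.\ $\pic(\TMF)=\Z/576$) this forces $24\mid m$; hence $\mathcal{I}=\Sigma^{24}\mathcal{J}$ has order exactly $24$ in $A$, which matches the upper bound and splits the extension. Without this construction (or an equivalent source of a class of order exactly $24$), your step of ``resolving the remaining extensions'' has no input, and the argument does not close.
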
 
\newtheorem*{introtheoremB}{Theorem~\ref{thm:PicTmf}}

In addition, we explicitly construct the $24$-torsion module in Construction~\ref{construct}.
We note that, after the initial submission of this paper, the preprint of Hill-Meier \cite{HillMeier} appeared, in which the authors use techniques from
$C_2$-equivariant stable homotopy to construct exotic torsion elements in the Picard
group of $Tmf_1(3)$. In contrast, our construction is given by an unusual gluing of locally trivial modules.

We hope that our method of proof of 
Theorem~\ref{hopkins1} and Theorem~\ref{thm:PicTmf}, which is very general, 
will also be of interest to those not directly concerned with $\TMF$. Our method is
inspired by and analogous to the forthcoming work of Gepner-Lawson \cite{GL} on
Galois descent of
Brauer as well as Picard groups, though the key ideas are classical.

Take, for example, the periodic variant $\TMF$. Its essential property is that it arises as the global sections of the structure sheaf $\otop$ of a
regular ``derived stack''  $(\mellb, \otop)$ refining the moduli stack of elliptic
curves $\mell$. Thus
\[ \TMF = \Gamma( \mellb, \otop)  = \varprojlim_{\spec R \to \mell} \Gamma(
\spec R, \otop), \]
where the maps $\spec R \to \mell$ range over all \'etale morphisms from affine
schemes to $\mell$. Moreover, the $\e{\infty}$-ring spectra $\Gamma(\spec R,
\otop)$ are weakly even periodic; thus we have $\TMF$ as the homotopy limit of
a diagram of weakly even periodic $\e{\infty}$-rings.
It follows by the main result of \cite{affine} that the \emph{module category}
of $\TMF$ can also be represented as the inverse limit of the module categories
$\md( \otop( \spec R))$, that is, as quasi-coherent sheaves on the derived
stack. In any analogous situation, our descent techniques for calculating Picard groups apply.

Over an affine chart $\spec R \to \mell$, 
the Picard group of $\Gamma( \spec R, \otop)$ (i.e. that of an \emph{elliptic
spectrum}) is purely algebraic, by a classical argument of
\cite{HMS, BakerRichter} with ``residue fields.'' 
This results from the fact that the ring $\pi_* \Gamma( \spec R, \otop)$ is
\emph{homologically} simple: in particular, it has finite global dimension,
which makes the study of $\Gamma( \spec R, \otop)$-modules much easier. 
One attempts to use this together with descent theory to compute the Picard group of
$\TMF$ itself; however, doing so necessitates the consideration of higher homotopy
coherences. For this, it is important to work with Picard \emph{spectra}
rather than Picard groups, as they have a better formal theory of descent. 

The Picard spectrum $\pics(A)$ of an $\e{\infty}$-ring $A$ is  an important
spectrum associated to $A$ that deloops the space of units $GL_1(A)$ of \cite{MQRT}\footnote{See \cite{5author} for a very important application.}: it is connective, its $\pi_0$ is the Picard
group of $A$, and its $1$-connective cover $\tau_{\geq 1} \pics(A) $ is
equivalent to $\Sigma \mathfrak{gl}_1(A)$ for $\mathfrak{gl}_1(A)$ the
\emph{spectrum of units} of \cite{MQRT}. We find that
the \emph{Picard spectrum} of $\TMF$ is the connective cover of the homotopy limit 
of $\pics( \otop( \spec R))$, taken over \'etale maps $\spec R \to \mell$. 
This statement is a homotopy-theoretic expression of the descent theory that
we need. 
Thus, we get a \emph{descent spectral sequence} for the homotopy groups of
$\pics(\TMF)$, which is a computational tool for understanding the
aforementioned homotopy coherences concretely. 
We use this technique to compute $\pi_0 ( \pics(\TMF))$, the group
we are after. 

The descent spectral sequence has many consequences in cases where
it degenerates simply for dimensional reasons, or in cases 
where the information sought is coarse. For instance, 
in a specific example (\Cref{picrational}),
we show that the Picard group of the $\e{\infty}$-ring $C^*(S^1;
\mathbb{Q}[\epsilon]/\epsilon^2)$ is given by $\mathbb{Z} \times \mathbb{Q}$,
which yields  a counterexample to a general conjecture of Balmer \cite[Conjecture
74]{balmerICM} on the
Picard groups of certain tensor-triangulated categories. 
We also prove the following general results in \Cref{sec:firstexamples,sec:SSLemma}.
\begin{ABCthm} 
\label{lnlexact}
Let $A$ be a weakly even periodic  Landweber exact $\e{\infty}$-ring with $\pi_0A$
regular noetherian. 
Let $n \geq 1$ be an integer, and let $L_n$ denote localization with respect to the Lubin-Tate spectrum $E_n$. 
The Picard group of $L_n A$ is 
$$
\mathrm{Pic}(L_n A) = \mathrm{Pic}(\pi_*A) \times 
\pi_{-1}(L_n A) ,
 $$
 where $\pic(\pi_*A)$ refers to the (algebraic) Picard group of the
 graded commutative ring $\pi_*A$. 
\end{ABCthm}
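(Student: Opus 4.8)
The plan is to prove that $\pic(L_n A)$ is \emph{algebraic} --- completely detected on homotopy groups --- and then to extract the two summands from the chromatic fracture of the Picard \emph{spectrum}, with the factor $\pi_{-1}(L_n A)$ emerging as the defect measuring the failure of the formation of units to commute with $L_n$.

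First I would reduce everything to the homotopy ring. Since $\pi_0 A$ is regular noetherian and $L_n$ preserves this, the graded ring $\pi_* L_n A$ (and those of the localizations $L_{K(n)}A$ and $L_{n-1}A$) has finite global dimension, so the residue--field argument of \cite{HMS, BakerRichter} applies: for any invertible module $M$, base change to the residue fields forces $\pi_* M$ to be an invertible graded $\pi_* L_n A$--module, giving a homomorphism $\phi\colon \pic(L_n A)\to \pic(\pi_* L_n A)$. This $\phi$ is injective, because a module with free homotopy admits a generator $L_n A\to M$ which is an isomorphism on $\pi_*$ and hence an equivalence of $L_n A$--modules; finite global dimension also kills the obstructions to realizing a graded line bundle, so there is no \emph{exotic} contribution hidden in $\pi_0$. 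In particular each constituent localization is homologically simple, so that its Picard group is algebraic; for the even--periodic $A$ itself this already gives $\pic(A)=\pic(\pi_* A)$.

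Next I would apply $\pics(-)$ to the chromatic fracture presentation
\[ L_n A \simeq L_{n-1}A \times_{L_{n-1}L_{K(n)}A} L_{K(n)}A . \]
By the affineness results of \cite{affine}, $\mod(-)$ carries a limit of $\e{\infty}$--rings to the limit of module categories, so $\pics$ sends this pullback of rings to a pullback of Picard spectra (up to connective cover), and the Mayer--Vietoris sequence computes $\pi_0=\pic$:
\[ \pi_1\pics(L_{n-1}L_{K(n)}A) \xrightarrow{\ \partial\ } \pic(L_n A)\to \pic(L_{n-1}A)\times \pic(L_{K(n)}A)\to \pic(L_{n-1}L_{K(n)}A). \]
By the algebraicity of the (homologically simple) pieces, the two right--hand groups glue, by descent for line bundles over the regular scheme $\spec \pi_* A$, to the algebraic group $\pic(\pi_* A)$; this is the first factor. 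The source is $\pi_1\pics(L_{n-1}L_{K(n)}A)=(\pi_0 L_{n-1}L_{K(n)}A)^\times$, and I claim $\mathrm{image}(\partial)\cong \pi_{-1}(L_n A)$: in the base case $n=1$ the corner is rational, so the logarithm converts the cokernel of the unit maps into the cokernel of the corresponding \emph{additive} maps, which is exactly the Mayer--Vietoris description of $\pi_{-1}(L_1 A)$, the groups $\pi_{-1}$ of the $K(1)$--local and rational constituents vanishing. This produces a short exact sequence $0\to \pi_{-1}(L_n A)\to \pic(L_n A)\to \pic(\pi_* A)\to 0$, split by the suspension $\Sigma L_n A$ together with the line bundles pulled back from $\spec\pi_0 A$, yielding the asserted product.

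The main obstacle is the middle step: proving that $\partial$ is injective with image precisely $\pi_{-1}(L_n A)$. This requires (i) exploiting the divisibility of the fracture corner so that the multiplicative cokernel of units is computed additively through a convergent logarithm on its pro--nilpotent part, and (ii) matching this additive cokernel with $\pi_{-1}(L_n A)$ while confirming that the algebraic terms assemble to $\pic(\pi_* A)$ without further boundary contributions. The general height is then handled by descending induction down the chromatic tower, where the lower $\pi_{-1}$'s must be carefully bookkept through the homotopy Mayer--Vietoris sequence; controlling these interactions, rather than the algebraicity input, is the delicate part of the argument.
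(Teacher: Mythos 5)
Your proposal takes a genuinely different route from the paper --- a chromatic fracture square and Mayer--Vietoris for $\pics$, versus the paper's realization of $L_nA$ as $\Gamma(U,\otop)$ for $U\subset \spec \pi_0 A$ the quasi-affine complement of $V(p,v_1,\dots)$, covered by $n$ affine opens --- but as written it has two genuine gaps. The first is the step you yourself flag: identifying $\im(\partial)$ with $\pi_{-1}(L_nA)$. In your sequence $\im(\partial)$ is the cokernel of the map of \emph{unit} groups $(\pi_0 L_{n-1}A)^\times\times(\pi_0 L_{K(n)}A)^\times\to(\pi_0 L_{n-1}L_{K(n)}A)^\times$, whereas the fracture long exact sequence exhibits $\pi_{-1}(L_nA)$ as an extension of a subgroup of $\pi_{-1}(L_{n-1}A)$ by the \emph{additive} cokernel on $\pi_0$. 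For $n\geq 2$ the corner $L_{n-1}L_{K(n)}A$ is not rational, so no convergent logarithm is available to compare the two cokernels, and no argument is given that they agree or that the $\pi_{-1}(L_{n-1}A)$-contribution is accounted for. The paper sidesteps this entirely: in its descent spectral sequence the exotic classes sit in filtration $n-1\geq 2$, where the relevant homotopy sheaf of $\pics(\otop)$ is $\pi_{t-1}\otop=\omega^{(t-1)/2}$ rather than $\mathcal{O}^\times$, so the identification with $\pi_{-1}(\overline{A})$ is read off from the (degenerate, additive) descent spectral sequence for $\overline{A}$ itself, with the units sheaf contributing only $H^1(U,\mathcal{O}_U^\times)\cong\pic(\pi_0A)$ in filtration $1$.

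The second gap is that your inductive scaffolding is broken at the outset. You assert that ``each constituent localization is homologically simple, so that its Picard group is algebraic,'' but $L_{n-1}A$ is a ring of exactly the type the theorem concerns: $\pi_*L_{n-1}A$ has square-zero material (e.g.\ in degree $-1$), is not regular, and $\pic(L_{n-1}A)$ contains the exotic summand $\pi_{-1}(L_{n-1}A)$, which is nonzero in general. Consequently the claim that the two right-hand terms of your Mayer--Vietoris sequence ``glue to $\pic(\pi_*A)$'' requires the inductive hypothesis together with an analysis of where the exotic summand of $\pic(L_{n-1}A)$ maps in $\pic(L_{n-1}L_{K(n)}A)$; none of that bookkeeping is supplied, and it interacts with the first gap. (A smaller point: the statement that $\md(-)$ carries the fracture pullback of rings to a pullback of module $\infty$-categories is not an affineness result and needs its own justification.) The strategy is not hopeless, but as it stands the two steps carrying all the content are missing, so the argument does not yet constitute a proof.
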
 
\newtheorem*{introtheoremC}{Theorem~\ref{lnlexact}}
Note that $\pic(\pi_*A)$ sits in an extension
 \[ 0 \to \pic(\pi_0A) \to \pic(\pi_*A) \to \Z/2 \to 0, \]
which is split if $A$ is \emph{strongly} even periodic.

\begin{ABCthm} \label{thm:charzero}
Let $A$ be an $\e{\infty}$-ring such that $\pi_0 A$ is a field of
characteristic zero and
such that $\pi_i A = 0$ for $i  > 0$. Then $\pic(A)$ is infinite cyclic, generated by
$\Sigma A$. 
\end{ABCthm}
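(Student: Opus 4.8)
The plan is to exploit that $A$ is coconnective with residue field $\pi_0 A = k$, and to reduce the whole computation to the (trivial) Picard group of the field $k$ by means of two symmetric monoidal functors. First I would observe that the connective cover $\tau_{\geq 0} A$ is just $k$, so the canonical map $k \to A$ is an $\e{\infty}$-ring map exhibiting $A$ as a $k$-algebra; base change along it sends $\Sigma^n k \mapsto \Sigma^n A$ and hence gives a homomorphism $\Z \to \pic(A)$, $n \mapsto [\Sigma^n A]$. These classes are pairwise distinct, since $\Sigma^n A$ has its top nonvanishing homotopy group in degree $n$, so this map is injective; it remains to prove surjectivity.

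Next, because $k$ has characteristic zero, I would produce an augmentation $\epsilon \colon A \to k$ splitting the unit. Modelling $A$ by a connected commutative differential graded $k$-algebra, such an augmentation is furnished by a minimal/Sullivan model, whose generators sit in positive cohomological degrees and are therefore killed by the map to $k$. The augmentation makes base change $k \otimes_A (-) \colon \md(A) \to \md(k)$ a symmetric monoidal functor, inducing $\theta \colon \pic(A) \to \pic(k) \cong \Z$. Since $\theta([\Sigma^n A]) = [\Sigma^n k] = n$, the composite $\Z \to \pic(A) \xrightarrow{\theta} \Z$ is the identity, so $\theta$ is a retraction and it suffices to show $\theta$ is injective.

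For injectivity I would run a derived Nakayama argument. Given an invertible $M$, it is in particular dualizable, hence perfect, hence bounded above (being built from finitely many shifts of the bounded-above $A$). The functor $k \otimes_A(-)$, being symmetric monoidal, sends $M$ to an invertible $k$-module, necessarily $\Sigma^n k$ for a single integer $n$; and because $k \to A$ is an isomorphism on $\pi_0$ the top homotopy group is preserved by $k \otimes_A (-)$, which identifies $n$ with the top degree of $M$ and shows $\pi_n M \cong k$ is one-dimensional. A generator $x \in \pi_n M$ then gives an $A$-linear map $f \colon \Sigma^n A \to M$ which becomes an equivalence after applying $k \otimes_A(-)$, so its cofiber $C$ is perfect, bounded above, and satisfies $k \otimes_A C \simeq 0$. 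Since tensoring with $k$ cannot annihilate the top cell of a nonzero bounded-above perfect module (again using that $A \to k$ is a $\pi_0$-isomorphism), we get $C \simeq 0$ and $M \simeq \Sigma^n A$. This shows $\theta$ is injective, whence $\pic(A) \cong \Z$, generated by $\Sigma A$.

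The step I expect to be the main obstacle is the construction of the augmentation $\epsilon \colon A \to k$: this is precisely where characteristic zero is indispensable, for without it the detection argument has no symmetric monoidal functor to $\md(k)$ on which to feed. In positive or mixed characteristic the analogous $A$ need not be augmented, and indeed its coconnective Picard group can be strictly larger, so I would isolate this as the one place the hypothesis on $\pi_0 A$ is used in full force. A secondary point requiring care is the derived Nakayama lemma for modules whose homotopy may be unbounded below; the argument above sidesteps this by working only with the top (rather than bottom) homotopy group, which is well-behaved exactly because $A$ is bounded above.
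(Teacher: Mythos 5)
Your argument is correct, but it is genuinely different from the one in the paper. The paper invokes \cite[Proposition 4.3.3]{DAGVIII} to write $A$ as a totalization of square-zero extensions $k \oplus V[-1]$, identifies each of these with a cochain algebra $k^{S^1 \vee \cdots \vee S^1}$ so that \Cref{picRX} computes its Picard group, and then uses the fully faithful map $\Pic(A) \to \mathrm{Tot}(\Pic(A^\bullet))$ together with the observation that the cosimplicial Picard groupoid is constant. You instead stay entirely inside $\md(A)$: you produce an augmentation $\epsilon\colon A \to k$ from a connected cdga (Sullivan) model --- this is where characteristic zero enters for you, whereas the paper uses it to guarantee the square-zero resolution --- and then run a residue-field detection plus coconnective Nakayama argument, in the spirit of the Baker--Richter-style proof of \Cref{connectivepic}. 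Your route is more self-contained (no descent machinery, no local systems) and isolates the role of the hypothesis cleanly; the paper's route reuses its general descent toolkit and the cochain-algebra upper bound it has already established. One step of yours deserves more care than you give it: the claim that $k \otimes_A (-)$ preserves the top nonvanishing homotopy group of a bounded-above module, and that it cannot annihilate a nonzero perfect bounded-above module. The correct input is that $I = \mathrm{fib}(\epsilon)$ is concentrated in degrees $\leq -1$, so that $I \otimes_A M$ is bounded above by $n-1$ when $M$ is bounded above by $n$; this yields $\pi_{>n}(k\otimes_A M)=0$ and an injection $\pi_n M \hookrightarrow \pi_n(k \otimes_A M)$ (equality then follows a posteriori since $k\otimes_A M \simeq \Sigma^n k$), which is all you need. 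But proving that upper bound requires either the reduced bar filtration of $I \otimes_A M$ or a minimal-resolution K\"unneth spectral sequence over the nonpositively graded local ring $\pi_*A$; a naive cell-structure argument does not suffice, because a priori a perfect module over a coconnective ring may have cells in degrees above its top homotopy group, and levelwise truncation bounds are not obviously preserved by geometric realization. With that point filled in, your proof is complete.
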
 
\newtheorem*{introtheoremD}{Theorem~\ref{thm:charzero}}

\begin{ABCthm} \label{thm:GaloisOrder}
Let $G$ be a finite group, and let $A \to B$ be a faithful $G$-Galois extension of $\e{\infty}$-rings in the
sense of Rognes \cite{rognes}. Then the relative Picard
group of $B/A$ (i.e., the kernel of $\pic(A) \to \pic(B)$) is $|G|$-power torsion of finite exponent. 
\end{ABCthm}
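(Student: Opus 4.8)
The plan is to use descent theory and the Picard spectrum to control the relative Picard group. Since $A \to B$ is a faithful $G$-Galois extension, by Galois descent the $\infty$-category of $A$-modules is recovered as the homotopy fixed points of the $G$-action on $B$-modules, and correspondingly the Picard spectrum $\pics(A)$ should be equivalent to the homotopy fixed points $\pics(B)^{hG}$. This is the key structural input, analogous to the descent statements invoked earlier for $\TMF$: the Picard functor commutes with the relevant limit. Granting this, the relative Picard group — the kernel of $\pi_0\pics(A) \to \pi_0\pics(B)$ — is controlled by the homotopy fixed point (equivalently, descent) spectral sequence
\[
E_2^{s,t} = H^s\bigl(G; \pi_t \pics(B)\bigr) \Longrightarrow \pi_{t-s}\pics(A).
\]

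Here is how I would extract the finiteness and exponent bound. An element of the relative Picard group is an invertible $A$-module that becomes trivial (a suspension-free copy of $B$) after base change to $B$; such classes in $\pi_0\pics(A)$ that die in $\pi_0\pics(B)$ must be detected by the nonzero-filtration terms of the spectral sequence, i.e. they arise from the groups $E_\infty^{s,s}$ for $s \geq 1$. Each such group is a subquotient of $H^s(G; \pi_s \pics(B))$. The crucial point is that for $s \geq 1$ these are group-cohomology groups of the finite group $G$ in positive degree, and group cohomology of a finite group in positive degrees is always annihilated by $|G|$ (the standard transfer/restriction argument, since the composite of restriction to the trivial subgroup and transfer is multiplication by $|G|$). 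Therefore each $E_\infty^{s,s}$ with $s \geq 1$ is $|G|$-torsion.

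First I would set up the equivalence $\pics(A) \simeq \pics(B)^{hG}$ and the resulting spectral sequence carefully, identifying the edge map $\pi_0 \pics(A) \to H^0(G; \pi_0 \pics(B)) \subseteq \pi_0\pics(B)$ with the base-change map on Picard groups. Then I would observe that the kernel of this edge map is filtered, with associated graded subquotients of the $E_\infty^{s,s}$ for $s \geq 1$, each of which is $|G|$-torsion as above. The filtration has finite length only if it terminates; since the relevant contributions come from positive cohomological degree and each layer is killed by $|G|$, an element in the kernel lying in filtration $s$ is killed by $|G|$, and multiplication by $|G|$ raises filtration. Iterating, any element of the relative Picard group is annihilated by a power $|G|^N$, which gives both the $|G|$-power-torsion conclusion and — provided the filtration has bounded length — the finite exponent.

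The main obstacle is exactly that last point: controlling the \emph{length} of the filtration to obtain a finite exponent rather than merely pro-$|G|$-torsion. Unlike the torsion statement, which is immediate from the vanishing of high group cohomology after multiplication by $|G|$, the finite-exponent claim requires knowing that the descent filtration on $\pi_0\pics(A)$ is bounded, i.e. that only finitely many filtrations $s$ can contribute to the kernel. I would handle this by invoking the vanishing of $\pi_t\pics(B)$ for $t < 0$ (the Picard spectrum is connective) together with the identification $\tau_{\geq 1}\pics(B) \simeq \Sigma\gl_1(B)$, which controls the higher homotopy groups contributing to the spectral sequence; combined with finite cohomological dimension considerations for the relevant coefficient systems, this bounds the number of contributing filtrations. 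The cleanest argument is likely to show directly that multiplication by $|G|$ strictly increases descent filtration on the kernel and that the filtration is complete and exhaustive, so that an element surviving to arbitrarily high filtration must be zero — yielding that $|G|^N$ kills the entire relative Picard group for $N$ equal to the (finite) length of the relevant part of the filtration.
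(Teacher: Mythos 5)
Your setup is the same as the paper's: identify the relative Picard group with the positive-filtration part of the descent spectral sequence for $\pics(B)^{hG}$ (equivalently, with $\pi_{-1}(\gl_1(B)^{hG})$), and observe that every contributing term $E_\infty^{s,s}$ with $s\geq 1$ is a subquotient of $H^s(G,-)$ and hence killed by $|G|$. You also correctly isolate the real difficulty, namely bounding the length of the filtration. But both of your proposed resolutions of that difficulty fail. First, ``finite cohomological dimension considerations'' are not available: a finite group essentially never has finite cohomological dimension (e.g.\ $H^s(C_2;\mathbb{Z}/2)\neq 0$ for all $s$), so infinitely many filtrations $s$ genuinely contribute on the $E_2$-page, and connectivity of $\pics(B)$ does nothing to cut this down. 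Second, the ``complete exhaustive filtration with each layer killed by $|G|$'' argument does not yield the conclusion: $\mathbb{Z}_p=\varprojlim \mathbb{Z}/p^n$ has a complete exhaustive filtration with every graded piece killed by $p$, yet is torsion-free. The paper explicitly flags exactly this failure mode (``in view of the potential infiniteness of the filtration, as well as the possibilities of nontrivial extensions, this alone does not force'' the conclusion).

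The missing ingredient is a \emph{horizontal vanishing line at a finite page}. A faithful $G$-Galois extension $A\to B$ admits descent up to nilpotence: the thick tensor-ideal generated by $B$ in $\md(A)$ is all of $\md(A)$, and by the results quoted from \cite{galois} this forces the homotopy fixed point spectral sequence for $A\simeq B^{hG}$ to have a horizontal vanishing line at some finite stage $E_k$ --- everything above a fixed filtration is killed by a $d_r$ with $r$ bounded. One then uses Comparison Tool~\ref{toolGalois}, which identifies $d_r^{s,t}(\pics B)$ with $d_r^{s,t-1}(B)$ for $2\leq r\leq t-1$, to transfer these differentials to the Picard spectral sequence in the range relevant to the $t=s$ column; this bounds the filtration of any class in the relative Picard group by a finite $N$, and since each of the finitely many surviving layers is killed by $|G|$, the group is killed by $|G|^N$. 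Without some input of this nilpotence type, the $E_2$-level torsion statement alone cannot rule out an infinite-exponent (or even torsion-free) answer.
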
 
\newtheorem*{introtheoremE}{Theorem~\ref{thm:GaloisOrder}}

For $\TMF$, the descent spectral sequence does not degenerate so nicely, and
we need to work further to obtain our main results. 
The homotopy groups of the Picard spectrum of an $\e{\infty}$-ring $A$, starting with $\pi_2$, 
are simply those of $A$: in fact, we have a natural equivalence of \emph{spaces}
\[ \Omega^{\infty+2} \pics(A) \simeq \Omega^{\infty + 1} A.  \]
This determines the $E_2$-page and many of the differentials in the descent spectral sequence for $
\pic(\TMF)$, but not all the ones that affect $\pi_0$. 
A key step in our argument is the identification of the differentials of the
descent spectral sequence for the Picard spectra, in a certain \emph{range of
dimensions}, with that of the (known) descent spectral sequence for
$\pi_*(\TMF)$. We prove this in a general setting in \Cref{sec:SSLemma} below. 

At the prime $2$, this technique is not sufficient to determine all the
differentials in the descent spectral sequence, and we need to determine in
addition the first ``unstable''  differential in the Picard spectral sequence
(in comparison to the usual descent spectral sequence). We give a ``universal''
formula for this first differential in \Cref{difftheorem}, which we hope
will have further applications.

\subsection*{Conventions} 
Throughout, we will write 
$\mathcal{S}$ for the $\infty$-category of spaces, $\mathcal{S}_*$ for the
$\infty$-category of pointed spaces, and $\sp$ for the $\infty$-category of
spectra. 
We will frequently identify abelian groups $A$ with their associated
Eilenberg-MacLane spectra $HA$.
Finally, all spectral sequences are displayed with the Adams indexing
convention, i.e. the vertical axis represents the cohomological degree, and the
horizontal axis represents the total topological degree.


\subsection*{Acknowledgments} We would like to thank Mike Hopkins for
suggesting this project, and David Gepner, Hans-Werner Henn, and Tyler Lawson  for helpful
discussions. We are grateful to Justin Noel for catching an error in some of
the computations and to
Paul Goerss for pointing us to the work of Priddy \cite{Priddy73}. We would also like to thank the referee for numerous helpful
comments. 
We created the spectral sequence figures using Tilman Bauer's sseq
\LaTeX\ package.

\part{Generalities}

\section{Picard groups}

We begin by giving an introduction  to Picard groups in stable homotopy
theory. General references here include \cite{HMS, May}. 

\subsection{Generalities}
Let $(\mathcal{C}, \otimes, \mathbf{1})$ be a symmetric monoidal category. 

\begin{definition} The \emph{Picard
group}  of $\mathcal{C}$ is the group of isomorphism classes of objects $x \in
\mathcal{C}$ which are \emph{invertible}, i.e. such that there exists an
object $y \in \mathcal{C}$ such that $x \otimes y \simeq  \mathbf{1}$. 
We will denote this group by $\pic(\mathcal{C})$. 
\end{definition}

\begin{remark} 
If $\mathcal{C}$ is a large category, then it is not necessarily clear that the
Picard group is a set. However, in all cases of interest, $\mathcal{C}$ will be
\emph{presentable} so that this will be automatic (see \Cref{useofprcat} below).
\end{remark}

When $\mathcal{C}$ is the category of quasi-coherent sheaves on a scheme (or
stack) $X$, then 
this recovers the usual Picard group of $X$: line bundles are precisely the
invertible objects. The principal goal of this paper is to compute a Picard group in a
homotopy-theoretic setting.

We will repeatedly use the following simple principle, which follows from the
observation that tensoring with an invertible object induces an
autoequivalence of categories. 
\begin{proposition} \label{principle}
Let $\mathcal{C}_0 \subset \mathcal{C}$ be a full subcategory that is preserved
under any autoequivalence of $\mathcal{C}$. 
Suppose the unit object $\mathbf{1} \in \mathcal{C}$ belongs to $\mathcal{C}_0$. Then any $x \in
\pic(\mathcal{C})$ belongs to $\mathcal{C}_0$ as well. 
\end{proposition}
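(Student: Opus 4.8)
The plan is to exhibit, for each invertible object, a specific autoequivalence of $\mathcal{C}$ carrying the unit to that object, and then invoke the two hypotheses on $\mathcal{C}_0$ directly. First I would fix an arbitrary $x \in \pic(\mathcal{C})$ together with a witness to its invertibility: an object $y$ and an isomorphism $x \otimes y \simeq \mathbf{1}$. I would then consider the functor $F = (-) \otimes x \colon \mathcal{C} \to \mathcal{C}$, which is the concrete embodiment of the ``observation'' alluded to in the text.

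The key step is to check that $F$ is genuinely an autoequivalence rather than merely a functor with a one-sided inverse. For this I would produce $G = (-) \otimes y$ and use the associativity and unit coherence isomorphisms of the symmetric monoidal structure to obtain natural isomorphisms $G \circ F \simeq (-) \otimes (x \otimes y) \simeq (-) \otimes \mathbf{1} \simeq \mathrm{id}_{\mathcal{C}}$, and symmetrically $F \circ G \simeq \mathrm{id}_{\mathcal{C}}$. Thus $F$ is an equivalence of categories. Since $\mathcal{C}_0$ is preserved under any autoequivalence, and $\mathbf{1} \in \mathcal{C}_0$ by hypothesis, I would conclude $F(\mathbf{1}) \in \mathcal{C}_0$. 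The unit constraint then identifies $F(\mathbf{1}) = \mathbf{1} \otimes x \simeq x$, and because $\mathcal{C}_0$ is a \emph{full} subcategory it is closed under isomorphism, so $x \in \mathcal{C}_0$. As $x$ was arbitrary, every element of the Picard group lies in $\mathcal{C}_0$.

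The argument is purely formal, so I do not anticipate a substantial obstacle; the only point deserving care is the verification that $(-)\otimes x$ is an equivalence, which is precisely where the monoidal coherence isomorphisms are used. I would also note that ``preserved under autoequivalence'' is being applied in the sense that $F$ sends objects of $\mathcal{C}_0$ into $\mathcal{C}_0$; the same statement for the inverse $G$ shows $\mathcal{C}_0$ is in fact carried onto itself, but only the forward direction is needed for the conclusion.
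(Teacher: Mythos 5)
Your proof is correct and is exactly the argument the paper intends: it states that the proposition "follows from the observation that tensoring with an invertible object induces an autoequivalence of categories," which is precisely the functor $F = (-)\otimes x$ you construct and verify. Your write-up simply fills in the routine coherence details the paper leaves implicit.
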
 

For example, if $\mathbf{1}$ is a compact object (that is, if
$\hom_{\mathcal{C}}(\mathbf{1}, \cdot)$ commutes with filtered colimits), then
so is $x$.

Suppose now that, more generally, $\mathcal{C}$ is a symmetric monoidal
$\infty$-category in the sense of \cite{higheralg}, which is the setting that
we will be most interested in. Then we can still define
the Picard group $\pic(\mathcal{C})$ of $\mathcal{C}$, which is the same as $\pic( \mathrm{Ho}(
\mathcal{C}))$. 
Moreover, \Cref{principle} is valid, but where one is allowed
to (and often should) use $\infty$-categorical properties. 

\begin{remark}\label{useofprcat} The theory of \emph{presentable} $\infty$-categories
\cite[\S 5.5]{highertopos} enables one to address set-theoretic concerns.
If $\mathcal{C} $ is a presentable symmetric monoidal $\infty$-category, then
the unit of $\mathcal{C}$ is $\kappa$-compact for some regular cardinal
$\kappa$. Therefore, by \Cref{principle} (strictly speaking, its
$\infty$-categorical analog), every invertible object of
$\mathcal{C}$ is $\kappa$-compact, and the collection of $\kappa$-compact
objects of $\mathcal{C}$ is essentially small. 
In particular, the collection of isomorphism classes forms a set and the Picard
group is well-defined.
\end{remark}

\begin{example} 
Suppose that $\mathcal{C}$ is a symmetric monoidal stable $\infty$-category
such that the tensor product commutes with finite colimits in each variable. Then one has a
natural homomorphism 
\[ \mathbb{Z} \to \pic(\mathcal{C}),  \]
sending $n \mapsto \Sigma^n \mathbf{1}$. 
\end{example} 

\begin{example} \label{pics0}
Let $\sp$ be the $\infty$-category of spectra with the smash product. Then
it is a classical result \cite[p. 90]{HMS} that
$\pic( \mathcal{C}) \simeq \mathbb{Z}$, generated by the sphere $S^1$. A quick
proof based on the above principle (which simplifies the argument in
\cite{HMS} slightly) is as follows. If $T \in \sp$ is invertible, so that there exists a
spectrum $T'$ such that $T \wedge T' \simeq S^0$, then 
we need to show that $T$ is a suspension of $S^0$. 

Since the unit object $S^0 \in \sp$ is compact, it follows that 
$T$ is compact: that is, it is a finite spectrum. 
By suspending or desuspending, we may assume that $T$ is
connective,\footnote{We always use ``connective'' to mean
``$(-1)$-connected.''} and that
$\pi_0 T \neq 0$. By the K\"unneth formula, it follows easily that $H_*(T;
F)$ is concentrated in one dimension for each field $F$. Since $H_*(T;
\mathbb{Z})$ is finitely generated, an argument with the universal coefficient
theorem implies that $H_*(T; \mathbb{Z})$  is torsion-free of rank one and is
concentrated in dimension zero: i.e. $H_0(T; \mathbb{Z}) \simeq \mathbb{Z}$. By the Hurewicz theorem, 
$T \simeq S^0$. 
\end{example} 

\begin{example}
Other variants of the stable homotopy category can have more complicated Picard
groups. For instance, if $E \in \sp$, one can consider the $\infty$-category
$L_E \sp$ of \emph{$E$-local spectra,} with the symmetric monoidal structure
given by the $E$-localized smash product $(X, Y) \mapsto L_E(X \wedge Y)$. The
Picard group of $L_E \sp$ is generally much more complicated than $\mathbb{Z}$. 
When $E$ is given by the Morava $E$-theories $E_n$ or the Morava $K$-theories
$K(n)$, the resulting Picard groups have been studied in \cite{HMS} and
\cite{HoveySadofsky}, among other references. 
\end{example}

Another important example of this construction arises for $R$ an $\e{\infty}$-ring, 
when we can consider the symmetric monoidal $\infty$-category 
$\mod(R)$ of $R$-modules.

\begin{definition} Given an $\e{\infty}$-ring $R$, we write $\pic(R)$ for the Picard group
$\pic(\mod(R))$. 
\end{definition}

Using the same argument as in \Cref{pics0}, it follows that any invertible
$R$-module is necessarily compact (i.e. perfect): in particular, the
invertible modules actually form a set rather than a proper class. 
Note that if $R$ is simply an $\e{2}$-ring spectrum, then 
$\md(R)$ is a monoidal $\infty$-category, so one can still define a  Picard group. 
This raises the following natural question.
\begin{question} 
Is there an example of an $\e{2}$-ring whose Picard group is nonabelian? 
\end{question} 

We will only work with $\e{\infty}$-rings in the
future, as it is for these highly commutative multiplications that we will be
able to obtain good (from the point of view of descent theory) infinite loop spaces that realize $\pic(R)$ on $\pi_0$. 
\newcommand{\Pic}{\mathcal{P}\mathrm{ic}}

\subsection{Picard $\infty$-groupoids}

If $(\mathcal{C}, \otimes, \mathbf{1})$ is a symmetric monoidal $\infty$-category, we 
reviewed in the previous section the \emph{Picard group} of $\mathcal{C}$. 
There is, however, a more fundamental invariant of $\mathcal{C}$, where we
remember all isomorphisms (and higher isomorphisms), and which behaves better
with respect to descent processes. 

\begin{definition} 
Let
$\Pic(\mathcal{C})$ denote the $\infty$-groupoid (i.e. space) of
\emph{invertible objects} in $\mathcal{C}$ and equivalences between them.  We will refer to this
as the \emph{Picard $\infty$-groupoid} of $\mathcal{C}$; it is a group-like
$\e{\infty}$-space, and thus \cite{mayiterated, segal} the delooping of a connective \emph{Picard
spectrum} $\pics(\mathcal{C})$. 
\end{definition}

We have in particular
\[ \pi_0 \Pic(\mathcal{C}) \simeq \pic(\mathcal{C}).\]
However, we can also describe the higher homotopy groups of $\Pic(\mathcal{C})$. 
Recall that since $\mathcal{C}$ is symmetric monoidal,
$\mathrm{End}(\mathbf{1})$ is canonically an $\e{\infty}$-space 
and $\mathrm{Aut}( \mathbf{1})$ consists of the grouplike components. 
Since
\[ \Omega \Pic( \mathcal{C}) \simeq \mathrm{Aut}( \mathbf{1}),  \]
we get the relations
\[ 
\quad \pi_1
\Pic(\mathcal{C}) = (\pi_0 \mathrm{End}( \mathbf{1}))^{\times} , \quad \pi_i \Pic( \mathcal{C}) = \pi_{i-1}
\mathrm{End}(\mathbf{1}) \ \text{for } i \geq 2. \]

\begin{example}
Let $R$ be an $\e{\infty}$-ring. 
We will write
\[ \Pic(R) \stackrel{\mathrm{def}}{=} \Pic( \mathrm{Mod}(R)), \quad 
\pics(R) \stackrel{\mathrm{def}}{=} \pics( \md(R))
.  \]
Then $\Pic(R)$ is a delooping of the \emph{space of units} $GL_1(R)$ studied in 
\cite{MQRT} and more recently using $\infty$-categorical techniques in \cite{5author}. 
In particular, the homotopy groups of $\Pic(R)$ look very much like those of
$R$ (with a shift), starting at $\pi_2$. 
In fact, if we take the connected components at the basepoint, we have a
natural equivalence
of spaces
\[ \tau_{\geq 1} (GL_1 R) \simeq \tau_{\geq 1}(\Omega \Pic(R)) \simeq
\tau_{\geq 1} ( \Omega^\infty R),  \]
given by subtracting $1$ with respect to the group structure on
the infinite loop space
$\Omega^\infty R$. 
Nonetheless, the \emph{spectra} $\pics(R)$ and $R$ are generally very
different: that is, the infinite loop structure on $\Pic(R)$ behaves very
differently from that of $\Omega^\infty R$. 
\end{example}

Unlike the group-valued functor $\pic$, $\Pic$ as well as $\pics$ have the
fundamental property, upon which the calculations in this paper are based, that they commute with homotopy limits. 

\begin{proposition}
\label{piclimits}
The functor
\[ \pics\colon \mathrm{Cat}^{\otimes} \to  \sp_{\geq 0},  \]
from the $\infty$-category $\mathrm{Cat}^{\otimes}$ of symmetric monoidal
$\infty$-categories to the $\infty$-category $\sp_{\geq 0}$ of connective spectra, commutes
with limits and filtered colimits, and $\Pic = \Omega^\infty \circ \pics\colon \mathrm{Cat}^{\otimes}
\to \mathcal{S}_*$ does as well. 
\end{proposition}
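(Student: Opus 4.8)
The plan is to factor $\pics$ through a chain of functors, each of which manifestly preserves limits and filtered colimits. A symmetric monoidal $\infty$-category $\mathcal{C}$ has an underlying \emph{core} $\mathcal{C}^{\simeq}$, its maximal sub-$\infty$-groupoid, and the tensor structure makes $\mathcal{C}^{\simeq}$ a commutative monoid object in $\mathcal{S}$; this is functorial in strong symmetric monoidal functors, since such functors carry invertible-pairs to invertible-pairs, and so gives a functor $\mathrm{Cat}^{\otimes} \to \mathrm{CMon}(\mathcal{S})$. Passing to the grouplike part (the union of those path components invertible in $\pi_0$) defines a units functor $\mathrm{CMon}(\mathcal{S}) \to \mathrm{CMon}^{\mathrm{gp}}(\mathcal{S})$, and by the recognition principle of \cite{mayiterated, segal} the target is equivalent to $\sp_{\geq 0}$, with $\Omega^\infty$ implementing the inverse equivalence on underlying spaces. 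By construction the composite sends $\mathcal{C}$ to $\pics(\mathcal{C})$ with underlying space $\Pic(\mathcal{C})$, so that $\Pic = \Omega^\infty \circ \pics$; it then suffices to treat each functor in turn.

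For limits, first note that the forgetful functor $\mathrm{Cat}^{\otimes} \to \mathrm{Cat}_\infty$ creates limits, since a symmetric monoidal $\infty$-category is encoded by a $\mathrm{Fin}_*$-indexed diagram and limits are computed objectwise; the core $(-)^{\simeq}\colon \mathrm{Cat}_\infty \to \mathcal{S}$ is right adjoint to the inclusion of spaces and hence preserves limits, and $\mathrm{CMon}(\mathcal{S}) \to \mathcal{S}$ creates them, so the first functor preserves limits. The units functor is right adjoint to the inclusion $\mathrm{CMon}^{\mathrm{gp}}(\mathcal{S}) \hookrightarrow \mathrm{CMon}(\mathcal{S})$ and so preserves limits as well, and the final step is an equivalence; since $\Omega^\infty\colon \sp_{\geq 0} \to \mathcal{S}_*$ preserves limits, the claim follows for $\Pic$ too.

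For filtered colimits I would argue that each functor is finitary. The core preserves filtered colimits because a morphism in a filtered colimit of $\infty$-categories is an equivalence precisely when it is already one at some finite stage; the units functor preserves them because $\pi_0$ commutes with filtered colimits and, in a filtered colimit of commutative monoids, a component is invertible exactly when it is the image of an invertible component from a finite stage; and $\Omega^\infty$ on connective spectra commutes with filtered colimits since homotopy groups do. One must also check that filtered colimits are compatibly computed along $\mathrm{Cat}^{\otimes} \to \mathrm{Cat}_\infty$ and $\mathrm{CMon}(\mathcal{S}) \to \mathcal{S}$, but again these colimits are formed on underlying objects.

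The main obstacle is the filtered-colimit half: the limit statements are formal consequences of adjunctions, whereas preservation of filtered colimits by the core and by the units functor rests on genuinely finitary arguments, namely that equivalences and invertibility are witnessed at finite stages. An alternative route to the limit statement alone would be to observe that $\Pic$ is corepresented in $\mathrm{Cat}^{\otimes}$ by the free symmetric monoidal $\infty$-category on one invertible object, making limit-preservation automatic; but this does not yield the filtered-colimit statement unless one further verifies that this corepresenting object is compact, which is no easier than the finitary arguments above.
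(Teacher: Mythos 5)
Your proposal is correct, and it uses the same basic factorization as the paper: first pass to the core with its induced $\e{\infty}$-structure, then take the union of invertible components, then identify grouplike $\e{\infty}$-spaces with connective spectra. The one genuine difference is in how the middle step is shown to preserve limits. The paper argues by hand: it reduces to products and pullbacks, and for a pullback square it checks directly that a $\pi_0$-class mapping to units in both legs is itself a unit, via the trick that translation by $x$ acts by equivalences on three corners of the square and hence invertibly on the fourth. You instead invoke the adjunction exhibiting the units functor as right adjoint to the inclusion $\mathrm{CMon}^{\mathrm{gp}}(\mathcal{S}) \hookrightarrow \mathrm{CMon}(\mathcal{S})$, which makes limit-preservation formal; this is cleaner and is the standard characterization of $GL_1$-type constructions, at the cost of having to know (or verify) that the union-of-invertible-components construction really is that right adjoint. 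You also spell out the filtered-colimit half — that equivalences and invertibility in a filtered colimit are witnessed at a finite stage — which the paper dismisses as ``similar and easier'' without detail; your finitary argument is exactly what is needed there, and it is the same style of reasoning the paper later uses when showing $R \mapsto \pic(R)$ commutes with filtered colimits. Your closing observation that $\Pic$ is corepresented by the free symmetric monoidal $\infty$-category on an invertible object is a nice alternative for the limit statement, and you correctly note it does not help with filtered colimits without a compactness verification.
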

\begin{proof} 
\newcommand{\CAlg}{\mathrm{CAlg}}
We will treat the case of limits; the case of filtered colimits is similar and
easier.
It suffices to show that $\Pic$ commutes with homotopy limits, since
$\Omega^\infty\colon \sp_{\geq 0} \to \mathcal{S}_*$ creates limits. 
Let $\CAlg (\mathcal{S})$ be the $\infty$-category of $\e{\infty}$-spaces. 
Now, $\Pic$ is
the composite $\mathrm{inv} \circ \overline{\iota} $ where:
\begin{enumerate}
\item  
$\overline{\iota}\colon \mathrm{Cat}^{\otimes} \to \CAlg(\mathcal{S})$
sends a symmetric monoidal $\infty$-category to the symmetric monoidal
$\infty$-groupoid (i.e. $\e{\infty}$-space) obtained by excluding all
non-invertible morphisms. 
\item
$\mathrm{inv}\colon \CAlg(\mathcal{S}) \to \mathcal{S}_*$ sends an
$\e{\infty}$-space $X$ to the union of those connected components 
which are invertible in the commutative monoid $\pi_0 X$, with basepoint given
by the identity.
\end{enumerate}
It thus suffices to show that $\overline{\iota}$ and $\mathrm{inv}$ both commute with
limits. 
\begin{enumerate}
\item The functor $\iota\colon \mathrm{Cat} \to \mathcal{S}$ that sends an
$\infty$-category $\mathcal{C}$ to its \emph{core} $\iota \mathcal{C}$ commutes
with limits: in fact, it is right adjoint to the inclusion $\mathcal{S} \to
\mathrm{Cat}$ that regards a space as an $\infty$-groupoid.  See for instance
\cite[\S 17.2]{riehl}.
Now, to see that $\overline{\iota}$ commutes with limits, we observe that
limits either in $\mathrm{Cat}^{\otimes}$ or in $\CAlg(\mathcal{S})$
are calculated at the level of the underlying spaces (resp.
$\infty$-categories), so the fact that $\iota$ commutes with limits implies
that $\overline{\iota}$ does too. 
\item It is easy to see that $\mathrm{inv}$ commutes with arbitrary products.
Therefore, we need to show that $\mathrm{inv}$ turns pullbacks in
$\CAlg(\mathcal{S})$ into pullbacks in $\mathcal{S}_*$. 
We recall that if $\mathcal{A}, \mathcal{B}$ are complete $\infty$-categories,
then a functor $F \colon  \mathcal{C} \to \mathcal{D}$ preserves limits if and
only if it preserves pullbacks and products \cite[Prop. 4.4.2.7]{highertopos}.
Suppose given a
homotopy pullback
\begin{equation} \label{hpullback1} \xymatrix{
A \ar[d] \ar[r] & B \ar[d] \\
C \ar[r] &  D
}\end{equation}
in $\CAlg(\mathcal{S})$; we need to show that  
\[ \xymatrix{
\mathrm{inv}(A) \ar[d] \ar[r] & \mathrm{inv}(B) \ar[d] \\
\mathrm{inv}(C) \ar[r] &  \mathrm{inv}(D)
}\]
is one too, in $\mathcal{S}_*$. Given the construction of $\mathrm{inv} $ as a
union of connected components, it suffices to show that if $x \in \pi_0 A$ has
the property that $x$ maps to invertible elements in the monoids $\pi_0 B,
\pi_0 C$, then $x$ itself is invertible. 

To see this, consider the homotopy pullback square
\eqref{hpullback1}. Addition of $x$ induces an endomorphism of the square. Since
it acts via homotopy equivalences on $B, C, D$, it follows formally that it
must act invertibly on $A$, i.e., that $x \in \pi_0 A $ has an inverse.
\end{enumerate}
\end{proof} 
 
\subsection{Descent}
Let $R \to R'$ be a morphism of $\e{\infty}$-rings.
Recall the \emph{cobar construction}, a cosimplicial $\e{\infty}$-$R$-algebra
\[ R' \rightrightarrows R' \otimes_R R' \triplearrows \dots,\]
important in descent procedures, which receives an augmentation from $R$. 
The cobar construction is the \emph{\v Cech nerve} (see \cite[6.1.2]{highertopos})
of $R \to R'$, in the opposite $\infty$-category.

\begin{definition}[{\cite[Def. 5.2]{DAGVII}}] 
We say that $R \to R'$ is 
\emph{faithfully flat} if 
$\pi_0 R \to \pi_0 R'$ is faithfully flat and the natural
map $\pi_* R  \otimes_{\pi_0 R} \pi_0 R' \to \pi_* R'$ is an isomorphism. 
\end{definition} 
In this case, the theory of faithfully flat descent goes into
effect. 
We have:

\begin{theorem}[{\cite[Theorem 6.1]{DAGVII}}] \label{thm:ffdescent}
Suppose $R\to R'$ is a faithfully flat morphism of $\e{\infty}$-rings. Then the symmetric monoidal $\infty$-category $\mod(R)$ can be recovered as the
limit of the cosimplicial diagram
of symmetric monoidal $\infty$-categories
\[ \mod(R') \rightrightarrows \mod(R' \otimes_R R') \triplearrows \dots.  \]
\end{theorem}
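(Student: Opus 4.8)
The plan is to realize this statement as an instance of comonadic descent via the Barr--Beck--Lurie theorem. Write $f^{*} = - \otimes_R R' \colon \mod(R) \to \mod(R')$ for the base-change functor; it is symmetric monoidal and admits a right adjoint $f_{*}$, namely the forgetful functor. By the construction of the \v Cech nerve, the limit of the cosimplicial diagram $\mod((R')^{\otimes_R \bullet + 1})$ is precisely the $\infty$-category of comodules over the comonad $f^{*} f_{*} \simeq - \otimes_{R'} (R' \otimes_R R')$ on $\mod(R')$, where the comonad structure comes from the coalgebra structure on $R' \otimes_R R'$. Thus the theorem is equivalent to the assertion that $f^{*}$ is comonadic, i.e. that the canonical comparison functor $\mod(R) \to \mathrm{coMod}_{f^{*}f_{*}}(\mod(R'))$ is an equivalence.

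By the comonadic form of the Barr--Beck--Lurie theorem \cite{higheralg}, $f^{*}$ is comonadic provided that (i) $f^{*}$ is conservative, and (ii) $\mod(R)$ admits totalizations of $f^{*}$-split cosimplicial objects and $f^{*}$ preserves them. Condition (ii) is formal: $\mod(R)$ is presentable and stable, hence complete, and the totalization of a split cosimplicial object is an absolute limit, preserved by any functor whatsoever. The real content is therefore condition (i), and this is exactly where faithful flatness enters.

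To verify conservativity, it suffices to show that $f^{*}$ detects zero objects, since in the stable setting a map $\phi$ is an equivalence if and only if its cofiber vanishes, and $f^{*}$ is exact (being a left adjoint between stable categories) so that it commutes with the formation of cofibers. Now for any $R$-module $M$, flatness of $\pi_0 R \to \pi_0 R'$ makes $\pi_* R'$ flat over $\pi_* R$, so the Tor spectral sequence computing $\pi_*(M \otimes_R R')$ collapses and yields a natural isomorphism $\pi_*(M \otimes_R R') \cong \pi_* M \otimes_{\pi_0 R} \pi_0 R'$. Faithful flatness of $\pi_0 R \to \pi_0 R'$ then gives that $\pi_* M \otimes_{\pi_0 R} \pi_0 R' = 0$ if and only if $\pi_* M = 0$; hence $f^{*}$ detects vanishing and is conservative.

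Finally, I would upgrade the resulting equivalence $\mod(R) \simeq \lim_{\Delta} \mod((R')^{\otimes_R \bullet + 1})$ to an equivalence of symmetric monoidal $\infty$-categories. Since $f^{*}$ is symmetric monoidal and the comonad $f^{*}f_{*}$ is a symmetric monoidal comonad, the whole descent datum can be organized inside commutative algebra objects of $\mathrm{Pr}^{L}$, so the comparison is automatically compatible with the tensor structures. The main obstacle is thus the conservativity step, that is, the flat base-change computation on homotopy groups combined with faithful flatness; once that is established, the remainder is a formal application of comonadic Barr--Beck--Lurie together with the bookkeeping needed to track the monoidal structure.
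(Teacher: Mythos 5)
The paper does not actually prove this statement: it is quoted verbatim from Lurie as \cite[Theorem 6.1]{DAGVII}, so there is no internal proof to compare against. Judged on its own terms, your comonadic strategy is the standard one and is essentially how Lurie argues; your conservativity step is correct (flatness collapses the Tor spectral sequence to give $\pi_*(M\otimes_R R')\cong \pi_*M\otimes_{\pi_0 R}\pi_0 R'$, and faithful flatness then detects vanishing). But there is a genuine gap in your treatment of the second Barr--Beck--Lurie hypothesis, and it sits exactly where the real content of the theorem lives.

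You claim condition (ii) is ``formal'' because the totalization of a split cosimplicial object is an absolute limit. That would be true for a cosimplicial object that is split in $\mod(R)$, but the hypothesis concerns cosimplicial objects $X^\bullet$ in $\mod(R)$ that are merely $f^*$-split, i.e.\ $f^*X^\bullet$ is split in $\mod(R')$ while $X^\bullet$ itself need not be. The totalization of such an $X^\bullet$ is not an absolute limit, and $f^* = -\otimes_R R'$ is a left adjoint with no general reason to commute with it; exactness of $f^*$ gives preservation of finite limits only. This is precisely the second place faithful flatness must be used: one runs the $\mathrm{Tot}$ spectral sequence for $X^\bullet$, uses flat base change to identify $H^s(\pi_t X^\bullet)\otimes_{\pi_0 R}\pi_0 R'$ with $H^s(\pi_t f^*X^\bullet)$, observes that the latter vanishes for $s>0$ because $f^*X^\bullet$ is split, concludes by faithful flatness that $H^s(\pi_t X^\bullet)=0$ for $s>0$, and then deduces degeneration and convergence of both spectral sequences so that $f^*\mathrm{Tot}(X^\bullet)\to\mathrm{Tot}(f^*X^\bullet)$ is an equivalence. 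Without some such argument your proof is incomplete. A smaller omission: the identification of $\varprojlim_\Delta \mod(R'^{\otimes(\bullet+1)})$ with comodules over the comonad $f^*f_*$ is not purely ``by construction of the \v{C}ech nerve''; it requires the adjointability (Beck--Chevalley) condition for the coface squares, as in \cite[4.7.5.2--4.7.5.3]{higheralg}, which does hold here by the base-change equivalences but should be invoked explicitly.
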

As a result, by \Cref{piclimits}, $\Pic(R)$ can be recovered as a
totalization of spaces, 
\begin{equation} \label{picdesc} \Pic(R) \simeq \mathrm{Tot}( \Pic( R'^{\otimes
(\bullet + 1)}))  .
\end{equation} 
Equivalently, one has an equivalence of connective spectra
\begin{equation} \label{picdesc2}
\pics(R) \simeq \tau_{\geq 0} \mathrm{Tot}( \pics( R'^{\otimes
(\bullet + 1)})) .
\end{equation} 
In this paper, we will
apply a version of this, except that we will work with morphisms of ring
spectra that are not faithfully flat on the level of homotopy groups. 
As we will see, the descent spectral sequences given by \eqref{picdesc}
and \eqref{picdesc2} are not very useful in the faithfully flat case for our
purposes. 

\begin{example} 
A more classical example of this technique (e.g. \cite[Exercise 6.9]{hartshorne}) is as follows. Let $X$ be a nodal cubic curve over the
complex numbers $\mathbb{C}$. Then $X$ can be obtained from its normalization
$\mathbb{P}^1$ by gluing together $0$ and $\infty$. 
There is a pushout diagram of schemes
\[ \xymatrix{
\left\{0, \infty\right\} \ar[d]  \ar[r] & \ast \ar[d]  \\
 \mathbb{P}^1 \ar[r] &  X.
}\]
Therefore, one would \emph{like} to say that the $\qcoh(X)$ of on $X$ fits into
a homotopy pullback
square
\begin{equation} \label{hpullback}
\xymatrix{
\qcoh(X) \ar[d] \ar[r] & \qcoh( \ast) \ar[d]  \\
\qcoh(\mathbb{P}^1) \ar[r] &  \qcoh( \ast \sqcup \ast),
}
\end{equation}
and that therefore,
therefore, the Picard \emph{groupoid} of $X$ fits into the homotopy cartesian square
\begin{equation} \label{picnodal} \xymatrix{
\Pic(X) \ar[d] \ar[r] &  \Pic(\ast) \ar[d]  \\
\Pic(\mathbb{P}^1) \ar[r] &  \Pic( \ast ) \times \Pic(\ast).
}\end{equation}
Unfortunately, \eqref{hpullback} is not a pullback square of categories, because 
restricting to a closed subscheme is not an exact functor. It is possible to
remedy this (up to connectivity issues) by working with derived $\infty$-categories \cite[Theorem
7.1]{DAGIX}, or by noting that we are working with locally free sheaves and
applying a version of \cite[Theorems 2.1--2.3]{MilnorK}. In any
event, one can argue that \eqref{picnodal} is homotopy cartesian. 

Alternatively, we obtain a homotopy pullback diagram of \emph{connective}
spectra. 
Using the long exact sequence on $\pi_*$, it follows that we have a 
short exact sequence
\[ 0 \to \mathbb{C}^{\times} \to \pic(X) \to \pic(\mathbb{P}^1) \simeq \mathbb{Z} \to 0.  \]
The approach of this paper is essentially an elaboration of this example. 
\end{example} 

\subsection{Picard groups of $\e{\infty}$-rings}

We now specialize to the case of interest to us in this paper. Let $R$ be an
$\e{\infty}$-ring, and consider the Picard group $\pic(R)$, and better yet, the
Picard $\infty$-groupoid $\Pic(R)$ and the Picard spectrum $\pics(R)$. 
The first of these has been studied by Baker-Richter in the paper \cite{BakerRichter}, and we start by
recalling some of their results. 

We start with the following useful property.

\begin{proposition} 
The functor $R \mapsto \pic(R)$ commutes with filtered colimits in $R$. 
\end{proposition}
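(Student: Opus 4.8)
The plan is to deduce this from \Cref{piclimits} together with the observation that the relevant symmetric monoidal categories are themselves compatible with filtered colimits of the ground ring. The key preliminary reduction is that $\Pic$ sees only invertible objects, and that every invertible $R$-module is perfect (as noted above). Hence if $\mathrm{Perf}(R) \subseteq \mod(R)$ denotes the full symmetric monoidal subcategory of perfect modules, the inclusion induces an equivalence $\Pic(\mathrm{Perf}(R)) \simeq \Pic(R)$: an object of $\mathrm{Perf}(R)$ is invertible in $\mathrm{Perf}(R)$ if and only if it is invertible in $\mod(R)$, since the inverse of an invertible module is again perfect and therefore already lies in $\mathrm{Perf}(R)$. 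Consequently $\pics(R) \simeq \pics(\mathrm{Perf}(R))$.

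Now let $R \simeq \varinjlim_\alpha R_\alpha$ be a filtered colimit of $\e{\infty}$-rings. First I would invoke the fact that perfect modules are compatible with filtered colimits of rings: base change along the maps $R_\alpha \to R$ exhibits $\mathrm{Perf}(R)$ as the filtered colimit $\varinjlim_\alpha \mathrm{Perf}(R_\alpha)$, computed in $\mathrm{Cat}^{\otimes}$. Concretely, every perfect $R$-module is a finite cell module, so it, together with any map between two such, is defined over some finite stage $R_\alpha$ and remains so after further base change; this is exactly the assertion that the canonical symmetric monoidal functor $\varinjlim_\alpha \mathrm{Perf}(R_\alpha) \to \mathrm{Perf}(R)$ is an equivalence. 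Granting this, \Cref{piclimits} applies verbatim: since $\pics\colon \mathrm{Cat}^{\otimes} \to \sp_{\geq 0}$ preserves filtered colimits, we obtain
\[ \pics(R) \simeq \pics(\mathrm{Perf}(R)) \simeq \varinjlim_\alpha \pics(\mathrm{Perf}(R_\alpha)) \simeq \varinjlim_\alpha \pics(R_\alpha). \]
Applying $\pi_0$, which commutes with filtered colimits of connective spectra, then yields $\pic(R) \simeq \varinjlim_\alpha \pic(R_\alpha)$, as desired.

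The main obstacle is the input used in the second paragraph, namely the identification $\mathrm{Perf}(R) \simeq \varinjlim_\alpha \mathrm{Perf}(R_\alpha)$ of symmetric monoidal $\infty$-categories. One can either cite the corresponding statement from \cite{higheralg} (that $\mod$ preserves filtered colimits as a functor to presentable $\infty$-categories, together with the behaviour of compact objects under filtered colimits along transition functors that preserve compact objects), or argue directly from the cellular and compactness properties of perfect modules. I note that this latter, hands-on route also furnishes a proof that bypasses \Cref{piclimits} entirely: surjectivity of $\varinjlim_\alpha \pic(R_\alpha) \to \pic(R)$ holds because an invertible $R$-module $M$ together with a chosen inverse $N$, both being perfect, descend to some finite stage $R_\alpha$ over which the witnessing equivalence $M \otimes N \simeq R$ already holds, so $M$ descends to an invertible $R_\alpha$-module; injectivity holds because an equivalence over $R$ between the base changes of two invertible $R_\alpha$-modules is likewise detected at a finite stage. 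I would present the conceptual argument as the main proof and regard these compactness facts as the crux of the matter.
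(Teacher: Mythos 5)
Your proof takes essentially the same route as the paper's: reduce to the symmetric monoidal subcategory of perfect modules (since invertible modules are perfect), show that perfect modules commute with filtered colimits of $\e{\infty}$-rings, and then apply \Cref{piclimits}. One small correction to your ``concrete'' justification: a perfect module is a \emph{retract} of a finite cell module, not a finite cell module, and this retract is precisely the one delicate point --- the paper devotes most of its argument to descending the idempotent $e$ to a finite stage, replacing it by an $e_i$ with $e_i^2 \simeq e_i$, and splitting it via the sequential colimit $X_i \xrightarrow{e_i} X_i \xrightarrow{e_i} \cdots$. With that point either carried out or cited from \cite{higheralg}, your argument (including the hands-on surjectivity/injectivity variant) is correct.
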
 
\begin{proof} 
This is a consequence of a form of ``noetherian descent'' \cite[\S 8]{EGAIV}. Given an $\e{\infty}$-ring
$T$, let $\md^\omega(T)$ denote the $\infty$-category of perfect $T$-modules. If $I$ is a filtered
$\infty$-category and $\{R_i\}_{i \in I}$ is a filtered system of
$\e{\infty}$-rings indexed by $I$, then the functor of symmetric monoidal $\infty$-categories
\begin{equation} \label{filteredcolim} \varinjlim_{i \in I} \md^\omega(R_i)
\to \md^\omega( \varinjlim_I R_i)  \end{equation}
is an equivalence. We outline the proof of this below.  

Assume without loss of generality that $I$ is a filtered partially ordered
set and write $R = \varinjlim_I R_i$. 
To see that \eqref{filteredcolim} is an equivalence, observe that the $\infty$-category
$\varinjlim_{i \in I} \md^\omega(R_i)$ has objects given by pairs $(M, i)$ where $i
\in I$ and $M \in \md^\omega(R_i)$. The space of maps between $(M, i)$ and $(N,
j)$ is given by $\varinjlim_{k \geq i, j} \hom_{\md(R_k)}( R_k \otimes_{R_i}
M, R_k \otimes_{R_j} N)$. 
For instance, this implies that if $i' \geq i$, the pair $(M, i)$ is
(canonically) equivalent to the pair $(R_{i'} \otimes_{R_i} M, i')$.
Thus, the assertion that \eqref{filteredcolim} is fully faithful
is equivalent to the assertion that if $M, N \in \md^\omega(R_i)$ for some $i$,
then 
the natural map
\begin{equation} \label{filteredcolim:spaces}  \varinjlim_{j \geq i}\hom_{\md^\omega(R_j)}( R_j \otimes_{R_i} M , R_j
\otimes_{R_i} N) 
\to \hom_{\md^\omega(R)}(R \otimes_{R_i} M, R \otimes_{R_i} N)
\end{equation}
is an equivalence. But \eqref{filteredcolim:spaces} is clearly an equivalence
if $M = R_i$ for \emph{any} $N$. The collection of $M \in \md^\omega(R_i)$ such that
\eqref{filteredcolim:spaces} is an equivalence is closed under finite colimits,
desuspensions, and retracts, and therefore it is all of $\md^\omega(R_i)$. 
It therefore follows that \eqref{filteredcolim} is fully faithful. 

Moreover, the image of \eqref{filteredcolim} contains $R \in \md^\omega(R)$ and
is closed under desuspensions and cofibers (thus finite colimits). 
Let $\mathcal{C} \subset \md^\omega(R)$ be the subcategory generated by $R$
under finite colimits and desuspensions. We have shown the image of
the fully faithful functor \eqref{filteredcolim} contains $\mathcal{C}$.
Any object $M \in
\md^\omega(R)$ is a retract of an object $X \in \mathcal{C}$, associated to
an idempotent map $e\colon X \to X$. 
We can ``descend'' $X$ to some $X_i \in \md^\omega(R_i)$ and the map $e$ to a
self-map
$e_i\colon X_i \to X_i$ such that $e_i^2$ is homotopic to $e_i$. 
As is classical, we use the idempotent $e_i$ to split $X_i$; see \cite[Prop. 1.6.8]{neeman} or the older \cite{Freyd2} and \cite[Th.
5.3]{Freyd}.
Explicitly, form the 
filtered colimit $Y_i$ of $X_i  \stackrel{e_i}{\to} X_i \stackrel{e_i}{\to}
\dots$, 
which splits off $X_i$.
The tensor product  $R
\otimes_{R_i} Y_i$ is the direct summand of $X$ given by the idempotent $e$ and
is therefore equivalent to $M$. 

The association $\mathcal{C} \mapsto \Pic(\mathcal{C})$ commutes with filtered
colimits of symmetric monoidal $\infty$-categories by \Cref{piclimits}. 
Taking Picard groups in the equivalence \eqref{filteredcolim}, the proposition follows. 
\end{proof}

Purely algebraic information can be used to begin approaching $\pic(R)$.
Let $\pic(R_*)$ be the Picard group of the symmetric monoidal category of
\emph{graded} $R_*$-modules. 
The starting point of \cite{BakerRichter} 
is the following. 

\begin{construction}
There is a monomorphism $$\Phi\colon
\pic(R_*)\to \pic(R),$$ constructed as follows. 
If $M_*$ is an invertible $R_*$-module, it has to be finitely generated and
projective of rank one. Consequently, there is a finitely generated free
$R_*$-module $F_*$ of which $M_*$ is a direct summand, i.e. there is a
projection $p_*$ with a section $s_*$,
$ \xymatrix{ F_* \ar[r]_{p_*}  & M_* \ar@/_/[l]_{s_*}  }. $

Clearly, $F_*$ can be realized as an $R$-module $F$ which is a finite wedge sum of copies of $R$ or its suspensions. Let $e_*$ be the idempotent given by composition $s_*\circ p_*$. Since $F$ is free over $R$, $e_*$ can be realized as an $R$-module map $e\colon F\to F$ which must be idempotent. Define $M$
to be the colimit of the sequence
\( F \stackrel{e}{\to} F \stackrel{e}{\to} \dots,  \)
i.e. the image of the idempotent $e$. Observe that the homotopy groups of $M$
are given by $M_*$, as desired. If $M'_*$ is the inverse to $M_*$ in the
category of graded $R_*$-modules, we can construct an analogous $R$-module
$M'$, and clearly $M \otimes_R M' \simeq R$ by the degeneration of the
K\"unneth spectral sequence. Thus, $M \in \pic(R)$. 
The association $M_* \mapsto M$ defines $\Phi$. 

Note that any two $R$-modules
that realize $M_*$ on homotopy groups are equivalent by the degeneration of the
$\mathrm{Ext}$-spectral sequence, and that $\Phi$ is a homomorphism by the
degeneration 
of the K\"unneth spectral sequence.
Observe also that $\Phi$ is clearly a monomorphism as equivalences of $R$-modules are detected on homotopy groups. 
\end{construction}

\begin{definition} 
When $\Phi$ is an isomorphism, we say that $\pic( R)$ is \emph{algebraic}.
\end{definition} 

 Baker-Richter \cite{BakerRichter} determine certain conditions which imply algebraicity. 
There are, in particular, two fundamental examples. 
The first one generalizes \Cref{pics0}.

\begin{theorem}[{Baker-Richter \cite{BakerRichter}}]
\label{connectivepic}
Suppose $R$ is a connective $\e{\infty}$-ring.  
Then the Picard group of $R$ is algebraic. 
\end{theorem}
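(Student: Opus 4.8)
The plan is to show that the monomorphism $\Phi\colon \pic(R_*) \to \pic(R)$ built in the construction of $\Phi$ above is also surjective; since it is injective, this yields algebraicity. Concretely, writing $R_* = \pi_* R$ and $R_0 = \pi_0 R$, given an invertible $R$-module $M$ I must show that $\pi_* M$ is an invertible graded $R_*$-module and that $M \simeq \Phi(\pi_* M)$.

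First I would put $M$ in a convenient normal form. By \Cref{principle}, since the unit $R$ is compact, $M$ is compact, i.e. perfect; over the connective ring $R$ a perfect module is bounded below, so after (de)suspending I may assume $M$ is connective with $\pi_0 M \neq 0$. Let $\kappa = \kappa(\mathfrak p)$ range over the residue fields of $R_0$. Base change along $R \to \tau_{\leq 0} R = HR_0 \to H\kappa$ carries the invertible module $M$ to an invertible $H\kappa$-module; as $\kappa$ is a field, $\mod(H\kappa)$ is the category of graded $\kappa$-vector spaces, whose Picard group is $\Z$, so $M \otimes_R H\kappa \simeq \kappa[a(\mathfrak p)]$ is one-dimensional and concentrated in a single degree $a(\mathfrak p)$.

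The heart of the argument, and the step I expect to be the main obstacle, is to pin down these degrees. A Tor-spectral-sequence computation shows that the bottom class can receive no differential, so $a(\mathfrak p)$ equals the lowest degree in which $\pi_* M \otimes_{R_0} \kappa$ is nonzero; in particular $a(\mathfrak p) = 0$ exactly on $\mathrm{supp}(\pi_0 M)$. On the other hand $\bar M := M \otimes_R HR_0$ is an invertible object of $\mod(HR_0)$, and such objects are Zariski-locally shifts of the structure sheaf with \emph{locally constant} shift; since $\bar M$ is connective with $\pi_0 \bar M = \pi_0 M \neq 0$, the shift vanishes on the clopen support of $\pi_0 M$. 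Decomposing $R$ along the corresponding idempotent and absorbing the complementary shift into a grading shift available in $\pic(R_*)$, I reduce to the case $M \otimes_R H\kappa \simeq \kappa$ for every residue field. This is precisely where invertibility (not merely perfectness) of $M$ is used, and controlling the shift function is the delicate point.

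In this reduced situation $\pi_0 M$ is a finitely generated $R_0$-module with $\dim_\kappa(\pi_0 M \otimes_{R_0}\kappa) = 1$ for all $\mathfrak p$, hence an invertible $R_0$-module (a line bundle) $L$. Tensoring $M$ with $\Phi(L^{-1}\otimes_{R_0} R_*)$ further reduces to $\pi_0 M \cong R_0$, whereupon the unit map $R \to M$ is an equivalence: its cofiber $C$ is perfect with $\pi_0 C = 0$ and $C \otimes_R H\kappa \simeq 0$ for every $\kappa$, and a nonzero connective perfect module has nonzero bottom homotopy surviving to some residue field by a Nakayama argument, so $C = 0$. Unwinding the reductions gives $M \simeq \Phi(L \otimes_{R_0} R_*)$ up to suspension, so $\pi_* M$ is invertible graded and $M$ lies in the image of $\Phi$; the essential uniqueness of realizations (degeneration of the Ext spectral sequence, as in the construction of $\Phi$) identifies $M$ with $\Phi(\pi_* M)$. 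This proves surjectivity, and hence the theorem.
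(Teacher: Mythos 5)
Your proof is correct and the engine is the same as the paper's: reduce to a connective invertible $M$ with $\pi_0 M \neq 0$, base change to residue fields of $\pi_0 R$ to see that $M \otimes_R H\kappa$ is a one-dimensional shift of $\kappa$, and then run a Nakayama argument on the cofiber of $R \to M$ (your final step is exactly the content of \Cref{lem:flatness}). Where you diverge is in the local-to-global bookkeeping. The paper first uses the filtered-colimit invariance of $\pic$ to reduce to a finitely presented, hence noetherian, $R$ with all $\pi_j R$ finitely generated, then localizes at each prime of $\pi_0 R$ and phrases the conclusion as \emph{flatness} of $\pi_* M$ over $\pi_* R$; the locally varying suspension degree is absorbed invisibly into the statement ``flat, locally free of rank one in a single degree,'' and the degeneration of the K\"unneth spectral sequence then identifies $M$ with $\Phi(\pi_* M)$. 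You instead stay global: you control the shift function by observing that $M \otimes_R H\pi_0 R$ is invertible in the derived category of $\pi_0 R$ and invoking the structure theorem that such objects are Zariski-locally shifts of line bundles with locally constant shift (Fausk's theorem), then split $R$ along the resulting idempotent. That works, but it imports a nontrivial external input and requires knowing that the bottom homotopy of a perfect module over a connective ring is finitely presented (to run Nakayama without the noetherian reduction); the paper's filtered-colimit reduction buys you noetherianness and finite generation of all $\pi_j$ for free, and the flatness formulation lets you check everything one prime at a time without ever having to globalize the shift. Your route, on the other hand, makes the surjectivity of $\Phi$ and the exact shape of the preimage $L \otimes_{R_0} R_*$ completely explicit, which the paper leaves implicit in ``flatness immediately implies the claim.''
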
 

\begin{proof} 
Since the formulation in \cite[Theorem 21]{BakerRichter} assumed a coherence hypothesis on
$\pi_* R$, we explain briefly how this (slightly stronger)  version can be
deduced from the theory of flatness of \cite[\S 8.2.2]{higheralg}. Recall that an
$R$-module $M$ is \emph{flat} if $\pi_0 M$ is a flat $\pi_0 R$-module and the
natural map
\[ \pi_* R \otimes_{\pi_0 R} \pi_0 M \to \pi_* M,  \]
is an isomorphism. 

Since the Picard group
commutes with filtered colimits in $R$, we may assume that $R$ is finitely
presented in the $\infty$-category of connective $\e{\infty}$-rings: in
particular, by \cite[Proposition 8.2.5.31]{higheralg}, $\pi_0 R$ is a finitely generated $\mathbb{Z}$-algebra and in
particular noetherian; moreover, each $\pi_j R$ is  a finitely generated
$\pi_0 R$-module. These are the properties that will be critical for us. 

Let $M$ be an invertible $R$-module. We will show that $\pi_* M$ is a flat
module over $\pi_* R$, which immediately implies the claim of the theorem. 
Localizing at a
prime ideal of $\pi_0 R$, we may assume that $\pi_0 R$ is a noetherian local
ring; in this case we will show the Picard group is $\mathbb{Z}$ generated by
the suspension of the unit. 
We saw that $M$ is perfect, so we  can
assume by shifting that $M$ is connective and that $\pi_0 M \neq 0$. 
Now for every map\footnote{Recall that we are using the same symbol to
denote an abelian group 
and its Eilenberg-MacLane spectrum.} $R \to k$, for $k$
a field, $\pi_*(M \otimes_R k)$  is
necessarily concentrated in a single degree: in fact, $M \otimes_R k$ is an invertible
object in $\md(k)$ and one can apply the K\"unneth formula to see that
$\pic(\md(k)) \simeq \mathbb{Z}$ generated by $\Sigma k$.  
By Nakayama's lemma, since $\pi_0 M \neq 0$, the homotopy groups of $M \otimes_R k
$ must be concentrated in degree zero. Thus, $M \otimes_R k \simeq k$ itself. 
Using \Cref{lem:flatness} below, it follows that $M$ is equivalent to $R$ as an
$R$-module, so we are done. 
\end{proof} 

\begin{lemma} 
\label{lem:flatness}
Let $R$ be a connective $\e{\infty}$-ring with $\pi_0 R$ noetherian local
with residue field $k$. Suppose moreover each $\pi_i R$ is a finitely
generated $\pi_0 R$-module.
Suppose $M$ is a connective,\footnote{i.e. $(-1)$-connected} perfect $R$-module. Then, for $n \geq 0$, the following are
equivalent: 
\begin{enumerate}
\item $M \simeq R^n$.  
\item $M \otimes_R k \simeq k^n$. 
\end{enumerate}
\end{lemma}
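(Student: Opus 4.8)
The direction $(1)\Rightarrow(2)$ is immediate, since $R^n\otimes_R k\simeq k^n$, so the content is $(2)\Rightarrow(1)$. The plan is the standard ``minimal free cover'' argument adapted to connective ring spectra, reducing everything to a derived Nakayama lemma. First I would extract algebra from the hypothesis $M\otimes_R k\simeq k^n$: this says $\pi_0(M\otimes_R k)\cong k^n$ and the higher homotopy vanishes. Because $M$ is perfect over $R$ and each $\pi_i R$ is a finitely generated module over the noetherian ring $\pi_0 R$, each $\pi_i M$ is a finitely generated $\pi_0 R$-module; in particular $\pi_0 M$ is finitely generated. The edge map of the base-change (K\"unneth/$\Tor$) spectral sequence identifies $\pi_0(M\otimes_R k)\cong \pi_0 M\otimes_{\pi_0 R}k$, so $\pi_0 M\otimes_{\pi_0 R}k\cong k^n$. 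By Nakayama's lemma I may choose $n$ elements of $\pi_0 M$ lifting a $k$-basis; these generate $\pi_0 M$ and assemble into an $R$-module map $f\colon R^n\to M$.

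Next I would show $f$ is an equivalence by studying its cofiber $C=\mathrm{cofib}(f)$, which is again perfect. On $\pi_0$ the map $f$ is surjective by Nakayama, so $\pi_0 C=0$; combined with the connectivity of $R^n$ and $M$ and the long exact sequence on homotopy groups, this forces $C$ to be connective (indeed $1$-connective). Base-changing the cofiber sequence $R^n\xrightarrow{f}M\to C$ along $R\to k$ yields a cofiber sequence $k^n\xrightarrow{\bar f}M\otimes_R k\to C\otimes_R k$. By construction $\bar f$ induces an isomorphism on $\pi_0$, and both source and target are concentrated in degree zero, so $\bar f$ is an equivalence and hence $C\otimes_R k\simeq 0$.

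The proof then closes with the key sub-lemma, a derived Nakayama statement: a connective perfect $R$-module $C$ with $C\otimes_R k\simeq 0$ must vanish. Suppose not, and let $m\geq 0$ be the least degree with $\pi_m C\neq 0$. Since $\pi_* C$ vanishes below degree $m$ and $\pi_* k$ is concentrated in degree $0$, the bottom of the base-change spectral sequence (no room for incoming or outgoing differentials in total degree $m$) gives $\pi_m(C\otimes_R k)\cong \pi_m C\otimes_{\pi_0 R}k$. As $\pi_m C$ is a nonzero finitely generated module over the local ring $\pi_0 R$, Nakayama makes this nonzero, contradicting $C\otimes_R k\simeq 0$. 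Hence $C\simeq 0$, so $f$ is an equivalence and $M\simeq R^n$.

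The main obstacle is entirely concentrated in the spectral-sequence bookkeeping: justifying the edge isomorphism $\pi_m(C\otimes_R k)\cong \pi_m C\otimes_{\pi_0 R}k$ in lowest degree, and checking that perfectness together with the finite-generation and noetherian hypotheses guarantees that the relevant homotopy groups are finitely generated $\pi_0 R$-modules. These are precisely the places where the standing assumptions on $R$ are consumed; the remaining steps are formal manipulations of cofiber sequences and Nakayama's lemma.
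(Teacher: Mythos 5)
Your proof is correct and follows essentially the same route as the paper: lift a $k$-basis of $\pi_0 M\otimes_{\pi_0 R}k$ to get a map $R^n\to M$, pass to the cofiber $C$, observe $C\otimes_R k\simeq 0$, and kill $C$ by applying Nakayama to its lowest nonvanishing homotopy group. The extra detail you supply about the edge isomorphism in the base-change spectral sequence is exactly the justification the paper leaves implicit.
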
 
\begin{proof} 
Suppose  $M \otimes_R k$ is 
isomorphic to $k^n$ and concentrated in degree zero.
Note that $\pi_0( M \otimes_R k) \simeq \pi_0 M \otimes_{\pi_0 R} k$. 
Choose a basis $\overline{x_1}, \dots, \overline{x_n} $ of this $k$-vector
space and lift these elements to $x_1, \dots, x_n \in \pi_0 M$. 
These define a map $R^n \to M$ which induces an equivalence after tensoring
with $k$, since $M \otimes_R k \simeq k^n$.

Now consider the cofiber $C$ of $R^n \to M$. It follows that $C \otimes_R k$ is
contractible. 
Suppose $C $ itself is not contractible. 
The hypotheses on $\pi_* R $ imply that $C$ is connective and each $\pi_j C$
is a finitely generated module over the noetherian local ring $\pi_0 R$. 
If $j$ is chosen minimal such that $\pi_j C \neq 0$, then 
\[ 0 = \pi_j(C \otimes_R k)  \simeq \pi_j C \otimes_{\pi_0 R } k , \]
and Nakayama's lemma implies that $\pi_j C = 0$, a contradiction. 
\end{proof}

Some of our analyses in the computational sections will rest upon the next result about the Picard groups
of \emph{periodic} ring spectra. 

\begin{theorem}[{Baker-Richter \cite[Theorem 37]{BakerRichter}}] 
\label{evenperiodicreg}
Suppose $R$ is a weakly even periodic $\e{\infty}$-ring with $\pi_0 R$ regular
noetherian. Then the Picard group of $R$ is algebraic. 
\end{theorem}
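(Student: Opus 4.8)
The plan is to prove surjectivity of the monomorphism $\Phi\colon \pic(\pi_* R) \to \pic(R)$ from the construction of $\Phi$ above; equivalently, given an invertible $R$-module $M$, to show that its homotopy $\pi_* M$ is an invertible object of the category of graded $\pi_* R$-modules. Once this is established, the uniqueness clause in that construction (two $R$-modules with the same invertible homotopy are equivalent, by degeneration of the $\mathrm{Ext}$ spectral sequence) identifies $M$ with $\Phi(\pi_* M)$, so $\Phi$ is onto and $\pic(R)$ is algebraic. First I would record two structural facts. As in \Cref{pics0}, invertibility of $M$ forces $M$ to be perfect, so $\pi_* M$ is a finitely generated module over $\pi_* R$, which is noetherian because $\pi_0 R$ is and $R$ is $2$-periodic. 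Second, weak even periodicity means $\pi_2 R$ is an invertible $\pi_0 R$-module acting invertibly on $\pi_* R$; hence $\pi_2 R$ acts invertibly on $\pi_* N$ for every $R$-module $N$, so $\pi_* N$ is $2$-periodic and is recovered from the pair $(\pi_0 N, \pi_1 N)$. Consequently an invertible graded $\pi_* R$-module is precisely a finitely generated projective $\pi_* R$-module of rank one, which by the periodicity amounts to $\pi_0 M$ and $\pi_1 M$ being finitely generated projective over $\pi_0 R$ of complementary rank (one or zero at each point). The whole problem thus reduces to showing that $\pi_* M$ is \emph{flat} over $\pi_* R$.

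Flatness may be checked after localizing $\pi_* M$ at each prime $\mathfrak{p} \subset \pi_0 R$, so I would fix $\mathfrak{p}$ and work over the localization $(\pi_* R)_{\mathfrak{p}}$. Since $\pi_0 R$ is regular noetherian, $(\pi_0 R)_{\mathfrak{p}}$ is a regular local ring, and $(\pi_* R)_{\mathfrak{p}}$ is therefore a graded-local ring (its unique graded maximal ideal is generated by $\mathfrak{p}$, because a generator of the free rank-one module $(\pi_2 R)_{\mathfrak{p}}$ is a unit in the graded sense) of finite global dimension. Let $M^{-1}$ be an inverse of $M$. I would feed this into the K\"unneth (Tor) spectral sequence
\[ E_2^{*,*} = \Tor^{\pi_* R}_{*}(\pi_* M, \pi_* M^{-1}) \Longrightarrow \pi_*(M \otimes_R M^{-1}) \cong \pi_* R, \]
whose strong convergence is guaranteed by the finite global dimension after localization. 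Localizing the spectral sequence at $\mathfrak{p}$ (localization is exact and commutes with $\Tor$) shows that $\Tor$ in positive degrees over $(\pi_* R)_{\mathfrak{p}}$ contributes nothing beyond $(\pi_* R)_{\mathfrak{p}}$ itself.

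Now comes the crux, the classical minimal-resolution argument of \cite{HMS, BakerRichter}. Over the graded-local ring $(\pi_* R)_{\mathfrak{p}}$ I would take minimal free resolutions of the finitely generated modules $(\pi_* M)_{\mathfrak{p}}$ and $(\pi_* M^{-1})_{\mathfrak{p}}$; their tensor product computes the $\Tor$ groups above, and minimality (all differentials vanish modulo the graded maximal ideal) forces the top nonvanishing $\Tor$ group to be nonzero unless both resolutions have length zero. Since the spectral sequence collapses onto $\pi_* R$, the top $\Tor$ must vanish, so $(\pi_* M)_{\mathfrak{p}}$ is free; as $\mathfrak{p}$ was arbitrary, $\pi_* M$ is flat, hence finitely generated projective over $\pi_* R$. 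With flatness in hand the K\"unneth spectral sequence degenerates, giving a graded isomorphism $\pi_* M \otimes_{\pi_* R} \pi_* M^{-1} \cong \pi_* R$, which exhibits $\pi_* M$ as invertible in graded $\pi_* R$-modules. Combined with the uniqueness clause recalled above, this completes the proof that $\Phi$ is an isomorphism. I expect the main obstacle to be precisely this flatness step: setting up the K\"unneth spectral sequence with strong convergence out of regularity, and running the minimal-resolution/top-$\Tor$ argument in the graded-local setting, is where the real work lies, while the reduction to flatness and the final realization are formal given the construction of $\Phi$ and the Nakayama-type reasoning of \Cref{lem:flatness}.
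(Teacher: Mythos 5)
Your overall strategy (reduce to showing $\pi_*M$ is flat over $\pi_*R$, localize at a prime of $\pi_0R$, conclude via the K\"unneth spectral sequence) has the right shape, but the crux step is false as stated. The claim that minimality forces $\Tor_{p+q}$ of two modules with minimal resolutions of lengths $p$ and $q$ to be nonzero unless $p=q=0$ fails for general pairs of finitely generated modules: over $A=k[[x,y]]$ the modules $A/(x)$ and $A/(y)$ each have minimal free resolutions of length one, yet $\Tor_1^A(A/(x),A/(y))=\Tor_2^A(A/(x),A/(y))=0$ --- the differential in the tensored complex has entries in the maximal ideal but can still be injective on the top term. The minimal-resolution argument detects projective dimension only when one of the two factors is the residue field $k$, since then the tensored-down complex has literally vanishing differentials and $\Tor_i(M,k)\cong F_i\otimes k$. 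This is exactly why the classical argument --- the one behind the citation to Baker--Richter, and the one this paper itself runs in \Cref{connectivepic} and invokes again in \Cref{even2} --- smashes $M$ with a \emph{residue field spectrum} (constructed by killing a regular sequence, or via the obstruction theory of \cite{angeltveit}), observes that $\pi_*$ of the result is an invertible module over a graded field and hence one-dimensional, and then runs a Nakayama argument as in \Cref{lem:flatness} to conclude that $M_{\mathfrak{p}}$ is free. There is also a secondary gap: even if some top $\Tor$ group were nonzero, you would still need to rule out its being killed by a K\"unneth differential or its surviving into positive filtration of $\pi_*R$; nothing in the proposal controls the edge homomorphism $\pi_*M\otimes_{\pi_*R}\pi_*M^{-1}\to\pi_*R$.

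For comparison, the paper does not reprove this statement: it cites \cite[Theorem 37]{BakerRichter}, which assumes $\pi_0R$ is a complete regular local ring, and supplies only the reduction to that case by localizing $R$ at each prime of $\pi_0R$ and completing at the maximal ideal. Your localization step is compatible with that reduction, but your self-contained replacement for the residue-field argument does not go through; the fix is to tensor with a residue field rather than with $M^{-1}$.
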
 

The result in \cite[Theorem 37]{BakerRichter} actually assumes that $\pi_0 R$
is a complete 
regular local ring. However, one can remove the hypotheses by replacing $R$
with the localization $R_{\mathfrak{p}}$ for any $\mathfrak{p} \in \spec\, \pi_0
R$ and then forming the completion at the maximal ideal. 

We will 
need a slight strengthening of \Cref{evenperiodicreg}, though. 
\begin{corollary} \label{even2}
Suppose $R$ is an $\e{\infty}$-ring satisfying the following assumptions.
\begin{enumerate}
\item 
$\pi_0 R$ is regular
noetherian. 
\item The $\pi_0 R$-module $\pi_{2k} R$ is invertible for some $k >0$. 
\item $\pi_{i} R = 0$ if $i \not\equiv 0  \ \mathrm{mod} \ 2k$. 
\end{enumerate}
Then the Picard group  of $R$ is algebraic. 
\end{corollary}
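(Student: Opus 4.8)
The plan is to prove that every invertible $R$-module $M$ is \emph{algebraic}, i.e. lies in the image of the monomorphism $\Phi$; since $\Phi$ is injective, this is exactly the statement of the corollary. By the construction of $\Phi$ --- concretely, the fact that an $R$-module is determined by its homotopy once the latter is an invertible graded $\pi_*R$-module, via the degeneration of the $\mathrm{Ext}$-spectral sequence --- it suffices to show that $\pi_* M$ is an invertible graded module over $\pi_* R$. Both this invertibility and the resulting equivalence $M \simeq \Phi(\pi_* M)$ may be tested after localizing $\pi_* R$ at each prime $\mathfrak{p} \in \spec \pi_0 R$, and $\pi_*(M_{\mathfrak p}) = (\pi_* M)_{\mathfrak p}$. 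Hence, using that $\pic$ commutes with filtered colimits together with the localization-and-completion device of the remark following \Cref{evenperiodicreg}, I would reduce to the case that $\pi_0 R$ is regular local with residue field $\kappa$. Assumption (2) then forces the invertible module $\pi_{2k} R$ to be free of rank one; fix a generator $\beta \in \pi_{2k}R$, giving a map $\beta \colon \Sigma^{2k} R \to R$ which is an equivalence precisely when $R$ is periodic.

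The core of the argument is then to show that $\pi_*M$ is an invertible graded $\pi_*R$-module. Choosing an inverse $M^{-1}$ with $M \otimes_R M^{-1} \simeq R$, the Künneth spectral sequence $\mathrm{Tor}^{\pi_*R}(\pi_* M, \pi_* M^{-1}) \Rightarrow \pi_*(M \otimes_R M^{-1}) = \pi_* R$ would, upon degenerating, yield $\pi_* M \otimes_{\pi_* R} \pi_* M^{-1} \cong \pi_* R$ with vanishing higher $\mathrm{Tor}$, whence $\pi_* M$ is finitely generated and flat, i.e. projective of rank one, i.e. invertible. So everything comes down to this degeneration, equivalently to the flatness of $\pi_* M$ over $\pi_* R$. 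To locate the ranks I would run the residue-field argument of \Cref{connectivepic} in this $2k$-sparse setting: the graded ring $\pi_* R$ is local, and its graded residue field $F_*$ is $\kappa$ when $\beta$ is nilpotent and the graded field $\kappa[\beta^{\pm 1}]$ when $\beta$ is a unit. Base-changing $M \otimes_R M^{-1} \simeq R$ to an appropriate residue object and using that an invertible module over a field spectrum is a shift shows that $\pi_* M \otimes_{\pi_* R} F_*$ is one-dimensional (concentrated in a single degree, modulo $2k$ when $\beta$ is a unit); after suspending $M$ by a suitable power I may assume this occurs in degree $0$.

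It remains to upgrade this ``rank one at the closed point'' to the freeness of $\pi_* M$ --- a graded analogue of \Cref{lem:flatness} --- and this is the step I expect to be the main obstacle, since neither \Cref{connectivepic} nor \Cref{evenperiodicreg} applies verbatim. When $\beta$ is a unit, $\pi_* R \cong \pi_0 R[\beta^{\pm 1}]$ is a regular graded ring, the residue object $R[\beta^{-1}]/\mathfrak{m}$ exists as an $R$-algebra, and the conclusion follows exactly as in the proof of \Cref{evenperiodicreg}, now with period $2k$ in place of $2$. The genuinely new case is the one where $\beta$ is \emph{not} invertible: here $\pi_* R$ need not be regular and $\pi_* M$ need not be bounded, so rank one at the closed point does not by itself force freeness (over a ring such as $\pi_0R[\beta]/\beta^2$ there are modules that are one-dimensional modulo the maximal ideal yet far from free), and even producing a residue-field spectrum as an $R$-algebra is delicate because $R$ may be non-connective. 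The key input I would exploit is that the topological invertibility of $M$ forces the Künneth spectral sequence above to collapse, which is precisely the higher-$\mathrm{Tor}$ vanishing needed for flatness. Concretely, I would lift a generator of $\pi_0 M \otimes_{\pi_0 R} \kappa$ to a map $R \to M$, form its cofiber $C$, and run graded Nakayama against $F_*$ degree by degree on the finitely generated homotopy of $C$, using the interplay of multiplication by $\beta$ and the collapse just described to propagate the vanishing $C \otimes_R F \simeq 0$ through all degrees and conclude $C \simeq 0$. Establishing this flatness (equivalently, the Künneth degeneration) in the possibly-singular, possibly-unbounded regime is the crux of the proof.
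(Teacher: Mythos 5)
Your overall strategy---localize at a prime of $\pi_0 R$, reduce to showing that $\pi_*M$ is free of rank one over the graded local ring $\pi_*R$, and detect this by base change to a residue field---is exactly the paper's, whose entire proof reads: use the obstruction theory of Angeltveit (together with localization) to construct residue fields as $\e{1}$-algebras in $\md(R)$, now $2k$-periodic rather than $2$-periodic, and then rerun the argument of \Cref{evenperiodicreg}. The case you single out as ``the crux,'' however---$\beta$ not invertible, $\pi_*R$ possibly singular and unbounded---is not intended to be in the scope of the corollary, and you correctly observe that your argument does not close it. Hypothesis (2) is meant in the same spirit as ``weakly even periodic,'' so that the multiplication maps $\pi_{2k}R\otimes_{\pi_0R}\pi_{2km}R\to\pi_{2k(m+1)}R$ are isomorphisms; after localizing, $\pi_*R\cong(\pi_0R)[\beta^{\pm1}]$ and your hard case evaporates. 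This reading is confirmed by \Cref{rem:AlgGradPic}, which asserts a $\Z/(2k)$ quotient of $\pic(\pi_*R)$ coming precisely from $2k$-periodicity, and by every application in the paper ($\TMF(2)$, $\TMF(3)$, smash powers of $E_n$), all of which are genuinely periodic. So the last third of your proposal is spent on a reading of the statement that the authors do not intend and that you do not resolve.

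Even in the periodic case there is one genuine gap: you assert without justification that ``the residue object $R[\beta^{-1}]/\mathfrak{m}$ exists as an $R$-algebra.'' Producing $R/(x_1,\dots,x_n)$, for $x_1,\dots,x_n$ a regular system of parameters, as an $R$-\emph{module} is easy (iterated cofibers), but equipping it with enough multiplicative structure to speak of modules over it and to run the ``every module over a graded field is free'' step is exactly where the content lies; the paper gets this from Angeltveit's $\e{1}$-obstruction theory, and the resulting residue fields are $2k$-periodic graded fields (see the remark following \Cref{picTMF(2)} for what they are concretely in the case of $\TMF(2)$). With that input supplied and the periodicity hypothesis read as intended, the remainder of your argument---base change to the residue field to see that $\pi_*(M\otimes_RK)$ is one-dimensional in a single degree mod $2k$, then Nakayama as in \Cref{lem:flatness} to lift a generator and kill the cofiber---is the paper's proof.
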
 
\begin{proof} 
Using the obstruction theory of \cite{angeltveit} (as well as localization), we can construct ``residue
fields'' in $R$ as $\e{1}$-algebras in $\md(R)$ (which will be $2k$-periodic
rather than $2$-periodic). After this, the same argument
as in \Cref{evenperiodicreg} goes through. 
\end{proof} 

\begin{remark}\label{rem:AlgGradPic}
If $R$ is a ring spectrum satisfying the conditions of \Cref{even2}, then $\pic( R) \cong \pic(\pi_*R)$ sits in a short exact sequence
\[ 0 \to \pic(\pi_0 R ) \to \pic(\pi_*R) \to \Z/(2k) \to 0.\]
The extension is such that the $(2k)$-th power of a set-theoretic lift of a generator of $\Z/(2k)$ to $\pic(\pi_* R)$ is identified with the invertible $\pi_0 R$-module $\pi_{2k} R$.
\end{remark}

An example of a non-algebraic Picard group, based on \cite[Example
7.1]{rational}, is as follows. 

\begin{proposition} 
\label{picrational}
The Picard group of the rational $\e{\infty}$-ring
$R = \mathbb{Q}[\epsilon_0, \epsilon_{-1}]/\epsilon_0^2$ (free on two generators
$\epsilon_0, \epsilon_{-1}$ of degree $0$ and $-1$, and with the relation
$\epsilon_0^2 = 0$) is given by $\mathbb{Z} \times \mathbb{Q}$.
\end{proposition}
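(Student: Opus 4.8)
The plan is to isolate the homotopy type of the Picard spectrum $\pics(R)$, peel off the suspension summand using the residue field, and reduce the whole computation to a single ``relative'' Picard group which carries the exotic factor $\mathbb{Q}$.

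First I would record the homotopy ring. As a rational $\e{\infty}$-ring, $\pi_* R \cong \mathbb{Q}[\epsilon_0]/\epsilon_0^2 \otimes_{\mathbb{Q}} \Lambda(\epsilon_{-1})$ is concentrated in degrees $0$ and $-1$, with $\pi_0 R = \mathbb{Q}[\epsilon_0]/\epsilon_0^2$ and $\pi_{-1} R \cong \pi_0 R$; in particular $\pi_i R = 0$ for $i \geq 1$. Feeding this into the general relations $\tau_{\geq 1}\pics(R) \simeq \Sigma\gl_1(R)$ and $\pi_i \pics(R) \cong \pi_{i-1} R$ for $i \geq 2$, I conclude that $\pics(R)$ has homotopy concentrated in degrees $0$ and $1$: its $\pi_1$ is the unit group $(\pi_0 R)^{\times} \cong \mathbb{Q}^{\times} \times \mathbb{Q}$, and its $\pi_0$ is the group $\pic(R)$ we are after. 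Thus no positive ``descent'' information can interfere, and everything is concentrated in the bottom two stages.

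Next I would split off the suspension summand. The augmentation $R \to H\mathbb{Q}$ (through $\pi_0 R \to \mathbb{Q}$) induces $\pics(R) \to \pics(H\mathbb{Q})$; since $\pic(H\mathbb{Q}) \cong \mathbb{Z}$ is generated by $\Sigma H\mathbb{Q}$ (by \Cref{thm:charzero}, as $\pi_0 H\mathbb{Q}$ is a field of characteristic zero with vanishing higher homotopy) and $\Sigma^n R \mapsto \Sigma^n H\mathbb{Q}$, base change along the augmentation is split surjective on $\pi_0$ via $n \mapsto \Sigma^n R$. This gives a splitting $\pic(R) \cong \mathbb{Z} \oplus \mathrm{Pic}^{\mathrm{rel}}$, where $\mathrm{Pic}^{\mathrm{rel}} = \ker\bigl(\pic(R) \to \pic(H\mathbb{Q})\bigr)$ is the group of invertible $R$-modules whose reduction to the residue field is trivial (equivalent to $\mathbb{Q}$ in degree $0$). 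It then remains to prove $\mathrm{Pic}^{\mathrm{rel}} \cong \mathbb{Q}$.

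Finally I would compute $\mathrm{Pic}^{\mathrm{rel}}$ by analyzing these locally trivial invertible modules directly, following \cite[Example 7.1]{rational}. Because $R$ carries homotopy in degree $-1$, there is room to deform the trivial invertible module: the goal is to produce a $\mathbb{Q}$-parametrized family of invertible $R$-modules, pairwise inequivalent but all trivialized over $H\mathbb{Q}$, assembled by an unusual gluing of locally trivial modules and separated by an invariant valued in $\mathbb{Q}$, with tensor product adding the parameters, so that the family is a subgroup $\cong \mathbb{Q}$; one then checks it exhausts $\mathrm{Pic}^{\mathrm{rel}}$. Combined with the previous step this yields $\pic(R) \cong \mathbb{Z} \times \mathbb{Q}$, and exhibits the non-algebraic element underlying the counterexample to Balmer's conjecture.

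\emph{The hard part} will be this last step: pinning down the relative Picard group as \emph{exactly} $\mathbb{Q}$, neither smaller nor larger. The danger is that the naive ``formal'' or associated-graded picture of $\mod(R)$ is badly misleading here --- a careless cell or minimal-model count suggests that every locally trivial invertible is already free (giving $0$), while an equally careless first-order deformation count, which sees all of $\pi_{-1} R$, overshoots. One must therefore control the homotopy-coherent (derived) structure with care, verifying that precisely a one-parameter family of twists survives, that each resulting module is genuinely invertible (detected on the residue field), and that these are pairwise inequivalent and exhaustive. This is exactly where the cited analysis does the essential work.
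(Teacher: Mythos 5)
Your setup is fine as far as it goes: the homotopy groups of $R$ and of $\pics(R)$ are recorded correctly, and the splitting $\pic(R) \cong \mathbb{Z} \oplus \mathrm{Pic}^{\mathrm{rel}}$ via the augmentation $R \to H\mathbb{Q}$ is legitimate (and is essentially parallel to the splitting the paper gets for free from its classification). But the third step is not a proof --- it is a statement of what a proof would need to accomplish. You write that ``the goal is to produce a $\mathbb{Q}$-parametrized family \dots separated by an invariant valued in $\mathbb{Q}$ \dots one then checks it exhausts $\mathrm{Pic}^{\mathrm{rel}}$,'' and you then concede that ``this is exactly where the cited analysis does the essential work.'' That essential work is the entire content of the proposition: without it you have constructed no nontrivial element of $\mathrm{Pic}^{\mathrm{rel}}$, identified no invariant, and proved no upper bound. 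In particular nothing in your argument rules out, say, $\mathrm{Pic}^{\mathrm{rel}} = 0$ or $\mathrm{Pic}^{\mathrm{rel}} \supseteq \mathbb{Q}^{\times} \times \mathbb{Q}$, the two failure modes you yourself flag.

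The missing idea in the paper's proof is concrete: one observes that $R \simeq C^*(S^1; \mathbb{Q}[\epsilon_0]/\epsilon_0^2)$, since $C^*(S^1;\mathbb{Q}) \simeq \mathbb{Q}[\epsilon_{-1}]$, and then invokes the fully faithful symmetric monoidal embedding $\md(R) \subset \loc_{S^1}(\md(\mathbb{Q}[\epsilon_0]/\epsilon_0^2))$ whose image is the local systems with \emph{ind-unipotent} monodromy. An invertible object is then a pair consisting of an invertible $\mathbb{Q}[\epsilon_0]/\epsilon_0^2$-module (necessarily $\Sigma^n$ of the unit, by \Cref{connectivepic}, since that ring is connective) together with a monodromy automorphism; the unipotence constraint --- which comes from $\md^{\omega}(R^X)$ landing in the thick subcategory generated by the unit, cf.\ \Cref{picRX} and the surrounding discussion --- is precisely what cuts the a priori monodromy group $(\pi_0)^{\times} \cong \mathbb{Q}^{\times} \times \mathbb{Q}$ down to $\{1 + q\epsilon_0 : q \in \mathbb{Q}\} \cong \mathbb{Q}$, and the multiplication rule $(1+q\epsilon_0)(1+q'\epsilon_0) = 1+(q+q')\epsilon_0$ gives both the group structure and the exhaustiveness in one stroke. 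Your proposal never surfaces this identification or any substitute for it, so the lower bound (constructing the twists), the upper bound (ruling out non-unipotent monodromy), and the injectivity of the parametrization are all unaddressed. As written, the proposal reduces the proposition to a citation of the result it is trying to prove.
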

\begin{proof}
The key observation is that $R$ is equivalent, as an $\e{\infty}$-ring, to
cochains over $S^1$ on the (discrete) $\e{\infty}$-ring
$\mathbb{Q}[\epsilon_0]/\epsilon_0^2$, because $C^*(S^1; \mathbb{Q})$ is
equivalent to $\mathbb{Q}[\epsilon_{-1}]$. 
By \cite[Remark 7.9]{galois}, we have a fully faithful,  symmetric monoidal embedding $\md(R) \subset
\loc_{S^1}( \md( \mathbb{Q}[\epsilon_0]/\epsilon_0^2))$ into the 
$\infty$-category of local systems (see \Cref{def:localsystems} below) of
$\mathbb{Q}[\epsilon_0]/\epsilon_0^2$-modules over the circle, whose image
consists of those
local systems of $\mathbb{Q}[\epsilon_0]/\epsilon_0^2$-modules such that the
monodromy action of $\pi_1(S^1)$ is ind-unipotent. 

In particular, to give an object in
$\pic(R)$ is equivalent to giving an element in $\pic(
\mathbb{Q}[\epsilon_0]/\epsilon_0^2)$ (of which there are only the suspensions
of the unit, by \Cref{connectivepic}) and an ind-unipotent
(monodromy) automorphism, which is necessarily given by
multiplication by $1 + q \epsilon_0$ for $q \in \mathbb{Q}$. 
We observe that this gives the right group structure to the Picard group
because $(1 + q \epsilon_0) ( 1 + q' \epsilon_0) = 1 + (q + q') \epsilon_0$.
\end{proof}

\Cref{picrational} provides a counterexample to \cite[Conjecture
74]{balmerICM}, which states that in a tensor triangulated category generated
by the unit with a local spectrum (e.g. with no nontrivial thick
subcategories), any element $\mathcal{L}$ in the Picard group has the property
that $\mathcal{L}^{\otimes n}$ is a suspension of the unit for suitable $n >
0$. In fact, one can take the (homotopy) category of perfect $R$-modules for
$R$ as in \Cref{picrational}, which has no nontrivial thick subcategories
 by \cite[Theorem 1.3]{rational}.

\begin{remark}
Other Picard groups of interest come from the theory of \emph{stable module $\infty$-categories}
of a $p$-group $G$ over a field $k$ of characteristic $p$, which from a
homotopy-theoretic perspective can be expressed as the module
$\infty$-categories of the Tate construction $k^{tG}$. The Picard groups of
stable module $\infty$-categories have been studied in the modular
representation theory literature (under the name \emph{endotrivial
modules}) starting with \cite{Dade2}, where it is proved that the
Picard group is algebraic (and cyclic) in the case where $G$ is elementary
abelian. The 
classification for a general $p$-group appears in
\cite{Torsionendo}. 
\end{remark}
\section{The descent spectral sequence}

In this section, we describe a descent spectral sequence for calculating Picard
groups. The spectral sequence (studied originally by Gepner and Lawson
\cite{GL} in a closely related
setting) is based on the observation (\Cref{piclimits}) that the association
$\mathcal{C} \mapsto \Pic( \mathcal{C})$, from symmetric monoidal
$\infty$-categories to $\e{\infty}$-spaces, commutes with homotopy limits. 
We will describe several examples and applications of this in the present
section. Explicit computations will be considered in later parts of this paper. 

For example, let $ \{\mathcal{C}_U\}$ be a sheaf of symmetric monoidal
$\infty$-categories on a site, and let $\Gamma( \mathcal{C})$ denote the global
sections (i.e. the homotopy limit) $\infty$-category. Then we  have an equivalence
of connective spectra
\[ \pics( \Gamma( \mathcal{C})) \simeq  \tau_{\geq 0} \Gamma( \pics(
\mathcal{C}_U)),  \]
and one can thus use the descent spectral sequence for a sheaf of spectra to
approach the computation of 
$\pics( \Gamma( \mathcal{C}))$. 
We will use this approach, together with a bit of descent theory,
to calculate $\pic( \TMF)$. The key idea is that while $\TMF$ itself has
sufficiently complicated homotopy groups that results such as \Cref{evenperiodicreg}
cannot apply, the $\infty$-category of $\TMF$-modules is built up as an inverse
limit of module categories over $\e{\infty}$-rings with better behaved homotopy
groups. 


\subsection{Refinements}
Let $X$ be a Deligne-Mumford stack equipped with a flat map $X \to M_{FG}$ to the moduli stack of formal groups. 
We will use the terminology of \cite{affine}. 
\begin{definition}
An \emph{even periodic refinement} of $X$ is a sheaf $\otop$ of
$\e{\infty}$-rings on the affine, \'etale site of $X$, such that for any \'etale
map
\[ \spec R \to X,  \]
the multiplicative homology theory associated to the $\e{\infty}$-ring
$\otop(\spec R)$ is functorially identified with the (weakly) even-periodic
Landweber-exact
theory\footnote{See \cite[Lecture 18]{luriechromatic} for an exposition of the theory of weakly
even-periodic theories.}
associated to the formal group classified by $\spec R \to X \to M_{FG}$. 
We will denote the refinement of the ordinary stack $X$ by $\mathfrak{X}$. 
\end{definition}

A very useful construction from the refinement $\mathfrak{X}$ is the
$\e{\infty}$-ring of ``global sections'' $\Gamma(\mathfrak{X}, \otop)$, which
is the homotopy limit of the $\otop(\spec R)$ as $\spec R \to X$ ranges over
the affine \'etale site of $X$. 

\begin{example} 
When $X$ is the moduli stack $\mell$ of elliptic curves, with the natural map
$\mell \to M_{FG}$ that assigns to an elliptic curve its formal group, 
fundamental work of Goerss, Hopkins, and Miller, and  (later) Lurie  constructs an even
periodic refinement $\mellb$. The global sections of $\mellb$ are defined to be
the $\e{\infty}$-ring $\TMF$ of \emph{topological modular forms}; for a survey,
see \cite{Goerss}. 
There is a similar picture for the compactified moduli stack $\mellc$, whose
global sections are denoted $\Tmf$. 
\end{example}

\begin{definition}
Given the refinement $\mathfrak{X}$, one has a natural symmetric monoidal
stable $\infty$-category
$\qcoh(\mathfrak{X})$ of \emph{quasi-coherent sheaves} on $\mathfrak{X}$, given
as a homotopy limit of the (stable symmetric monoidal) $\infty$-categories $\mod(\otop(\spec R))$ for each \'etale
map $\spec R \to X$. 
\end{definition}
There is an adjunction
\begin{equation} \label{adj} \mod( \Gamma( \mathfrak{X}, \otop))
\rightleftarrows \qcoh( \mathfrak{X}),  \end{equation}
where the left adjoint ``tensors up'' and the right adjoint takes global
sections.\footnote{One way to extract this from \cite{higheralg} is to consider
the thick subcategory $\mathcal{C}$ of $\qcoh( \mathfrak{X}, \otop)$ generated
by the unit. Then, one obtains by the universal property of $\mathrm{Ind}$ an
adjunction $\mathrm{Ind}(\mathcal{C}) \rightleftarrows \qcoh( \mathfrak{X},
\otop)$. However, the symmetric monoidal
$\infty$-category $\mathrm{Ind}(\mathcal{C})$ 
is generated under colimits by the unit, so it is by Lurie's
symmetric monoidal version \cite[Prop. 8.1.2.7]{higheralg}
of Schwede-Shipley theory equivalent to modules over $\Gamma( \mathfrak{X},
\otop)$, which is the ring of endomorphisms of the unit.} 

Our main goal in this paper is to investigate the left hand side; however, the
right hand side is sometimes easier to work with, since even periodic,
Landweber-exact spectra
have convenient properties. Therefore, the following result 
 will be helpful.

\begin{theorem}[{\cite[Theorem 4.1]{affine}}]
\label{affinethm}
Suppose  $X$ is noetherian and separated, and $X \to M_{FG}$ is quasi-affine. Then the adjunction \eqref{adj}  is an
equivalence of symmetric monoidal $\infty$-categories. 
\end{theorem}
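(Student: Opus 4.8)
The plan is to deduce the equivalence from Lurie's symmetric monoidal Schwede--Shipley theorem, in precisely the form set up in the footnote to \eqref{adj}. There the left adjoint $L$ sends a $\Gamma(\mathfrak{X},\otop)$-module to the quasi-coherent sheaf obtained by tensoring up, carrying the unit to the structure sheaf $\otop$, while the right adjoint is $\Gamma(\mathfrak{X},-) \simeq \map_{\qcoh(\mathfrak{X})}(\otop,-)$. By \cite[Prop. 8.1.2.7]{higheralg} the adjunction is a symmetric monoidal equivalence as soon as the unit object $\otop$ is a \emph{compact generator} of $\qcoh(\mathfrak{X})$; concretely I would separate this into (i) $\otop$ generates $\qcoh(\mathfrak{X})$ as a localizing subcategory (equivalently, $\Gamma(\mathfrak{X},-)$ is conservative), and (ii) $\otop$ is compact, i.e. $\Gamma(\mathfrak{X},-)$ commutes with filtered colimits. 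Given (i) and (ii), fully faithfulness of $L$ follows because $L$ and (using (ii)) $\Gamma L$ preserve colimits and agree with the identity on the generator $\Gamma(\mathfrak{X},\otop)$, while essential surjectivity follows because the colimit-closed essential image of $L$ contains the generator $\otop$ by (i).

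For the generation statement (i), the quasi-affine hypothesis is the essential input. By definition $\qcoh(\mathfrak{X})$ is the homotopy limit of the categories $\mod(\otop(\spec R))$ over the affine \'etale charts $\spec R \to X$ (faithfully flat descent, \Cref{thm:ffdescent}), so the restriction functors are jointly conservative, and each $\mod(\otop(\spec R))$ is generated under colimits by its own unit. Promoting this to generation by the \emph{single} global unit $\otop$ is exactly where quasi-affineness enters: factoring $X \to M_{FG}$ as a quasi-compact open immersion into an affine-over-$M_{FG}$ scheme, I would argue as in the classical fact that on a quasi-affine scheme the structure sheaf generates all quasi-coherent sheaves, transported to the derived even-periodic setting --- equivalently, that any sheaf in the right orthogonal $\otop^{\perp}$ (vanishing global sections after all twists) must vanish. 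This step genuinely uses the hypothesis: for a proper stack such as $\P^1$ the structure sheaf fails to generate, and affineness fails.

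The step I expect to be the main obstacle is the compactness claim (ii), that $\Gamma(\mathfrak{X},-)$ commutes with filtered colimits. This cannot be read off from the naive cohomological dimension of $X$: at small primes the automorphisms of $X$ (e.g. of supersingular elliptic curves) make $H^{s}(X,\pi_t\F)$ nonzero for arbitrarily large $s$, so the descent spectral sequence $H^{s}(X,\pi_t\F) \Rightarrow \pi_{t-s}\Gamma(\mathfrak{X},\F)$ has \emph{no} horizontal vanishing line on its $E_2$-page. What is needed instead is a uniform horizontal vanishing line at $E_\infty$: the differentials must truncate the infinite cohomological towers so that, after passing to $E_\infty$, the filtration on each homotopy group is bounded \emph{uniformly} in the internal degree. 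Producing such a line from the hypotheses is the crux; I would extract it from the chromatic structure built into the refinement --- the Landweber-exactness of $\otop(\spec R)$ together with nilpotence and periodicity phenomena over $M_{FG}$ --- using the quasi-affine morphism to reduce the requisite bound to $M_{FG}$, and noetherianness to guarantee that each $\pi_t\F$ is a coherent sheaf so that the spectral sequence is available and converges. Once a horizontal vanishing line at $E_\infty$ is in hand, the totalization computing $\Gamma(\mathfrak{X},-)$ is effectively finite, hence commutes with filtered colimits, giving compactness of $\otop$ and completing the verification of (i)--(ii) and thus the theorem.
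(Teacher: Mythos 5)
First, a caveat: the paper does not prove this statement — it is imported wholesale from \cite[Theorem 4.1]{affine} — so there is no internal proof to compare against, and your proposal has to be measured against the argument in that reference. Your formal skeleton is the right one and matches both the paper's footnote to \eqref{adj} and the strategy of \cite{affine}: reduce to showing that $\otop$ is a compact generator of $\qcoh(\mathfrak{X})$, i.e. that $\Gamma(\mathfrak{X},-)$ is conservative and preserves filtered colimits, and then invoke Lurie's symmetric monoidal Schwede--Shipley theorem. You are also right that the naive cohomological-dimension argument fails and that the whole content is a uniform horizontal vanishing line at a finite page of the descent spectral sequence.

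There are, however, two genuine gaps. (a) Your step (i) does not follow from ``the classical fact that on a quasi-affine scheme the structure sheaf generates'': the hypothesis is that the \emph{morphism} $X \to M_{FG}$ is quasi-affine, not that $X$ is. For example $\mell \to M_{FG}$ is affine while $\mell$ is a stack with nontrivial stabilizers and Picard group, and for such stacks derived global sections need not be conservative by any soft argument. In the even-periodic setting conservativity is not actually separable from compactness: writing $j\colon U_0 \to X$ for an affine \'etale cover, one has $\Gamma(U_0, j^*\F) \simeq \Gamma(X, \F \otimes j_*\otop)$ by the projection formula, so both conservativity and colimit-preservation of $\Gamma$ are consequences of the single assertion that $j_*\otop$ generates the unit as a \emph{thick tensor-ideal} (``descendability'' in the sense of \cite[Definition 3.17]{galois}, which by \cite[Corollary 4.4]{galois} is exactly what produces the horizontal vanishing line at a finite page). (b) That descendability statement is the crux you correctly flag, but ``extract it from the chromatic structure using nilpotence and periodicity phenomena over $M_{FG}$'' names the required input without supplying it. In \cite{affine} it is obtained by a genuine chain of reductions — along the quasi-affine morphism, and along a finite cover of the noetherian separated Deligne--Mumford stack by a scheme, down to the open substacks $M_{FG}^{\leq n}$ — followed by a chromatic induction whose engine is the Hopkins--Ravenel smash product theorem together with Galois-descent vanishing lines for finite subgroups of the Morava stabilizer groups. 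Without that chain, the proposal is an accurate road map rather than a proof.
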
 

For example, since the map $M_{ell} \to M_{FG}$ is affine, it follows that $\md(\TMF)$ is
equivalent to $\qcoh( \mellb)$. This was originally proved by Meier, away from
the prime $2$, in
\cite{meier}. \Cref{affinethm} implies the analog for $\Tmf$ and the derived
\emph{compactified} moduli stack, as well \cite[Theorem 7.2]{affine}. 

Suppose $X \to M_{FG}$ is quasi-affine. 
In particular, it follows that there is a \emph{sheaf} of symmetric monoidal
$\infty$-categories on the affine, \'etale site of $X$, given by 
\[ ( \spec R \to X)  \to \md( \otop( \spec R)),  \]
whose global sections  are given by $\md( \Gamma( \mathfrak{X}, \otop))$. 
This diagram of $\infty$-categories is a sheaf in view of the descent theory of
\cite[Theorem 6.1]{DAGVII}, but \cite[Theorem 4.1]{affine} gives the global sections. 
We
are now in the situation of the introduction to this section. 
In particular, we obtain a descent spectral sequence for $\pics( \Gamma( X,
\otop))$, and we turn to studying it in detail.

\subsection{The Gepner-Lawson spectral sequence}
Keep the notation of the previous subsection: $X$ is a Deligne-Mumford stack
equipped with a quasi-affine flat
map $X \to M_{FG}$, and $(\mathfrak{X}, \otop)$ is an even periodic refinement. 

Our goal in this subsection is to prove:

\begin{theorem} 
\label{descentss}
Suppose that $X$ is a regular Deligne-Mumford stack with a quasi-affine flat map
$X \to M_{FG}$, and suppose $\mathfrak{X}$ is an even periodic refinement of
$X$. 
There is a spectral sequence with
\begin{equation} \label{eq:descentss}E_2^{s,t} = 
\begin{cases}
H^s( {X}, \mathbb{Z}/2) & t = 0  ,\\
H^s( X, \mathcal{O}_X^{\times}) & t  = 1    , \\
H^s( X, \omega^{(t-1)/2 }) &  t \geq 3 \ \text{odd},  \\
0 & \text{otherwise,} \\
\end{cases}
\end{equation}
whose abutment  is $\pi_{t-s} \Gamma( \mathfrak{X}, \pics( \otop))$. 
The differentials run $d_r\colon E_r^{s,t} \to E^{s + r, t + r -1}$.
\end{theorem}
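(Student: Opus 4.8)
The plan is to obtain the claimed spectral sequence as the descent spectral sequence of a sheaf of connective spectra on the affine \'etale site of $X$, and then to compute its homotopy sheaves one degree at a time. Applying the limit-preserving functor $\pics$ (\Cref{piclimits}) to the sheaf of symmetric monoidal $\infty$-categories $\spec R \mapsto \md(\otop(\spec R))$ --- which satisfies descent by \Cref{thm:ffdescent} --- produces a sheaf of connective spectra $\pics(\otop)$ on the affine \'etale site of $X$ whose global sections are $\Gamma(\mathfrak{X}, \pics(\otop))$. The standard descent spectral sequence (arising from the Postnikov tower) for a sheaf of spectra on the \'etale site of a Deligne--Mumford stack then reads
\[ E_2^{s,t} = H^s\bigl(X, \underline{\pi_t \pics(\otop)}\bigr) \Longrightarrow \pi_{t-s}\,\Gamma(\mathfrak{X}, \pics(\otop)), \]
with differentials $d_r\colon E_r^{s,t}\to E_r^{s+r,t+r-1}$ in the stated Adams indexing; here $\underline{\pi_t\pics(\otop)}$ is the sheafification of the presheaf $\spec R\mapsto \pi_t\pics(\otop(\spec R))$. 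The whole theorem thus reduces to identifying these homotopy sheaves.

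For this I would feed in the homotopy groups of a Picard spectrum recalled earlier, namely $\pi_0\pics(A)=\pic(A)$, $\pi_1\pics(A)=(\pi_0 A)^\times$, and $\pi_i\pics(A)=\pi_{i-1}A$ for $i\ge 2$, together with the fact that each $\otop(\spec R)$ is weakly even periodic with $\pi_0=R$ (regular noetherian, since $X$ is regular), $\pi_{2k}=\omega^{\otimes k}$, and vanishing odd homotopy. This immediately disposes of every value of $t$ except $t=0$: for $t<0$ the sheaf vanishes by connectivity; for $t=1$ the presheaf $\spec R\mapsto R^\times$ is already the sheaf $\mathcal{O}_X^\times$; for even $t\ge 2$ the group $\pi_{t-1}\otop$ is an odd homotopy group and hence zero; and for odd $t\ge 3$ the presheaf is the quasi-coherent sheaf $\omega^{\otimes (t-1)/2}$, which already satisfies \'etale descent.

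The main obstacle is the remaining sheaf $\underline{\pi_0\pics(\otop)}$, built from the Picard groups $\pic(\otop(\spec R))$. Here I would apply \Cref{evenperiodicreg} (valid because $\pi_0\otop(\spec R)=R$ is regular noetherian): each such Picard group is algebraic, and by \Cref{rem:AlgGradPic} it sits in a natural short exact sequence $0\to\pic(R)\to\pic(\otop(\spec R))\to\Z/2\to 0$ in which the quotient records the suspension class. The key point is that the subobject $\pic(R)$ dies upon sheafification --- every invertible module is locally free of rank one, hence locally trivial, so the presheaf $\spec R\mapsto\pic(R)$ has vanishing sheafification --- while the quotient $\Z/2$ sheafifies to the constant sheaf. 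Since sheafification is exact, $\underline{\pi_0\pics(\otop)}\cong \underline{\Z/2}$, and $E_2^{s,0}=H^s(X,\Z/2)$. Collecting the four cases gives exactly \eqref{eq:descentss}; the one point still requiring care is convergence, which is only conditional in general and must be upgraded using the known structure (a horizontal vanishing line) of the associated descent spectral sequence for $\pi_*\otop$.
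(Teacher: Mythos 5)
Your proposal follows essentially the same route as the paper: pass the descent datum through the limit-preserving functor $\pics$, run the descent spectral sequence for the resulting sheaf of connective spectra, and identify the homotopy sheaves using the standard description of $\pi_{\geq 1}\pics$ together with \Cref{evenperiodicreg} to show that $\underline{\pi_0\pics(\otop)}$ is the constant sheaf $\Z/2$ (the paper checks this on stalks, i.e.\ for $R$ local, which is the same observation as your sheafification of the extension in \Cref{rem:AlgGradPic}). The argument is correct and matches the paper's proof.
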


The analogous spectral sequence for a faithful Galois extension has been studied in work
of Gepner and Lawson \cite{GL}, and our approach is closely based on theirs. 

\begin{proof}
In this situation, as we saw in the previous subsection, we get an equivalence of symmetric monoidal $\infty$-groupoids,
\[ \Pic( \Gamma(\mathfrak{X}, \otop)) \simeq \mathrm{holim}_{\spec R \to X}
\Pic( \otop( \spec R)),  \]
where $\spec R \to X$ ranges over the affine \'etale maps. Equivalently, we
have an equivalence of connective spectra
\[  \pics( \Gamma(\mathfrak{X}, \otop)) \simeq \tau_{\geq
0}\left( \mathrm{holim}_{\spec R \to X}
\pics (\otop( \spec R)) \right).  \]

Let us study the descent spectral sequence associated to this. 
We need to understand the homotopy group \emph{sheaves}  of the sheaf of
connective spectra $(\spec R \to X) \mapsto \pics( \otop( \spec R))$ 
(i.e. the
sheafification of the homotopy group presheaves $(\spec R \to X) \mapsto \pi_i
 \pics( \otop( \spec R))$). 
First, we know that
\[ \pi_1 \pics(\otop( \spec R)) \simeq R^{\times}, \]
and, for $i \geq 2$, we have
\[ \pi_i \left( \pics( \otop(
\spec R) \right) \simeq \pi_{i-1} \otop( \spec R) = \begin{cases} 
\omega^{(i-1)/2} & \text{$i$ odd } \\
0 & \text{$i$ even.}
 \end{cases} \]

It remains to determine the homotopy group sheaf $\pi_0$. 
If $X$ is a regular Deligne-Mumford stack, so that each ring $R$ that enters is
regular, then we can do this 
using \Cref{evenperiodicreg}. 
In fact, it follows that if $R$ is a local ring, then $\pi_0
\pics(\otop( \spec
R)) \simeq \mathbb{Z}/2$. Thus, up to suitably suspending once, invertible sheaves are
locally trivial. 
Using the descent spectral sequence for a sheaf of spectra, we get that 
the above descent spectral sequence for $\Gamma(
\mathfrak{X}, \pics( \otop))$ is almost entirely the same as the descent
spectral sequence for $\Gamma( \mathfrak{X}, \otop)$ in the sense that the cohomology
groups that appear for $t \geq 3$, i.e. $H^s(X, \omega^{(t-1)/2})$, are the same as
those that appear in the descent spectral sequence for 
$\Gamma( \mathfrak{X}, \otop)$. However, the terms for $t = 1$ are the
\'etale cohomology of $\mathbb{G}_m$ on $X$. 
In particular, 
we obtain the term
\[ H^1(X, \mathcal{O}_X^{\times}) \simeq \pic(X),  \]
which
is the Picard group of the underlying ordinary stack. 
\end{proof}

\newcommand{\Picss}{\mathcal{P}\mathrm{ic}^{\mathbb{Z}}}
\newcommand{\picss}{\pics^{\mathbb{Z}}}

\begin{remark}
One may think of the spectral sequence as
arising from a totalization, or rather as a filtered colimit of totalizations. 
Choose an \'etale hypercover $\mathfrak{A}$ given by $U_\bullet \to X$ by affine schemes $\{U_n\}$. 
For any $\e{\infty}$-ring $A$, 
denote by $\Picss(A)$ the symmetric monoidal subcategory of $\Pic(A)$ spanned by
those $A$-modules such that, after restricting to  each connected component of
$\spec \,\pi_0 A$, become equivalent to a suspension of  $A$. Denote by $\picss(A)$ the associated connective
spectrum.
Then we form the totalization 
\[ \mathrm{Tot}( \picss(\otop(U_\bullet))),   \]
whose associated infinite loop space
$\Omega^\infty\mathrm{Tot}( \picss(\otop(U_\bullet)))$ is, by descent theory, the
symmetric monoidal
$\infty$-subgroupoid
of $\Pic( \Gamma(\mathfrak{X}, \otop))$ spanned by those invertible modules
which become (up to a suspension) trivial after pullback along $U_0 \to X$. 
In particular, the filtered colimit of these totalizations is the spectrum we
are after. 
The descent spectral sequence 
of \Cref{descentss} is the filtered colimit of these $\mathrm{Tot}$ spectral
sequences. 

\end{remark}
\subsection{Galois descent}

We next describe the setting of the spectral sequence that was originally
considered in  \cite{GL}. 
Let $A \to B$ be a faithful $G$-Galois extension of $\e{\infty}$-ring spectra
in the sense of \cite{rognes}. In particular, $G$ acts on $B$ in the
$\infty$-category of $\e{\infty}$-$A$-algebras and $A \to B^{hG}$ is an
equivalence. 
Then $A \to B$ is an analog of a $G$-Galois \'etale cover in the
sense of ordinary commutative algebra or algebraic geometry. 
As in ordinary algebraic geometry, there is a good theory of \emph{Galois descent} along $A \to B$,
as has been observed by several authors, for instance \cite{GL, meier}. 

\begin{theorem}[Galois descent] Let $A \to B$ be a faithful $G$-Galois
extension of $\e{\infty}$-rings. Then there is a natural equivalence of
symmetric monoidal $\infty$-categories $\md(A) \simeq \md(B)^{hG}$. 
\end{theorem}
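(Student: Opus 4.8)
The plan is to prove the equivalence in two independent steps and then splice them together: first a \emph{descent} step, expressing $\md(A)$ as the totalization of the cosimplicial diagram of module categories built from the cobar construction of $A \to B$; and second a \emph{Galois} step, identifying that same totalization with the homotopy fixed points $\md(B)^{hG}$. Throughout, everything happens in symmetric monoidal $\infty$-categories, and since limits of symmetric monoidal $\infty$-categories are computed on underlying $\infty$-categories, it suffices to produce the equivalence compatibly with the tensor structures; this will be automatic because every functor in sight (base change, restriction, and the Galois comparison maps) is symmetric monoidal.

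For the descent step, I would consider the base-change adjunction
\[ B \otimes_A (-) \colon \md(A) \rightleftarrows \md(B), \]
whose left adjoint is symmetric monoidal and whose right adjoint is restriction of scalars, and apply the $\infty$-categorical Barr--Beck theorem \cite{higheralg}. Two properties must be checked. First, $B \otimes_A(-)$ is conservative: this is essentially the hypothesis that $A \to B$ is \emph{faithful}, i.e. that $B$ detects equivalences of $A$-modules. Second, $B \otimes_A(-)$ must preserve the totalizations occurring in the cobar description; this holds because a finite faithful Galois extension is \emph{dualizable} over $A$ (part of Rognes's theory \cite{rognes}), so that $B \otimes_A(-) \simeq \hom_A(B^\vee, -)$ is itself a right adjoint and therefore preserves all limits. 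Barr--Beck then exhibits
\[ \md(A) \simeq \Tot\left( \md(B^{\otimes_A(\bullet+1)}) \right), \]
the limit over $\Delta$ of the cobar construction of module categories. This is precisely the point where \Cref{thm:ffdescent} cannot be applied directly, since a Galois extension need not be faithfully flat on homotopy groups; the comonadic argument is what replaces flat descent.

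For the Galois step I would use the defining structure of the extension. The $G$-action on $B$ relative to $A$ produces a map of cosimplicial $\e{\infty}$-$A$-algebras from the cobar construction to the cosimplicial ring $[n]\mapsto \prod_{G^n} B$ that computes $BG$, given on the first level by
\[ B \otimes_A B \longrightarrow \prod_{g \in G} B, \qquad b \otimes b' \longmapsto (g \mapsto b\cdot g(b')). \]
The assertion that $A \to B$ is $G$-Galois is exactly that this map is an equivalence, whence inductively $B^{\otimes_A(n+1)} \simeq \prod_{G^n} B$. Passing to module categories and using that $\md$ carries finite products of rings to finite products of categories, the diagram $\md(B^{\otimes_A(\bullet+1)})$ is identified with the standard cosimplicial object computing the limit over $BG$ of $\md(B)$ equipped with its $G$-action; that is,
\[ \Tot\left( \md(B^{\otimes_A(\bullet+1)}) \right) \simeq \md(B)^{hG}. \]
Combining with the descent step yields the desired symmetric monoidal equivalence $\md(A) \simeq \md(B)^{hG}$.

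I expect the main obstacle to be the descent (comonadicity) step rather than the Galois bookkeeping. One must genuinely verify the hypotheses of Barr--Beck, and in particular that base change along $A \to B$ preserves the totalizations appearing in the cobar description; this is where dualizability of $B$ over $A$ does the real work, and it is the feature that lets the argument succeed even though $A \to B$ is not assumed flat. The Galois step, by contrast, is a formal consequence of the product decomposition $B^{\otimes_A(n+1)} \simeq \prod_{G^n} B$ built into the definition of a Galois extension, together with the standard cosimplicial model for homotopy fixed points.
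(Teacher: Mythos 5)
Your argument is correct and is essentially the standard proof: the paper itself states this theorem without proof, deferring to \cite{GL} and \cite{meier}, and those references argue exactly as you do --- comonadicity of $B \otimes_A -$ from conservativity (which is Rognes's faithfulness condition) together with dualizability of $B$ over $A$ \cite{rognes}, followed by the levelwise identification $B^{\otimes_A (n+1)} \simeq \prod_{G^n} B$ matching the cobar construction with the standard cosimplicial model for $\md(B)^{hG}$. The only point worth flagging is that the paper's own machinery favors a slightly different route to the comonadicity step: by \cite[Theorem 3.36]{galois} a faithful Galois extension is descendable (the thick tensor-ideal generated by $B$ in $\md(A)$ is all of $\md(A)$), which yields the same conclusion via \cite[Proposition 3.21]{galois} but, unlike bare dualizability plus conservativity, also supplies the horizontal vanishing line in the descent spectral sequence that the paper exploits later (e.g.\ in the proof of Theorem~\ref{thm:GaloisOrder}).
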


The ``strength'' of the descent is in fact very good. As shown in
\cite[Theorem 3.36]{galois}, any faithful Galois extension $A \to B$ satisfies a form of
descent up to nilpotence: the thick tensor-ideal that $B$ generates in
$\md(A)$ is equal to all of $\md(A)$. This imposes strong restrictions on the
{descent spectral sequences} that can arise. 

Applying the Picard functor, we get an equivalence of spaces
\begin{equation} \label{gal1} \Pic(A) \simeq \Pic(B)^{hG},  \end{equation}
or an equivalence of \emph{connective} spectra
\begin{equation} \label{gal2} \pics(A) \simeq \tau_{\geq 0} \pics(B)^{hG}.  \end{equation}
\begin{remark}\label{rem:relativePic}
The spectrum $\Sigma \gl_1 B$ is equivalent to $\tau_{\geq 1}\pics (B)$; consider the induced map of $G$-homotopy fixed point spectral sequences. All the differentials involving the $t-s=0$ line will be the same for $\pics B $ and $\Sigma \gl_1 B$. Hence, we obtain a short exact sequence 
\[ 0 \to \pi_0 (\Sigma \gl_1 B)^{hG} \to \pi_0(\pics (B))^{hG}  \to E_\infty^{0,0} \to 0,\]
where $E_\infty^{0,0}$ is the kernel of all the differentials supported on
$H^0(G, \pi_0 \pics B)$. This short exact sequence exhibits
$\pi_0 (\Sigma \gl_1 B)^{hG}$ as the \emph{relative Picard group} of $A \to B$,
which consists of invertible $A$-modules which after smashing with $B$ become isomorphic to $B$ itself.
\end{remark}

Our main interest in Galois theory, for the purpose of this paper,  comes from
the observation, due to Rognes, that there are numerous examples of $G$-Galois
extensions of $\e{\infty}$-rings $A \to B$ where the homotopy groups of $B$ are
significantly simpler than that of $A$. 
In particular, one hopes to understand the homotopy groups of $\pics(B)$, and
then use \eqref{gal1} and \eqref{gal2} together with an analysis of the associated homotopy
fixed-point spectral sequence
\begin{align}\label{ss:galDescent}
H^s(G, \pi_t \pics (B)) \Rightarrow \pi_{t-s} (\pics (B))^{hG},
\end{align}
whose abutment for $t=s$ is the Picard group $\pic(A)$.

\begin{example}[{\cite[Proposition 5.3.1]{rognes}}] 
\label{KOKUGalois}
The map $KO \to KU$ and the $C_2$-action on $KU$ arising from complex
conjugation exhibit $KU$ as a $C_2$-Galois extension of $KO$. 
\end{example} 

\Cref{KOKUGalois} is fundamental and motivational to us: the study of
$KO$-modules, which is a priori difficult because of the complicated structure
of the ring $\pi_* KO$, can be approached via Galois descent together with the
(much easier) study of $KU$-modules. In particular, we obtain
\[ \pics(KO) \simeq \tau_{\geq 0} \pics(KU)^{hC_2},  \]
and one can hope to use 
the homotopy fixed-point spectral sequence (HFPSS) to calculate $\pics(KO)$. This approach is due to Gepner-Lawson
\cite{GL},\footnote{The original calculation of the Picard group of $KO$, by
related techniques, is
unpublished work of Mike Hopkins.} and we shall give a version of it below in \Cref{sec:PicKO} (albeit using a 
different method of deducing differentials). 

Other examples of Galois extensions come from the theory of topological modular
forms with \emph{level structure.}

\begin{example} 
Let $n \in \mathbb{N}$, and let $\TMF(n)$ denote the periodic version of $\TMF$
for elliptic curves over $\mathbb{Z}[1/n]$-algebras with a \emph{full level $n$ structure.} 
Then, by \cite[Theorem 7.6]{affine}, $\TMF[1/n] \to \TMF(n)$ is a faithful $GL_2(\mathbb{Z}/n)$-Galois
extension. The advantage is that, if $n \geq 3$, the moduli stack of
elliptic curves with level $n$ structure is actually a regular affine
scheme (by \cite[Corollary 2.7.2]{katzmazur}, elliptic curves with full level
$n \geq 3$ structure have no nontrivial automorphisms). In particular, $\TMF(n)$ is even periodic with regular $\pi_0$, and one
can compute its Picard group purely algebraically by \Cref{evenperiodicreg}. 
One can then hope to use $GL_2(\mathbb{Z}/n)$-descent to get at the Picard
group of $\TMF[1/n]$. We will take this approach below. 
\end{example} 

\subsection{The $E_n$-local sphere}

In addition, descent theory can be used to give a spectral sequence for $\pics(
L_n S^0)$. 
This is related to work of Kamiya-Shimomura \cite{KamiyaShimomura} and the
upper bounds that they obtain on $\pic(L_n S^0)$.

Consider the cobar construction on $L_n S^0 \to E_n$, i.e. the cosimplicial $\e{\infty}$-ring
\[ E_n \rightrightarrows E_n \wedge E_n \triplearrows \dots,  \]
whose homotopy limit is $L_n S^0$. It is a consequence of the Hopkins-Ravenel
smash product theorem 
\cite[Ch. 8]{ravenelorange}
that this cosimplicial diagram has ``effective descent.''

\begin{proposition}
The natural functor
\[ \md( L_n S^0) \to \mathrm{Tot} \left(\md( E_n^{\wedge(\bullet+1)})\right),  \]
is an equivalence of symmetric monoidal $\infty$-categories. 
\end{proposition}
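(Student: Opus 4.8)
The plan is to realize the displayed functor as the comparison map associated to the cobar construction on $L_n S^0 \to E_n$ and then to invoke a general descent theorem for \emph{descendable} morphisms, the only nontrivial input being the smash product theorem.

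First I would recall the general mechanism. For a map $R \to R'$ of $\e{\infty}$-rings whose cobar construction has \emph{effective descent} --- equivalently, such that the unit $R$ lies in the thick tensor-ideal generated by $R'$ inside $\md(R)$ --- the canonical base-change functor
\[ \md(R) \longrightarrow \mathrm{Tot}\left( \md\bigl(R'^{\otimes_R (\bullet+1)}\bigr) \right) \]
is an equivalence of symmetric monoidal $\infty$-categories. This is the non-flat generalization of \Cref{thm:ffdescent}, established in this form in \cite{galois} (and is the natural home for the descent-up-to-nilpotence phenomenon recorded after \eqref{gal2}). I would apply it with $R = L_n S^0$, the unit of $L_n \sp$, and $R' = E_n$, so that the relevant smash powers are formed in $L_n \sp$ and $\md(L_n S^0) \simeq L_n \sp$ because $L_n$ is smashing.

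The substantive point is therefore to verify the descendability hypothesis for $L_n S^0 \to E_n$, and this is precisely what the text attributes to the Hopkins--Ravenel smash product theorem \cite[Ch. 8]{ravenelorange}. I would spell this out as follows: descendability asserts that $L_n S^0$ is built from $E_n$ by finitely many cofiber sequences and retracts inside $L_n \sp$. The cobar construction on $L_n S^0 \to E_n$ is the $E_n$-Adams tower, and the smash product theorem --- together with the accompanying nilpotence technology --- forces this tower to converge with a \emph{uniform}, finite-page horizontal vanishing line. Such finiteness is exactly the statement that the unit belongs to the thick tensor-ideal generated by $E_n$, i.e. that $L_n S^0 \to E_n$ has effective descent. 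Feeding this into the general theorem then yields the proposition, with symmetric monoidality automatic since the comparison functor is the symmetric monoidal base-change functor.

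The main obstacle is exactly this descendability input. The weaker fact that the cobar totalization recovers $L_n S^0$ as an $\e{\infty}$-ring (i.e. convergence of the $E_n$-nilpotent completion to the sphere) does not by itself suffice to descend the whole module $\infty$-category; one genuinely needs the finite, uniform filtration of the unit supplied by the smash product theorem in order to upgrade a pro-equivalence of Tot-towers to an honest equivalence of symmetric monoidal $\infty$-categories. Everything downstream of that verification is a formal application of the descent machinery already in place.
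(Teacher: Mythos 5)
Your proposal is correct and follows essentially the same route as the paper: the smash product theorem of Hopkins--Ravenel shows that $E_n$ generates the unit ideal of $\md(L_n S^0)$ as a thick tensor-ideal, and the general descendability theorem of \cite{galois} (via Barr--Beck--Lurie) then upgrades this to the stated equivalence of symmetric monoidal $\infty$-categories. Your closing remark correctly identifies the key subtlety — that mere convergence of the $E_n$-nilpotent completion would not suffice and one needs the uniform horizontal vanishing line — which is exactly the point the paper makes in its footnote about constancy of the $\mathrm{Tot}$-tower.
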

\begin{proof} 
According to the Hopkins-Ravenel smash product theorem, the map of $\e{\infty}$-rings
$L_n S^0 \to E_n$ 
has the property that the thick tensor-ideal that $E_n$ generates in $\md(L_n
S^0)$ is all of $\md( L_n S^0)$.\footnote{The argument in \cite[Ch.
8]{ravenelorange} is stated for the uncompleted Johnson-Wilson theories, but
also can be carried out for the completed ones. We refer in particular to the
lecture notes of Lurie \cite{luriechromatic}; Lecture 30 contains the
necessary criterion for constancy of the $\mathrm{Tot}$-tower.}

According to \cite[Proposition 3.21]{galois}, this implies the
desired descent statement (the condition is there called ``admitting descent''). 
The argument is a straightforward application of the Barr-Beck-Lurie
monadicity theorem
\cite[\S 6.2]{higheralg}. 
\end{proof} 
In particular, we find that 
\[ \pics( L_n S^0) \simeq \tau_{\geq 0} \mathrm{Tot} \, \pics( E_n^{\wedge
(\bullet+1)}).  \]
Let us try to understand the associated spectral sequence. 

The higher homotopy groups $\pi_i, i \geq 2$ of $\pics (E_n^{\wedge(\bullet+1)})$ 
are determined in terms of those of $E_n^{\wedge(\bullet+1)}$. 
Once again, it remains to determine $\pi_0$. 
Now $E_n$ is an even periodic $\e{\infty}$-ring whose $\pi_0$ is regular local,
so $\pic(E_n) \simeq \pi_0 \pics(E_n) \simeq
\mathbb{Z}/2$ by \Cref{evenperiodicreg}. The iterated smash products
$E_n^{\wedge m}$ are also even
periodic, so their Picard group contains at least a $\mathbb{Z}/2$. We do not need 
to know their exact Picard groups, however, to run the spectral sequence, as
only the $\mathbb{Z}/2$ component is relevant for the spectral sequence (as it
is all that comes from $\pi_0 \pics(E_n)$).

Next, we need to determine the algebraic Picard group. 
After taking $\pi_0$, the simplicial scheme
\[ \dots \triplearrows \spec \pi_0(E_n \wedge E_n) \rightrightarrows \spec
\pi_0 E_n , \]
is a presentation of the moduli stack $M_{FG}^{\leq n}$ of formal groups (over
$\mathbb{Z}_{(p)}$-algebras) of height at most $n$. 

\begin{proposition} 
$\pic( M_{FG}^{\leq n}) \simeq \mathbb{Z}$, generated by $\omega$. 
\end{proposition}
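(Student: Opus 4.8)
The plan is to exploit that the Lubin--Tate cover of $M_{FG}^{\leq n}$ is affine over a \emph{local} ring, so that line bundles are automatically locally trivial, and then to isolate the weight of the scaling automorphisms as the only remaining invariant. Write $V = \pi_0 E_n$, the Lubin--Tate ring; it is a regular complete local ring, so $\pic(V) = 0$. The map $\spec V \to M_{FG}^{\leq n}$ classifying the universal deformation of the height-$n$ Honda formal group is a faithfully flat presentation, with associated flat Hopf algebroid $(V, W)$, $W = \pi_0(E_n \wedge E_n)$, corepresenting isomorphisms of formal groups. Since $\pic(V) = 0$, every invertible quasi-coherent sheaf on $M_{FG}^{\leq n}$ has underlying $V$-module equal to $V$, and is therefore given by descent data, i.e. a grouplike unit of $W$ modulo coboundaries; in particular every line bundle is \emph{locally trivial} along $\spec V \to M_{FG}^{\leq n}$.

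First I would prove injectivity of the homomorphism $\mathbb{Z} \to \pic(M_{FG}^{\leq n})$, $k \mapsto \omega^{\otimes k}$. For $\zeta \in \mathbb{Z}_p^{\times}$, multiplication by $\zeta$ is an automorphism of any formal group, acting on the invariant cotangent line $\omega$ by the scalar $\zeta$, hence on $\omega^{\otimes k}$ by $\zeta^k$. If $\omega^{\otimes k}$ were trivial this scalar action would be trivial for all $\zeta$, forcing $k = 0$; thus $\omega$ has infinite order.

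For surjectivity I would use the $\mathbb{G}_m$-torsor $p\colon P \to M_{FG}^{\leq n}$ given by the complement of the zero section of $\omega$. Concretely $P$ is the moduli stack of height $\leq n$ formal group laws up to \emph{strict} isomorphism, presented by a Hopf algebroid $(V, W^{\mathrm{str}})$ corepresenting strict isomorphisms. The long exact sequence of units and Picard groups associated to the torsor reads
\[ \cdots \to \mathbb{Z} \xrightarrow{\ \cdot\, \omega\ } \pic(M_{FG}^{\leq n}) \to \pic(P) \to \cdots, \]
where the map out of $\mathbb{Z} = H^0(M_{FG}^{\leq n}, \mathbb{Z})$ sends $1 \mapsto [\omega]$. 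Hence, once $\pic(P) = 0$, the group $\pic(M_{FG}^{\leq n})$ is generated by $\omega$; combined with injectivity this yields $\pic(M_{FG}^{\leq n}) \cong \mathbb{Z}\langle \omega \rangle$.

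The crux, and the step I expect to be the main obstacle, is therefore $\pic(P) = 0$. As before, $\pic(V) = 0$ reduces a line bundle on $P$ to a grouplike unit of $W^{\mathrm{str}}$ modulo coboundaries, i.e. to a character of the automorphism groupoid of the universal formal group valued in \emph{strict} isomorphisms. But the strict automorphism group (power series $x + a_2 x^2 + \cdots$ under composition) is pro-unipotent over $V$, so it carries no nontrivial characters to $\mathbb{G}_m$; combined with the local triviality of $\omega$, every such descent datum is a coboundary and $\pic(P)$ vanishes. Equivalently, the only available weights come from $\mathrm{Aut}/\mathrm{Aut}^{\mathrm{str}} \cong \mathbb{G}_m$, whose character group is exactly $\mathbb{Z}\langle\omega\rangle$. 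I would take care to verify the pro-unipotence and the resulting vanishing of the relevant $H^1(\mathbb{G}_m)$ directly at the level of the Hopf algebroid, which is the one genuinely computational point.
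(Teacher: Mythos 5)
Your overall architecture — reduce along the $\mathbb{G}_m$-torsor $P \to M_{FG}^{\leq n}$ of trivializations of $\omega$, prove injectivity of $\Z \to \pic$ by evaluating line bundles on the automorphism $[\zeta]$, and get surjectivity from $\pic(P)=0$ — is a legitimate alternative to the paper's argument, and your injectivity step is correct and rather cleaner than anything the paper makes explicit. The paper instead uses the simplicial presentation of $M_{FG}$ by the stacks $\spec (MU^{\wedge(s+1)})_*/\mathbb{G}_m$: since each $\spec(MU^{\wedge(s+1)})_*$ is (a colimit of) polynomial rings over $\mathbb{Z}_{(p)}$, its Picard group is trivial and its units are the constants $\mathbb{Z}_{(p)}^{\times}$, so the cosimplicial Picard groupoid is \emph{constant} at $\mathbb{Z}\times B\mathbb{Z}_{(p)}^{\times}$ and the totalization is computed with no work; the height $\leq n$ case then follows by excising a closed substack of codimension $\geq 2$, which changes neither units nor Picard groups. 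That choice of presentation is exactly what trivializes the cohomological computation.

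The genuine gap in your proposal is the step $\pic(P)=0$. Having reduced (correctly, since $\pic(V)=0$) to showing that every grouplike unit of $W^{\mathrm{str}}$ is a coboundary, you argue that such a unit restricts to a character of the strict automorphism group of each point, and that pro-unipotence kills all such characters. But vanishing of characters of the stabilizers does not imply vanishing of $H^1$ of the groupoid with coefficients in $\mathbb{G}_m$: the groupoid $(\spec V, \spec W^{\mathrm{str}})$ has a nontrivial ``space'' direction as well as a stabilizer direction, and a grouplike unit can be nontrivial for reasons invisible to any stabilizer. Concretely, $\mathbb{P}^1$ presented by the equivalence relation on $\mathbb{A}^1\sqcup \mathbb{A}^1$ has trivial stabilizers and a cover with trivial Picard group, yet the grouplike unit $t$ on the overlap $\spec k[t^{\pm 1}]$ is not a coboundary and $\pic(\mathbb{P}^1)=\Z$. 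So the inference ``no characters of $\mathrm{Aut}^{\mathrm{str}}$ $\Rightarrow$ every descent datum is a coboundary'' is not valid, and the point you defer as ``the one genuinely computational point'' is in fact the entire content of the proposition. Worse, the Lubin--Tate presentation makes this computation harder rather than easier: $V=\pi_0 E_n$ is local, so $V^{\times}$ and hence $(W^{\mathrm{str}})^{\times}$ are large and not constant, and you would need to actually determine the grouplike units of $(E_n)_0E_n$-type rings modulo coboundaries (whereas in the paper's $MU$-presentation the unit groups are literally the constants $\mathbb{Z}_{(p)}^{\times}$, whose only grouplike element is $1$). A secondary point to address, citable rather than fatal, is that $(\pi_0E_n,\pi_0(E_n\wedge E_n))$ really does present $M_{FG}^{\leq n}$ and not some completed variant; this is a theorem (Landweber exactness plus change of rings), not a formality.
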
 
\begin{proof} 
We use the presentation of $M_{FG}$ (localized at $p$) via the simplicial stack
\begin{equation} \label{presMFG}\dots \triplearrows ( \spec (MU \wedge MU)_*)/\mathbb{G}_m \rightrightarrows
(\spec MU_*)/\mathbb{G}_m . \end{equation}
Since the Picard group of  a polynomial ring over $\mathbb{Z}_{(p)}$ is
trivial,\footnote{Since the Picard group commutes with filtered colimits, one
reduces to the case of a polynomial ring on a finite number of variables, and
here it follows from unique factorization.} and each smash power of $MU$ has a polynomial ring for $\pi_*$,  the Picard group of each of the terms in the simplicial stack
\emph{without} the $\mathbb{G}_m$-quotient is
trivial, and the group of units is $\mathbb{Z}_{(p)}^{\times}$, constant
across the simplicial object. 
In other words, the \emph{Picard groupoid} 
of each $\spec (MU^{\wedge (s+1)})_*$ is $B \mathbb{Z}_{(p)}^{\times}$. 
When we add the $\mathbb{G}_m$-quotient, we get $\mathbb{Z} \times B
\mathbb{Z}_{(p)}^{\times}$ for the Picard groupoid of each term in the simplicial stack
because of the possibility of twisting by a character of $\mathbb{G}_m$: this
twisting corresponds to the powers of $\omega$. By descent theory, this shows that $\pic(M_{FG})
\simeq \mathbb{Z}$, generated by $\omega$. More precisely, the Picard groupoid
of $M_{FG}$ is the totalization of the Picard groupoids of $\spec (MU^{\wedge
(s+1)})_*/\mathbb{G}_m$, and each of these is $\mathbb{Z} \times B
\mathbb{Z}_{(p)}^{\times}$: that is, the cosimplicial diagram of Picard
groupoids is constant and the totalization is $\mathbb{Z} \times B
\mathbb{Z}_{(p)}^{\times}$ again.

When we replace $M_{FG}$ by $M_{FG}^{\leq n}$, we can replace the above
presentation by excising from each term the closed substack cut out by $(p,
v_1, \dots, v_{n})$. This does not affect the Picard \emph{groupoid} since the
codimension of the substack removed is at least $2$ (i.e. neither the Picard
group nor the group of units is affected).\footnote{Once again, this is a
familiar result for regular rings, and here one must pass to filtered colimits
since one is working with polynomial rings on infinitely many variables.} That is, when we modify each
term in \eqref{presMFG} to form the associated presentation of $M_{FG}^{\leq
n}$, the Picard groupoid is unchanged. It follows by faithfully flat descent that the inclusion
$M_{FG}^{\leq n} \to M_{FG}$ induces an isomorphism on Picard groups (or
groupoids) and that the Picard group is generated by $\omega$. 
\end{proof} 

We obtain the following result.
\begin{theorem} There is a spectral sequence
\[ E_2^{s,t} = 
\begin{cases}
\mathbb{Z}/2 & t =  0 ,\\
H^s( M_{FG}^{\leq n}, \mathcal{O}_{M_{FG}}^{\times}) & t = 1    , \\
H^s( M_{FG}^{\leq n}, \omega^{(t-1)/2 }) &  t \geq 3 \ \text{odd},  \\
0 & \text{otherwise,} \\
\end{cases}
\]
which converges for $t-s \geq 0$ to $\pi_{t-s}\pics(L_n S^0)$. The relevant occurrences of
the second case are $H^0(M_{FG}^{\leq n}, \mathcal{O}_{M_{FG}}^{\times}) \simeq
\mathbb{Z}_{(p)}^{\times}$ and $H^1(M_{FG}^{\leq n},
\mathcal{O}_{M_{FG}}^{\times})
\simeq \mathbb{Z}$.
\end{theorem}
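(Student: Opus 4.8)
The plan is to run the descent spectral sequence of \Cref{descentss} essentially verbatim, but now for the cobar construction on $L_n S^0 \to E_n$ in place of an even periodic refinement of a Deligne--Mumford stack; the role of the stack is played by $M_{FG}^{\leq n}$, presented by the simplicial scheme $\cdots \triplearrows \spec \pi_0(E_n \wedge E_n) \rightrightarrows \spec \pi_0 E_n$. First I would invoke the equivalence $\pics(L_n S^0) \simeq \tau_{\geq 0}\,\mathrm{Tot}\,\pics(E_n^{\wedge(\bullet+1)})$ established in the preceding proposition (itself a consequence of the Hopkins--Ravenel smash product theorem). The homotopy-limit spectral sequence of this totalization has $E_2^{s,t}$ given by the $s$-th cohomology of the cosimplicial abelian group $t \mapsto \pi_t \pics(E_n^{\wedge(\bullet+1)})$, and it converges, in the range $t - s \geq 0$ forced by the connective cover, to $\pi_{t-s}\pics(L_n S^0)$.

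Second, I would identify these cosimplicial homotopy groups degreewise, which is where the special structure of $\pics$ enters. For $t \geq 2$ one has the natural identification $\pi_t \pics(A) \simeq \pi_{t-1}A$ for any $\e{\infty}$-ring $A$; since each smash power $E_n^{\wedge m}$ is even periodic, these groups vanish for $t$ even and, for $t$ odd, assemble into the quasi-coherent sheaf $\omega^{(t-1)/2}$ on $M_{FG}^{\leq n}$, exactly as in the descent spectral sequence for $\pi_* L_n S^0$. For $t = 1$ we have $\pi_1 \pics(A) = (\pi_0 A)^{\times}$, so the associated cosimplicial group is that of units, whose cohomology is the \'etale cohomology of $\mathbb{G}_m = \mathcal{O}_{M_{FG}}^{\times}$. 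For $t = 0$, \Cref{evenperiodicreg} gives $\pi_0 \pics(E_n) \simeq \mathbb{Z}/2$ (as $E_n$ is even periodic with $\pi_0$ regular local); the higher smash powers are again even periodic and so contribute at least a $\mathbb{Z}/2$, but as observed above only this $\mathbb{Z}/2$ summand, coming from $\pi_0 \pics(E_n)$, feeds into the spectral sequence.

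Third, I would convert cosimplicial cohomology into stack cohomology. The point is that applying $\spec \pi_0(-)$ to the cobar construction recovers precisely the presentation of $M_{FG}^{\leq n}$, so that the cohomology of each cosimplicial abelian group computes the sheaf cohomology $H^s(M_{FG}^{\leq n}, -)$ of the relevant sheaf ($\omega^{(t-1)/2}$, $\mathcal{O}_{M_{FG}}^{\times}$, or $\mathbb{Z}/2$). The explicit low-degree values then follow from the preceding proposition: the computation $\pic(M_{FG}^{\leq n}) \simeq \mathbb{Z}$ generated by $\omega$ yields $H^1(M_{FG}^{\leq n}, \mathcal{O}_{M_{FG}}^{\times}) \simeq \mathbb{Z}$, while the units in degree zero give $H^0(M_{FG}^{\leq n}, \mathcal{O}_{M_{FG}}^{\times}) \simeq \mathbb{Z}_{(p)}^{\times}$.

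The main obstacle is convergence together with the bookkeeping of the connective cover: one must check that the $\mathrm{Tot}$-tower converges strongly enough to yield a convergent spectral sequence in the range $t - s \geq 0$, and that $\tau_{\geq 0}$ only truncates contributions below the line $t = s$. This I would extract from the effective-descent/nilpotence input already invoked in the preceding proposition (the Hopkins--Ravenel theorem, giving constancy of the $\mathrm{Tot}$-tower). A secondary subtlety is that the identification of the cosimplicial homotopy groups with quasi-coherent sheaves on $M_{FG}^{\leq n}$ uses the flatness properties that make $\spec \pi_0 E_n^{\wedge(\bullet+1)}$ a genuine Hopf-algebroid presentation of the height $\leq n$ locus; granting this, the argument is an \emph{essentially verbatim} translation of the proof of \Cref{descentss}.
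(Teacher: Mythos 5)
Your proposal matches the paper's argument essentially step for step: the paper likewise runs the $\mathrm{Tot}$ spectral sequence for $\pics(E_n^{\wedge(\bullet+1)})$ (justified by the Hopkins--Ravenel smash product theorem), identifies $\pi_t\pics$ with $\pi_{t-1}$ of the smash powers for $t\geq 2$ and with units for $t=1$, notes that only the $\mathbb{Z}/2$ coming from $\pi_0\pics(E_n)$ matters on the $t=0$ line, and recognizes $\spec \pi_0 E_n^{\wedge(\bullet+1)}$ as a presentation of $M_{FG}^{\leq n}$ so that the preceding computation of $\pic(M_{FG}^{\leq n})\simeq\mathbb{Z}$ supplies the stated low-degree values. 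The proof is correct and takes the same route as the paper.
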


Note in particular that the $E_2$-term is determined entirely in terms of the
Adams-Novikov spectral sequence for the $E_n$-local sphere. 
As we will see in \Cref{sec:SSLemma}, many of the differentials are also
determined by the ANSS.

\section{First examples}\label{sec:firstexamples}

In this section, we will give several examples where descent theory gives a 
quick calculation of the Picard group. In these examples, we will not need to
analyze differentials in the descent spectral sequence \eqref{ss:galDescent}.
The main examples of interest, where there will be a number of 
differentials to determine, will be treated in the last part
of this paper.

\subsection{The faithfully flat case}

We begin with the simplest case. Suppose $R \to R'$ is a morphism of
$\e{\infty}$-rings which is faithfully flat. 
In this case, we know from \cite[Theorem 6.1]{DAGVII} that the tensor-forgetful adjunction $\md(R)
\rightleftarrows \md(R')$ is comonadic and we get a descent spectral sequence
for the Picard group of $R$, as 
\[ \pics(R) \simeq \tau_{\geq 0}\mathrm{Tot}\; \pics( R'^{\otimes (\bullet + 1)}).  \]

This spectral sequence, however, gives essentially no new information that is
not algebraic in nature. That is, the entire $E_2$-term $E_2^{s,t}$ for $t> 1$
vanishes, as it can be identified with the $E_2$-term for the cobar resolution
$R'^{\otimes (\bullet + 1)}$ of $R$, and this cobar resolution has a degenerate
spectral sequence with non-zero terms only for $s =0$ at $E_2$. 
For example, an element in $\pic(R)$ is algebraic if \emph{and only if} its
image in $\pic(R')$ is algebraic,  by faithful flatness. 

Thus, faithfully flat descent will be mostly irrelevant to us as a tool of
computing the non-algebraic parts of Picard groups. 
In the examples of interest, we want
$\pi_* R'$
to be significantly simpler homologically than $\pi_* R$, so that we will be
able to conclude (using results such as \Cref{evenperiodicreg}) 
that the 
Picard group of $R'$ is entirely algebraic. But if $\pi_*R'$ is faithfully
flat over $\pi_*R$, it cannot be much simpler homologically. (Recall for
example that \emph{regularity} descends under faithfully flat extensions of
noetherian rings.)

\subsection{Cochain $\e{\infty}$-rings and local systems}
In this subsection, we give another example of a family of $\e{\infty}$-ring
spectra whose Picard groups can be determined, or at least bounded. 

Let $X$ be a space and $R$ an $\e{\infty}$-ring. 
Let $R^X = C^*(X; R)$ be the $\e{\infty}$-ring of $R$-valued cochains on
$X$. 

\begin{definition} 
\label{def:localsystems}
Let $\loc_X( \md(R)) = \mathrm{Fun}(X, \md(R))$ denote the $\infty$-category of
\emph{local systems} of $R$-module spectra on $X$.
\end{definition} 

Then we have a fully
faithful embedding of symmetric monoidal $\infty$-categories
\[ \md^\omega( R^X)  \subset \loc_X(\md(R)), \]
which sends $R^X$ to the constant local system at $R$ and is determined by that. 
As
discussed in \cite[\S 7]{galois}, this embedding is often useful for relating invariants
of $R^X$ to those of $R$.  
In particular, since any invertible $R^X$-module is perfect, we have a fully faithful functor
of $\infty$-groupoids
\[ \Pic( R^X) \to \Pic( \loc_X( \md(R))) = \map( X, \Pic(\md(R))),  \]
where the last identification follows because $\Pic$ commutes with homotopy
limits (\Cref{piclimits}). 
Thus, we get the following useful \emph{upper bound} for the Picard group of
$R^X$. 

\begin{proposition} 
If $R$ is an $\e{\infty}$-ring and $X$ is any space, then $\pic( R^X)$ is
a subgroup of $\pi_0( \pics(R)^X)$.
\end{proposition}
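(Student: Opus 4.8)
The plan is to reduce the statement to the two facts already assembled above: the fully faithful symmetric monoidal functor of $\infty$-groupoids $\Pic(R^X) \to \Pic(\loc_X(\md(R)))$, and the identification $\Pic(\loc_X(\md(R))) \simeq \map(X, \Pic(\md(R)))$ coming from the fact that $\Pic$ commutes with homotopy limits. What remains is to recognize the right-hand side as the zeroth space of the spectrum $\pics(R)^X$ and to pass to $\pi_0$.

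First I would record the spectrum-level translation. Interpreting $\pics(R)^X$ as the cotensor $\varprojlim_X \pics(R)$ of the connective spectrum $\pics(R)$ by the space $X$, and using that $\Omega^\infty\colon \sp \to \mathcal{S}$ is a right adjoint and hence preserves limits (in particular cotensors by spaces), I obtain
\[ \Omega^\infty(\pics(R)^X) \simeq \map(X, \Omega^\infty \pics(R)) = \map(X, \Pic(R)), \]
where the last equality is the definition $\Pic = \Omega^\infty \circ \pics$ from \Cref{piclimits}. Combined with the identification $\map(X, \Pic(\md(R))) \simeq \Pic(\loc_X(\md(R)))$ recalled above, this gives a natural equivalence of spaces $\Omega^\infty(\pics(R)^X) \simeq \Pic(\loc_X(\md(R)))$. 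Taking $\pi_0$, and using that $\pi_0 E \cong \pi_0 \Omega^\infty E$ for any spectrum $E$, yields
\[ \pi_0(\pics(R)^X) \cong \pi_0 \Pic(\loc_X(\md(R))). \]

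Next I would transport the Picard group of $R^X$ across the fully faithful functor. Since the embedding $\md^\omega(R^X) \subset \loc_X(\md(R))$ is symmetric monoidal, the induced map $\Pic(R^X) \to \Pic(\loc_X(\md(R)))$ is a map of grouplike $\e{\infty}$-spaces, so on $\pi_0$ it is a homomorphism of abelian groups; full faithfulness makes it a monomorphism of $\infty$-groupoids, identifying $\Pic(R^X)$ with a union of connected components of the target and hence injective on $\pi_0$. An injective group homomorphism exhibits its source as a subgroup, so
\[ \pic(R^X) = \pi_0 \Pic(R^X) \hookrightarrow \pi_0 \Pic(\loc_X(\md(R))) \cong \pi_0(\pics(R)^X), \]
as claimed. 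The argument is essentially formal once the embedding and the local-systems identification are in hand; the only point warranting care is the middle step, where one commutes $\Omega^\infty$ past the cotensor and observes that, although $\pics(R)^X$ need not be connective (it will typically have negative homotopy groups computing higher cohomology of $X$), its $\pi_0$ is nonetheless computed by the zeroth space $\Omega^\infty(\pics(R)^X)$.
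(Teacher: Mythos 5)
Your argument is correct and follows the same route as the paper: the fully faithful symmetric monoidal embedding $\md^\omega(R^X)\subset \loc_X(\md(R))$ (using that invertible $R^X$-modules are perfect) induces an inclusion of $\Pic(R^X)$ into $\Pic(\loc_X(\md(R)))\simeq \map(X,\Pic(R))$, whose $\pi_0$ is $\pi_0(\pics(R)^X)$. Your added care about $\Omega^\infty$ commuting with the cotensor and about $\pi_0$ of the possibly non-connective spectrum $\pics(R)^X$ simply makes explicit what the paper leaves implicit.
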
 

Without loss of generality, we will assume that $X$ is connected. 
Note that we have a cofiber sequence
\[ \Sigma \mathfrak{gl}_1(R) \to \pics(R) \to H( \pic(R)),  \]
where $H(\pic(R))$ denotes the Eilenberg-MacLane spectrum associated to the
group $\pic(R)$. 
If we take the long exact sequence after taking maps from $X$, we get
an exact sequence
\begin{equation} \label{RXexact} 0 \to \pi_{-1}( \gl_1(R)^X) \to  \pi_0 ( \pics(R)^X)  \to
\pic(R) .  \end{equation}
 Our object of interest, $\pic(R^X)$, is a subobject of the middle
term, by the above proposition.

Let us unwind the exact sequence further. 
First, the composite map $\pic(R^X) \to \pi_0( \pics(R)^X) \to \pic(R)$ comes from the map of $\e{\infty}$-rings $R^X \to R$ given by
choosing a basepoint of $X$. In particular, it is \emph{split surjective} as it has
a section given by $R \to R^X$ (so \eqref{RXexact} is a split exact sequence). 
Next, observe that, using the truncation map $\gl_1(R) \to H R_0^{\times}$, we have a
map $\pi_{-1}( \gl_1(R)^X) \to \pi_{-1}( (H
R_0^{\times})^X) = \hom( \pi_1(X), R_0^{\times})$. 
We can understand this map in terms of $\pic(R^X)$. 
Very explicitly, suppose given an invertible $R^X$-module $M$ with 
associated local system $\mathcal{L} \in \loc_X(\md(R))$. 
Then if the image of $M$ in $\pic(R)$ is trivial, we conclude that
$\mathcal{L}_x \simeq R$ for any basepoint $x \in X$. An element in
$\pi_1(X, x)$ induces a monodromy automorphism of $\mathcal{L}_x$ and
thus defines an element of $R_0^{\times}$. This defines a map in $\hom(
\pi_1(X, x), R_0^{\times})$. 
Let $\pic^0(R^X)$ be the kernel of $\pic(R^X) \to \pic(R)$. Then we have just
described the map
\begin{equation} \label{phiL}\pic^0(R^X) \stackrel{\phi}{\to} \hom( \pi_1(X, x), R_0^{\times}),
\end{equation}
that comes from the exact sequence \eqref{RXexact}.

The monodromy action cannot be arbitrary, since  this local system is not
arbitrary: it is in the image of $\md^\omega(R^X)$ and therefore belongs to
the thick subcategory generated by the unit. 
As in \cite[\S 8]{galois}, it follows that the monodromy action of any element of the fundamental group must be
\emph{ind-unipotent}.
In particular, 
fix an element $M$ of $\pic^0(R^X)$.
Given any loop $\gamma \in \pi_1(X, x)$, the associated
element $u = u_{\gamma, M} \in R_0^{\times}$ under the homomorphism $\phi(M)\colon \pic^0(R^X) \to
\hom(\pi_1(X, x), R_0^{\times})$ of \eqref{phiL} must have the property
that $u-1$ is nilpotent.

Hence if $R_0$ is a \emph{reduced} ring, we deduce from \eqref{RXexact}
the following conclusion.

\begin{corollary} \label{picRX}
If $R$ is an $\e{\infty}$-ring with $\pi_0 R$ reduced, and $X $ is any
connected space, then we have a split short exact sequence
\[ 0 \to A \to \pic(R^X) \to \pic(R) \to 0,  \]
where $A \subset \pi_{-1}( \gl_1(R)^X)$ is actually contained in $\pi_{-1}(
(\tau_{\geq 1} \gl_1(R))^X) \subset \pi_{-1}(
(\gl_1(R))^X) $.
In particular, if 
$\pi_{-1}(
(\tau_{\geq 1} \gl_1(R))^X) = 0$, then $\pic(R) \to \pic(R^X)$ is an
isomorphism. 
\end{corollary}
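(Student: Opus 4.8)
The plan is to assemble the already-established split exact sequence \eqref{RXexact} with the monodromy analysis, and to use reducedness of $\pi_0 R$ to force the monodromy to vanish on the relative part. First I would recall that the upper-bound proposition embeds $\pic(R^X)$ as a subgroup of $\pi_0(\pics(R)^X)$, and that the composite $\pic(R^X) \to \pi_0(\pics(R)^X) \to \pic(R)$ is split surjective via the basepoint section $R \to R^X$. Setting $A = \pic^0(R^X) = \ker(\pic(R^X) \to \pic(R))$ and intersecting $\pic(R^X)$ with $\ker(\pi_0(\pics(R)^X) \to \pic(R))$ identifies $A$ with a subgroup of $\pi_{-1}(\gl_1(R)^X)$, which already yields the split short exact sequence $0 \to A \to \pic(R^X) \to \pic(R) \to 0$.

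The core of the argument is to refine the containment $A \subset \pi_{-1}(\gl_1(R)^X)$ to $A \subset \pi_{-1}((\tau_{\geq 1}\gl_1(R))^X)$. For this I would apply $(-)^X$ to the fiber sequence of connective spectra $\tau_{\geq 1}\gl_1(R) \to \gl_1(R) \to H R_0^{\times}$ and read off the long exact sequence in homotopy. Since $X$ is connected, $\pi_0((H R_0^{\times})^X) = H^0(X; R_0^{\times}) = R_0^{\times}$ and $\pi_{-1}((H R_0^{\times})^X) = H^1(X; R_0^{\times}) = \hom(\pi_1(X,x), R_0^{\times})$. The basepoint gives a section of $\gl_1(R) \to \gl_1(R)^X$, so the map $\pi_0(\gl_1(R)^X) \to R_0^{\times}$ is surjective; hence the preceding connecting map vanishes and $\pi_{-1}((\tau_{\geq 1}\gl_1(R))^X)$ injects into $\pi_{-1}(\gl_1(R)^X)$ as the kernel of the map to $\hom(\pi_1(X,x), R_0^{\times})$. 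This latter map is exactly the monodromy homomorphism $\phi$ of \eqref{phiL}.

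It then remains to show that every element of $A$ dies under $\phi$, and this is where reducedness enters. By the ind-unipotency of the monodromy action established just before the statement, for $M \in A$ and any $\gamma \in \pi_1(X,x)$ the element $u_{\gamma,M} - 1 \in R_0$ is nilpotent; once $R_0$ is reduced this forces $u_{\gamma,M} = 1$, so $\phi$ vanishes on $A$. Therefore $A \subset \ker\phi = \pi_{-1}((\tau_{\geq 1}\gl_1(R))^X)$, as claimed. The final clause is immediate: if $\pi_{-1}((\tau_{\geq 1}\gl_1(R))^X) = 0$ then $A = 0$, and the split surjection $\pic(R^X) \to \pic(R)$ becomes an isomorphism.

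The only genuinely delicate point is the bookkeeping of the second paragraph: confirming that the connecting map out of $\pi_0((H R_0^{\times})^X)$ is zero, so that $\pi_{-1}((\tau_{\geq 1}\gl_1(R))^X)$ really sits as a \emph{subgroup} (rather than merely a subquotient) of $\pi_{-1}(\gl_1(R)^X)$ and the containment $A \subset \pi_{-1}((\tau_{\geq 1}\gl_1(R))^X)$ is literally meaningful. All the harder inputs --- the split sequence \eqref{RXexact} and especially the ind-unipotency of the monodromy --- are already in hand, so the corollary is essentially a matter of combining them correctly.
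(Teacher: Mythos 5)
Your proposal is correct and follows the paper's own route essentially verbatim: the split sequence \eqref{RXexact}, the identification of the relative part with a subgroup of $\pi_{-1}(\gl_1(R)^X)$, the monodromy map $\phi$ of \eqref{phiL} as the composite to $\hom(\pi_1(X,x),R_0^{\times})$, and ind-unipotency plus reducedness to kill $\phi$ on $A$. The injectivity bookkeeping in your second paragraph is exactly the remark the paper makes immediately after the corollary, so there is nothing to add.
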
 

Again, we note that the map $\pi_{-1}( (\tau_{\geq 1} \gl_1(R))^X) \to
\pi_{-1}(  \gl_1(R)^X)$ is injective, by the long exact sequence and the fact
that $\pi_0 ( \gl_1(R)^X) \to \pi_0 (( H R_0^{\times})^X ) \simeq R_0^{\times}$ is surjective.

As an application, we obtain a calculation of the Picard group of a
nonconnective $\e{\infty}$-ring in a setting
far from regularity. 

\begin{theorem} 
Let $A$ be any finite \emph{abelian} group and let $E_n$ be Morava $E$-theory.
Then the Picard group of $E_n^{BA}$
is $\mathbb{Z}/2$, generated by the suspension $\Sigma E_n^{BA} $. 
The same conclusion holds 
for any finite group $G$ whose $p$-Sylow subgroup is abelian, where $p$ is the prime of definition for $E_n$.
\end{theorem}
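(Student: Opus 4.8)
The plan is to apply \Cref{picRX} and reduce the entire statement to the vanishing of a relative Picard group, which I would then control through the even-ness of the $E_n$-cohomology of $BA$. Since $\pi_0 E_n \cong W(\mathbb{F}_{p^n})[[u_1, \dots, u_{n-1}]]$ is regular local, it is in particular reduced, so \Cref{picRX} applies with $R = E_n$ and $X = BA$ and yields a split short exact sequence
\[ 0 \to \pic^0(E_n^{BA}) \to \pic(E_n^{BA}) \to \pic(E_n) \to 0, \]
where $\pic(E_n) \cong \mathbb{Z}/2$ (generated by $\Sigma E_n$) by \Cref{evenperiodicreg}, and where the relative term $\pic^0(E_n^{BA})$ is contained in $\pi_{-1}((\tau_{\geq 1}\gl_1 E_n)^{BA})$. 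It therefore suffices to prove that this relative group vanishes. I would first dispose of two reductions. Because $E_n$ is $p$-complete, $\widetilde{E_n}^*(BA')$ vanishes for any finite group $A'$ of order prime to $p$ (its reduced integral cohomology is prime-to-$p$ torsion, which dies against the $p$-complete coefficients); writing a finite abelian $A = A_{(p)} \times A'$ and using $BA \simeq BA_{(p)} \times BA'$ gives $E_n^{BA} \simeq E_n^{BA_{(p)}}$, reducing the abelian case to abelian $p$-groups. For a general finite $G$ with abelian $p$-Sylow $P$, restriction $H^*(BG; \pi_t E_n) \to H^*(BP; \pi_t E_n)$ is split injective (via the transfer, as $[G:P]$ is a unit in the $p$-complete ring $\pi_t E_n$), compatibly with the spectral-sequence differentials; hence the terms computing $\pi_{-1}((\tau_{\geq 1}\gl_1 E_n)^{BG})$ inject into those for $P$, and the case of $G$ follows from the abelian one.

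The heart of the matter is the vanishing of $\pi_{-1}((\tau_{\geq 1}\gl_1 E_n)^{BA})$ for $A$ an abelian $p$-group, and I would first record the clean \emph{additive} model. For abelian $A$ the theory $E_n^*(BA)$ is concentrated in even degrees (by complex-orientability of $E_n$ together with the Künneth isomorphism for the free modules $E_n^*(B\mathbb{Z}/p^k)$); in particular $\pi_{-1}(E_n^{BA}) = E_n^1(BA) = 0$. Now consider the Postnikov fiber sequence $\tau_{\geq 2}E_n \to E_n \to H\pi_0 E_n$ (here $\pi_1 E_n = 0$, so $\tau_{\leq 1}E_n = H\pi_0 E_n$). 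Applying $(-)^{BA}$ and passing to the long exact sequence in homotopy, the augmentation $\pi_0(E_n^{BA}) = E_n^0(BA) \to \pi_0((H\pi_0 E_n)^{BA}) = \pi_0 E_n$ is split surjective (split by the constants $E_n \to E_n^{BA}$), while $\pi_{-1}(E_n^{BA}) = 0$; the long exact sequence then forces $\pi_{-1}((\tau_{\geq 2}E_n)^{BA}) = 0$.

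Finally I would transport this to the multiplicative setting. The key structural observation is that $\tau_{\geq 1}\gl_1 E_n$ and $\tau_{\geq 2}E_n$ have \emph{identical} homotopy groups, namely $\pi_0 E_n$ in each even degree $\geq 2$ and $0$ otherwise (recall $\pi_1\gl_1 E_n = \pi_1 E_n = 0$), so their cochain (descent/Atiyah--Hirzebruch) spectral sequences on $BA$ have identical $E_2$-pages. The only contributions to $\pi_{-1}$ sit on the lines $H^s(BA; \pi_0 E_n)$ with $s$ odd; the degree-one monodromy term $H^1(BA; (\pi_0 E_n)^\times)$ that could otherwise interfere is already absent, since \Cref{picRX} placed $\pic^0$ inside $\tau_{\geq 1}\gl_1 E_n$ precisely by using reducedness of $\pi_0 E_n$ to trivialize the unipotent monodromy. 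The remaining odd-line classes $H^s(BA; \pi_0 E_n)$ are exactly the ones shown to die in the additive computation above. The main obstacle is that the differentials of the units spectral sequence are not \emph{a priori} those of the additive one: the relevant $k$-invariants of $\gl_1 E_n$ and of $E_n$ could differ. To close the argument I would invoke the comparison of \Cref{sec:SSLemma}, which identifies the two families of differentials in the relevant range of dimensions; granting this, the odd-line classes are killed just as in the additive case, so $\pi_{-1}((\tau_{\geq 1}\gl_1 E_n)^{BA}) = 0$ and hence $\pic(E_n^{BA}) \cong \mathbb{Z}/2$, generated by $\Sigma E_n^{BA}$. It is worth noting that $E_n^{BA}$ is weakly even periodic with $\pi_0 E_n^{BA} = E_n^0(BA)$ \emph{non-regular} (its special fibre is non-reduced), so \Cref{evenperiodicreg} does not apply directly; the content of the theorem is exactly that no exotic (non-algebraic) invertible modules arise despite this.
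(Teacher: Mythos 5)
Your reductions are sound and for the most part coincide with the paper's: the split exact sequence from \Cref{picRX}, the reduction to abelian $p$-groups, the transfer argument handling a group with abelian $p$-Sylow subgroup, and the additive computation that $E_n^*(BA)$ is even (hence $\pi_{-1}(E_n^{BA})=0$ and $\pi_{-1}((\tau_{\geq 2}E_n)^{BA})=0$) are all correct. The gap is in the final step, where you transport this vanishing to $\tau_{\geq 1}\gl_1(E_n)$. The comparison of \Cref{sec:SSLemma} rests on the equivalence $\tau_{[m,2m-1]}\gl_1(R)\simeq\tau_{[m,2m-1]}R$, which only identifies differentials in a bounded range: a $d_r$ out of $E_r^{s,t}$ on the $(-1)$-line is controlled for $r\leq t-1$, and \Cref{difftheorem} extends this by exactly one more page via $\overline{d}_{t+1}(\overline{x})=\overline{d_{t+1}(x)+x^2}$; nothing is said about longer differentials. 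For a non-cyclic abelian $p$-group the odd-line classes you must kill lie outside this range. Concretely, take $A=(\mathbb{Z}/p)^2$ with $p$ odd and the class in $E_2^{3,2}=H^3(BA;\pi_2\gl_1(E_n))\cong (\pi_0E_n)/p\neq 0$ contributing to $\pi_{-1}$. Here $t=2$, so only $d_2$ and $d_3$ are controlled: $d_2$ vanishes because $\pi_3=0$, the additive $d_3$ vanishes because there is no degree-three stable operation on $H\pi_0E_n$ at odd primes, and the correction term $x^2$ vanishes because $x$ is an odd-degree $p$-torsion class. In the additive Atiyah--Hirzebruch spectral sequence this class is eventually killed by a much longer differential governed by the formal group law, but neither \Cref{toolGalois} nor \Cref{difftheorem} lets you import that differential into the spectral sequence for $\gl_1$. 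So your argument establishes $\pi_{-1}((\tau_{\geq 1}\gl_1 E_n)^{BA})=0$ only for cyclic $A$.

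The paper avoids this entirely by inducting on the $p$-rank: writing $A\cong\mathbb{Z}/p^m\times A'$ and $E_n^{BA}\simeq(E_n^{BA'})^{B\mathbb{Z}/p^m}$, it applies \Cref{picRX} at each stage with $R=E_n^{BA'}$ and $X=B\mathbb{Z}/p^m$. The coefficients $\pi_*\tau_{\geq 1}\gl_1(E_n^{BA'})$ are concentrated in even degrees and are torsion-free $p$-complete groups (this is where abelianness enters, via freeness of $E_n^*(BA')$ over $E_n^*$), and $H^{\mathrm{odd}}(B\mathbb{Z}/p^m;M)=0$ for such $M$ with trivial action. Hence the $E_2$-page of the relevant spectral sequence is concentrated in even total degree, it degenerates, and no comparison of differentials is needed at any stage. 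If you restructure your argument as such an induction, peeling off one cyclic factor at a time, the proof closes.
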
 
\begin{proof} 
We induct on the $p$-rank of $A$. When $A $ has no $p$-torsion, then $E_n^{BA} \simeq E_n$ and
\Cref{evenperiodicreg} implies that the Picard group is $\mathbb{Z}/2$. 

If the $p$-rank of $A$ is positive, write
$A \simeq \mathbb{Z}/p^m \times A'$ where the $p$-rank of $A'$ has smaller
cardinality than that of $A$. The inductive
hypothesis gives 
us that the Picard group of $E_n^{BA'}$ is $\mathbb{Z}/2$. Now
$E_n^{BA} \simeq (E_n^{BA'})^{B\mathbb{Z}/p^m}$. Moreover, $E_n^{BA'}$ is
well-known to be even periodic (though its $\pi_0$ is not regular).\footnote{We
refer to \cite[\S 7]{HKR} for a general analysis of the question of when $E_n^{BG}$
is even-periodic for $G$ a finite group.} 

We claim now that $\pi_{-1}( (\tau_{\geq 1} \gl_1( E_n^{BA}))^{B
\mathbb{Z}/p^m}) = 0$. To see this, we observe that the homotopy
groups of $\tau_{\geq
1}\gl_1(E_n^{BA'})$ are concentrated in even degrees and are all
given by torsion-free $p$-complete abelian groups. 
Therefore, the cohomology groups $H^i(\mathbb{Z}/p^m, \pi_j 
\tau_{\geq 1} \gl_1( E_n^{BA}))$ \emph{vanish} if $i$ is odd, since the
$\mathbb{Z}/p^m$-action on them is trivial. In the homotopy fixed point spectral
sequence for 
$(\tau_{\geq 1} \gl_1( E_n^{BA}))^{B
\mathbb{Z}/p^m} $ (i.e. the Atiyah-Hirzebruch spectral sequence), there is no
room for contributions to $\pi_{-1}$. In fact, there is no room for
differentials at all, which indicates that any $\lim^1$ terms cannot occur
either. Now
\Cref{picRX} shows that the map $E_n^{BA'} \to E_n^{BA}$ induces an equivalence 
on Picard groups, which completes the inductive step. 

For the last claim, fix any finite group $G$ with an abelian $p$-Sylow
subgroup $A \subset G$. For any connected space $X$, denote as before
$\pic^0(R^X)$ the kernel of $\pic(R^X) \to \pic(R)$. We have a commutative square
\[ \xymatrix{
\pic^0(E_n^{BG}) \ar[r]\ar@{^{(}->}[d]  &  \pic^0(E_n^{BA}) \ar@{^{(}->}[d]  \\
\pi_{-1}( \tau_{\geq 1} \gl_1(E_n)^{BG}) \ar[r] & \pi_{-1}( \tau_{\geq 1} \gl_1(E_n)^{BA})
}\]
The bottom horizontal map is injective since $\tau_{\geq 1}\gl_1(E_n)$ is
$p$-local and $BG$ is $p$-locally a wedge summand of $BA$ in view of the
transfer $\Sigma^\infty_+ BG \to \Sigma^\infty_+ BA$, which has the property
that the composite $\Sigma^\infty_+ BG \to \Sigma^\infty_+BA
\to \Sigma^\infty_+BG$ is a $p$-local equivalence by inspection of $p$-local homology. 
It follows that $\pic^0(E_n^{BG}) \to \pic^0( E_n^{BA})$ is injective, and
since the latter is zero, the former must be as well. 
\end{proof}

Recall that the spectrum $E_1$ is $p$-complete complex $K$-theory. 
\begin{proposition} 
Let $G$ be any finite group. Then the Picard group of $E_1^{BG}$ is
finite. 
\end{proposition}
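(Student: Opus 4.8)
The plan is to reduce the computation to a units calculation via the cochain machinery of \Cref{picRX}, and then to exploit Rezk's logarithm, which at height $1$ has a bounded cofiber. Since $\pi_0 E_1 = \mathbb{Z}_p$ is reduced, \Cref{picRX} applies with $R = E_1$ and $X = BG$ (we may assume $BG$ connected), giving a split short exact sequence
\[ 0 \to A \to \pic(E_1^{BG}) \to \pic(E_1) \to 0, \qquad A \subseteq \pi_{-1}\big((\tau_{\geq 1}\gl_1 E_1)^{BG}\big). \]
Because $E_1$ is weakly even periodic with $\pi_0 E_1 = \mathbb{Z}_p$ regular noetherian, \Cref{evenperiodicreg} gives $\pic(E_1) \cong \mathbb{Z}/2$. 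Thus it suffices to prove that $\pi_{-1}\big((\tau_{\geq 1}\gl_1 E_1)^{BG}\big)$ is finite.

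First I would record the analogous \emph{additive} statement. The Postnikov fiber sequence $\tau_{\geq 2}E_1 \to E_1 \to H\mathbb{Z}_p$ (note $\pi_1 E_1 = 0$ and $\pi_0 E_1 = \mathbb{Z}_p$), after applying $F(BG_+, -)$, yields an exact sequence $\pi_0 E_1^{BG} \to H^0(BG; \mathbb{Z}_p) \to \pi_{-1}\big((\tau_{\geq 2}E_1)^{BG}\big) \to \pi_{-1}E_1^{BG}$. Here $\pi_{-1}E_1^{BG} = KU^1(BG)^{\wedge}_p = 0$, since the odd complex $K$-theory of the classifying space of a finite group vanishes (Atiyah--Segal), and the rank map $\pi_0 E_1^{BG} = KU^0(BG)^{\wedge}_p \to H^0(BG;\mathbb{Z}_p) = \mathbb{Z}_p$ is surjective because it carries the unit to $1$. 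Hence $\pi_{-1}\big((\tau_{\geq 2}E_1)^{BG}\big) = 0$.

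The point is now to compare $\tau_{\geq 1}\gl_1 E_1$ with $\tau_{\geq 2}E_1$. Since $E_1$ is $K(1)$-local, Rezk's logarithm provides a map of spectra $\ell\colon \gl_1 E_1 \to E_1$; on $\pi_{2k}$ (for $k \geq 1$) it is multiplication by $1 - p^{\,k-1}$, which is a unit of $\mathbb{Z}_p$ for $k \geq 2$ and is $0$ for $k = 1$. Truncating (and using $\pi_1 E_1 = 0$) yields $\ell\colon \tau_{\geq 1}\gl_1 E_1 \to \tau_{\geq 2}E_1$ whose cofiber $C$ has homotopy $\mathbb{Z}_p$ in degrees $2$ and $3$ and vanishes otherwise; in particular $\ell$ is an equivalence on $4$-connective covers. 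Applying $F(BG_+,-)$ to the cofiber sequence and using the vanishing of the previous paragraph, the long exact sequence exhibits $\pi_{-1}\big((\tau_{\geq 1}\gl_1 E_1)^{BG}\big)$ as a quotient of $\pi_0(C^{BG})$. As $C$ is concentrated in degrees $2$ and $3$, its descent spectral sequence shows that $\pi_0(C^{BG})$ receives contributions only from $H^2(BG;\mathbb{Z}_p)$ and $H^3(BG;\mathbb{Z}_p)$, both finite; hence $\pi_0(C^{BG})$, and therefore $A$, is finite. Combined with $\pic(E_1) = \mathbb{Z}/2$, this shows $\pic(E_1^{BG})$ is finite.

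The main obstacle, and the reason this argument succeeds at height $1$ but not obviously in general, is the input that the logarithm has a bounded cofiber: it is precisely the height-$1$ coincidence that $\tau_{\geq 1}\gl_1 E_1$ and $\tau_{\geq 2}E_1$ agree above a finite range which converts the a priori infinitely many odd cohomology groups $H^{2k+1}(BG;\mathbb{Z}_p)$ into a single finite contribution. The remaining care is at the prime $2$, where Rezk's formula must be adjusted but the same boundedness of the cofiber holds.
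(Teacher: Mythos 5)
Your proof is correct in substance and reaches the same reduction as the paper --- via \Cref{picRX} and \Cref{evenperiodicreg}, everything comes down to the finiteness of $\pi_{-1}\bigl((\tau_{\geq 1}\gl_1 E_1)^{BG}\bigr)$ --- but you handle that group by a genuinely different mechanism. The paper invokes the Adams--Priddy theorem to identify $\tau_{\geq 3}\gl_1(E_1)$ abstractly with $\Sigma^4 ku^{\wedge}_p$, and then argues that $(ku^{\wedge}_p)^*(BG)$ is finite in odd degrees by comparison with $E_1^*(BG)$. You instead use Rezk's logarithm, whose effect on $\pi_{2t}$ (multiplication by $1-p^{t-1}$) gives an \emph{explicit} map $\tau_{\geq 1}\gl_1 E_1 \to \tau_{\geq 2}E_1 \simeq \Sigma^2 ku^{\wedge}_p$ with bounded cofiber $C$ concentrated in degrees $2$ and $3$. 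This buys you something concrete: the error term is visibly controlled by $H^2(BG;\mathbb{Z}_p)\oplus H^3(BG;\mathbb{Z}_p)$, and you even get the sharper vanishing $\pi_{-1}\bigl((\tau_{\geq 2}E_1)^{BG}\bigr)=0$ rather than mere finiteness. Note also that $1-p^{t-1}$ is a unit in $\mathbb{Z}_p$ for all $t\geq 2$ at \emph{every} prime, including $p=2$, so your closing caveat about adjusting the formula at $2$ (which concerns $KO$, not $KU^{\wedge}_2$) is unnecessary.

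There is one incorrect statement you should repair: the cofiber of $\tau_{\geq 2}E_1 \to E_1$ is \emph{not} $H\mathbb{Z}_p$, because $E_1$ is periodic rather than connective; it is $\tau_{\leq 0}E_1$, which has a copy of $\mathbb{Z}_p$ in every non-positive even degree. Your displayed four-term sequence survives this correction only by a degree count: in the descent spectral sequence for $F(BG_+,\tau_{\leq 0}E_1)$, a contribution to $\pi_0$ requires $s=t$ with $t\leq 0$ and $s\geq 0$, forcing $s=t=0$, so $\pi_0\bigl((\tau_{\leq 0}E_1)^{BG}\bigr)\cong H^0(BG;\mathbb{Z}_p)$ after all (and similarly the terms in negative filtration cannot contribute to $\pi_{-1}$). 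With that observation inserted, the surjectivity of the rank map $KU_p^0(BG)\to \mathbb{Z}_p$ and the Atiyah--Segal vanishing of $KU_p^1(BG)$ do give $\pi_{-1}\bigl((\tau_{\geq 2}E_1)^{BG}\bigr)=0$ as you claim, and the rest of the argument goes through.
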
 
\begin{proof} 
In fact, $\pi_{-1}\left(\tau_{\geq 1} \gl_1(E_1)^{BG}\right)$ is finite. 
We know that $\tau_{\geq 3}\gl_1(E_1) \simeq \Sigma^4 {ku}\hat{ _p}$ by a
theorem of Adams-Priddy \cite{AdamsPriddy}.
Moreover, $({ku}\hat{ _p})^*(BG)$ is finite in each odd dimension, by
comparing with $E_1^*(BG)$ which vanishes in odd dimensions. It follows now from
\Cref{picRX} that the desired Picard group has to be finite. 
\end{proof}

\begin{question}
Let $G$ be any finite group. Can the Picard group of $E_1^{BG}$ be any larger
than $\mathbb{Z}/2$? What about the higher Morava $E$-theories?
\end{question}

\subsection{Coconnective rational $\e{\infty}$-rings}

We can also determine the Picard groups 
of coconnective rational $\e{\infty}$-ring spectra. 
A rational $\e{\infty}$-ring $R$ is said to be \emph{coconnective} if
\begin{enumerate}
\item $\pi_0 R$ is a field (of characteristic zero). 
\item $\pi_i R = 0$ for $i > 0$. 
\end{enumerate}

\begin{introtheoremD} 
If $R$ is a coconnective rational $\e{\infty}$-ring, then 
the Picard group $\pic(R) \simeq \mathbb{Z}$, generated by $\Sigma R$.
\end{introtheoremD}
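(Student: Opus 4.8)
The plan is to exploit the canonical $\e{\infty}$-ring map $R \to k$, where $k := \pi_0 R$: since $R$ is coconnective, its Postnikov truncation $\tau_{\geq 0} R$ is exactly $Hk$, so this map exists. Base change along it gives a symmetric monoidal functor $-\otimes_R k\colon \md(R) \to \md(k)$ and hence a homomorphism $\pic(R) \to \pic(k)$. By the computation of $\pic(\md(k))$ recalled in the proof of \Cref{connectivepic}, we have $\pic(k) \cong \mathbb{Z}$ generated by $\Sigma k$, and $\Sigma R \mapsto \Sigma k$; so it suffices to prove that $\pic(R) \to \pic(k)$ is injective, i.e. that any invertible $M$ with $M \otimes_R k \simeq k$ is equivalent to $R$. (Every invertible module is perfect, as noted after \Cref{pics0}.)

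The technical heart is a ``top-degree Künneth'' for the coconnective ring $R$, dual to the connective Nakayama input of \Cref{lem:flatness}. Writing $a(N) := \sup\{i : \pi_i N \neq 0\}$ for a nonzero perfect $R$-module $N$, I would establish that $a(P \otimes_R Q) = a(P) + a(Q)$ with $\pi_{a(P)+a(Q)}(P \otimes_R Q) \cong \pi_{a(P)} P \otimes_k \pi_{a(Q)} Q$. This is the main obstacle and essentially the only real computation; I would prove it with the Künneth (Tor) spectral sequence $\Tor^{\pi_* R}_{*,*}(\pi_* P, \pi_* Q) \Rightarrow \pi_*(P\otimes_R Q)$, using that $\pi_* R$ is a connected graded algebra with $\pi_0 R = k$ a field and augmentation ideal concentrated in degrees $\leq -1$. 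Minimality then forces the generators of a free resolution to drop by at least one internal degree at each homological step, so that $\Tor_0 = \pi_*P \otimes_{\pi_*R}\pi_*Q$ accounts for the top total degree while each higher $\Tor_s$ is pushed at least $s$ degrees lower; this isolates the claimed top group. Applying the statement to $I := \mathrm{fib}(R \to k)$, which is concentrated in degrees $\leq -1$, gives $a(I \otimes_R N) \leq a(N) - 1$, and the cofiber sequence $I \otimes_R N \to N \to N \otimes_R k$ then yields an isomorphism $\pi_{a(N)} N \xrightarrow{\sim} \pi_{a(N)}(N \otimes_R k)$. In particular $N \otimes_R k \neq 0$ whenever $N \neq 0$, so passing to cofibers shows $-\otimes_R k$ is conservative on perfect modules.

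Finally I would run the standard lifting argument. Applying the top-degree statement to our invertible $M$ with $M \otimes_R k \simeq k$ forces $a(M) = 0$ and $\pi_0 M \cong k$, so $M$ is coconnective with one-dimensional top homotopy group and the edge map $\pi_0 M \to \pi_0(M \otimes_R k)$ is an isomorphism. Choosing a generator of $\pi_0 M$ produces a map $f\colon R \to M$; after base change, $f \otimes_R k$ is a map between objects each equivalent to $k$ that is an isomorphism on $\pi_0$, hence an equivalence. By the conservativity just established, $f$ itself is an equivalence, so $M \simeq R$. This gives injectivity of $\pic(R) \to \pic(k)$, whence $\pic(R) \cong \mathbb{Z}$ generated by $\Sigma R$. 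I expect essentially all of the difficulty to sit in the coconnective Künneth bound; once conservativity is available, the remainder is formal.
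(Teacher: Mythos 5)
Your route --- reduce to the residue field via an augmentation $R \to k$ and run a Nakayama-style lifting argument --- is genuinely different from the paper's proof (which writes $R$ as a totalization of split square-zero extensions $k \oplus V[-1]$ and bounds the Picard $\infty$-groupoid by the totalization of the resulting essentially constant cosimplicial diagram), but as written it has two real gaps.

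First, the map $R \to k$ does not exist for the reason you give. For coconnective $R$ the identification $\tau_{\geq 0} R \simeq Hk$ produces the connective cover map $Hk \to R$, i.e.\ it exhibits $R$ as a $k$-algebra, not $k$ as an $R$-algebra; there is no truncation map out of $R$ here. The existence of an $\e{\infty}$-augmentation $R \to k$ is a genuine theorem: it amounts to producing a $k$-point of $\spec R$ (for $R = C^*(X;k)$ this is the statement that $X$ is nonempty), and in this generality it is a consequence of exactly the structure theorem \cite[Proposition 4.3.3]{DAGVIII} on which the paper's proof rests --- one composes $R \to A^0 = k \oplus V_0[-1] \to k$. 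Without that (or an equivalent input from Sullivan's theory of minimal models), the functor $-\otimes_R k$ on which your whole argument runs is not defined, so the approach does not actually bypass the paper's main external ingredient.

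Second, the ``top-degree K\"unneth'' is false as stated, even for perfect modules. Take $R = C^*(S^1;\mathbb{Q}) \simeq \mathbb{Q} \oplus \mathbb{Q}\epsilon$ with $|\epsilon| = -1$, and $P = Q = \mathrm{cofib}(\epsilon\colon \Sigma^{-1}R \to R)$. Then $\epsilon$ acts by zero on $\pi_* P \cong k_{(0)} \oplus k_{(-1)}$, so $\pi_0(P \otimes_R P) \cong k^2 \neq k = \pi_0 P \otimes_k \pi_0 Q$; similarly $\pi_0(P \otimes_R k) \cong k^2$, so the edge map $\pi_{a(N)} N \to \pi_{a(N)}(N \otimes_R k)$ is injective but not an isomorphism. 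The error is a degree count in the Tor spectral sequence: minimality does force the generators of $F_s$ into internal degrees $\leq a(P) - s$, whence $\Tor_{s,t} = 0$ for $t > a(P) + a(Q) - s$; but $\Tor_{s,t}$ contributes to $\pi_{s+t}$, and the homological degree $s$ exactly compensates the internal-degree drop, so every $\Tor_s$ can (and in the example does) contribute to the top total degree $a(P)+a(Q)$. What your argument genuinely proves is that $\pi_i(P \otimes_R Q) = 0$ for $i > a(P)+a(Q)$ and that $\pi_{a(P)}P \otimes_k \pi_{a(Q)}Q$ injects into $\pi_{a(P)+a(Q)}(P \otimes_R Q)$. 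This weaker statement is in fact enough to finish: it gives additivity of $a$, hence conservativity of $-\otimes_R k$ on bounded-above modules, and for invertible $M$ with $M \otimes_R k \simeq k$ the nonzero subspace $\pi_0 M \hookrightarrow \pi_0(M \otimes_R k) = k$ must be all of $k$, so the lifting argument goes through. So the second gap is repairable; the first is where the real content lies.
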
 
\begin{proof} 
Let $k = \pi_0 R$. 
We use \cite[Proposition 4.3.3]{DAGVIII} to conclude that $R \simeq
\mathrm{Tot}( A^\bullet )$, where  $A^\bullet$ is a cosimplicial
$\e{\infty}$-$k$-algebra with each $A^i$ of the form $k \oplus V[-1]$, where $V$ is a
discrete $k$-vector space; the $\e{\infty}$-structure given is the
``square-zero'' one. 

We thus begin with the case of $R = k \oplus V[-1]$: we will show that
$\pic(R) \simeq \mathbb{Z}$ in this case. 
Since $\pic$ commutes with filtered colimits, we may assume that $V$ is a
finite-dimensional vector space. In this case, 
\[ R \simeq k^{S^1 \vee \dots \vee S^1},  \]
where the number of copies of $S^1$ in the wedge summand is equal to the
dimension $n = \dim_k V$; by \cite[Proposition 4.3.1]{DAGVIII}, any rational
$\e{\infty}$-ring with these homotopy groups is equivalent to  $k \oplus V[-1]$. 
But we can now use \Cref{picRX} to see that the Picard group of $k^{S^1 \vee \dots
\vee S^1}$ is $\mathbb{Z}$, generated by the suspension, because $\tau_{\geq 1}
\gl_1( k) = 0$.

Now suppose that $R$ is arbitrary. As above, we have an equivalence $R \simeq
\mathrm{Tot}(A^\bullet)$ where each $A^i$ is a coconnective $\e{\infty}$-ring
of the form $k \oplus V[-1]$ for $V$ a discrete $k$-vector space. We have seen
above that $\pic(A^i) \simeq \mathbb{Z}$. 
We know, moreover, that we have a fully faithful embedding
of symmetric monoidal $\infty$-categories
\[ \md^\omega(R) \subset  \mathrm{Tot}( \md(A^\bullet)),  \]
which implies that we have a fully faithful functor
of $\infty$-groupoids
\[ \Pic(R) \to \mathrm{Tot}( \Pic(A^\bullet)).  \]
But each $\Pic(A^i)$, as an $\infty$-groupoid, has homotopy groups given by 
\[ \pi_j \Pic(A^i) \simeq \begin{cases} 
\mathbb{Z}  & j = 0 \\
k^{\times} & j =1
 \end{cases} ,\]
and in particular, in the cosimplicial diagram $\Pic(A^\bullet)$, all the maps
are \emph{equivalences.} This is a helpful consequence of coconnectivity. Thus, we find that $\mathrm{Tot}(\Pic(A^\bullet))$ maps
by equivalences to each $\Pic(A^i)$, and we get an upper bound of
$\mathbb{Z}$ for $\Pic(R)$. This upper bound is realized by the suspension
$\Sigma R$ (which hits the generator of $\mathbb{Z} \simeq \pi_0 \Tot(
\Pic(A^\bullet))$).
\end{proof} 

\begin{remark} 
If $k = \mathbb{Q}$, then a large class of coconnective $\e{\infty}$-rings with
$\pi_0 \simeq \mathbb{Q}$
(e.g. those with reasonable finiteness hypotheses and vanishing $\pi_{-1}$)
arise as cochains on a simply connected space, by Quillen-Sullivan's rational
homotopy theory. The comparison with local systems can be carried out directly
here to prove Theorem~\ref{thm:charzero} for these $\e{\infty}$-rings.
\end{remark}

\subsection{Quasi-affine cases}

We now consider a case where the descent spectral sequence
enables us to \emph{produce} nontrivial elements in the Picard group. 
Let $A$ be a weakly even-periodic $\e{\infty}$-ring with $\pi_0 A $ 
regular noetherian, and write $\omega = \pi_2 A$.
Then $A$ leads to a sheaf of $\e{\infty}$-rings
on the affine \'etale site of $\spec\, \pi_0 A$. That is, for every \'etale
$\pi_0 A$-algebra $A'_0$, there is (functorially) associated \cite[\S
8.5]{higheralg}  an $\e{\infty}$-ring $A'$ under $A$ with $\pi_0 A' \simeq A'_0$ and
$A'$ flat over $A$. We will denote this sheaf by $\otop$.

Now let $a_1, \dots, a_n \in \pi_0 A$ be a regular sequence, for $n \geq 2$. We consider the
complement $U$ in $\spec\, \pi_0 A $ of the closed subscheme $V(a_1, \dots,
a_n)$ and the sections $\overline{A} = \Gamma(U, \otop)$.  $\overline{A}$ is an
$\e{\infty}$-$A$-algebra and is a type of localization of $A$, albeit not
(directly) an
arithmetic one.\footnote{A piece of forthcoming work of Bhatt and Halpern-Leinster
identifies the universal property of $\overline{A}$.} Note that $\pic(A)$ is algebraic by \Cref{evenperiodicreg}, but
the situation for $\overline{A}$ is more complicated. 

The homotopy groups 
$\pi_*(\overline{A})$ are given by the abutment of a descent spectral sequence
\begin{align}\label{eq:AbarSS}
 H^s( U, \omega^{\otimes t}) \implies \pi_{2t - s}( \overline{A}).  
 \end{align}
We can first determine the zero-line. We have 
\[ H^0(U, \omega^{\otimes t}) = H^0(\spec\, \pi_0 A, \omega^{\otimes t}),  \]
because $\spec\, \pi_0 A$ is regular and $U \subset \spec\, \pi_0 A$ is obtained by removing a codimension
$\geq 2$ subscheme. 

\begin{proposition} 
The only other nonzero term in the descent spectral sequence \eqref{eq:AbarSS} occurs for $s =
n-1$. The descent spectral sequence degenerates. 
\end{proposition}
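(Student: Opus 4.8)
The plan is to compute the local cohomology of the sheaf $\omega^{\otimes t}$ on $U$ directly, using the fact that $U$ is the complement of the vanishing locus $V(a_1,\dots,a_n)$ of a regular sequence in the regular scheme $\spec\,\pi_0 A$. The key tool is the long exact sequence relating cohomology on $\spec\,\pi_0 A$, cohomology on $U$, and local cohomology supported on $Z = V(a_1,\dots,a_n)$. Since $\spec\,\pi_0 A$ is affine, the quasi-coherent cohomology $H^s(\spec\,\pi_0 A, \omega^{\otimes t})$ vanishes for $s \geq 1$, so the connecting maps give isomorphisms
\[ H^{s}(U, \omega^{\otimes t}) \;\cong\; H^{s+1}_Z(\spec\,\pi_0 A, \omega^{\otimes t}) \qquad (s \geq 1), \]
together with the already-established zero-line statement coming from the $s=0$ segment. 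So the entire problem reduces to computing the local cohomology $H^{\bullet}_Z$ of a line bundle supported along $Z$.

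\emph{The heart of the argument} is the following standard fact from commutative algebra: if $a_1,\dots,a_n$ is a regular sequence in a noetherian ring cutting out a closed subscheme $Z$ of codimension $n$, and $\mathcal{F}$ is locally free (here $\omega^{\otimes t}$, which is invertible hence locally free since $\pi_0 A$ is regular), then the local cohomology $H^{i}_Z(-,\mathcal{F})$ is concentrated in the single degree $i = n$. Indeed, the local cohomology can be computed by the (stable) Koszul complex on $a_1,\dots,a_n$, and for a regular sequence this Koszul complex is a resolution, so its cohomology is concentrated in top degree $n$; tensoring with a locally free sheaf preserves this concentration. Combined with the isomorphism above, this forces $H^{s}(U, \omega^{\otimes t}) = 0$ for $1 \leq s \leq n-2$ and leaves a single possibly-nonzero group in cohomological degree $s = n-1$, exactly as claimed. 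I would invoke the regularity of $\pi_0 A$ to guarantee $\omega^{\otimes t}$ is a line bundle and to justify that removing a codimension-$\geq 2$ locus does not change $H^0$.

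For the degeneration claim, the point is a \emph{dimensional} one: with only the two horizontal lines $s = 0$ and $s = n-1$ carrying nonzero groups, I would check that no differential $d_r$ can connect them. The differentials run $d_r \colon E_r^{s,t} \to E_r^{s+r, t+r-1}$ (increasing $s$ by $r \geq 2$), so a differential out of the $s=0$ line lands in $s = r \geq 2$, and since $n \geq 2$ the only target line available is $s = n-1$; but such a differential would require $r = n-1$, and I would rule it out by comparing the internal degree $t$ — a $d_{n-1}$ from $(0,t)$ lands in $(n-1, t+n-2)$, and tracking which $t$-values actually support nonzero classes on each line (the $s=0$ line sees $\omega^{\otimes t}$ for all $t$, the $s=n-1$ line sees the top local cohomology) shows the source and target cannot simultaneously be nonzero, or alternatively that the map is forced to vanish.

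\emph{The main obstacle} I anticipate is making the degeneration argument airtight rather than merely plausible: concentration in two rows does not by itself preclude a single long differential $d_{n-1}$ between them, so the real work is to verify that this lone potential differential must vanish. The cleanest route is probably multiplicativity — the spectral sequence \eqref{eq:AbarSS} is a spectral sequence of $H^*(U,\omega^{\otimes \bullet})$-modules (or at least the zero-line $H^0(\spec\,\pi_0 A, \omega^{\otimes \bullet})$ acts on it), and the $s=0$ row consists of permanent cycles (they are genuine global functions/sections pulling back from the affine base), so any differential is determined by its value on module generators and is constrained to be a derivation, which in this sparse configuration forces it to be zero. I would lean on this module structure to conclude degeneration cleanly.
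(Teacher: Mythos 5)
Your proposal is correct and is essentially the paper's argument in different packaging: the local cohomology of a regular sequence computed by the stable Koszul complex is exactly the \v{C}ech complex for the cover $U = \bigcup_i (\spec \pi_0 A \setminus V(a_i))$ that the paper uses, giving concentration in degrees $0$ and $n-1$. For degeneration, note that your degree-tracking idea alone does not rule out the $d_{n-1}$ (for $n$ even both source and target can be nonzero), but the fix you land on --- the $s=0$ line consists of permanent cycles because those classes pull back along $A \to \overline{A}$ from the degenerate spectral sequence over the affine base --- is precisely the paper's argument, and it already suffices without the additional multiplicativity/derivation discussion.
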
 
\begin{proof} 
Cover the scheme $U$ by the $n$ open affine subsets $U_i = \spec \pi_0(A) \setminus
V(a_i)$, for $1 \leq i \leq n$. Given any quasi-coherent sheaf $\mathcal{F}$ on
$U$, it follows that the coherent cohomology $H^*(U, \mathcal{F})$ is that of the
\v{C}ech
complex (which starts in degree zero)
\[ \bigoplus_{i = 1}^n\mathcal{F}(U_i) \to \bigoplus_{i < j} \mathcal{F}(U_i
\cap U_j) \to \dots \to \mathcal{F}(U_1 \cap \dots \cap U_n).  \]
Let $R = \pi_0 A$, and suppose $\mathcal{F}$ is the restriction to $U \subset
\spec R$ of
the quasi-coherent sheaf
$\widetilde{M}$ on $\spec\, R$ for an $R$-module $M$. Then the final term is the cokernel of
the map
\[ \bigoplus_{i = 1}^n M[( a_1 \dots \hat{a_i} \dots a_n)^{-1}] \to M[(a_1
\dots a_n)^{-1}],  \]
where the hat denotes omission. If $M$ is flat, the complex is exact away from
degrees $0$ and $n-1$
as the sequence $a_1, \dots, a_n$ is regular, using a Koszul complex argument
(see \cite{24hrs} for a detailed treatment or \cite{GMcompletion} for a short
exposition with a view towards topological applications), and the zeroth cohomology is given by $M$
itself.

Now, in view of the map $A \to \overline{A}$, clearly everything in the
zero-line of the $E_2$-page of the spectral sequence survives, so the spectral
sequence
must degenerate. 
\end{proof} 

We now study the Picard group of $\overline{A}$: as above,
$\pi_* \overline{A}$ is not regular but instead has a great deal of
square-zero material. 
Let $\mathfrak{U} = (U, \otop|_{U})$ denote the derived scheme consisting
of the topological space $U \subset \spec\, \pi_0 A$, but equipped with the 
sheaf $\otop$ of $\e{\infty}$-rings restricted to $U$. 
$\overline{A}$ arises as the global sections of the structure sheaf $\otop$
over the derived scheme $\mathfrak{U}$. 

Since $U$ is quasi-affine as an (ordinary!) scheme, it follows by
\cite[Corollary 3.24]{affine} that 
the global sections functor is the right adjoint of an inverse equivalence
\[ \md( \overline{A}) \rightleftarrows \qcoh( \mathfrak{U}) ,  \]
of symmetric monoidal $\infty$-categories. 
In particular, the Picard group $\pic(\overline{A})$ 
can be computed as $\pic( \qcoh( \mathfrak{U}))$. 

As before, we have a descent spectral sequence
\eqref{eq:descentss} converging to $\pi_{t-s} \pics(\overline{A})$. But from
\eqref{eq:descentss}, we know that almost all of the terms at $E_2$ are
identified with the descent spectral sequence for $\pi_{*}\overline{A}$. 
In addition, we know that $H^1(U, \mathcal{O}_U^{\times}) \simeq
\pic(\pi_0 A)$, as $\pi_0 A$ is regular and the complement of $U$ has
codimension $\geq 2$. These classes must be permanent cycles as they are
realized in $\pic(\overline{A})$: in fact, they are realized in $\pic(A)$
itself. 
Thus, the descent spectral sequence for $\pics$ degenerates as well. We get
three contributions to the Picard group: $\mathbb{Z}/2$ and $\pic(\pi_0
A)$, which together build $\pic(\pi_*A)$ (compare \Cref{rem:AlgGradPic}),
and a group that is identified with $\pi_{-1} \overline{A}$. 
The relevant extension problem is solved because of the map 
$\pic(\pi_* A) \cong \pic(A) \to \pic(\overline{A})$
realizing the algebraic part of the Picard group. We get: 

\begin{theorem} 
\label{q-affine}
Let $\overline{A} = \Gamma(U, \otop)$ as above. Then we have a natural
isomorphism
\[ \pic(\overline{A}) \simeq \pic( \pi_* A) \times
\pi_{-1}(\overline{A}).  \]
\end{theorem}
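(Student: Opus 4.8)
The plan is to read off $\pic(\overline{A}) = \pi_0 \pics(\overline{A})$ from the descent spectral sequence of \Cref{descentss}, applied to the derived scheme $\mathfrak{U} = (U, \otop|_U)$. First I would use the equivalence $\md(\overline{A}) \simeq \qcoh(\mathfrak{U})$, valid since $U$ is quasi-affine, to identify $\pics(\overline{A})$ with $\tau_{\geq 0}\Gamma(\mathfrak{U}, \pics(\otop))$, so that \eqref{eq:descentss} with $X = U$ converges to $\pi_{t-s}\pics(\overline{A})$. Restricting to the line $t-s=0$ that computes $\pic(\overline{A})$, the $E_2$-contributions are $H^0(U,\mathbb{Z}/2) = \mathbb{Z}/2$ at $(0,0)$, $H^1(U,\mathcal{O}_U^\times) \cong \pic(\pi_0 A)$ at $(1,1)$, and a single higher group at filtration $s = n-1$; every other entry $E_2^{s,s}$ vanishes because, by the Koszul-complex computation recalled above, the cohomology of a flat sheaf on $U$ is concentrated in degrees $0$ and $n-1$.

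Next I would establish degeneration along this line. The classes at $(0,0)$ and $(1,1)$ are permanent cycles, since they are realized already in $\pic(A)$ --- which is algebraic by \Cref{evenperiodicreg} --- via the base-change map $\pic(A) \to \pic(\overline{A})$ induced by $A \to \overline{A}$; and neither can receive a differential, as the source would sit in negative filtration. The top class at filtration $n-1$ supports no outgoing differential, because its target lies in cohomological degree $> n-1$, where the sheaf cohomology vanishes. The only possible incoming differential is $d_{n-1}$ from $(0,1) = H^0(U,\mathcal{O}_U^\times) = (\pi_0 A)^\times$. Showing that this differential vanishes is the step I expect to be the main obstacle, since it is precisely the ``unstable'' differential off the units line that the general comparison of \Cref{sec:SSLemma} does not directly govern. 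I would dispatch it by observing that $E_2^{0,1}$ consists entirely of permanent cycles: an honest unit $u \in (\pi_0 \overline{A})^\times = (\pi_0 A)^\times$ determines a self-equivalence of $\overline{A}$, hence a class in $\pi_1 \pics(\overline{A})$ whose image under the edge map is $u$ itself, so the edge map onto $E_\infty^{0,1} \subseteq E_2^{0,1}$ is surjective and $(0,1)$ supports no differential.

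With degeneration in hand, $E_\infty = E_2$ on the line $t-s=0$, and the surviving group at filtration $n-1$ is identified with $\pi_{-1}\overline{A}$ by comparison with the (degenerate) additive descent spectral sequence \eqref{eq:AbarSS}, in which the same cohomology group is exactly the $s = n-1$ contribution to $\pi_{-1}\overline{A}$. It then remains to solve two extension problems. The bottom two pieces, $\mathbb{Z}/2$ and $\pic(\pi_0 A)$, assemble into the graded algebraic Picard group $\pic(\pi_* A)$ with the extension dictated by \Cref{rem:AlgGradPic}, and this is the correct extension because it is realized by the subgroup $\pic(\pi_* A) \cong \pic(A) \hookrightarrow \pic(\overline{A})$. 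Finally, the resulting short exact sequence $0 \to \pi_{-1}\overline{A} \to \pic(\overline{A}) \to \pic(\pi_* A) \to 0$ splits: the map $\pic(A) \to \pic(\overline{A})$ is a section of the quotient onto the algebraic part, since $\pi_0 A = \pi_0 \overline{A}$ and $\omega$ is pulled back, so both sides detect the same suspension-and-line-bundle data. This yields the claimed direct product $\pic(\overline{A}) \simeq \pic(\pi_* A) \times \pi_{-1}\overline{A}$.
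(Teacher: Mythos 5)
Your argument is correct and follows essentially the same route as the paper: identify $\md(\overline{A})$ with $\qcoh(\mathfrak{U})$ via quasi-affineness, read off the $t-s=0$ line of the descent spectral sequence (whose entries are concentrated at $s=0,1,n-1$ by the Koszul computation), observe that the bottom classes are realized by $\pic(A)$ and hence permanent, and split the resulting extension using the section $\pic(\pi_*A)\cong\pic(A)\to\pic(\overline{A})$. The only difference is that you spell out the degeneration checks the paper leaves implicit — in particular the possible $d_{n-1}$ off $E_2^{0,1}=H^0(U,\mathcal{O}_U^\times)$, which you correctly kill via the edge map from $\pi_1\pics(\overline{A})$.
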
 

Moreover, observe that
\begin{equation}\pi_{-1}(\overline{A})  = \begin{cases} 
\mathrm{coker}\left(  \bigoplus_{i = 1}^n \omega^{n/2-1}[(a_1 \dots \hat{a_i} \dots
a_n)^{-1}] \to \omega^{n/2-1}[(a_1 \dots a_n)^{-1}] \right) & n \geq 4\text{ even}  \\
0 & n \text{ odd}.
 \end{cases} \end{equation}


\begin{example} 
Let $A$ be a Landweber-exact weakly even periodic $\e{\infty}$-ring with $\pi_0 A$
regular noetherian; for instance, $A$ could be Morava $E$-theory $E_n$. 
In this case, we take $a_1, \dots, a_k  = p, v_1, \dots, v_{k-1}$, so that
$\overline{A} \simeq L_k A$. 
This gives Theorem~\ref{lnlexact} as a special case of \Cref{q-affine}. 
\end{example} 
\part{Computational tools}

\section{The comparison tool in the stable range}\label{sec:SSLemma}

This is a technical section in which we develop a tool that will enable us to
compare many of the differentials in a Picard  spectral sequence for Galois or
\'etale descent with the analogous differentials in the corresponding descent
spectral sequence before taking the Picard functor (i.e. for the
$\e{\infty}$-rings themselves). For example, in the Galois descent setting, we
are given a $G$-Galois extension $A\to B$, and we know the descent, i.e. homotopy fixed point, spectral sequence for $A\simeq B^{hG}$. The tool we develop in this section will allow us to deduce many differentials in the homotopy fixed point spectral sequence for $(\pics (B))^{hG}$.

For a spectrum or a pointed space $X$, and integers $a,b$, we denote by
$\tau_{\geq a} X$, $\tau_{\leq b} X$, and $\tau_{[a,b]} X$ the truncations of $X$ with homotopy groups in the designated range. 
Our main observation is that if $R$ is any $\e{\infty}$-ring, then for any $n \geq
2$, there is a natural equivalence of spectra
\[ \tau_{[n, 2n-1]} R \simeq \tau_{[n, 2n-1]} \gl_1(R).  \]
This equivalence is natural at the level of $\infty$-categories, and enables us
to identify a large number of differentials in descent spectral sequences for
$\gl_1$ and therefore also for $\pics$.
This observation, however, fails if we increase the range by $1$, and an
identification of the relevant discrepancy (as observed in such spectral
sequences) will be the subject of the following section and the 
formula \eqref{univformula}. 

The main result of \Cref{sec:truncated} is essentially a formulation of the
classical concept of the ``stable range''
in $\infty$-categorical terms, as can be seen from the fact that the major ingredients of the proof are Freudenthal's suspension theorem as well as the existence of Whitehead products in the unstable setting. Nonetheless, our formulation will be extremely
useful in the sequel. 

\subsection{Truncated spaces and spectra}\label{sec:truncated}
Throughout, $n \geq 2$.

\begin{definition}Let $\sp_{[n, 2n-1]} \subset \sp$ denote the $\infty$-category of spectra with homotopy
groups concentrated in degrees $[n, 2n-1]$. Let $\mathcal{S}_{*}$ denote the
$\infty$-category of pointed spaces, and let $\mathcal{S}_{*, [a,b]} \subset
\mathcal{S}_*$ denote
the subcategory spanned by those pointed spaces whose homotopy groups are
concentrated in the interval $[a,b]$. 
\end{definition}

The main goal of this subsection is to prove 
the following result identifying spaces and spectra whose homotopy groups are
concentrated in a range of dimensions.

\begin{theorem} \label{omegainfinity}
The functor $\Omega^\infty\colon \sp_{[n, 2n-1]} \to \mathcal{S}_*$ is fully
faithful. 
The functor $\Omega^\infty\colon \sp_{[n, 2n-2]} \to \mathcal{S}_{*, [n, 2n-2]}$ is
an equivalence of $\infty$-categories. 
\end{theorem}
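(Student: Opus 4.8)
The plan is to exploit the adjunction $\Sigma^\infty \dashv \Omega^\infty$ between pointed spaces and spectra. For $X, Y \in \sp_{[n,2n-1]}$, the map on mapping objects induced by $\Omega^\infty$ is identified, via this adjunction, with precomposition by the counit $c_X\colon \Sigma^\infty \Omega^\infty X \to X$; that is, with $c_X^*\colon \map_{\sp}(X, Y) \to \map_{\sp}(\Sigma^\infty\Omega^\infty X, Y) \simeq \map_{\mathcal{S}_*}(\Omega^\infty X, \Omega^\infty Y)$. Writing $F = \mathrm{fib}(c_X)$, the cofiber sequence $F \to \Sigma^\infty\Omega^\infty X \to X$ yields, upon applying $\map_{\sp}(-, Y)$, a fiber sequence of mapping spectra whose third term is $\map_{\sp}(F, Y)$; hence full faithfulness will follow once I show $\map_{\sp}(F, Y) \simeq \ast$. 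Since $Y$ has homotopy concentrated in degrees $\le 2n-1$, this reduces to proving that $F$ is $(2n-1)$-connected, i.e. that $\pi_i F = 0$ for $i \le 2n-1$.

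The heart of the matter is this connectivity estimate for the counit, and it is where Freudenthal enters. First I would observe that the triangle identity shows that on homotopy the composite $\pi_i \Omega^\infty X \xrightarrow{\sigma_i} \pi_i \Sigma^\infty\Omega^\infty X \xrightarrow{(c_X)_*} \pi_i X$ is the identity, where $\sigma_i$ is the stabilization map of the space $Z := \Omega^\infty X$; in particular $(c_X)_*$ is a split epimorphism in every nonnegative degree. Since $n \ge 2$, the space $Z$ is $(n-1)$-connected, so Freudenthal's suspension theorem gives that $\sigma_i$ is an isomorphism for $i \le 2n-2$ and an epimorphism for $i = 2n-1$. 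Combined with the splitting $(c_X)_* \sigma_i = \mathrm{id}$, the epimorphism $\sigma_{2n-1}$ is forced to be a split monomorphism as well, hence an isomorphism; thus $(c_X)_*$ is an isomorphism for all $i \le 2n-1$. Being also surjective in every degree, the long exact sequence collapses to $\pi_i F \cong \ker (c_X)_*$, which vanishes for $i \le 2n-1$. This establishes the $(2n-1)$-connectivity of $F$ and completes the proof of full faithfulness on $\sp_{[n,2n-1]}$.

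For the second statement, full faithfulness of $\Omega^\infty\colon \sp_{[n,2n-2]} \to \mathcal{S}_{*,[n,2n-2]}$ is immediate by restriction of the first statement, noting that $\Omega^\infty X$ has homotopy in $[n,2n-2]$ since $\pi_i\Omega^\infty X = \pi_i X$ for $i \ge 0$. It remains to prove essential surjectivity. Given $W \in \mathcal{S}_{*,[n,2n-2]}$, I would set $X := \tau_{[n,2n-2]}\Sigma^\infty W = \tau_{\le 2n-2}\Sigma^\infty W$, which lies in $\sp_{[n,2n-2]}$ because $\Sigma^\infty W$ is $(n-1)$-connected. Then $\Omega^\infty X \simeq \tau_{\le 2n-2}\Omega^\infty\Sigma^\infty W$, and the unit $W \to \Omega^\infty\Sigma^\infty W$ composed with this truncation is, by Freudenthal again, an isomorphism on $\pi_i$ for $i \le 2n-2$; as both sides have homotopy concentrated in $[n,2n-2]$, it is an equivalence $W \simeq \Omega^\infty X$.

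The step where I expect to have to be most careful is the connectivity estimate for $F$, specifically the one-degree improvement from the naive Freudenthal range $[n,2n-2]$ up to $[n,2n-1]$, which is exactly what makes full faithfulness persist at the boundary degree $2n-1$. The improvement is formal, coming from the splitting supplied by the triangle identity, but tracking whether one has an isomorphism versus a mere epimorphism in degree $2n-1$ is the delicate point. I would also remark that this boundary is genuinely sharp, and this is where Whitehead products are relevant: a space with homotopy in $[n,2n-1]$ carries a potentially nonzero Whitehead product $\pi_n \times \pi_n \to \pi_{2n-1}$, which obstructs its being an infinite loop space, so essential surjectivity — and hence the equivalence — can only be claimed on the smaller range $[n,2n-2]$.
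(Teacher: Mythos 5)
Your proof is correct. The full faithfulness half is essentially the paper's argument: both reduce, via the adjunction, to showing the counit $\Sigma^\infty\Omega^\infty X \to X$ is an equivalence after $(2n-1)$-truncation, and both get this from Freudenthal plus the retraction $\Omega^\infty X \to \Omega^\infty\Sigma^\infty\Omega^\infty X \to \Omega^\infty X$; your explicit observation that the splitting upgrades the degree-$(2n-1)$ epimorphism to an isomorphism is exactly the point that makes the boundary degree work, and you are if anything more careful about it than the paper, which simply asserts the isomorphism in the full range $k\le 2n-1$. Where you genuinely diverge is essential surjectivity. The paper proceeds by induction up the Postnikov tower of $W$: it writes $\tau_{\le k+1}W$ as a pullback over $K(\pi_{k+1}W,k+2)$, uses the already-established full faithfulness to lift the $k$-invariant to a map of spectra, and uses that $\Omega^\infty$ preserves fiber squares. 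You instead exhibit a preimage directly as $X=\tau_{\le 2n-2}\Sigma^\infty W$ and apply Freudenthal once more to see that the unit $W\to\Omega^\infty\Sigma^\infty W$ becomes an equivalence after truncation. Your route is shorter and avoids the induction entirely; the paper's route has the mild advantage of making visible exactly which piece of structure (the $k$-invariants as maps of spectra) is being transported, which is in the spirit of how the theorem gets used later. Both are complete proofs, and your closing remark on Whitehead products as the obstruction to extending essential surjectivity to $[n,2n-1]$ matches the paper's own discussion of sharpness.
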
 
\begin{proof} 
Let $X, Y \in \sp_{[n, 2n-1]}$. 
We want to show that
the natural map
\begin{equation} \label{oinfinitymap}\hom_{\sp}(X, Y) \to \hom_{\mathcal{S}_*}(
\Omega^\infty X, \Omega^\infty Y)\end{equation}
is a homotopy equivalence. 
By adjointness, we can identify this with the map
\[ \hom_{\sp}(X, Y) \to \hom_{\sp}( \Sigma^\infty \Omega^\infty X, Y) 
\]
that arises from the counit map $\Sigma^\infty \Omega^\infty X \to X$.
Observe that we have a natural equivalence $\hom_{\sp}( \Sigma^\infty \Omega^\infty X , Y) \simeq \hom_{\sp}(
\tau_{\leq 2n-1}\Sigma^\infty \Omega^\infty X , Y)$ because $Y$ is
$(2n-1)$-truncated. In particular, to prove \Cref{omegainfinity}, it will
suffice to show that the natural map
of spectra
\[ \tau_{\leq 2n-1} \Sigma^\infty \Omega^\infty X \to X \simeq \tau_{\leq
2n-1} X, \]
is an equivalence, for any $X \in \sp_{[n, 2n-1]}$. 
Equivalently, we need to show that for any such spectrum $X$, the map
\begin{equation} \label{pikmap} \pi_k( \Sigma^\infty \Omega^\infty  X) \to
\pi_k(  X) \end{equation}
is an isomorphism for $k \leq 2n-1$. But we have maps of \emph{spaces}
\[ \Omega^\infty X \to \Omega^\infty \Sigma^\infty  \Omega^\infty X \to \Omega^\infty X,  \]
where the composite is the identity. 
The first map is the unit $Y \to \Omega^\infty \Sigma^\infty Y$ applicable for
any $Y \in \mathcal{S}_*$, and the second map is $\Omega^\infty$ applied to the
counit. By the Freudenthal suspension theorem, the first map induces an
isomorphism on homotopy groups $\pi_k$ for $ k \leq 2n-1$, and therefore the second
map does as well. This proves the claim that \eqref{pikmap} is an equivalence 
and the first part of the theorem. 

The functor
$\Omega^\infty\colon \sp_{[n, 2n-1]} \to \mathcal{S}_{*, [n, 2n-1]}$ is not 
essentially surjective, because spaces with homotopy groups concentrated in degrees $[n,
2n-1]$ can still have \emph{Whitehead products}, and spaces with nontrivial
Whitehead products can never be in the image of $\Omega^\infty$. 
However, we claimed in the statement of the theorem that the functor $\Omega^\infty\colon \sp_{[n, 2n-2]} \to \mathcal{S}_{*, [n, 2n-2]}$ is
an equivalence of $\infty$-categories. 
To show this, it suffices to show that the
functor is essentially surjective. 

Given a pointed space $X$ with homotopy groups in the desired range, 
we suppose inductively (on $k$) that $\tau_{\leq k} X$ is in the image of
$\Omega^\infty$. 
If $k \geq 2n-2$, then we are done. 
Otherwise, we have a pullback square
\[ \xymatrix{
\tau_{\leq k+1}X \ar[d] \ar[r] & \ast \ar[d]  \\
\tau_{\leq k} X \ar[r] &  K( \pi_{k+1} X, k+2).
}\]
Observe that the three pointed spaces 
$\tau_{\leq k} X, K(\pi_{k+1} X, k+2)$, and $\ast$ are all in the image of
$\Omega^\infty$ (the first by the inductive hypothesis), and $K(\pi_{k+1} X, k+2) \in \mathcal{S}_{*, [n, 2n-1]}$. Moreover, 
the \emph{maps} in the diagram are in the image of $\Omega^\infty$ by the previous
part of the result. Therefore, the object $\tau_{\leq k+1} X$  is in the image
of $\Omega^\infty$, as $\Omega^\infty$ preserves homotopy fiber squares. 
\end{proof}

Given an integer $k$, we could precompose the functor of \Cref{omegainfinity} with
the equivalence $\Omega^k\colon \sp_{[n+k, 2n+k-1]} \to \sp_{[n,2n-1]}$, and obtain
the following:
\begin{corollary}
For any integer $k$, the functor $\Omega^{\infty+k}\colon\sp_{[n+k,
2n+k-1]}\to \mathcal{S}_*$ is fully faithful.
\end{corollary}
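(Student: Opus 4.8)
The plan is to deduce this directly from the fully faithfulness assertion in \Cref{omegainfinity} by shifting. The key observation is that the functor $\Omega^{\infty+k}$ factors as a composite $\Omega^\infty \circ \Omega^k$, where the inner $\Omega^k$ denotes the $k$-fold loop functor on \emph{spectra}. Indeed, since $\Omega^\infty$ is a right adjoint and thus commutes with the loop functor, one has a natural equivalence $\Omega^{\infty+k}X = \Omega^k(\Omega^\infty X) \simeq \Omega^\infty(\Omega^k X)$.

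First I would check that the loop functor on spectra restricts to an equivalence
\[ \Omega^k\colon \sp_{[n+k, 2n+k-1]} \to \sp_{[n, 2n-1]}. \]
This is immediate from the identity $\pi_i(\Omega^k X) \simeq \pi_{i+k}(X)$, which sends the range $[n+k, 2n+k-1]$ onto $[n, 2n-1]$ and exhibits $\Omega^k$ as the restriction of the auto-equivalence $\Sigma^{-k}$ of $\sp$; its inverse is the shift $\Sigma^k$ restricted to $\sp_{[n, 2n-1]}$, which carries homotopy in $[n, 2n-1]$ back to homotopy in $[n+k, 2n+k-1]$.

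Then I would assemble the conclusion: $\Omega^{\infty+k}$, restricted to $\sp_{[n+k, 2n+k-1]}$, is the composite of the equivalence $\Omega^k\colon \sp_{[n+k, 2n+k-1]} \to \sp_{[n, 2n-1]}$ with the functor $\Omega^\infty\colon \sp_{[n, 2n-1]} \to \mathcal{S}_*$, which is fully faithful by \Cref{omegainfinity}. Since the composite of an equivalence with a fully faithful functor is again fully faithful, the corollary follows.

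The argument involves no real obstacle; it is a formal consequence of \Cref{omegainfinity}. The only point requiring minor care is the bookkeeping that identifies $\Omega^{\infty+k}$ with the composite $\Omega^\infty \circ \Omega^k$, so that the fully faithfulness of $\Omega^\infty$ in the stable range $[n, 2n-1]$ applies after shifting, together with the verification that the homotopy-group ranges match up under the loop equivalence.
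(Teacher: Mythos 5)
Your argument is correct and is exactly the paper's own: the corollary is obtained by precomposing the fully faithful functor $\Omega^\infty\colon \sp_{[n,2n-1]}\to\mathcal{S}_*$ of \Cref{omegainfinity} with the shift equivalence $\Omega^k\colon \sp_{[n+k,2n+k-1]}\to\sp_{[n,2n-1]}$. You have simply spelled out the bookkeeping more explicitly than the paper does.
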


\subsection{Comparisons for $\e{\infty}$-rings}
Our basic example for all this comes from the spectrum $\gl_1(R)$ associated to
an $\e{\infty}$-ring $R$, and the comparison between the two. This comparison
is the main obstacle in understanding the descent spectral sequence for the
Picard group: it is generally easier to understand descent spectral
sequences for the $\e{\infty}$-rings themselves (e.g. for $\TMF$). 

We emphasize again that given an $\e{\infty}$-ring $R$, 
the \emph{spectra} $R$ and $ \gl_1(R)$ are generally very different, and for an illustration we provide the following example. 
\begin{example}[T. Lawson \cite{tyler_MO}] 
\label{tylerexample}
Consider the commutative differential graded algebra $\mathbb{F}_2[x]/x^3$
where $|x| = 1$ and $dx = 0$ (so $d \equiv 0$). Let $R$ be the associated
$\e{\infty}$-ring under $\mathbb{F}_2$. Then $\gl_1(R)$ has homotopy groups in
dimensions $1,2$ given by $\mathbb{F}_2$; however, they are connected by
multiplication by $\eta$. In particular, $\gl_1(R)$ is not an
$\mathbb{F}_2$-module spectrum. 

More generally, let $R$ be the $\e{\infty}$-ring associated to the commutative 
differential graded algebra $\mathbb{F}_2[x]/x^3$ where $|x|=n$, $dx = 0$. 
$R$ can also be constructed by applying the Postnikov section $\tau_{\leq 2n}$
to the free $\e{\infty}$-$\mathbb{F}_2$-algebra on a class in degree $n$. 
Then $\pi_n(\gl_1(R)) \simeq \pi_{2n}( \gl_1(R)) \simeq \mathbb{F}_2$ and all
the other homotopy groups of $\gl_1(R)$ vanish. 
Therefore, $\gl_1(R)$ is the fiber of a $k$-invariant map $H \mathbb{F}_2[n]
\to H \mathbb{F}_2[2n+1]$. 
In this case, we can identify the $k$-invariant and thus identify $\gl_1(R)$. 
\newcommand{\sq}{\mathrm{Sq}}
\begin{proposition} 
\label{gl1cubezero}
Given $R$ as above, the $k$-invariant of $\gl_1(R)$ is given by the map \[
\mathrm{Sq}^{n+1}\colon H
\mathbb{F}_2[n] \to H \mathbb{F}_2[2n+1].\]
\end{proposition}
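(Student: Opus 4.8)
The plan is to identify the single $k$-invariant $\theta$, which lives in
\[ [\Sigma^n H\mathbb{F}_2, \Sigma^{2n+1} H\mathbb{F}_2] \cong \mathcal{A}^{n+1}, \]
the degree-$(n+1)$ part of the mod $2$ Steenrod algebra, and to show $\theta = \mathrm{Sq}^{n+1}$. Writing $a \in H^n(\gl_1(R); \mathbb{F}_2)$ for the bottom cohomology class, the fiber sequence $\Sigma^{2n}H\mathbb{F}_2 \to \gl_1(R) \to \Sigma^n H\mathbb{F}_2 \xrightarrow{\theta} \Sigma^{2n+1}H\mathbb{F}_2$ shows that $\theta$ is detected cohomologically: the kernel of $H^*(\Sigma^n H\mathbb{F}_2) \to H^*(\gl_1(R);\mathbb{F}_2)$ is the image of $\theta^*$, so in degree $n+1$ the \emph{unique} operation $\phi$ with $\phi\cdot a = 0$ in $H^*(\gl_1(R);\mathbb{F}_2)$ is $\phi = \theta$. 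Hence it suffices to show that $\mathrm{Sq}^{n+1}$ is the governing low-degree relation on $a$.

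First I would use \Cref{omegainfinity} to localize the problem. Since $R$ is an $\e{\infty}$-$\mathbb{F}_2$-algebra it is an $H\mathbb{F}_2$-module, so as a spectrum it splits and all of its $k$-invariants vanish. By the comparison of this section, $\tau_{[n,2n-1]}\gl_1(R) \simeq \tau_{[n,2n-1]}R$, so the additive and multiplicative structures agree through degree $2n-1$; the entire discrepancy between $\gl_1(R)$ and $R$ is therefore concentrated in the passage from degree $2n$ to degree $2n+1$, which is exactly where $\theta$ lives. Thus $\theta$ is genuinely an ``edge of the stable range'' phenomenon and there is nothing to compute below degree $2n$.

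Next I would extract the operation from the multiplicative structure. The identity component of $GL_1(R) = \Omega^\infty \gl_1(R)$ is the component of $\Omega^\infty R$ at $1$, but carrying the infinite loop structure coming from multiplication in $R$ rather than addition. Writing $x$ for the generator of $\pi_n R$, the element $1 + x \in \pi_n \gl_1(R)$ satisfies $(1+x)^2 = 1 + x^2$ because $2x = 0$, and $x^2$ generates $\pi_{2n}$. Hence the multiplicative squaring carries the degree-$n$ class nontrivially onto the degree-$2n$ class; dually, the cup square $\mathrm{Sq}^n$ acts nontrivially. This is precisely the extra squaring (Dyer--Lashof) structure that the additive spectrum $R$ cannot see, and it forces $\theta \neq 0$.

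Finally, to upgrade ``$\theta \neq 0$'' to the exact identification $\theta = \mathrm{Sq}^{n+1}$, I would pass from the degree-$2n$ squaring to the degree-$(2n+1)$ $k$-invariant by a single delooping, via the Kudo transgression theorem: the squaring operation $\mathrm{Sq}^{n}$ on a degree-$n$ class transgresses to $\mathrm{Sq}^{n+1}$ one delooping up, which is exactly the degree shift relating the unstable squaring in $GL_1(R)$ to the stable $k$-invariant of the spectrum $\gl_1(R)$. As a consistency check and an alternative, one can propagate by naturality in $n$ from the base case $n=1$, where $\pi_1$ and $\pi_2$ are linked by $\eta$ and the $k$-invariant is the familiar $\mathrm{Sq}^2 = \mathrm{Sq}^{n+1}$. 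I expect the last step to be the main obstacle: the squaring computation alone only yields $\theta \neq 0$, while $\mathcal{A}^{n+1}$ may contain decomposables, so the heart of the matter is verifying that the transgression produces the admissible class $\mathrm{Sq}^{n+1}$ on the nose, with no correction terms --- equivalently, that $\mathrm{Sq}^{n+1} a = 0$ is the defining low-degree relation in $H^*(\gl_1(R);\mathbb{F}_2)$.
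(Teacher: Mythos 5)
Your overall criterion in the first paragraph is circular rather than a reduction: by exactness of the cofiber sequence, the kernel of $\mathcal{A}^{n+1} = H^{2n+1}(\Sigma^n H\mathbb{F}_2;\mathbb{F}_2) \to H^{2n+1}(\gl_1(R);\mathbb{F}_2)$ is \emph{tautologically} $\mathbb{F}_2\cdot\theta$, so verifying that $\mathrm{Sq}^{n+1}a = 0$ in the spectrum cohomology of $\gl_1(R)$ is literally the same problem as identifying $\theta$ --- you have no independent access to $H^*(\gl_1(R);\mathbb{F}_2)$. The key move in the paper's proof, absent from your proposal, is to work \emph{unstably}: since $R$ is an $H\mathbb{F}_2$-module, $\Omega^\infty \gl_1(R)$ is a union of components of $\Omega^\infty R$ and hence splits as $K(\mathbb{F}_2,n)\times K(\mathbb{F}_2,2n)$ even though the spectrum $\gl_1(R)$ does not. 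Therefore $\Omega^\infty\theta$ is null, i.e.\ $\theta\,\iota_n = 0$ in $H^{2n+1}(K(\mathbb{F}_2,n);\mathbb{F}_2)$, and by Serre's computation the unique nonzero element of $\mathcal{A}^{n+1}$ annihilating $\iota_n$ (the unique admissible monomial of degree $n+1$ and excess $>n$) is $\mathrm{Sq}^{n+1}$. This is what replaces your Kudo-transgression step, which as stated has an indexing problem (Kudo sends $\mathrm{Sq}^n x = x^2$ to $\mathrm{Sq}^n(\tau x)$, not to $\mathrm{Sq}^{n+1}(\tau x)$) and which you yourself flag as unresolved; the ``naturality in $n$'' fallback does not help either, since the rings $R$ for different $n$ are not related by any suspension of $\e{\infty}$-rings that would transport the $k$-invariant.

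Your nontriviality step also has a gap. The H-space square of a class in $\pi_n GL_1(R)$ is $2x=0$, and the cup square of a class in $\widetilde{R}^0(S^n)$ vanishes because $S^n$ is a suspension, so ``$(1+x)^2 = 1+x^2$'' is not a well-defined operation on $\pi_n\gl_1(R)$ from which the $k$-invariant can be read off. The actual argument (the paper's first step) is: a splitting of the spectrum $\gl_1(R)$ would give a spectrum map $H\mathbb{F}_2[n]\to \gl_1(R)$ hitting $\pi_n$, which by the $(\Sigma^\infty_+,\gl_1)$ adjunction is an $\e{\infty}$-ring map $\Sigma^\infty_+ K(\mathbb{F}_2,n)\to R$; after smashing with $H\mathbb{F}_2$ the image of $\iota_n$ would have to square to zero because the bottom class of $H_*(K(\mathbb{F}_2,n);\mathbb{F}_2)$ does, contradicting $x^2\neq 0$. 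Your intuition about the multiplicative structure points the right way, but the bridge from it to the spectrum-level $k$-invariant is exactly this adjunction plus the Pontryagin ring of $K(\mathbb{F}_2,n)$, neither of which appears in your write-up.
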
 
\begin{proof} 
We first argue, following Lawson, that $\gl_1(R) $ cannot be the spectrum
$H\mathbb{F}_2[n] \vee H \mathbb{F}_2[2n]$. 
In fact, in this case, the map of spectra $H \mathbb{F}_2[n] \to \gl_1(R)$
would, by adjointness \cite{5author}
lead to a map of $\e{\infty}$-rings
\[ \Sigma^\infty_+ K( \mathbb{F}_2, n) \to R,  \]
carrying the class in $\pi_n K(\mathbb{F}_2, n)$ to the nonzero class in $\pi_n
R$. Smashing with $H \mathbb{F}_2$, we would get a map
of $\e{\infty}$-$H \mathbb{F}_2$-algebras
\[ H \mathbb{F}_2 \wedge \Sigma^\infty_+ K(\mathbb{F}_2, n) \to R  \]
with the same property. 
Now $\pi_n( H\mathbb{F}_2 \wedge \Sigma^\infty_+ K(\mathbb{F}_2, n)) \simeq \mathbb{F}_2$, 
with the nontrivial class coming from $\pi_n( K( \mathbb{F}_2, n))$. However,
this class squares to zero by \cite[Lemma 6.1, Ch. 1]{homology_iter} while the
nonzero class in $\pi_n R$ does not square to zero. 
This is a contradiction and proves that such a map cannot exist. 
Consequently, the $k$-invariant map for $\gl_1(R)$ must be nontrivial.

On the other hand, we know $\Omega^\infty \gl_1(R) \simeq K( \mathbb{F}_2, n) \times K(
\mathbb{F}_2, 2n)$ because $\Omega^\infty \gl_1(R)$ is the connected component
at $1$ of $\Omega^\infty R$. In particular, the $k$-invariant $H
\mathbb{F}_2[n] \to H \mathbb{F}_2[2n+1]$ defines upon applying
$\Omega^\infty$ the trivial  cohomology class in $H^{2n+1}( K( \mathbb{F}_2,
n); \mathbb{F}_2)$. 

So, for the $k$-invariant of $\gl_1(R)$, we need a nonzero element $\phi$ of
degree $n+1$ in the (mod 2) Steenrod algebra
such that, if $\iota_n \in H^n( K(\mathbb{F}_2, n); n)$ is the tautological
class, then $\phi \iota_n = 0$. By the calculation of the cohomology of
Eilenberg-MacLane spaces \cite{Serrecoh} (see also \cite[Ch. 9]{MT} for a textbook reference), the only
possibility is $\sq^{n+1}$.
\end{proof} 

\end{example}

Nonetheless, we will show that right below the range of the previous example,
the spectra $\gl_1(R)$ and $R$ can be identified. 

\begin{corollary} 
\label{gl1RvsR}
Let $n \geq 2$ and let $R$ be any $\e{\infty}$-ring. Then there is an
equivalence of spectra, functorial in $R$,
\[ \tau_{[n, 2n-1]} \gl_1(R) \simeq \tau_{[n, 2n-1]} R.  \]
Similarly, there is an equivalence of spectra, functorial in $R$,
\[ \tau_{[n+1, 2n]} \pics(R) \simeq  \Sigma\tau_{[n, 2n-1]} R. \]
\end{corollary}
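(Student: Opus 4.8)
The plan is to deduce both equivalences formally from the full faithfulness of $\Omega^\infty$ on $\sp_{[n,2n-1]}$ established in \Cref{omegainfinity}, together with the space-level comparison between $GL_1(R)$ and $\Omega^\infty R$. First I would observe that, since $n \geq 2$, both $\tau_{[n,2n-1]}\gl_1(R)$ and $\tau_{[n,2n-1]} R$ lie in the subcategory $\sp_{[n,2n-1]}$, on which \Cref{omegainfinity} asserts that $\Omega^\infty$ is fully faithful. Consequently, to produce a \emph{natural} equivalence of spectra $\tau_{[n,2n-1]}\gl_1(R)\simeq\tau_{[n,2n-1]}R$ it suffices to produce one after applying $\Omega^\infty$: post-composition with a fully faithful functor induces a fully faithful functor on the relevant functor $\infty$-categories, so a natural equivalence of the underlying pointed-space-valued functors lifts uniquely to one of spectrum-valued functors, and because the homotopy groups in question all sit in positive degrees this lift is automatically an equivalence on each $\pi_k$.

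For the space-level input, recall the natural equivalence of pointed spaces $\tau_{\geq 1}GL_1(R)\simeq\tau_{\geq 1}\Omega^\infty R$ obtained by subtracting $1$ with respect to the additive infinite-loop structure on $\Omega^\infty R$, where $GL_1(R)=\Omega^\infty\gl_1(R)$. Applying $\Omega^\infty$ to $\tau_{[n,2n-1]}\gl_1(R)$ returns the pointed space $\tau_{[n,2n-1]}GL_1(R)$, since truncations in positive degrees commute with $\Omega^\infty$, and similarly $\Omega^\infty\tau_{[n,2n-1]}R\simeq\tau_{[n,2n-1]}\Omega^\infty R$. Because $n\geq 2\geq 1$, restricting the subtract-$1$ equivalence to the window $[n,2n-1]$ gives a natural equivalence $\Omega^\infty\tau_{[n,2n-1]}\gl_1(R)\simeq\Omega^\infty\tau_{[n,2n-1]}R$, which is exactly the input needed for the lifting argument of the previous paragraph. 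This establishes the first equivalence.

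The second equivalence I would reduce to the first. Recall that the connective spectrum $\pics(R)$ satisfies $\tau_{\geq 1}\pics(R)\simeq\Sigma\gl_1(R)$. Since $n+1\geq 3$, truncating to the window $[n+1,2n]$ gives $\tau_{[n+1,2n]}\pics(R)\simeq\Sigma\,\tau_{[n,2n-1]}\gl_1(R)$, and applying the suspension equivalence $\Sigma\colon\sp_{[n,2n-1]}\xrightarrow{\ \sim\ }\sp_{[n+1,2n]}$ to the first equivalence yields $\Sigma\,\tau_{[n,2n-1]}\gl_1(R)\simeq\Sigma\,\tau_{[n,2n-1]}R$. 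Composing these, and tracking naturality throughout, produces the desired natural equivalence $\tau_{[n+1,2n]}\pics(R)\simeq\Sigma\,\tau_{[n,2n-1]}R$.

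The conceptual heart of the argument, and the point where care is genuinely required, is the passage from a space-level equivalence to a coherent, natural equivalence of spectra. Merely knowing that $\gl_1(R)$ and $R$ have abstractly isomorphic homotopy groups in this range is \emph{not} enough: \Cref{tylerexample} and \Cref{gl1cubezero} exhibit $\e{\infty}$-rings for which $\gl_1(R)$ and $R$ differ, just one degree beyond this window, by a nontrivial $k$-invariant $\mathrm{Sq}^{n+1}$. What makes the stable range special is precisely that the space-level subtract-$1$ identification can be upgraded to a spectrum map, and this upgrade is supplied by full faithfulness of $\Omega^\infty$ on $\sp_{[n,2n-1]}$. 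The only routine verifications I would still need to dispatch are that $\Omega^\infty$ commutes with the truncations $\tau_{[n,2n-1]}$ in this positive-degree range and that the lifting respects naturality in $R$.
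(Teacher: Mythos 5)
Your proposal is correct and follows essentially the same route as the paper: the canonical space-level identification of $\tau_{\geq 1}GL_1(R)$ with $\tau_{\geq 1}\Omega^\infty R$, lifted to spectra via the full faithfulness of $\Omega^\infty$ on $\sp_{[n,2n-1]}$ from \Cref{omegainfinity}, with the second equivalence deduced from the first via $\tau_{\geq 1}\pics(R)\simeq\Sigma\gl_1(R)$. The paper's proof is just a terser version of exactly this argument.
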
 
\begin{proof} 
For any $\e{\infty}$-ring $R$, the space $\Omega^\infty \gl_1(R) = GL_1(R)$ is a union of those components of
$\Omega^\infty R$ that correspond to units in $\pi_0 R$. In particular,
$\Omega^\infty \tau_{\geq 1}\gl_1(R)$ is \emph{canonically} identified with
$\Omega^\infty \tau_{\geq 1}  R$ in $\mathcal{S}_*$.
Applying \Cref{omegainfinity}, we now get a canonical identification as desired
in the corollary. 
The second half of 
\Cref{gl1RvsR} follows from the first, as $\tau_{\geq 0}\Omega \pics(R) \simeq
\gl_1(R)$ as spectra. 
\end{proof}

Take now a faithful $G$-Galois extension $A\to B$ of $\e{\infty}$-rings, and
consider the HFPSS \eqref{ss:galDescent} for the $G$-action on $\pics (B)$. 
We want to understand $\pi_0 ( \pics(B)^{hG})$, or 
equivalently $\pi_{-1}( \Omega \pics(B)^{hG})$, and we can do this by
understanding the HFPSS for the $G$-action on $\Omega \pics (B)$.
Observe first that $\pi_t   \Omega\pics(B) \simeq \pi_{t}B$ functorially for $t
\geq 1$: in fact, $\Omega^\infty(\Omega \pics(B)) \simeq GL_1(B)$. 
In other words, the spectrum $\Omega \pics(B)$ equipped with the $G$-action has
the property that, after applying $\Omega^\infty$,
it is identified with a union of connected
components of $\Omega^\infty B$ (with the $G$-action on $B$). 

As a result, we have a map of spaces with $G$-action
\[  \Omega^\infty ( \Omega \pics(B)) \to \Omega^\infty B, \]
which identifies the former with a union of connected components of the latter. 
As a result, we can identify 
the respective HFPSS for the spaces $\Omega^\infty ( \Omega \pics(B))$, $\Omega^\infty B$
for $t > 0$, both at $E_2$ and differentials (including the ``fringed''
ones). This identification comes from the map $\tau_{\geq 1} GL_1(B) \to
\Omega^\infty B $ given by subtracting one.

In particular, shifting by one again, most of the differentials in the HFPSS for $\pics(B)$ are
determined by the HFPSS for $B$. More precisely, any differential out of $E_r^{s,t}$ for $t-s
>  0$, $s>0$, 
depends only on the $G$-space $\Omega  \Pic(B)$, so the equivalence of $\Omega
\Pic(B)$ with a union of connected components of $\Omega^\infty B$ implies
that the differential \emph{can be identified} with the analogous
differential in the HFPSS for $B$.

However, to understand $\pi_0 ( \pics(B)^{hG}) \simeq \pi_0 ( \Pic(B)^{hG})
\simeq \pic(A)$, we need to determine differentials out of $E_r^{s,t}$ with $t
= s$. 
These differentials cannot be determined by $\Omega \Pic(B)$, as a space with a
$G$-action. 
Our strategy to determine these differentials is to use the equivalence of
spectra with $G$-action
\[ \tau_{[n+1, 2n]}\pics (B) \simeq  \Sigma \tau_{[n, 2n-1]} B,  \]
which is a special case of  \Cref{gl1RvsR}. 

Assume that  $r\leq t-1$. 
In this case, any differential $d_r\colon E_*^{s,t} \to E_*^{s+r, t+r-1}$ in the
HFPSS for $\pics(B)$ is determined by the $G$-action on  $\tau_{[t,
t+r-1]}\pics(B)$. Since we have an equivalence $\tau_{[t,t+r -1]}\pics(B) \simeq \Sigma
\tau_{[t-1, t+r - 2]} B$, compatible with the $G$-actions, we can identify the
differentials. 

Denote the differentials in the homotopy fixed point spectral sequence
\[ H^s(G, \pi_t \pics B) \Rightarrow \pi_{t-s} (\pics B)^{hG}  \]
by $d_r^{s,t}(\pics B)$, and similarly $d_r^{s,t}(B)$ for those in the HFPSS for $B$.
The upshot of this discussion is the following. 

\begin{tool}\label{toolGalois}
Let $A\to B$ be a $G$-Galois extension of $\e{\infty} $-rings. Whenever $2 \leq
r\leq t-1$, we have an equality of differentials $d_r^{s,t}(\pics B) = d_r^{s, t-1}( B)$.
\end{tool}

Of course, we also have an identification of differentials out of $(s,t)$ if
$t-s > 0, s>0$. 

\begin{remark}
Our original approach to the Comparison Tool~\ref{toolGalois} was somewhat more complicated than
the above and has been described in \cite{tots}. 
Namely, our strategy was to identify the HFPSS with a Bousfield-Kan spectral
sequence for a certain cosimplicial space $X^\bullet$ built from $\Pic(B)$
with its $G$-action, and  argue that these differentials only depended on the
fiber of $\mathrm{Tot}_{t+r}(X^\bullet) \to \mathrm{Tot}_{t-1}(X^\bullet)$
(as well as the other fibers in between). In
the appropriate range, these fibers depend only on $\Omega X^\bullet$ as a
cosimplicial space. However, $\Omega X^\bullet$ can be  (almost) identified with the
analogous cosimplicial space for the $G$-action on $\Omega^{\infty -
1}(\tau_{\geq 0}B)$
because $\Omega \Pic(B) $ is a union of components of $\Omega^\infty B$. 
This forces the differentials to correspond to one another. 
\end{remark}

For the same reasons, we have analogous comparison results for the
spectral sequence as \Cref{descentss}. Again, any differential in the
descent spectral sequence for $\pics( \Gamma( X, \otop))$ that only
depends on the \emph{diagram} $\tau_{[n+1, 2n]} \pics(\otop)$ can be identified
with the corresponding differential in the descent spectral sequence for
$\Gamma(X, \otop)$, thanks to the equivalence of \emph{diagrams} of spectra
$\tau_{[n+1, 2n]} \pics(\otop) \simeq \Sigma \tau_{[n, 2n-1]} \otop$. 

\begin{remark}
The equivalence $\tau_{[n, 2n-1]} R \simeq \tau_{[n, 2n-1]} \gl_1(R)$ 
resembles  the following observation in commutative algebra. Let $A$ be an
ordinary commutative ring and let $I \subset A$ be a square-zero ideal. 
Then $1 + I \subset A^{\times}$ and there is an isomorphism of groups
\[ I \simeq 1 + I \subset A^{\times}, \quad x \mapsto 1 + x . \]
This correspondence is a very degenerate version of the exponential and logarithm. 

Suppose $p$ is a prime number and $(p-1)!$ is invertible in $A$. Then if $J
\subset A$ is an ideal with $J^p = 0$, we have $1 + J \subset A^{\times} $ and
a natural isomorphism of groups
\[ J \simeq 1 + J , \quad x \mapsto 1 + x + \frac{x^2}{2} + \dots +
\frac{x^{p-1}}{(p-1)!},  \]
given by a $p$-truncated exponential. 

Similarly, let $R$ be an $\e{\infty}$-ring with $(p-1)!$ invertible. Motivated by the above, for any $n\geq
1$, one could surmise a
\emph{functorial} equivalence of spectra $\tau_{[n, pn-1]} R \simeq \tau_{[n, pn-1]}
\gl_1(R)$.
We expect to construct such an equivalence in ongoing joint work with Clausen and Heuts.

\end{remark} 

\subsection{A general result on Galois descent}

As a quick application of the preceding ideas, we can prove a general result
about  Galois descent for Picard groups.

\begin{introtheoremE} 
Let $A \to B$ be a faithful $G$-Galois extension of $\e{\infty}$-rings. Then the relative Picard
group of $B/A$ is $|G|$-power torsion of finite exponent. 
\end{introtheoremE}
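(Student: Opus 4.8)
The plan is to recognize the relative Picard group as a homotopy-fixed-point group and to read the answer off the associated HFPSS, combining two inputs: the annihilation of positive-degree group cohomology by $|G|$, and a horizontal vanishing line coming from the descendability of a faithful Galois extension. First I would invoke Remark~\ref{rem:relativePic}, which already identifies the relative Picard group $\ker(\pic(A)\to\pic(B))$ with $\pi_0(\Sigma\gl_1 B)^{hG}$. It then suffices to analyze the homotopy fixed point spectral sequence
\[ E_2^{s,t} = H^s(G, \pi_t \Sigma\gl_1 B) \Longrightarrow \pi_{t-s}(\Sigma\gl_1 B)^{hG}. \]
Since $\Sigma\gl_1 B = \tau_{\geq 1}\pics(B)$ is $1$-connective, the term $E_2^{s,t}$ vanishes for $t \leq 0$, so the only contributions to the target in total degree $0$ come from the diagonal terms $E_\infty^{s,s}$ with $s \geq 1$. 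Thus $\pi_0(\Sigma\gl_1 B)^{hG}$ carries a filtration whose associated graded is $\bigoplus_{s\geq 1} E_\infty^{s,s}$.

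Next I would record that each graded piece is killed by $|G|$: for a finite group $G$ and any $G$-module $M$, the cohomology $H^s(G,M)$ with $s \geq 1$ is annihilated by $|G|$, since $\mathrm{cor}\circ\mathrm{res} = |G|$ and restriction to the trivial subgroup kills positive-degree classes. Hence each $E_\infty^{s,s}$, being a subquotient of $H^s(G, \pi_s\Sigma\gl_1 B)$, is $|G|$-torsion. Consequently, multiplying a class of filtration $s$ by $|G|$ strictly raises its filtration.

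The crux, and the step I expect to be the main obstacle, is to bound the filtration length, i.e.\ to produce a horizontal vanishing line $E_\infty^{s,s} = 0$ for $s \geq N$. This does not follow formally, since the HFPSS of an arbitrary $G$-spectrum need not have such a line; it must instead come from the fact that $A \to B$ is a faithful Galois extension. By \cite[Theorem 3.36]{galois}, $B$ generates $\md(A)$ as a thick tensor-ideal, so the extension is descendable with a finite descendability index, and descendability forces a horizontal vanishing line in the associated descent spectral sequence. The delicate point is that I need this line not merely for module spectra but for the functorially-associated spectra $\gl_1$ and $\pics$, together with the (conditional) convergence of the HFPSS for these non-module spectra; reducing the $\pics$-line to the descendability of $A\to B$ is where the real work lies.

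Granting a vanishing line at height $N$, the filtration on $\pi_0(\Sigma\gl_1 B)^{hG}$ has length $<N$, so that $F^N = 0$ by convergence. Since each step up the filtration is realized by multiplication by $|G|$ and each graded piece is $|G|$-torsion, any element $x$ of filtration $1$ satisfies $|G|^{N-1}x \in F^N = 0$. This simultaneously yields that the relative Picard group is $|G|$-power torsion and that it has the uniform finite exponent dividing $|G|^{N-1}$, completing the proof.
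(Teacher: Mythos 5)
Your skeleton is the paper's: identify the relative Picard group with $\pi_{-1}(\gl_1(B)^{hG})$ via \Cref{rem:relativePic}, observe that every contributing $E_2$-term is a positive-degree group cohomology and hence killed by $|G|$, and then argue that a horizontal vanishing line bounds the filtration length so that a fixed power of $|G|$ kills everything. But the step you flag as ``where the real work lies'' and then grant yourself --- the vanishing line for the HFPSS of $\gl_1(B)$, as opposed to that of $B$ --- is a genuine gap, and it is exactly the point the paper's proof exists to address. Descendability of $A \to B$ does give a horizontal vanishing line at a finite page for the descent spectral sequence of $B$ as an $A$-module (\cite[Corollary 4.4]{galois}), but $\gl_1(B)$ is not a $B$-module and is not obtained from $B$ by any exact or colimit-preserving functor, so the thick-tensor-ideal argument says nothing about its HFPSS directly. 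As written, your ``granting a vanishing line at height $N$'' concedes the theorem's only nontrivial content.

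The paper closes this gap with Comparison Tool~\ref{toolGalois}, which rests on the natural equivalence of spectra with $G$-action $\tau_{[n,2n-1]}\gl_1(B) \simeq \tau_{[n,2n-1]}B$ from \Cref{gl1RvsR}: for $2 \le r \le t-1$ one has $d_r^{s,t}(\pics B) = d_r^{s,t-1}(B)$. The essential point is that the vanishing line for $B$ is achieved at a \emph{finite} page $E_{r_0}$, so every class of sufficiently high filtration in the HFPSS for $B$ is killed by a differential of bounded length $r < r_0$; for classes on the line contributing to $\pi_{-1}(\gl_1(B)^{hG})$ (equivalently, your diagonal $t=s$ for $\Sigma\gl_1 B$) with $s$ large relative to $r_0$, those differentials lie in the stable range $r \le t-1$ and therefore occur identically in the HFPSS for $\gl_1(B)$. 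This is what bounds the filtration of the relative Picard group, after which your concluding count $|G|^{N-1}x = 0$ goes through. So: right strategy and a correct diagnosis of the obstacle, but the obstacle itself is left unresolved, and resolving it requires the comparison-in-the-stable-range technology of \Cref{sec:SSLemma} rather than descendability alone.
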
 
\begin{proof} 
We know that the relative Picard group of $A \to B$ is given by $\pi_{-1}(
\gl_1(B)^{hG})$ (compare \Cref{rem:relativePic}). 
There is a HFPSS that converges to the homotopy groups, which begins with the
group cohomology of $G$ with coefficients in $\pi_* ( \gl_1(B))$. Every
contributing term is $|G|$-power torsion: in fact, every term is a $H^i(G,
\cdot)$ for $i>0$ and is thus killed by $|G|$. 
However, in view of the potential infiniteness of the filtration, as well as
the possibilities of nontrivial extensions, this alone
does not force $\pi_{-1}( \gl_1(B)^{hG})$ to be $|G|$-power torsion.  

Our strategy is to compare the HFPSS for $\pi_{-1}( \gl_1(B)^{hG})$ with that
of $\pi_{-1}( B^{hG})$. The map $A \to B$ admits descent in the sense of
\cite[Definition 3.17]{galois}. In particular, by \cite[Corollary
4.4]{galois}, the descent spectral sequence for $A \to B$
(equivalently, the HFPSS) has a \emph{horizontal vanishing line} at a finite
stage. It follows that, above a certain filtration, everything in the HFPSS
for $\pi_*(A) \simeq \pi_*(B^{hG})$ is killed by a
$d_k$ for $k$ bounded. 

In view of our Comparison Tool~\ref{toolGalois}, it follows that any class in the relative
Picard group has bounded filtration (though possibly the bound is weaker than
the analog in $\pi_{-1}(B)$). Since every contributing term in the spectral sequence is
killed by $|G|$, the theorem follows. 
\end{proof}


\section{The first unstable differential}

\subsection{Context}
Let $R^\bullet$ be a cosimplicial $\e{\infty}$-ring, and 
consider the Bousfield-Kan spectral sequences  (BKSS)
$\left\{E_{r}^{s,t}\right\}$ and $
\left\{\overline{E}_{r}^{s,t}\right\}$ for the two cosimplicial objects
 $R^\bullet$ and 
 $\gl_1(R^\bullet)$, converging to $\pi_{t-s}$ of the
respective totalizations in $\sp$. 

For $t -s \geq 0$, the spectral sequences and the differentials are mostly
identified with one another, as the space $\Omega^\infty \gl_1(R)$ is a union
of connected components of $\Omega^\infty R$. 
But for $t - s = -1$, 
we get differentials
\[ d_r\colon E_r^{ t+1, t} \to E_r^{t+r+1, t+r-1}, 
\quad
\overline{d}_r\colon \overline{E}_r^{ t+1, t} \to \overline{E}_r^{t+r+1, t+r-1}.
\]
These depend on more than the spaces $\Omega^\infty R^\bullet, \Omega^\infty
\gl_1(R^\bullet)$:
they require the one-fold deloopings. 
As we saw in \Cref{gl1RvsR}, for any $n \geq 2$, in the range $[n, 2n-1]$, the
cosimplicial \emph{spectra}
$\tau_{[n, 2n-1]} R^\bullet, \tau_{[n, 2n-1]} \gl_1(R^\bullet)$ are
identified. As a result, for $r \leq t$, the groups
in question are (canonically) identified and
$d_r = \overline{d}_r$. 

But in general, $d_{t+1} \neq \overline{d}_{t+1}$. Since all the previous
differentials entering or leaving this spot between the two spectral sequences were identified, 
the groups in question are identified. 
We let the correspondence $E_{t+1}^{t+1,t} \simeq \overline{E}_{t+1}^{t+1, t}$
be given as $x \mapsto \overline{x}$.
Similarly, we have a correspondence $E_{t+1}^{2t+2, 2t} \simeq
\overline{E}_{t+1}^{2t+2, 2t}$. 

In this subsection, we will give a universal formula for the first differential
out of the stable range. 
We will need this in \Cref{sec:TMF1/3} to obtain the 2-primary Picard group of $\TMF$.

\begin{theorem} 
\label{difftheorem}
We have the formula
\begin{equation} \label{univformula}
\overline{d}_{t+1}(\overline{x}) = \overline{d_{t+1}(x) + x^2}, \quad x \in E_{t+1}^{t+1, t}.
\end{equation} 
\end{theorem}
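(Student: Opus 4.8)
The plan is to reduce the identity \eqref{univformula} to the computation of a single universal operation, and then to evaluate that operation using the multiplicative structure on $GL_1$. First I would record that, exactly as in the discussion preceding \Cref{toolGalois}, the differential $d_{t+1}$ out of bidegree $(t+1,t)$ depends only on the cosimplicial spectrum $\tau_{[t,2t]}R^\bullet$, and likewise $\overline{d}_{t+1}$ depends only on $\tau_{[t,2t]}\gl_1(R^\bullet)$. By \Cref{gl1RvsR} these two cosimplicial spectra are canonically identified through degree $2t-1$; this is precisely why all earlier differentials agree and why the source $E_{t+1}^{t+1,t}$ and target $E_{t+1}^{2t+2,2t}$ of the two spectral sequences are identified under $x\mapsto\overline{x}$. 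Consequently the only possible discrepancy lies in the way the top Postnikov layer $\pi_{2t}$ is attached, i.e. in the relative $k$-invariant of $\tau_{[t,2t]}\gl_1(R^\bullet)$ compared with that of $\tau_{[t,2t]}R^\bullet$. The difference $\overline{d}_{t+1}(\overline{x})-\overline{d_{t+1}(x)}$ therefore equals the effect on $x$ of a single natural operation $\theta$, built from this difference of $k$-invariants and functorial in $R^\bullet$.

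Next I would identify $\theta$. Both $R$ and $\gl_1(R)$ are spectra, hence neither carries Whitehead products (this is the content of the obstruction appearing at the top of the range in \Cref{omegainfinity}); so the discrepancy is purely that between the additive infinite-loop structure on $\tau_{\geq 1}\Omega^\infty R$ and the multiplicative one on $GL_1(R)=\Omega^\infty\gl_1(R)$. Under the identification of \Cref{gl1RvsR}, which is literally ``subtract $1$,'' the group law on $GL_1$ reads $(1+x)(1+y)=1+(x+y)+xy$, so the failure of this identification to remain additive one degree beyond the stable range is governed by the bilinear multiplication $\pi_t R\otimes\pi_t R\to\pi_{2t}R$ restricted to the diagonal. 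Thus $\theta$ is a \emph{quadratic} operation, and the natural candidate is the squaring map $x\mapsto x^2$ coming from the ring structure of the $E_2$-page (available since $R^\bullet$ is a cosimplicial $\e{\infty}$-ring).

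To pin down $\theta$ exactly — in particular to fix the coefficient to be $+1$ — I would argue that a natural operation of this type is detected on a universal example and invoke the explicit calculation already carried out in \Cref{tylerexample}. Evaluating $\theta$ on the truncation $\tau_{\leq 2t}$ of the free $\e{\infty}$-ring on a single degree-$t$ generator, where \Cref{gl1cubezero} shows that it is precisely the nonvanishing of the square of the fundamental class that obstructs the splitting of $\gl_1$, identifies $\theta$ with the squaring operation and with no extra multiple: the correction term $xy$ occurs exactly once, so a factor of $2$ or $1/2$ cannot enter. Transcribing the equality of operations $\theta=(\cdot)^2$ back into the spectral sequence, via the standard fact that altering a $k$-invariant by $\theta$ alters the associated length-$(t+1)$ differential by $\theta$ applied to the surviving class, yields \eqref{univformula}.

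The hard part will be the last step: making rigorous the passage from ``difference of $k$-invariants of cosimplicial spectra'' to ``difference of $d_{t+1}$ in the two Bousfield--Kan spectral sequences,'' while simultaneously certifying that the internal square $x^2\in E_2^{2t+2,2t}$ really is the correct representative at the $E_{t+1}$-page of the target. Both points concern the compatibility of the multiplicative and transgressive structures of the BKSS with the Postnikov analysis, and keeping track of the single delooping — which converts the internal-degree-$t$ squaring into a length-$(t+1)$ differential — is exactly where the coefficient and sign must be controlled. Verifying on the free $\e{\infty}$-ring example is the safest place to confirm that no spurious factors appear.
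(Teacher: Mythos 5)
Your overall architecture — reduce to a universal natural operation, then detect it on the $\mathbb{F}_2[x]/x^3$ example of \Cref{tylerexample} via \Cref{gl1cubezero} — is the same as the paper's, and your identification of that example as the place where the quadratic term is forced to be nonzero is exactly right. But there is a genuine gap at the step you yourself flag as ``the hard part,'' and it is not merely a matter of bookkeeping. A single test case can determine a natural operation only if you already know that the operation lives in a small, explicitly computed group. The paper supplies this by actually constructing the corepresenting cosimplicial $\e{\infty}$-ring $\mathscr{X}^\bullet=\mathrm{Free}_{\mathrm{CAlg}}(\Sigma^{\infty-1}\mathscr{F}^\bullet)$ out of left Kan extensions, proving that the relevant totalizations are contractible via the $\infty$-categorical Dold--Kan correspondence (\Cref{FBKSS}), and then computing $E_{t+1}^{2t+2,2t}\simeq\mathbb{Z}\oplus\mathbb{Z}/2$ for this universal object using Priddy's calculation of the derived symmetric square (\Cref{priddythm}, \Cref{mainsymmcalculation}). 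Only after that computation does one know that the universal formula must have the shape $\overline{d}_{t+1}(\overline{x})=\overline{a\,d_{t+1}(x)+\epsilon x^2}$ with $a\in\mathbb{Z}$, $\epsilon\in\{0,1\}$, so that two test cases suffice. Your $(1+x)(1+y)=1+(x+y)+xy$ heuristic correctly predicts the answer but does not bound the space of possibilities; in particular it does not rule out other natural operations $\pi_tR\to\pi_{2t}R$ (power-operation-type phenomena are exactly what live here, cf.\ the appearance of $\mathrm{Sq}^{t+1}$ in \Cref{gl1cubezero}), and the rigorous bridge from ``the $k$-invariant of $\gl_1$ is $\mathrm{Sq}^{t+1}$'' to ``the correction term in the BKSS is the internal square $x^2$'' is precisely Priddy's theorem, which you never invoke.

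A second, smaller omission: by writing the discrepancy as $\overline{d}_{t+1}(\overline{x})-\overline{d_{t+1}(x)}$ you have silently set the linear coefficient $a$ equal to $1$. This does not come for free from the identification of the truncations through degree $2t-1$, because the entire differential $d_{t+1}$ out of $(t+1,t)$ — not just a ``correction'' — depends on how $\pi_{2t}$ is attached, so one cannot a priori split it into a shared linear part plus a quadratic defect. The paper pins down $a=1$ by a separate argument: the natural splitting $\gl_1(S^0\vee X)\simeq\gl_1(S^0)\vee X$ for square-zero extensions (\Cref{gl1sqzero}), which forces the two spectral sequences to have literally identical differentials in that family and hence $a=1$ (and, consistently, $\epsilon\cdot x^2=0$ there since the square-zero multiplication kills $x^2$). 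Your $k$-invariant framing would subsume this step if made rigorous, but making it rigorous is essentially the universal-example computation you have deferred; as written, both the constraint to the two-parameter family and the value of $a$ are unproved.
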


\begin{remark} 
The above formula actually makes $\overline{d}_{t+1}$ into a linear operator.
This follows from the graded-commutativity of the BKSS for $R^\bullet$. Note in
particular that the difference between $\overline{d}_{t+1}$ and $d_{t+1}$ is
annihilated by two.
\end{remark} 

\subsection{The universal example}
The proof of \eqref{univformula} follows a standard technique in algebraic
topology: we reduce to a
``universal'' case and show that \eqref{univformula} is essentially the only possibility. 
We want to consider the universal case of a cosimplicial $\e{\infty}$-ring
$R^\bullet$ with
a class in $E_{t+1}^{t+1, t}$. This class represents an element in $\pi_{-1}
\mathrm{Tot}_{2t+1}( R^\bullet)$ trivialized in $\mathrm{Tot}_{t}(R^\bullet)$; the differential $d_{t+1}$ represents the
obstruction to lifting to $\mathrm{Tot}_{2t+2}$. 
So, we need to make the analysis of differentials in the cosimplicial $\e{\infty}$-ring which corepresents the functor 
$R^\bullet \mapsto \mathfrak{A}(R^\bullet) = \Omega^\infty \left( \Sigma^{-1}\mathrm{fib}\left( \mathrm{Tot}_{2t+1}(R^\bullet) \to
\mathrm{Tot}_t(R^\bullet)\right) \right)$.

\newcommand{\lan}{\mathrm{Lan}}
\newcommand{\fp}{\mathscr{F}}
The relevant cosimplicial $\e{\infty}$-ring $\mathscr{X}^\bullet$ can be constructed as
follows. 
\begin{definition}
Let $\lan$ denote the operation of left Kan extension, and let
$\lan_{\Delta^{\leq t} \to \Delta}(\ast)$ denote the left Kan extension of the
constant functor $\Delta^{\leq t} \to \mathcal{S}$ at a point to $\Delta$. 
Similarly, define 
$\lan_{\Delta^{\leq 2t+1} \to \Delta}(\ast)$. Consider the homotopy pushout
\begin{equation} \label{defF} 
\xymatrix{
\lan_{\Delta^{\leq t} \to \Delta}(\ast)_+ \ar[d]  \ar[r] & \ast 
\ar[d] \\
\lan_{\Delta^{\leq 2t+1} \to \Delta}(\ast)_+ \ar[r] &  \fp^\bullet,
}\end{equation}
where $\fp^\bullet\colon \Delta \to \mathcal{S}_*$ is a functor to the $\infty$-category
$\mathcal{S}_*$ of \emph{pointed} spaces. 
\end{definition}
Consider $$\mathscr{G}^\bullet \stackrel{\mathrm{def}}{=}\Sigma^{\infty-1} \fp^\bullet\colon
\Delta \to \sp$$ and the functor
\newcommand{\CAlg}{\mathrm{CAlg}}
\newcommand{\X}{\mathscr{X}}
\[ \X^\bullet = \mathrm{Free}_{\CAlg}( \mathscr{G}^\bullet)\colon \Delta \to \CAlg,  \]
into the $\infty$-category $\CAlg$ of $\e{\infty}$-rings, obtained by applying
the free algebra functor everywhere to $\mathscr{G}$. 
Then $\X^\bullet$, by construction, corepresents the functor $\mathfrak{A}
\colon \mathrm{Fun}(\Delta, \CAlg) \to \mathcal{S}$ in which we are interested. In particular, it suffices to prove \eqref{univformula} for this particular
functor. As we will see in the next paragraph, $\mathscr{G}^\bullet$ takes values in
\emph{connective} spectra and therefore so does $\X^\bullet$. Since we are only interested in differentials in a particular range,
we may (by naturality) only consider the Postnikov section $\tau_{\leq 2t}
\X^\bullet$. 
We get the following basic step. 

\begin{proposition} 
In order to prove \Cref{difftheorem}, it suffices to prove it for
$\tau_{\leq 2t}\X^\bullet$ (and the tautological class). 
\end{proposition}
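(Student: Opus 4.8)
The plan is to carry out the standard universal-example reduction in two steps. Since $\X^\bullet$ corepresents the functor $\mathfrak{A}$, I would first use naturality to reduce \Cref{difftheorem} to the single cosimplicial $\e{\infty}$-ring $\X^\bullet$ together with its tautological class, and then invoke a range argument to pass to the Postnikov truncation $\tau_{\le 2t}\X^\bullet$. The point underlying both steps is that every ingredient of \eqref{univformula}---the differentials $d_{t+1}$ and $\overline{d}_{t+1}$, the cup-square $x\mapsto x^2$, and the stable-range correspondences $E_{t+1}^{t+1,t}\simeq\overline{E}_{t+1}^{t+1,t}$ and $E_{t+1}^{2t+2,2t}\simeq\overline{E}_{t+1}^{2t+2,2t}$ arising from \Cref{gl1RvsR}---is natural in the cosimplicial $\e{\infty}$-ring.

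For the first step, recall that a class $x\in E_{t+1}^{t+1,t}$ in the Bousfield--Kan spectral sequence of $R^\bullet$ is precisely a point of $\mathfrak{A}(R^\bullet)=\Omega^\infty\Sigma^{-1}\mathrm{fib}(\mathrm{Tot}_{2t+1}R^\bullet\to\mathrm{Tot}_tR^\bullet)$. As $\X^\bullet$ corepresents $\mathfrak{A}$, such an $x$ is classified by a map $f\colon\X^\bullet\to R^\bullet$ of cosimplicial $\e{\infty}$-rings carrying the tautological class $x_{\mathrm{univ}}$ to $x$. This $f$ induces compatible maps on both spectral sequences and, by the naturality noted above, these commute with $d_{t+1}$, $\overline{d}_{t+1}$, the cup-square, and the two correspondences. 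Applying $f$ to the instance of \eqref{univformula} for $x_{\mathrm{univ}}$ therefore yields exactly the instance for $x$; note that no injectivity is required, as one simply transports the universal relation forward. Hence it suffices to prove the formula for $\X^\bullet$ and $x_{\mathrm{univ}}$.

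For the second step, observe that the source of $\overline{d}_{t+1}$ sits in internal degree $t$ and its target in internal degree $2t$, and likewise for $d_{t+1}$ and the cup-square; thus the entire relation \eqref{univformula} is read off from the portion of each spectral sequence in internal degrees $[t,2t]$. Because $\mathscr{G}^\bullet$, and hence $\X^\bullet$, is connective, the Postnikov map $q\colon\X^\bullet\to\tau_{\le 2t}\X^\bullet$ is an isomorphism on $\pi_j$ for $j\le 2t$ and carries $x_{\mathrm{univ}}$ to the tautological class of $\tau_{\le 2t}\X^\bullet$. On the spectral sequence of the rings themselves, $q$ then induces an isomorphism on all terms and differentials in internal degrees $\le 2t$, so the $d_{t+1}$-side of the formula is unchanged. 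The step I expect to be the main obstacle is the corresponding claim for $\gl_1$, since $\gl_1$ does not commute with Postnikov truncation and one cannot simply apply $q$ to $\gl_1(\X^\bullet)$. Here I would use that $\pi_j\gl_1(S)\cong\pi_j S$ for every $\e{\infty}$-ring $S$ and every $j\ge 1$, while $\pi_0\gl_1(S)=(\pi_0 S)^{\times}$; since $q$ is an isomorphism on $\pi_j$ for $0\le j\le 2t$, it follows that $\gl_1(q)\colon\gl_1(\X^\bullet)\to\gl_1(\tau_{\le 2t}\X^\bullet)$ is again an isomorphism on $\pi_j$ for $0\le j\le 2t$. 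Consequently $\gl_1(q)$ induces an isomorphism on the part of the $\gl_1$-spectral sequence in internal degrees $\le 2t$, compatibly with $\overline{d}_{t+1}$ and with the correspondence $x\mapsto\overline{x}$. Therefore \eqref{univformula} for $\tau_{\le 2t}\X^\bullet$ and its tautological class implies it for $\X^\bullet$ and $x_{\mathrm{univ}}$, and so in general.
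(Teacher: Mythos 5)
Your argument is correct and is essentially the paper's own (quite terse) reduction: corepresentability of $\mathfrak{A}$ by $\X^\bullet$ plus naturality of all the ingredients of \eqref{univformula} handles the first step, and connectivity of $\X^\bullet$ together with the fact that the relation only involves internal degrees in $[t,2t]$ justifies passing to $\tau_{\leq 2t}\X^\bullet$. Your explicit check that $\gl_1(q)$ is a $\pi_j$-isomorphism for $j\leq 2t$ is a detail the paper leaves implicit, but it is the right justification.
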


In fact, we have a reasonable handle on what the functor $\tau_{\leq 2
t}\X^\bullet$ looks like and can \emph{entirely} determine the BKSS. 
To see this, we recall the construction of $\fp^\bullet$; compare also the
discussion in \cite{tots}. 
The functor
\[ \lan_{\Delta^{\leq t} \to \Delta}(\ast) \colon \Delta \to \mathcal{S},  \]
sends any finite nonempty totally ordered set $T$ to the nerve of the category
$\Delta^{\leq t}_{/T}$
of all order-preserving morphisms $\left\{S \to T\right\}$ where: 
\begin{enumerate}
\item  $S$ is a finite, nonempty totally ordered set, and
\item $|S| \leq t+1$. 
\end{enumerate}

\begin{proposition}
$\lan_{\Delta_{\leq t} \to \Delta}(\ast)$
is naturally equivalent to the functor which sends $T \in \Delta$ to the nerve of the
\emph{poset} $P_{\leq t+1}(T)$ of  nonempty subsets of
$T$ of cardinality $\leq t+1$. 
\end{proposition}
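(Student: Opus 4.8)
The plan is to prove the equivalence by exhibiting the comparison map as induced by an adjunction, objectwise in $T$, and then to promote it to a natural equivalence using only the naturality of one of the two functors involved. First I would recall that, by the pointwise formula for left Kan extension, the value $\lan_{\Delta^{\leq t}\to\Delta}(\ast)(T)$ is the colimit in $\mathcal{S}$ of the constant diagram at $\ast$ indexed by the comma category $\Delta^{\leq t}_{/T}$; since the colimit of a constant point-diagram over a category is (the homotopy type of) its nerve, this recovers the description $T\mapsto N(\Delta^{\leq t}_{/T})$ already stated above, naturally in $T$. It therefore suffices to produce a natural equivalence $N(\Delta^{\leq t}_{/T})\simeq N(P_{\leq t+1}(T))$.

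Next I would define the comparison functor $\pi_T\colon \Delta^{\leq t}_{/T}\to P_{\leq t+1}(T)$ sending an order-preserving map $f\colon S\to T$ with $|S|\leq t+1$ to its image $\im(f)\subseteq T$, a nonempty subset of cardinality $\leq t+1$. This is manifestly a functor, since a morphism $g$ over $T$ induces an inclusion of images, and it is strictly natural in $T$: for $\alpha\colon T\to T'$ one has $\im(\alpha f)=\alpha(\im f)$, which is exactly the functoriality of $P_{\leq t+1}(-)$ via direct images. Thus $\pi_\bullet$ is a natural transformation of functors $\Delta\to\mathrm{Cat}$, and after applying the nerve it becomes a natural transformation $N(\Delta^{\leq t}_{/\bullet})\Rightarrow N(P_{\leq t+1}(\bullet))$ of functors $\Delta\to\mathcal{S}$.

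The heart of the argument is to check that, for each fixed $T$, the functor $\pi_T$ induces a homotopy equivalence on nerves. Here I would construct a right adjoint $j_T\colon P_{\leq t+1}(T)\to\Delta^{\leq t}_{/T}$ sending a subset $U$ to the inclusion $U\hookrightarrow T$, a legitimate object since $U$ is nonempty of cardinality $\leq t+1$. An elementary check of hom-sets gives $\hom(\pi_T f, U)\cong\hom(f, j_T U)$: both are a point exactly when $\im(f)\subseteq U$ and empty otherwise, using that $U\hookrightarrow T$ is a monomorphism so that any factorization of $f$ through $U$ is unique. Hence $\pi_T\dashv j_T$ with $\pi_T j_T=\mathrm{id}$, and since an adjunction induces inverse homotopy equivalences on nerves, $N(\pi_T)$ is an equivalence. (Equivalently one can invoke Quillen's Theorem A: the comma category $\pi_T/U$ is identified with $\Delta^{\leq t}_{/U}$, which has the terminal object $\mathrm{id}_U$ and so contractible nerve.)

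Finally, a natural transformation of functors valued in $\mathcal{S}$ that is objectwise an equivalence is itself an equivalence of functors; combining this with the previous two steps yields the desired natural equivalence $\lan_{\Delta^{\leq t}\to\Delta}(\ast)\simeq N(P_{\leq t+1}(-))$. I expect the only genuinely delicate point to be naturality, since the adjoint $j_T$ is \emph{not} natural in $T$: the induced map on posets is by direct image, and $U\hookrightarrow T\xrightarrow{\alpha}T'$ need not be the inclusion of $\alpha(U)$ once $\alpha|_U$ fails to be injective. The resolution, as arranged above, is to use $j_T$ only to establish the objectwise equivalence at each $T$ and to carry all of the naturality on the functor $\pi_\bullet$, which is natural on the nose.
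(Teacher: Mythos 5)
Your proof is correct and is essentially the paper's own argument: the paper likewise exhibits the adjunction between the image functor $\Delta^{\leq t}_{/T}\to P_{\leq t+1}(T)$ and the inclusion $U\mapsto (U\hookrightarrow T)$, and concludes via the fact that adjoint functors induce homotopy equivalences on nerves. Your closing remark is a genuine (if minor) refinement: the paper asserts naturality of the inclusion-direction map, which as you note fails strictly, whereas routing the naturality through the image functor $\pi_\bullet$ is the clean way to get the equivalence of functors.
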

\begin{proof}
In fact, for any $T$, there is a natural map $P_{\leq t+1}(T) \to
\Delta^{\leq t}_{/T}$, which is a homotopy equivalence as it is right adjoint
to the functor $\Delta^{\leq t}_{/T} \to P_{\leq t+1}(T)$ which sends
$S \to T$ to $\mathrm{image}(S \to T) \subset T$. 
\end{proof}

In view of the last proposition, one can also consider the following approach to the left Kan extension. 
There is a standard cosimplicial simplicial set sending $[n] \mapsto
\Delta^n$. The functor 
of the proposition
is equivalent to the barycentric subdivision of the
cosimplicial simplicial set $[n] \mapsto
\mathrm{sk}_t \Delta^n$.

As in \cite{tots}, the nerve of $P_{\leq t+1}(T)$, for any choice of $T$, is
(pointwise) homotopy equivalent to a
wedge of $t$-spheres, 
and contractible if $|T| \leq t+1$. 
We get from \eqref{defF}:

\begin{proposition}
The functor 
$\fp^\bullet\colon \Delta \to \mathcal{S}_*$ constructed above has the following properties: 
\begin{enumerate}
\item For any $T$, $\fp(T)$ is always a wedge of copies of $S^{t+1}$ and
$S^{2t+1}$.
\item Restricted to $\Delta^{\leq t}$, $\fp^\bullet$  is contractible. Restricted to
$\Delta^{\leq 2t}$, $\fp^\bullet$ is pointwise a wedge of copies of $S^{t+1}$.
\end{enumerate}
\label{coskfunctor}
\end{proposition}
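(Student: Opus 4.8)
The plan is to compute $\fp^\bullet$ pointwise and exhibit, for each object $T$ of $\Delta$, a wedge decomposition of $\fp(T)$. Write $k = |T|-1$ and abbreviate $A(T) = \lan_{\Delta^{\leq t}\to\Delta}(\ast)(T)$ and $B(T) = \lan_{\Delta^{\leq 2t+1}\to\Delta}(\ast)(T)$. By the preceding proposition and the discussion following it, $A(T)$ is the nerve of $P_{\leq t+1}(T)$, i.e.\ (up to barycentric subdivision) the skeleton $\sk_t \Delta^{k}$, which is homotopy equivalent to a wedge of copies of $S^t$ and is contractible when $|T|\leq t+1$; applying the same result with $t$ replaced by $2t+1$, the space $B(T)$ is homotopy equivalent to a wedge of copies of $S^{2t+1}$ and is contractible when $|T|\leq 2t+2$. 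The natural transformation in \eqref{defF} induces on $T$ the skeletal inclusion $A(T)\hookrightarrow B(T)$. Finally, since $\ast$ is the zero object of $\mathcal{S}_*$, the homotopy pushout \eqref{defF} identifies $\fp(T)$ with the (reduced) mapping cone of $A(T)_+ \to B(T)_+$.

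The key step is to observe that the map $A(T)\to B(T)$ is null-homotopic. Indeed, $A(T)$ is a wedge of $t$-spheres, so pointed homotopy classes $A(T)\to B(T)$ are computed by $\prod \pi_t(B(T))$; but $B(T)$ is a wedge of $(2t+1)$-spheres (or contractible), hence $2t$-connected, and therefore $\pi_t(B(T)) = 0$ for $t\geq 1$ (the standing assumption in this section). To pass from this to a splitting of $\fp(T)$, I would use the standard pointed equivalence $X_+ \simeq X\vee S^0$ for well-pointed $X$, which is natural in $X$ via the splitting $X_+\to \ast_+ = S^0$ coming from $X\to\ast$. Under these equivalences the map $A(T)_+\to B(T)_+$ becomes $\mathrm{id}_{S^0}\vee f$ for the pointed map $f\colon A(T)\to B(T)$, so taking reduced cofibers wedge-wise gives $\fp(T)\simeq \mathrm{cofib}(f)$; since $f$ is null-homotopic its cofiber splits, yielding $\fp(T)\simeq B(T)\vee \Sigma A(T)$.

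With this identification the two assertions are immediate: $\Sigma A(T)$ is a wedge of $(t+1)$-spheres (or contractible) and $B(T)$ is a wedge of $(2t+1)$-spheres (or contractible), so $\fp(T)$ is always a wedge of copies of $S^{t+1}$ and $S^{2t+1}$, giving (1); for (2), when $|T|\leq t+1$ both $A(T)$ and $B(T)$ are contractible so $\fp(T)\simeq \ast$, while when $|T|\leq 2t+1$ we still have $B(T)$ contractible (as $|T|\leq 2t+2$), whence $\fp(T)\simeq \Sigma A(T)$ is a wedge of copies of $S^{t+1}$. I expect the only real subtlety to be the careful bookkeeping with the disjoint basepoints in the $(-)_+$ construction—namely, verifying that the pointed homotopy pushout reduces to the reduced cofiber of $f$ and hence splits as $B(T)\vee\Sigma A(T)$—whereas the null-homotopy itself is a routine connectivity count. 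As a consistency check, one can compute the reduced homology of the cofiber sequence $A(T)_+\to B(T)_+\to \fp(T)$, which comes out free and concentrated in degrees $t+1$ and $2t+1$, exactly as predicted by the wedge decomposition.
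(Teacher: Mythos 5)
Your proof is correct and follows the same route the paper takes: identify the two left Kan extensions pointwise as wedges of $t$-spheres and $(2t+1)$-spheres (contractible in the stated ranges) via the nerve-of-posets description, and read off $\fp(T)$ from the defining pushout \eqref{defF}. The paper leaves the cofiber analysis implicit, and your explicit step --- that the map $A(T)\to B(T)$ is null by connectivity, so the reduced cofiber splits as $B(T)\vee\Sigma A(T)$ --- is exactly the right way to justify the wedge decomposition (note that homology alone would not suffice here, so this null-homotopy is genuinely the crux).
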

\subsection{Some technical lemmas}
Our first goal is to  understand the BKSS for $\mathscr{G}^\bullet
= \Sigma^{\infty-1}
\fp^\bullet$. Observe that pointwise, this cosimplicial spectrum is a wedge of copies of $S^t$ and
$S^{2t}$ by \Cref{coskfunctor}.
In order to do this, we need to understand the cosimplicial abelian group
$\pi_* ( \Sigma^{\infty - 1} \fp^\bullet)$. 
We will prove the following:

\begin{proposition} \label{FBKSS}
The cohomology $H^s( \pi_*( \mathscr{G}^\bullet))$ is given by 
\begin{equation} 
H^s( \pi_*( \mathscr{G}^\bullet) )\simeq
\begin{cases} 
\pi_*  S^t & s = t+1, \\
\pi_* S^{2t} & s =  2(t+1).
 \end{cases} 
\end{equation} 
In the spectral sequence, the differential $d_{t+1}$ is an isomorphism. 
\end{proposition}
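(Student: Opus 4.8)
The plan is to reduce the whole statement to a single acyclicity fact about corepresentable cosimplicial abelian groups, and to read off both the $E_2$-page and the differential $d_{t+1}$ from it. First I would record, using \Cref{coskfunctor}, that each $\mathscr{G}^n = \Sigma^{\infty-1}\fp^n$ is a wedge of copies of $S^t$ and $S^{2t}$, so its reduced homology is free and concentrated in degrees $t$ and $2t$; concretely the cosimplicial homology groups are $H_t(\mathscr{G}^\bullet)\cong Z_t$ and $H_{2t}(\mathscr{G}^\bullet)\cong Z_{2t+1}$, where $Z_j=\ker(\partial)$ is the cosimplicial abelian group of reduced $j$-cycles of the (levelwise acyclic) chain complex of $\Delta^\bullet$, assembled in the outer variable. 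Since the two homological degrees are distinct and there are no nonzero maps $S^t\to S^{2t}$, the Atiyah--Hirzebruch filtration is natural and splits levelwise, giving a natural short exact sequence of cosimplicial abelian groups
\[ 0 \to Z_t \otimes \pi_{k-t}S^0 \to \pi_k(\mathscr{G}^\bullet) \to Z_{2t+1}\otimes \pi_{k-2t}S^0 \to 0 \]
for every $k$, in which the sub is the image of the $t$-skeleton.

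The heart of the $E_2$-computation is then $H^s(Z_j)$, which I would obtain by dimension-shifting. Levelwise acyclicity of the reduced chains of $\Delta^n$ gives short exact sequences of cosimplicial abelian groups $0\to Z_j\to C_j\to Z_{j-1}\to 0$, where $C_j=\Z[\hom_\Delta([j],-)]$ is the cosimplicial group of $j$-chains. The crucial input is that each corepresentable $C_j$ has vanishing cosimplicial cohomology, which I would prove by exhibiting an extra codegeneracy (a d\'ecalage/contracting cochain homotopy) — the standard reason corepresentable cosimplicial objects are acyclic. Granting this, the long exact sequences yield $H^{s}(Z_{j-1})\cong H^{s+1}(Z_j)$, and the base case $0\to Z_0\to C_0\to\Z\to0$ gives $H^s(Z_0)\cong H^{s-1}(\text{const }\Z)=\Z$ concentrated in $s=1$; inductively $H^s(Z_j)=\Z$ concentrated in $s=j+1$. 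Because $Z_t$ and $Z_{2t+1}$ are free, tensoring is exact, and the cohomology long exact sequence of the displayed extension splits: its two contributions sit in the disjoint cohomological degrees $t+1$ and $2t+2$, so every connecting map vanishes. This gives exactly $H^{t+1}(\pi_*\mathscr{G}^\bullet)\cong\pi_* S^t$ and $H^{2t+2}(\pi_*\mathscr{G}^\bullet)\cong\pi_* S^{2t}$, and zero otherwise.

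It remains to show $d_{t+1}$ is an isomorphism. As the $E_2$-page is concentrated on the two lines $s=t+1$ and $s=2t+2$, the only possible differential is $d_{t+1}\colon E_{t+1}^{t+1,v}\to E_{t+1}^{2t+2,v+t}$, and an index check shows source and target are the same stable stem; hence it suffices to prove $\pi_*\Tot(\mathscr{G}^\bullet)=0$, which forces $d_{t+1}$ to be an isomorphism. I would get this from the cofiber sequence of cosimplicial spectra $\Sigma^{\infty-1}(\sk_t\Delta^\bullet)_+ \to \Sigma^{\infty-1}(\sk_{2t+1}\Delta^\bullet)_+ \to \mathscr{G}^\bullet$ by showing each of the first two totalizations is contractible. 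Filtering $(\sk_m\Delta^\bullet)_+$ by skeleta, the associated graded pieces are, up to suspension, the cosimplicial spectra with homology the normalized chain groups $N_k(\Delta^\bullet)$; these are again acyclic because the Dold--Kan splitting $C_k=\bigoplus N_{k'}$ is natural in the cosimplicial variable, so acyclicity of every $C_k$ forces acyclicity of every $N_k$. Since this skeletal filtration is finite, $\Tot(\Sigma^{\infty-1}(\sk_m\Delta^\bullet)_+)$ is built from pieces with contractible totalization and is therefore contractible, and the cofiber sequence then gives $\Tot(\mathscr{G}^\bullet)\simeq\ast$.

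The main obstacle — and the single fact on which both halves rest — is the acyclicity of the corepresentable cosimplicial abelian groups $\Z[\hom_\Delta([k],-)]$: it powers both the dimension-shifting computation of $H^*(Z_j)$ and the contractibility of the skeletal totalizations that pins down $d_{t+1}$. The remaining points are bookkeeping that I expect to be routine: the naturality of the Atiyah--Hirzebruch splitting in the cosimplicial variable, and convergence, which causes no trouble because the relevant skeletal filtrations are finite.
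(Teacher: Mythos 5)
Your proposal is correct, but it takes a genuinely different route from the paper's. The paper runs everything through \Cref{leftKancontractible}, proved via Lurie's $\infty$-categorical Dold--Kan correspondence: a functor on $\Delta^{\leq n}$ that is left Kan extended from $\Delta^{\leq n-1}$ has contractible limit. That single statement gives the contractibility of $\Tot(\mathscr{G}^\bullet)$ (hence the differential), and the paper also extracts the $E_2$-page \emph{indirectly} from it: in \Cref{BKSSLan} it smashes $\Sigma^\infty_+ \lan_{\Delta^{\leq t}\to\Delta}(\ast)$ with $H\mathbb{Z}$ and argues that, since the totalization is contractible and $\pi_0$ is the constant $\mathbb{Z}$, the cohomology of $\pi_t$ is \emph{forced} to be a single $\mathbb{Z}$ in degree $t+1$; the case of $\fp^\bullet$ then follows from the defining cofiber sequence. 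Your engine is instead the chain-level fact that the corepresentable cosimplicial abelian groups $\mathbb{Z}[\hom_\Delta([k],-)]$ are acyclic (true: under Dold--Kan they corepresent evaluation at $[k]$ and split as sums of acyclic disks), and you use it twice --- once to compute $H^*(Z_j)$ directly by dimension-shifting, which the paper never does, and once, via the skeletal filtration, to reprove contractibility of the totalizations. What each approach buys: yours is more elementary and self-contained, avoiding the $\infty$-categorical Kan-extension formalism entirely, but you pay with bookkeeping --- the naturality of the Atiyah--Hirzebruch filtration in the cosimplicial direction (which is fine: the filtration, though not the splitting, is natural since $\pi_{-t}S^0=0$, and the differentials vanish levelwise by counting), the implicit identification of $\mathrm{sd}(\sk_m\Delta^\bullet)$ with $\sk_m\Delta^\bullet$ via the last-vertex map, and the distinction between normalized and unnormalized cycles $Z_j$ in the dimension shift (the homology of $\sk_j\Delta^n$ is the group of \emph{normalized} cycles, so the short exact sequences should be run with $N_j$, whose acyclicity you correctly deduce from that of $C_j$ via the natural Dold--Kan splitting). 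The paper's version gets the $E_2$-page essentially for free once contractibility is known, at the cost of invoking heavier machinery.
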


\begin{figure}[h]
\includegraphics[scale=1]{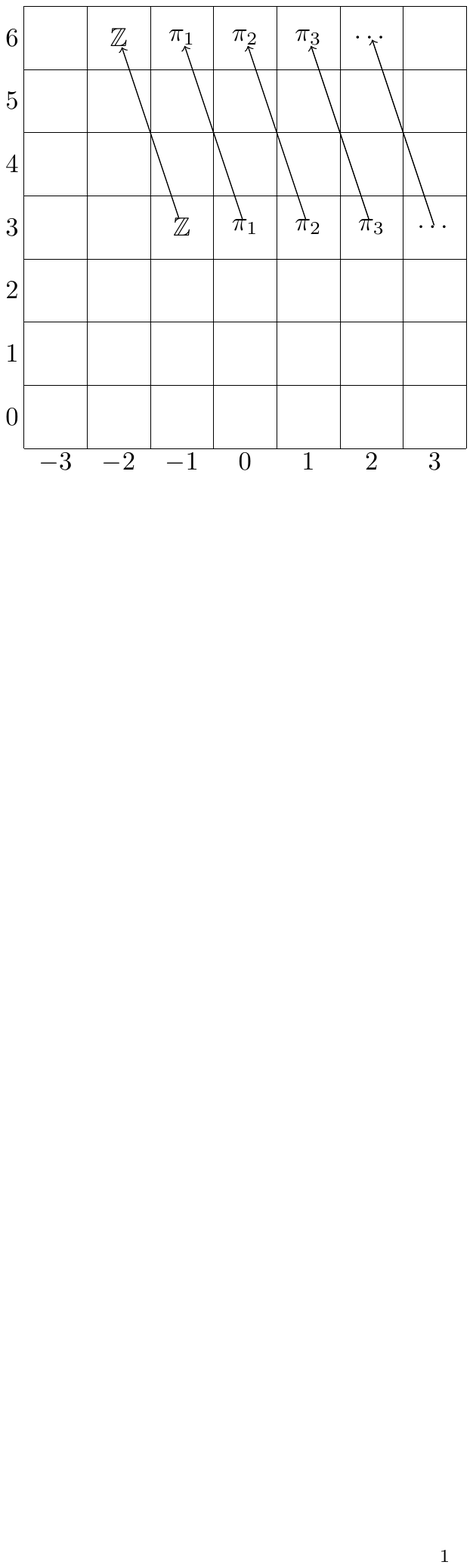}
\caption{Bousfield-Kan spectral sequence for $\mathscr{G}^\bullet$, with $t = 2$. }\centering{$\pi_k$
denotes $\pi_k S^0$}
\label{fig:SSF2}
\end{figure}

The spectral sequence is depicted in \Cref{fig:SSF2}. The proof of \Cref{FBKSS} will take work and will be spread over two
subsections. In the present subsection, our main result is that the
totalization of $\mathscr{G}^\bullet$ (and related cosimplicial
spectra) is contractible, and we will deduce the differentials from that. 
The approach to this is not computational and relies instead on ideas involving the
$\infty$-categorical Dold-Kan correspondence of Lurie.

We recall from {\cite[1.2.8.4]{highertopos}} the \emph{cone} construction,
which 
associates
to
a simplicial set $K$,
the \emph{cone} $K^{\lhd}$. If $K$ is an $\infty$-category, $K^{\lhd}$ is as
well, and
is obtained by adding a new initial object to $K$. 

\begin{lemma} \label{colimcone}
Let $K$ be a simplicial set and $\mathcal{D}$ an $\infty$-category with
colimits. Let $F\colon K^{\lhd} \to \mathcal{D}$ be a functor with the property
that  $F$ carries the cone point to an initial object of $\mathcal{D}$. Then
the natural map
\[ \varinjlim_{K} F|_{K} \to \varinjlim_{K^{\lhd}}  F \]
is an equivalence in $\mathcal{D}$.
\end{lemma}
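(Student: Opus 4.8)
The plan is to recognize $F$ as the left Kan extension of its own restriction $F|_K$ along the inclusion $j\colon K\hookrightarrow K^{\lhd}$, and then to invoke the transitivity of left Kan extensions. Since $\mathcal{D}$ admits all colimits, the left Kan extension $\mathrm{Lan}_j(F|_K)\colon K^{\lhd}\to\mathcal{D}$ exists, and the identity of $F|_K$ produces, by the universal property of $\mathrm{Lan}_j$, a canonical comparison map $\mathrm{Lan}_j(F|_K)\to F$. First I would check that this map is an equivalence object by object, using the pointwise formula for left Kan extensions \cite[\S 4.3]{highertopos}.

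At an object $x\in K$, the relevant comma $\infty$-category $K\times_{K^{\lhd}}(K^{\lhd})_{/x}$ is equivalent to $K_{/x}$, because the join $K^{\lhd}=\{v\}\star K$ adds no morphisms among the objects of $K$; this slice has the terminal object $\mathrm{id}_x$, so its colimit is $F(x)$ and the comparison map is the identity there. At the cone point $v$, however, the crucial point is that $v$ is \emph{initial} in $K^{\lhd}$, so the only morphism into $v$ is the identity; hence $(K^{\lhd})_{/v}\simeq\{v\}$, and pulling back along $j$ yields the empty simplicial set since $v\notin K$. Thus $(\mathrm{Lan}_j(F|_K))(v)$ is the colimit of the empty diagram, i.e.\ an initial object of $\mathcal{D}$. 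By hypothesis $F(v)$ is also initial, and any map between initial objects is an equivalence, so the comparison map is an equivalence at $v$ as well. Therefore $\mathrm{Lan}_j(F|_K)\xrightarrow{\ \sim\ }F$.

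To conclude, I would factor the structure map as $K\xrightarrow{\,j\,}K^{\lhd}\to\Delta^0$ and use the transitivity of left Kan extensions, which gives $\mathrm{Lan}_{K^{\lhd}\to\Delta^0}\circ\mathrm{Lan}_j\simeq\mathrm{Lan}_{K\to\Delta^0}$. Evaluating at the point and using the equivalence $F\simeq\mathrm{Lan}_j(F|_K)$ just established yields $\varinjlim_{K^{\lhd}}F\simeq\varinjlim_{K^{\lhd}}\mathrm{Lan}_j(F|_K)\simeq\varinjlim_K F|_K$, and unwinding the universal properties identifies this chain with the natural map appearing in the statement. The main obstacle is exactly the computation at the cone point: one must get the variance of the join $K^{\lhd}=\{v\}\star K$ right, so that the comma category there is genuinely \emph{empty}, for this is precisely the place where the hypothesis that $F$ carries the cone point to an initial object is consumed. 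As a sanity check, when $K=\{x,y\}$ is a pair of points the assertion recovers the familiar fact that a pushout over an initial object is a coproduct.
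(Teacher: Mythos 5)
Your argument is correct, but it takes a genuinely different route from the paper. The paper's proof is a two-step comparison of undercategories: since $K^{\lhd}=\Delta^0\star K$, the definition of the undercategory gives $\mathcal{D}_{F/}\simeq\bigl(\mathcal{D}_{F(v)/}\bigr)_{F|_K/}$, where $v$ is the cone point; because $F(v)$ is initial, the projection $\mathcal{D}_{F(v)/}\to\mathcal{D}$ is an equivalence (a trivial fibration), so the restriction $\mathcal{D}_{F/}\to\mathcal{D}_{F|_K/}$ is an equivalence, and the two colimits, being initial objects of these undercategories, coincide. That argument consumes the hypothesis in a single step and never mentions Kan extensions. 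You instead exhibit $F$ as $\mathrm{Lan}_j(F|_K)$ by a pointwise computation and then invoke transitivity of left Kan extensions; this is longer but makes explicit the (true and useful) intermediate statement that $F$ is a left Kan extension of its restriction, which is precisely the viewpoint the paper adopts in the next lemma (\Cref{kancone}). Two places to tighten your write-up: (i) for an arbitrary simplicial set $K$ the phrase ``$K_{/x}$ has the terminal object $\mathrm{id}_x$'' is an abuse of language; either first replace $K$ by a categorically equivalent $\infty$-category (harmless, since colimits are invariant under categorical equivalence) or argue directly that $\{\mathrm{id}_x\}\hookrightarrow K_{/x}$ is cofinal; and (ii) the final ``unwinding'' step, identifying the composite equivalence with the canonical map $\varinjlim_K F|_K\to\varinjlim_{K^{\lhd}}F$ of the statement, deserves a sentence: it holds because the comparison map $\mathrm{Lan}_j(F|_K)\to F$ restricts to the identity on $K$, so the induced map on colimits is compatible with the one induced by $j$.
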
 
\begin{proof} 
It suffices to show\footnote{We are indebted to the referee for substantially
simplifying our original argument here.} that the natural map
\begin{equation} \label{naturalmapD} \mathcal{D}_{K^{\lhd}/} \to \mathcal{D}_{K/} \end{equation}
is an equivalence of $\infty$-categories. 
But we have $\mathcal{D}_{K^{\lhd}/} \simeq \mathcal{D}_{(\Delta^0 \star K)/}
\simeq (\mathcal{D}_{\Delta^0/})_{K/}$ in view of the definition of the
overcategory \cite[\S 1.2.9]{highertopos}, where $\star$ denotes the \emph{join} of simplicial sets
\cite[\S 1.2.8]{highertopos}. However, we also know that the projection
map $\mathcal{D}_{\Delta^0/} \to \mathcal{D}$ is an equivalence since $\Delta^0
\to \mathcal{D}$ maps to an initial object. Therefore, we obtain 
that \eqref{naturalmapD} is an equivalence, as desired. 
\end{proof}

\begin{lemma} \label{kancone}
Let $\mathcal{C}, \mathcal{D}$ be $\infty$-categories and assume that
$\mathcal{D}$ has colimits. Let $F\colon \mathcal{C}^{\lhd} \to \mathcal{D}$ be a
functor such that $F$ carries the cone point to an initial object of
$\mathcal{D}$. Let $\mathcal{C} ' \subset \mathcal{C}$ be a full subcategory.
Then the following are equivalent: 
\begin{enumerate}
\item $F|_{\mathcal{C}}$ is a left  Kan extension of its restriction to
$\mathcal{C}'$. 
\item $F$ is a left Kan extension of its restriction to $\mathcal{C'}^{\lhd}$.
\end{enumerate}
\end{lemma}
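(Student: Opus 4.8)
The plan is to reduce both assertions to pointwise statements and to observe that, away from the relevant subcategories, they carry identical content once we invoke \Cref{colimcone}. Recall that for a functor $G$ on an $\infty$-category $\mathcal{E}$ and a full subcategory $\mathcal{E}_0 \subset \mathcal{E}$, the condition that $G$ be a left Kan extension of $G|_{\mathcal{E}_0}$ is checked objectwise: for each $e \in \mathcal{E}$ the canonical comparison map $\varinjlim_{\mathcal{E}_0 \times_{\mathcal{E}} \mathcal{E}_{/e}} G \to G(e)$ must be an equivalence, and this holds automatically whenever $e \in \mathcal{E}_0$, since $e$ is then a terminal object of the comma category and the colimit is computed there. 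Applying this to condition (1), the only objects with nontrivial content are $c \in \mathcal{C} \setminus \mathcal{C}'$, at which we must check that $\varinjlim_{\mathcal{C}' \times_{\mathcal{C}} \mathcal{C}_{/c}} F \to F(c)$ is an equivalence. For condition (2), the cone point $v$ lies in $\mathcal{C'}^{\lhd}$ and so is handled automatically; the remaining objects of $\mathcal{C}^{\lhd}$ are again exactly those $c \in \mathcal{C} \setminus \mathcal{C}'$.

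The key step will be to identify the comma categories appearing in condition (2). Writing $\mathcal{C}^{\lhd} = \Delta^0 \star \mathcal{C}$ with cone point $v$, I will show that for any $c \in \mathcal{C}$ the unique morphism $v \to c$ out of the initial object $v$ is itself initial in the slice $(\mathcal{C}^{\lhd})_{/c}$, producing a natural equivalence
\[ (\mathcal{C}^{\lhd})_{/c} \simeq (\mathcal{C}_{/c})^{\lhd}, \]
with the new initial object corresponding to $v \to c$. Indeed, an object of $(\mathcal{C}^{\lhd})_{/c}$ is a map $x \to c$ where $x$ is either $v$ or an object of $\mathcal{C}$; the latter reproduce $\mathcal{C}_{/c}$, while the former supplies the cone point, and initiality of $v \to c$ follows from initiality of $v$ in $\mathcal{C}^{\lhd}$. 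Intersecting with $\mathcal{C'}^{\lhd}$ yields correspondingly
\[ (\mathcal{C'}^{\lhd})_{/c} := \mathcal{C'}^{\lhd}\times_{\mathcal{C}^{\lhd}} (\mathcal{C}^{\lhd})_{/c} \simeq \bigl(\mathcal{C}' \times_{\mathcal{C}} \mathcal{C}_{/c}\bigr)^{\lhd}, \]
again with cone point $v \to c$.

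With these identifications in hand the proof concludes formally. Since $F$ carries $v$ to an initial object of $\mathcal{D}$, the functor $\bigl(\mathcal{C}' \times_{\mathcal{C}} \mathcal{C}_{/c}\bigr)^{\lhd} \to \mathcal{D}$ induced by $F$ sends its cone point to an initial object, so \Cref{colimcone} gives an equivalence
\[ \varinjlim_{\mathcal{C}' \times_{\mathcal{C}} \mathcal{C}_{/c}} F \ \xrightarrow{\ \sim\ }\ \varinjlim_{(\mathcal{C}' \times_{\mathcal{C}} \mathcal{C}_{/c})^{\lhd}} F . \]
Under this equivalence the comparison map defining condition (2) at $c$ is carried to the one defining condition (1) at $c$, so one is an equivalence exactly when the other is. Since both conditions are automatic on $\mathcal{C}'$ (and, for (2), at the cone point), this objectwise matching establishes (1) $\Leftrightarrow$ (2). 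The main obstacle I anticipate is the careful bookkeeping of the slice-of-a-cone identification $(\mathcal{C}^{\lhd})_{/c} \simeq (\mathcal{C}_{/c})^{\lhd}$ and its compatibility with restriction to $\mathcal{C}'$; once this join--slice computation is pinned down at the $\infty$-categorical level, the remainder is a direct application of \Cref{colimcone}.
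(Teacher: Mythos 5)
Your proof is correct and follows essentially the same route as the paper: both arguments rest on the identification $(\mathcal{C}'^{\lhd})_{/c} \simeq (\mathcal{C}'_{/c})^{\lhd}$ (your $\mathcal{C}' \times_{\mathcal{C}} \mathcal{C}_{/c}$ is the same comma category), an application of \Cref{colimcone} using that $F$ sends the cone point to an initial object, and the observation that the Kan extension condition at the cone point is automatic. Your write-up is somewhat more explicit about the pointwise reduction, but the content is identical.
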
 
\begin{proof} 
Suppose the first condition is satisfied. Then if $c \in \mathcal{C}$  is
arbitrary, the natural map
\[ \varinjlim_{c' \to c \in \mathcal{C}'_{/c}} F(c') \to F(c)   \]
is an equivalence. 
Now, we have an equivalence of $\infty$-categories $(\mathcal{C}'_{/c})^{\lhd}
\simeq (\mathcal{C}'^{\lhd})_{/c}$, because $\lhd$ adds a new initial object. 
Therefore, for arbitrary $c \in \mathcal{C}$, we also get that the natural map
\[ \varinjlim_{c' \to c \in (\mathcal{C}'^{\lhd})_{/c}} F(c')
\simeq \varinjlim_{c' \to c \in (\mathcal{C}'_{/c})^{\lhd}} F(c') \to F(c)   \]
is an equivalence, thanks to \Cref{colimcone}. At the cone point, the left Kan
extension condition is automatic. Thus, it follows that $F$ is a left Kan extension of
$F|_{\mathcal{C}'^{\lhd}}$. The converse is proved in the same way. 
\end{proof}

\begin{proposition} 
\label{leftKancontractible}
Let $\mathcal{C}$ be a stable $\infty$-category and let $F\colon \Delta^{\leq n} \to
\mathcal{C}$ be any functor. Suppose $F$ is a left Kan extension of its
restriction to $\Delta^{\leq n-1}$. Then $\varprojlim_{\Delta^{\leq n}}
F $ is contractible. 
\end{proposition}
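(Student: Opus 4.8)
The plan is to treat $\varprojlim_{\Delta^{\leq n}}$ as an exact functor and to show that it annihilates every left Kan extension from $\Delta^{\leq n-1}$. Write $\iota\colon \Delta^{\leq n-1}\hookrightarrow\Delta^{\leq n}$ for the inclusion, so the hypothesis reads $F\simeq \iota_!\iota^*F$. Since $\Delta^{\leq n}$ is a finite category and $\mathcal{C}$ is stable, the functor category $\mathrm{Fun}(\Delta^{\leq n},\mathcal{C})$ is stable, and $\varprojlim_{\Delta^{\leq n}}$, being right adjoint to the constant-diagram functor, preserves all limits; a limit-preserving functor between stable $\infty$-categories is automatically exact. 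First I would observe that $\iota_!$ identifies the full subcategory of diagrams satisfying the hypothesis with $\mathrm{Fun}(\Delta^{\leq n-1},\mathcal{C})$, and that any diagram over the finite category $\Delta^{\leq n-1}$ admits a finite (skeletal) filtration whose associated graded pieces are the corepresentable ``free'' diagrams $\mathrm{Fr}_k(c)\colon [j]\mapsto \coprod_{\mathrm{Hom}_\Delta([k],[j])}c$, for $0\le k\le n-1$ and $c\in\mathcal{C}$. Applying $\iota_!$ and using exactness, the proposition reduces to the single vanishing statement
\[ \varprojlim_{\Delta^{\leq n}}\mathrm{Fr}_k(c)\simeq 0\qquad (0\le k\le n-1). \]

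For this key step I would pass through Lurie's $\infty$-categorical Dold--Kan correspondence, as the surrounding discussion suggests: it presents the totalization tower of a cosimplicial object of $\mathcal{C}$ in terms of its normalized cochain object, whose terms are the conormalizations along the codegeneracies and whose differentials are alternating sums of cofaces. The diagram $\mathrm{Fr}_k(c)$ is the corepresentable cosimplicial object attached to $[k]$, coaugmented by the zero object since $\mathrm{Hom}_\Delta([k],\varnothing)=\varnothing$ forces the augmentation term $\coprod_\varnothing c$ to vanish. The décalage of the simplex category supplies an extra codegeneracy on this coaugmented diagram, exhibiting its augmented normalized complex as contractible; as the augmentation is $0$, the normalized complex is acyclic (morally, the reduced cochains of a simplex). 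The hypothesis $k\le n-1$ enters exactly here: it places the nondegenerate generator in degree $\le n-1$, so that the $n$-truncation $N^0\to\cdots\to N^n$ of the acyclic complex has trivial total object, giving the desired vanishing. (For $k=n$ the top conormalization carries a nondegenerate class that survives truncation, so the hypothesis is sharp.)

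The main obstacle is precisely this computation for the corepresentable generators, where the left Kan (coface/latching) hypothesis has to be converted into control of a totalization whose tower is instead governed by the codegeneracies (matching data); concretely, the work lies in producing the extra codegeneracy and in checking that $k\le n-1$ makes the truncated normalized complex have vanishing total object. Everything else is the formal exactness of $\varprojlim_{\Delta^{\leq n}}$ together with the generation of $\mathrm{Fun}(\Delta^{\leq n-1},\mathcal{C})$ by free diagrams, and the auxiliary cone statements \Cref{colimcone} and \Cref{kancone} are well-suited to organizing the coaugmentation and the contraction. As a sanity check I would first treat $n=1$ by hand: there $F([0])=A$ and $F([1])=A\oplus A$ with the two cofaces the summand inclusions, the coface difference maps isomorphically onto the conormalization, and hence $\varprojlim_{\Delta^{\leq 1}}F\simeq\mathrm{fib}(A\xrightarrow{\sim}A)\simeq 0$.
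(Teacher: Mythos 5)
Your argument takes a genuinely different route from the paper's. The paper's proof is purely formal and computation-free: extend $F$ to $\widetilde{F}$ on $(\Delta^{\leq n})^{\lhd}\simeq\Delta^{\leq n}_+$ by the zero object, observe that contractibility of $\varprojlim F$ is exactly the assertion that $\widetilde{F}$ is a \emph{right} Kan extension of $F$, and then invoke Lurie's Dold--Kan lemma (HA 1.2.4.19), which converts that right Kan extension condition into the hypothesis that $F$ is a \emph{left} Kan extension from $\Delta^{\leq n-1}$ (via \Cref{kancone}). Your route --- exactness of $\varprojlim_{\Delta^{\leq n}}$, reduction to corepresentable generators, and an explicit computation of their truncated totalizations --- is viable, and the key vanishing $\varprojlim_{\Delta^{\leq n}}\mathrm{Fr}_k(c)\simeq 0$ for $k\leq n-1$ is in fact true, with the sharpness at $k=n$ exactly as you say. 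What the paper's approach buys is that none of this needs to be computed; what yours buys is an explicit identification of the ``cells'' and of where the inequality $k\leq n-1$ bites.

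However, two of your stated justifications fail and would need to be replaced. First, the associated graded of the skeletal filtration of a cosimplicial object is \emph{not} $\mathrm{Fr}_k(c)=\coprod_{\hom_{\Delta}([k],[j])}c$: the new morphisms attached at stage $k$ are only the injections $[k]\hookrightarrow[j]$, and the coefficient is the cofiber of the latching map, not $G([k])$ itself. (A correct generation statement for the $\mathrm{Fr}_k$ under finite colimits and retracts does hold, so this is repairable, but not by the filtration as you describe it.) Second, and more seriously, the ``extra codegeneracy from d\'ecalage'' does not exist: the coaugmentation term is $\hom_{\Delta_+}([k],[-1])=\varnothing$, and there is no map of sets $\hom_{\Delta}([k],[0])\to\varnothing$, so the corepresentable coaugmented cosimplicial set admits no contracting codegeneracy. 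The acyclicity you want is nevertheless true, but it should be proved differently, e.g.\ by Yoneda: $\mathrm{Fr}_k(c)$ corepresents evaluation at $[k]$, and under Dold--Kan $X^k\simeq\bigoplus_{[k]\twoheadrightarrow[i]}N^iX$, so the conormalized complex of $\mathrm{Fr}_k(c)$ is a finite direct sum of two-term acyclic complexes $c\xrightarrow{\ \sim\ }c$ placed in degrees $[i,i+1]$ with $i\leq k$; since $k+1\leq n$, the restriction to $\Delta^{\leq n}$ leaves every summand intact and the limit vanishes. With those two repairs your proof goes through, but as written the central contractibility mechanism is not the one doing the work.
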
 
\begin{proof} 
Observe that the cone $(\Delta^{\leq n})^{\lhd}$ is given by the category
$\Delta^{\leq n}_+$ of the finite totally ordered sets $\left\{[i]\right\}_{-1
\leq i \leq n}$ since $[-1]$ is an initial object of this category. 
Consider the functor $\widetilde{F}\colon \Delta_+^{\leq n} \simeq (\Delta^{\leq n})^{\lhd} \to \mathcal{C}$
extending $F$ that sends the cone point to the initial object (one can always
make such an extension). 
In order to show that $\varprojlim_{\Delta^{\leq n}} F$ is contractible, it
suffices to show that $\widetilde{F}$ is a right Kan extension of $F =
\widetilde{F}|_{\Delta^{\leq n}}$. 

Now, we recall a basic result of Lurie \cite[Lemma 1.2.4.19]{higheralg} (which we use
for the opposite category), a piece of the
$\infty$-categorical version of the Dold-Kan correspondence: given any functor
$G\colon \Delta^{\leq n}_+ \to \mathcal{C}$, $G$ is a right Kan extension of
$G|_{\Delta^{\leq n}}$ if and only if $G$ is a \emph{left} Kan extension of
$G|_{\Delta^{\leq n-1}_+}$. 
In our case, it follows that to show that $\widetilde{F}$ is a right Kan
extension of $F$ (as we would like to see), it suffices to show that
$\widetilde{F}$ is a \emph{left} Kan extension of $\widetilde{F}|_{\Delta_+^{\leq
n-1}}$. But by \Cref{kancone}, this follows from the fact that
$\widetilde{F}|_{\Delta^{\leq n}} = F$ is a left Kan extension of
$\widetilde{F}|_{\Delta^{\leq n-1}} = F|_{\Delta^{\leq n-1}}$. 
\end{proof} 

\subsection{The BKSS for $\fp$}

The goal of this subsection is to complete the proof of \Cref{FBKSS}. 
To begin with, we analyze the BKSS for the functor $\Sigma^\infty_+
\lan_{\Delta^{\leq t} \to\Delta}(\ast)\colon \Delta \to \sp$.

\begin{proposition} 
\label{BKSSLan}
The BKSS for the cosimplicial spectrum $\Sigma^\infty_+
\lan_{\Delta^{\leq t} \to\Delta}(\ast)$ 
satisfies 
\begin{equation} 
E_2^{s,*} = H^s( \pi_* (\Sigma^\infty_+
\lan_{\Delta^{\leq t} \to\Delta}(\ast))) = \begin{cases} 
\pi_*(S^0) & s = 0 \\
\pi_*(S^t) & s = t + 1.
 \end{cases} 
\end{equation} 
The differential $d_{t+1}$ is an isomorphism. (The result for $t =2$ is
displayed in
\Cref{fig:BKSSLan}.)
\end{proposition}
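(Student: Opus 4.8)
The plan is to avoid any direct computation of the cohomology groups and instead read the entire spectral sequence off the tower of partial totalizations, using the technical lemmas of the previous subsection. Write $\Lambda^\bullet = \lan_{\Delta^{\leq t}\to\Delta}(\ast)$ and $P^\bullet = \Sigma^\infty_+ \Lambda^\bullet$. The first step is to note that, since $\Sigma^\infty_+\colon \mathcal{S}\to\sp$ is a left adjoint and hence preserves colimits, it commutes with left Kan extension and sends the constant functor at $\ast$ to the constant functor at $\mathbb{S}$; thus $P^\bullet \simeq \lan_{\Delta^{\leq t}\to\Delta}(\mathbb{S})$, the left Kan extension of the constant cosimplicial sphere defined on $\Delta^{\leq t}$. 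This is the only structural input I will use.

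Next I would analyze the tower $\cdots \to \Tot_n P^\bullet \to \Tot_{n-1}P^\bullet \to \cdots$, whose associated spectral sequence is the BKSS. For $n \leq t$ the restriction $P^\bullet|_{\Delta^{\leq n}}$ is simply the constant functor at $\mathbb{S}$ (a left Kan extension restricts to the original functor along the fully faithful inclusion $\Delta^{\leq t}\hookrightarrow\Delta$), and the finite totalization of a constant cosimplicial object is the object itself since its conormalization is concentrated in degree $0$; hence $\Tot_n P^\bullet \simeq \mathbb{S}$ for $0\le n\le t$, with all transition maps equivalences. For $n \geq t+1$, transitivity of left Kan extensions shows that $P^\bullet|_{\Delta^{\leq n}} = \lan_{\Delta^{\leq t}\to\Delta^{\leq n}}(\mathbb{S})$ is a left Kan extension of its own restriction to $\Delta^{\leq n-1}$ (because $t \leq n-1$), so \Cref{leftKancontractible} applies verbatim to give $\Tot_n P^\bullet = \varprojlim_{\Delta^{\leq n}} P^\bullet \simeq \ast$. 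In particular $\Tot P^\bullet = \varprojlim_n \Tot_n P^\bullet \simeq \ast$.

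The upshot is that the tower is equivalent to $\mathbb{S}$ in filtrations $\leq t$, is contractible in filtrations $\geq t+1$, and therefore has exactly one nontrivial layer, $\mathrm{fib}(\Tot_{t+1}\to\Tot_t) \simeq \mathrm{fib}(\ast \to \mathbb{S}) \simeq \Omega\mathbb{S}$. Reading off the conormalization, the cochain spectrum $N^s P^\bullet$ is $S^0$ for $s=0$, is $S^t$ for $s=t+1$ (since the identity $\Omega^{t+1}N^{t+1}\simeq\Omega\mathbb{S}$ forces $N^{t+1}\simeq\Sigma^t\mathbb{S}$), and vanishes otherwise. Consequently $E_1^{s,u}=\pi_u N^s P^\bullet$ is concentrated in the two columns $s=0$ and $s=t+1$, yielding $\pi_* S^0$ and $\pi_* S^t$ respectively; as these columns differ by $t+1\geq 3$ there is no room for a $d_1$, so $E_1=E_2=H^s(\pi_* P^\bullet)$, which is exactly the claimed $E_2$-page.

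Finally, the only possible differential between the two surviving columns is $d_{t+1}\colon E_{t+1}^{0,u}\to E_{t+1}^{t+1,u+t}$ (for degree reasons every other $d_r$ has either zero source or zero target). Since the spectral sequence converges to $\pi_*\Tot P^\bullet = 0$, both $E_\infty^{0,*}$ and $E_\infty^{t+1,*}$ must vanish, which forces $d_{t+1}$ to be simultaneously injective and surjective, i.e.\ an isomorphism; equivalently, $d_{t+1}$ is the single attaching map of the two-layer tower above, and its being an equivalence is precisely the assertion $\Tot P^\bullet\simeq\ast$. The main thing to get right is the bookkeeping in the second paragraph — the transitivity and restriction of left Kan extensions along $\Delta^{\leq t}\subset\Delta^{\leq n-1}\subset\Delta^{\leq n}$, which is what licenses invoking \Cref{leftKancontractible} at each stage; once that is secured, the identification of both the $E_2$-page and $d_{t+1}$ is formal.
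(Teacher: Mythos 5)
Your argument is correct, but it takes a genuinely different route from the paper's. The paper first identifies $\lan_{\Delta^{\leq t}\to\Delta}(\ast)$ pointwise as a wedge of $t$-spheres (via the poset model and the combinatorial input from the preceding proposition), reduces to the cosimplicial $H\mathbb{Z}$-module $M^\bullet = H\mathbb{Z}\wedge \Sigma^\infty_+\lan_{\Delta^{\leq t}\to\Delta}(\ast)$, and then invokes \Cref{leftKancontractible} a single time — to get contractibility of $\mathrm{Tot}(M^\bullet)$ — so that the degenerate spectral sequence of a cosimplicial $H\mathbb{Z}$-module forces $H^*(\pi_t M^\bullet)$ to be a single $\mathbb{Z}$ in degree $t+1$. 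You instead commute $\Sigma^\infty_+$ past the Kan extension and compute the entire $\mathrm{Tot}$ tower: constant at $\mathbb{S}$ through stage $t$ (restriction along the fully faithful inclusion), and contractible from stage $t+1$ onward by applying \Cref{leftKancontractible} at each finite stage via transitivity and restriction of pointwise Kan extensions. Everything — the two nonadjacent columns of $E_1=E_2$, their identification with $\pi_* S^0$ and $\pi_* S^t$, and the $d_{t+1}$ isomorphism — is then read off the single nontrivial layer of the tower. Your version is more formal and self-contained: it needs no knowledge of the pointwise homotopy type (the wedge-of-$t$-spheres fact), and it actually identifies the conormalized pieces as spectra, which is slightly stronger than the stated $E_2$-computation. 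The trade-off is that the paper needs the wedge-of-spheres identification anyway (\Cref{coskfunctor}, and the deduction of \Cref{FBKSS} from \Cref{BKSSLan}), so its route costs nothing extra in context; you are also implicitly using the additive Dold--Kan splitting to identify $\pi_u N^s$ with the normalized cochains of $\pi_u$, which is standard but worth a word. Both arguments are sound.
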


\begin{figure}[h]
\includegraphics[scale=1]{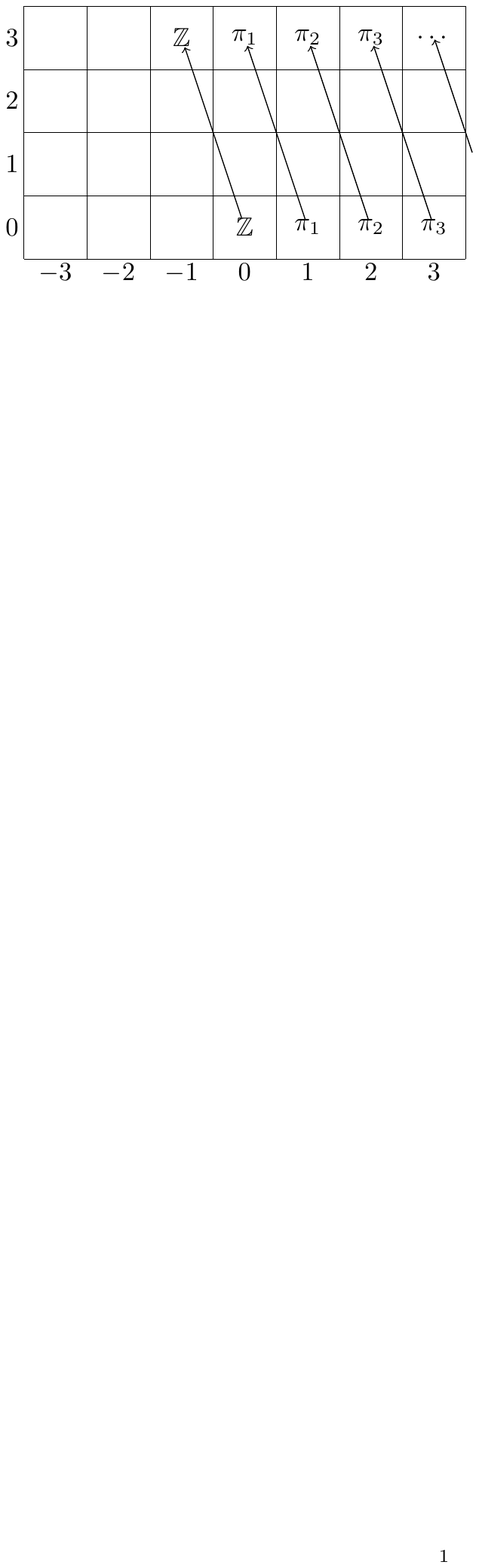}
\caption{Bousfield-Kan spectral sequence for $\Sigma^\infty_+
\lan_{\Delta^{\leq t} \to \Delta}(\ast)$, with
$t = 2$.}
\label{fig:BKSSLan}
\end{figure}

\begin{proof}
Observe that $\lan_{\Delta^{\leq t} \to\Delta}(\ast)$ is,
pointwise, a wedge of $t$-spheres, so to compute the desired cohomology $H^s( \pi_* ( \Sigma^\infty_+
\lan_{\Delta^{\leq t} \to\Delta}(\ast)))$, it suffices to 
do this for $\pi_t$. (The disjoint basepoint contributes the $\pi_*(S^0)$ for
$s =0 $ in cohomology.)
In other words, we may consider the cosimplicial $H \mathbb{Z}$-module 
$M^\bullet = H \mathbb{Z} \wedge \Sigma^\infty_+\lan_{\Delta^{\leq t} \to\Delta}(\ast)$. 
Now we know that, for each $n$, $\pi_*(M^n)$ is concentrated in degrees $0$ and
$t$, and that $\pi_0(M^\bullet)$ is the constant cosimplicial abelian group
$\mathbb{Z}$. Moreover, by \Cref{leftKancontractible}, $\mathrm{Tot}( M^\bullet)$ is contractible.
A look at the spectral sequence for $\mathrm{Tot}( M^\bullet)$ shows that
$H^s( \pi_t M^\bullet)$ must be concentrated in degree $s = t+1$ and must
be a $\mathbb{Z}$ there. The claim about differentials also follows from
contractibility of the totalization. 
\end{proof}

\begin{proof}[Proof of \Cref{FBKSS}]
The definition \eqref{defF} of $\fp^\bullet$ and \Cref{BKSSLan} together give the $E_2$-page of the spectral
sequence, when one uses the long exact sequence in homotopy groups. 
The differentials are forced, again, by \Cref{leftKancontractible} which
implies that $\mathrm{Tot}( \mathscr{G}^\bullet)$
is contractible. 
\end{proof}

\subsection{Completion of the proof}

\newcommand{\Y}{\mathscr{Y}}

Now we need to consider 
the cosimplicial $\e{\infty}$-ring
defined earlier
$$\Y^\bullet \stackrel{\mathrm{def}}{=} \tau_{\leq 2t} \X^\bullet \simeq
\tau_{\leq 2t}\mathrm{Free}_{\CAlg}(\mathscr{G}^\bullet).$$ 
We recall that this is well-defined as a cosimplicial $\e{\infty}$-ring because
$\mathscr{G}^\bullet$ is (pointwise) connective.

In this subsection, we will determine the relevant piece of the BKSS for $\Y$
and then complete the proof of \Cref{difftheorem}. 
We have that
\[ \Y^\bullet \simeq \tau_{\leq 2t}S^0 \vee \tau_{\leq 2t}
\mathscr{G}^\bullet
\vee \tau_{\leq 2t}\left((\mathscr{G}^\bullet)^{\wedge 2}_{h \Sigma_2}\right),
\]
because, by a connectivity argument, no other terms contribute. 
In particular, the cohomology $H^s( \pi_*(\Y^\bullet))$ picks up a copy of
$\pi_*(S^0)$ for $s = 0$ (which is mostly irrelevant).
In \Cref{FBKSS}, we determined the BKSS for $\mathscr{G}^\bullet$; in bidegrees $(t+1, t)$
and $(2t+2, 2t)$, this
picks up copies of $\mathbb{Z}$ such that the first one hits the second one
with a $d_{t+1}$. 
We will prove: 
\newcommand{\ssym}{\widetilde{\sym}}

\newcommand{\sym}{\mathrm{Sym}}

\begin{proposition} 
$E_{2}^{2t+2, 2t} \simeq \mathbb{Z} \oplus \mathbb{Z}/2$ in the BKSS for
$\Y^\bullet$. 
The $\mathbb{Z}/2$ is generated by the square of the class in bidegree $(t+1, t)$.
\end{proposition}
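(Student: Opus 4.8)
The plan is to read off the contribution of each summand in the splitting
\[ \Y^\bullet \simeq \tau_{\leq 2t}S^0 \vee \tau_{\leq 2t}\mathscr{G}^\bullet \vee \tau_{\leq 2t}\big((\mathscr{G}^\bullet)^{\wedge 2}_{h\Sigma_2}\big) \]
to the group $E_2^{2t+2,2t} = H^{2t+2}(\pi_{2t}(\Y^\bullet))$, using that the $E_2$-page is additive over wedge summands. The unit summand $\tau_{\leq 2t}S^0$ is a constant cosimplicial spectrum, so its cohomology sits in filtration $s=0$ and contributes nothing in filtration $2t+2$. The linear summand $\mathscr{G}^\bullet$ supplies exactly the $\mathbb{Z}$: by \Cref{FBKSS} we have $H^{2t+2}(\pi_*(\mathscr{G}^\bullet)) \cong \pi_*(S^{2t})$, which in internal degree $2t$ is $\pi_{2t}(S^{2t}) = \mathbb{Z}$. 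Hence the entire content is to show that the quadratic summand $(\mathscr{G}^\bullet)^{\wedge 2}_{h\Sigma_2}$ contributes precisely a $\mathbb{Z}/2$, and that this $\mathbb{Z}/2$ is hit by $x^2$. Since the two contributions come from distinct wedge summands, no extension problem arises and the answer is a genuine direct sum $\mathbb{Z}\oplus\mathbb{Z}/2$.

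For the quadratic summand I would first reduce to the bottom of $\mathscr{G}^\bullet$. As $(\mathscr{G}^n)^{\wedge 2}$ is $2t$-connective and we only want internal degree $2t$, only the smash square of the $t$-dimensional part of $\mathscr{G}^n$ contributes, and that part is $\Sigma_2$-stable; writing $V^\bullet = \pi_t(\mathscr{G}^\bullet)$, a levelwise free cosimplicial abelian group with $H^*(V^\bullet)\cong\mathbb{Z}$ concentrated in degree $t+1$ (again by \Cref{FBKSS}), the homotopy-orbit spectral sequence has only the $s=0$ term in total degree $2t$, so levelwise
\[ \pi_{2t}\big((\mathscr{G}^\bullet)^{\wedge 2}_{h\Sigma_2}\big) \cong (V^\bullet \otimes V^\bullet)_{\Sigma_2}, \]
the (underived, levelwise) $\Sigma_2$-coinvariants for the action that swaps the two tensor factors and simultaneously multiplies by the degree $(-1)^t$ of the swap on $S^t\wedge S^t$. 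What remains is to compute $H^{2t+2}$ of this cosimplicial abelian group.

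The key input is the induced involution on $H^*(V^\bullet\otimes V^\bullet)$. By the Künneth formula (no Tor, as $H^*(V^\bullet)$ is free) this is $\mathbb{Z}$, concentrated in degree $2t+2$ and generated by $\alpha\otimes\alpha$ with $\alpha$ a generator of $H^{t+1}(V^\bullet)$. The bare tensor-swap acts there by the Koszul sign $(-1)^{(t+1)^2}=(-1)^{t+1}$, so the full involution acts by $(-1)^{t}\cdot(-1)^{t+1}=(-1)^{2t+1}=-1$, \emph{uniformly} in $t$ (the parity cancellation between the sphere sign and the cosimplicial Koszul sign is exactly what makes the final answer $t$-independent). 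Inverting $2$ then kills the coinvariants' cohomology, so $H^{2t+2}\big((V^\bullet\otimes V^\bullet)_{\Sigma_2}\big)$ is a finite $2$-group; an explicit chain-level computation of the quadratic construction pins it down to $\mathbb{Z}/2$.

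I expect this last step — passing from ``the involution is $-1$ on the one-dimensional $H^{2t+2}$'' to ``the coinvariants have $H^{2t+2}=\mathbb{Z}/2$'' — to be the main obstacle, precisely because $\Sigma_2$-coinvariants is not exact and, at the chain level, the Eilenberg--Zilber equivalence does not strictly commute with the swap; this discrepancy is exactly the Steenrod-square phenomenon responsible for the extra class (compare the appearance of $\mathrm{Sq}^{n+1}$ in \Cref{gl1cubezero}). I would resolve it with a concrete model, using that $V^\bullet$ is levelwise free with free cohomology in a single degree: computing the strict coinvariants of the tensor square via the standard free $\Sigma_2$-resolution identifies $H^{2t+2}$ with $H_0(\Sigma_2;\mathbb{Z}_-)=\mathbb{Z}/2$, where $\mathbb{Z}_-$ is the sign representation carrying the involution computed above. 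Finally, multiplicativity of the Bousfield--Kan spectral sequence for the $\e{\infty}$-ring $\Y^\bullet$ shows that the product $x\cdot x$ of the generator $x\in E_2^{t+1,t}$ lands in the quadratic summand and is the image of the symmetrization of $\alpha\otimes\alpha$, hence generates this $\mathbb{Z}/2$, completing the identification $E_2^{2t+2,2t}\cong\mathbb{Z}\oplus\mathbb{Z}/2$.
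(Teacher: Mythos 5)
Your reduction agrees with the paper's: you split $\Y^\bullet$ into the three wedge summands, observe that only $\mathscr{G}^\bullet$ (supplying the $\mathbb{Z}$, by \Cref{FBKSS}) and the quadratic summand contribute to $E_2^{2t+2,2t}$, identify $\pi_{2t}$ of the quadratic summand with the strict $\Sigma_2$-coinvariants of $V^\bullet\otimes V^\bullet$ twisted by the sign $(-1)^t$ of the swap on $S^t\wedge S^t$ (this is exactly the $\mathrm{Sym}_2$ versus $\widetilde{\mathrm{Sym}}_2$ dichotomy of \Cref{symmdef}, with $V^\bullet$ the paper's $A^\bullet$), and check that the induced involution on $H^{2t+2}(V^\bullet\otimes V^\bullet)\cong\mathbb{Z}$ is $-1$ uniformly in $t$, so the group is $2$-torsion. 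Where you genuinely diverge is the final step. The paper (\Cref{mainsymmcalculation}) passes from ``annihilated by $2$'' to ``$\mathbb{Z}/2$ generated by $\iota^2$'' by reducing mod $2$ and invoking Priddy's theorem (\Cref{priddythm}) on symmetric squares of cosimplicial $\mathbb{F}_2$-vector spaces --- i.e.\ the cosimplicial Steenrod operations, whose top class is the cup square --- followed by the universal coefficient theorem. You propose instead a direct integral computation from the standard free $\Sigma_2$-resolution. That route can be made to work, and it has the virtue of not needing Priddy's theorem; but as written it has a gap at exactly the point you yourself flag as the main obstacle.

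The gap is that ``computing the strict coinvariants \ldots via the standard free $\Sigma_2$-resolution identifies $H^{2t+2}$ with $H_0(\Sigma_2;\mathbb{Z}_-)$'' is an assertion, not an argument: the free resolution computes the \emph{derived} coinvariants, and relating those to the strict coinvariants is precisely where the inexactness of $(-)_{\Sigma_2}$ bites. To close it, form the bicomplex $W_\bullet\otimes_{\Sigma_2}(V^\bullet\otimes V^\bullet\otimes\epsilon)$, with $W_\bullet$ the standard resolution, and compare its two spectral sequences. Filtering one way gives $E_2=H_p(\Sigma_2;H^q(V^\bullet\otimes V^\bullet\otimes\epsilon))$, which in total degree $q-p=2t+2$ consists only of $H_0(\Sigma_2;\mathbb{Z}_-)=\mathbb{Z}/2$ (here $\mathbb{Z}_-$ is the sign representation you computed). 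Filtering the other way, the $p=0$ column is the strict coinvariants you are after, while for $p>0$ the levelwise groups $H_p(\Sigma_2;V^n\otimes V^n\otimes\epsilon)$ are, via the Frobenius $v\mapsto v\otimes v$ of \Cref{symmdef}, naturally copies of $V^\bullet/2$, whose cohomology is concentrated in cosimplicial degree $t+1$; these therefore sit in total degrees $t+1-p\leq t<2t+1$ and can neither contribute to nor support differentials touching total degree $2t+2$. Only with this check does one get that $H^{2t+2}$ of the strict coinvariants is $\mathbb{Z}/2$, generated by the image of $\alpha\otimes\alpha$ --- i.e.\ by $\iota^2$, which then identifies with $x^2$ via multiplicativity of the BKSS as you say. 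With that paragraph supplied, your argument is a legitimate and somewhat more self-contained alternative to the paper's appeal to Priddy.
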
 
\begin{proof} 
We will use the notation and results of \Cref{app:symm}. Let $A^\bullet$ be the
cosimplicial abelian group $\pi_t \mathscr{G}^\bullet$, which is levelwise
free and finitely generated. As we have seen
(\Cref{FBKSS}), 
$H^{t+1}(A^\bullet) \simeq \mathbb{Z}$ and the other cohomology of $A^\bullet$
vanishes. Now, using the notation of \Cref{symmdef},
$$ \pi_{2t}(\mathscr{G}^{ \bullet \wedge 2}_{h \Sigma_2}) = 
\begin{cases} 
\sym_2 A^\bullet & t \text{ even} \\
\ssym_2 A^\bullet & t \text{ odd}.
 \end{cases} $$
By \Cref{mainsymmcalculation}, we find that the $E_{2}^{2t + 2, 2t}$ term of
$(\mathscr{G}^\bullet)^{\wedge 2}_{h \Sigma_2}$ is as claimed. 
\end{proof}

We are now ready to complete the proof and determine the differential in the
$\gl_1$ spectral sequence. Using the notation of the beginning of
this section, it follows that 
$E_{t+1}^{ t+1, t} \simeq \mathbb{Z}$ and $E_{t+1}^{2t+1, 2t} \simeq
\mathbb{Z}\oplus \mathbb{Z}/2$, and similarly for $\overline{E}$.
The $d_{t+1}$ carries the $\mathbb{Z}$ into the other $\mathbb{Z}$.
By naturality of the spectral sequence, it follows that there must exist a universal formula
\begin{equation} \label{univform} \overline{d}_{t+1}( \overline{x}) = \overline{ a d_{t+1}(x) + \epsilon x^2},
\quad a \in \mathbb{Z}, 
 \ \epsilon \in \{0, 1\}.  \end{equation}
The main claim is that $a= \epsilon = 1$. Our first goal is to compute $a$.

\begin{lemma} \label{funL} We have an equivalence of $\infty$-categories
between the $\infty$-category $\mathrm{Fun}^L( \sp_{\geq
0}, \sp_{\geq 0}) $ of cocontinuous functors $\sp_{\geq 0} \to
\sp_{\geq 0}$ and $\sp_{\geq 0}$ given by evaluating at the sphere. 
The inverse equivalence sends a connective spectrum $Y$ to the functor $X
\mapsto X \otimes Y$. 
\end{lemma}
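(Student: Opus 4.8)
The plan is to recognize $\sp_{\geq 0}$ as an \emph{idempotent} object of the symmetric monoidal $\infty$-category $(\mathrm{Pr}^L, \otimes)$ of presentable $\infty$-categories and colimit-preserving functors, and then to exploit the fact that mapping into a local object inverts localization equivalences. Recall that the unit of $\otimes$ on $\mathrm{Pr}^L$ is the $\infty$-category $\mathcal{S}$ of spaces, that the internal mapping object is exactly $\mathrm{Fun}^L(-,-)$ \cite[\S 4.8.1]{higheralg}, and that $\mathrm{Fun}^L(\mathcal{S}, \mathcal{C}) \simeq \mathcal{C}$ by the universal property of $\mathcal{S}$ as the free cocompletion of a point \cite[Thm. 5.1.5.6]{highertopos}. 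Writing $A = \sp_{\geq 0}$, the one external input I would invoke is that $\Sigma^\infty_+\colon \mathcal{S} \to A$ exhibits $A$ as an idempotent algebra in $\mathrm{Pr}^L$, i.e. the multiplication $A \otimes A \to A$ is an equivalence; this is standard (Gepner--Groth--Nikolaus identify $\sp_{\geq 0}$ with the free presentable $\infty$-category on a grouplike commutative monoid, and idempotency is part of that package).

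Granting idempotency, the localization $A \otimes (-)$ is smashing and $A$ is itself an $A$-local object, with localization map $\Sigma^\infty_+\colon \mathcal{S} \to A$. The central step is then purely formal: for any $A$-local object $Z$ one has $\mathrm{Fun}^L(A, Z) \simeq Z$, and consequently $\mathrm{Fun}^L(-, Z)$ carries $A$-local equivalences to equivalences. Applying this to the local object $Z = A$ and the local equivalence $\Sigma^\infty_+$ gives an equivalence
\[ \mathrm{Fun}^L(A, A) \xrightarrow{\ \sim\ } \mathrm{Fun}^L(\mathcal{S}, A) \simeq A. \]
Unwinding the identifications, this equivalence is restriction along $\Sigma^\infty_+\colon \mathcal{S} \to A$, i.e. evaluation of a functor at $\Sigma^\infty_+(\ast) = S^0$, which is precisely the evaluation-at-the-sphere functor of the statement.

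For the inverse I would read off the section directly from the monoidal structure. The multiplication $A \otimes A \to A$ is adjoint, under $\mathrm{Fun}^L(A \otimes A, A) \simeq \mathrm{Fun}^L(A, \mathrm{Fun}^L(A,A))$, to a map $A \to \mathrm{Fun}^L(A, A)$, and by construction of the tensoring this sends a connective spectrum $Y$ to the functor $X \mapsto X \otimes Y$. This functor is genuinely colimit-preserving because the smash product on $\sp_{\geq 0}$ preserves colimits in each variable. Composing $Y \mapsto (-\otimes Y)$ with evaluation at $S^0$ yields $Y \mapsto S^0 \otimes Y \simeq Y$, the identity; hence $Y \mapsto (-\otimes Y)$ is a section of the equivalence above, and so is its inverse, exactly as claimed.

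I expect the only genuine obstacle to be the idempotency of $\sp_{\geq 0}$ in $\mathrm{Pr}^L$; everything downstream is formal manipulation of smashing localizations and internal homs. If one prefers to avoid citing idempotency as a black box, an alternative route is to present $\sp_{\geq 0} \simeq \mathcal{P}_\Sigma(\mathcal{F})$ as the nonabelian derived $\infty$-category of the additive $\infty$-category $\mathcal{F}$ of finitely generated free connective spectra (finite wedges of $S^0$), use the universal property $\mathrm{Fun}^L(\mathcal{P}_\Sigma(\mathcal{F}), \sp_{\geq 0}) \simeq \mathrm{Fun}^{\times}(\mathcal{F}, \sp_{\geq 0})$ of $\mathcal{P}_\Sigma$ \cite[Prop. 5.5.8.15]{highertopos}, and observe that a product-preserving functor out of the free additive $\infty$-category on one object is determined by its value on $S^0$; this again recovers evaluation at the sphere as the equivalence, with inverse $Y \mapsto (-\otimes Y)$.
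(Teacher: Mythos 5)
Your argument is correct, but it takes a genuinely different route from the paper's. The paper first reduces to identifying $\mathrm{Fun}^L(\sp_{\geq 0}, \sp)$ with $\sp$ (the cocontinuous endofunctors of $\sp_{\geq 0}$ then corresponding to the functors landing in connective spectra, i.e.\ to connective $Y$), and then deduces this from two stable facts: $\sp$ is the stabilization of $\sp_{\geq 0}$ because $\Sigma$ is fully faithful on connective spectra, so restriction along the inclusion gives $\mathrm{Fun}^L(\sp,\sp) \simeq \mathrm{Fun}^L(\sp_{\geq 0}, \sp)$ by the universal property of stabilization \cite[Cor. 1.4.4.5]{higheralg}, and $\mathrm{Fun}^L(\sp,\sp)\simeq \sp$ by \cite[\S 4.8.2]{higheralg}. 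In other words, the paper exploits the idempotency of $\sp$ in $\mathrm{Pr}^L$ and passes through nonconnective spectra, whereas you exploit the idempotency of $\sp_{\geq 0}$ itself and never leave the connective world. Your route is cleaner in that it avoids the (implicit but necessary) step of cutting $\mathrm{Fun}^L(\sp_{\geq 0},\sp)$ down to the connective functors, and it treats the lemma as an instance of a uniform statement about smashing localizations of $\mathrm{Pr}^L$; its cost is that the idempotency of $\sp_{\geq 0}$ is a nontrivial input not stated in the paper's references, so you are genuinely importing Gepner--Groth--Nikolaus, while the paper stays within \cite{highertopos, higheralg}. The formal manipulations downstream of that input (locality of $A$, invariance of $\mathrm{Fun}^L(-,Z)$ under $A$-local equivalences for local $Z$, and the identification of $Y\mapsto (-\otimes Y)$ as a section and hence the inverse) are all sound. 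One small caveat on your alternative $\mathcal{P}_\Sigma$ route: a product-preserving functor out of the free additive $\infty$-category on one object is classified by a single object of the target only when the target is itself additive (otherwise it is a grouplike commutative monoid therein); since $\sp_{\geq 0}$ is additive this is harmless here, but it is again a GGN-type input rather than a purely formal observation.
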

\begin{proof} 
It suffices to show that evaluation at the sphere induces an equivalence of
$\infty$-categories $\mathrm{Fun}^L(\sp_{\geq 0}, \sp) \simeq \sp$ (with
inverse given as above). 
But the $\infty$-category $\sp$ is the 
\emph{stabilization} \cite[\S 1.4]{higheralg} of $\sp_{\geq 0}$ (as one sees
 easily from the fact that $\Sigma$ is \emph{fully faithful} on $\sp_{\geq 0}$
 and an equivalence on $\sp$), so that we
have an equivalence
(by \cite[Corollary 1.4.4.5]{higheralg}) $\mathrm{Fun}^L( \sp, \sp) \simeq
\mathrm{Fun}^L( \sp_{\geq 0}, \sp)$ given by restriction. But we know that
$\mathrm{Fun}^L( \sp, \sp) \simeq \sp$ by evaluation at the sphere spectrum,
with inverse given by the smash product (see \cite[\S 4.8.2]{higheralg}). 
\end{proof} 

We need the following fact about $\gl_1$. 
\begin{proposition} \label{gl1sqzero}
Let $X$ be a connective spectrum, and let $S^0 \vee X$ be the square-zero
$\e{\infty}$-ring. Then there is a natural equivalence
of spectra,
\[ \gl_1( S^0 \vee X ) \simeq \gl_1(S^0) \vee X . \]
\end{proposition}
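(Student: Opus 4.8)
The plan is to use the two $\e{\infty}$-ring maps attached to a trivial square-zero extension: the unit $S^0 \to S^0 \vee X$ and the augmentation $S^0 \vee X \to S^0$, whose composite is the identity of $S^0$. Applying $\gl_1$ produces a retraction of connective spectra, and hence a natural splitting
\[ \gl_1(S^0 \vee X) \simeq \gl_1(S^0) \vee F(X), \qquad F(X) := \mathrm{fib}\big(\gl_1(S^0 \vee X) \to \gl_1(S^0)\big). \]
Since the augmentation is split surjective on $\pi_0$, the fiber $F(X)$ is again connective, and $F(0) \simeq 0$; thus $F$ is a reduced functor $\sp_{\geq 0} \to \sp_{\geq 0}$, natural in $X$. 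The whole statement is then equivalent to the assertion $F \simeq \mathrm{id}_{\sp_{\geq 0}}$, and this is what I would prove.

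First I would check that $F$ is cocontinuous. Commutation with filtered colimits is easy, since the trivial square-zero extension $X \mapsto S^0 \vee X$ commutes with filtered colimits and $\gl_1$ does too (it is assembled from $\Omega^\infty$ and the formation of units on $\pi_0$, both of which commute with filtered colimits). The essential point, and the main obstacle, is exactness: that $F$ sends cofiber sequences of connective spectra to cofiber sequences. The guiding idea is the degenerate exponential $x \mapsto 1 + x$: because $X$ squares to zero one has $(1+x)(1+y) = 1 + (x+y)$, so the units congruent to $1$ realize the additive $\e{\infty}$-space $\Omega^\infty X$ and $F$ ought to depend linearly on $X$. At the level of spaces this linearity is already visible from the splitting $GL_1(S^0\vee X) \simeq GL_1(S^0)\times\Omega^\infty X$ of the pullback defining the units, and the work is to upgrade it to the spectrum level. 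Granting cocontinuity, \Cref{funL} identifies $F$ with the functor $X \mapsto X \otimes F(S^0)$.

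It then remains to show $F(S^0) \simeq S^0$. Under \Cref{funL}, a natural transformation $\mathrm{id} \to F$ is exactly a map $S^0 \to F(S^0)$, and $\mathrm{id} \to F$ is an equivalence of functors precisely when that map is an equivalence of spectra. I would produce the map from the unit $1 + \epsilon \in \pi_0 \gl_1(S^0 \vee S^0)$, where $\epsilon$ generates the square-zero summand; since $\epsilon$ dies under the augmentation, this class lies in $\pi_0 F(S^0)$ and classifies a map $S^0 \to F(S^0)$. Using $\pi_0 \gl_1 R = (\pi_0 R)^{\times}$ and $\pi_i \gl_1 R \cong \pi_i R$ for $i \geq 1$, one finds $\pi_0 F(S^0) \cong \mathbb{Z}$ generated by $1+\epsilon$ and $\pi_i F(S^0) \cong \pi_i S^0$ for $i \geq 1$, and checks that the map induces an isomorphism on all homotopy groups. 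Hence $F(S^0) \simeq S^0$, so $F \simeq \mathrm{id}$ and $\gl_1(S^0 \vee X) \simeq \gl_1(S^0) \vee X$ as claimed. I expect the exactness of $F$ in the second paragraph to be the crux; the cleanest rigorous route is to construct the exponential as a genuine $\e{\infty}$-map (equivalently a map of spectra) $X \to F(X)$, in which case \Cref{funL} serves precisely to reduce the required coherence to the single universal class $1 + \epsilon$.
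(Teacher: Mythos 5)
Your outline coincides with the paper's strategy (the same splitting defining $F$, the same appeal to \Cref{funL}), but the one step you identify as the crux --- that $F$ preserves pushouts --- is exactly the step you do not prove, and your fallback plan for it is circular. The heuristic $(1+x)(1+y)=1+x+y$ does not by itself produce a spectrum-level statement, and the proposed ``cleanest rigorous route'' --- construct a natural $\e{\infty}$-map $X \to F(X)$ and use \Cref{funL} to reduce its coherence to the single class $1+\epsilon$ --- cannot get off the ground: \Cref{funL} identifies natural transformations $\mathrm{id}\to F$ with maps $S^0 \to F(S^0)$ only \emph{after} one knows $F$ is cocontinuous, which is precisely what is at stake. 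The paper's argument for exactness is short but is the real content: a pushout square of connective spectra is, by stability, also a pullback of spectra, so applying $X\mapsto S^0\vee X$ yields a pullback square of $\e{\infty}$-rings; since $\gl_1$ is a right adjoint, the resulting square of $\gl_1$'s is a pullback of connective spectra; and a pullback square of connective spectra is again a pushout provided $\pi_0(\gl_1(S^0\vee X_2))\oplus\pi_0(\gl_1(S^0\vee X_3))\to\pi_0(\gl_1(S^0\vee X_4))$ is surjective, which follows from the corresponding surjectivity for the $X_i$ themselves. You need some version of this (or another genuine argument) before \Cref{funL} can be invoked.

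There is a second, smaller gap in your endgame. Granting $F\simeq(-)\otimes Y$, you propose to prove $Y=F(S^0)\simeq S^0$ by checking that the map $S^0\to F(S^0)$ classified by $1+\epsilon$ is an isomorphism on homotopy. On $\pi_0$ it is; but for $i\geq 1$ the induced map $\pi_i S^0\to\pi_i F(S^0)$ is the action of $\pi_i S^0$ on the class $1+\epsilon$ in the $\pi_*S^0$-module $\pi_*\gl_1(S^0\vee S^0)$, and this module structure need not agree with the evident one on $\pi_*(S^0\vee S^0)$ --- the failure of $\gl_1(R)$ and $R$ to agree as spectra beyond the stable range (see \Cref{gl1cubezero}) lives exactly in such attaching data, so ``checks that the map is an isomorphism'' is hiding a genuine computation. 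The paper sidesteps this entirely by evaluating at $X=H\mathbb{Z}$: there $F(H\mathbb{Z})$ has homotopy concentrated in degree $0$, so $H\mathbb{Z}\otimes Y\simeq H\mathbb{Z}$, and the Hurewicz theorem forces $Y\simeq S^0$ without analyzing any particular map. I would adopt that evaluation point rather than try to control the map out of the sphere.
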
 

On homotopy groups, this equivalence is compatible with the purely
algebraic equivalence $\pi_t
\gl_1( S^0 \vee X) \simeq \pi_t (S^0 \vee X) \simeq \pi_t(S^0) \oplus \pi_t(X)
\simeq \pi_t( \gl_1(S^0)) \oplus \pi_t(X)$.

\begin{proof} 
Given the connective spectrum $X$, we can use the 
composite $S^0 \to S^0 \vee X \to S^0$, in which the second map sends $X $ to $ 0 $, 
to get a natural splitting 
\[ \gl_1(S^0 \vee X) \simeq \gl_1(S^0) \vee F(X),  \]
where $F\colon \sp_{\geq 0} \to \sp_{\geq 0}$ is a certain functor that we want to
claim is naturally isomorphic to the identity. 
First, observe that $F$ commutes with colimits. 
Namely, $F$ commutes with filtered colimits (as one can check on homotopy
groups), $F$ takes $*$ to $*$, and given a pushout square
\begin{equation} \label{pushout1}  \xymatrix{
X_1 \ar[d] \ar[r] & X_2 \ar[d] \\
X_3 \ar[r] &  X_4,
}\end{equation}
in $\sp_{\geq 0}$, the analogous diagram
\begin{equation} \label{pushout2} \xymatrix{ F(X_1) \ar[d] \ar[r] & F(X_2) \ar[d] \\
F(X_3) \ar[r] &  F(X_4)
}\end{equation}
is a pushout square in $\sp_{\geq 0}$. This in turn follows by considering long
exact sequences in homotopy groups. 
More precisely, 
given the pushout square \eqref{pushout1}, the diagram of $\e{\infty}$-rings
\[ \xymatrix{
S^0 \vee X_1 \ar[d]\ar[r] &  S^0 \vee X_2 \ar[d]  \\
S^0 \vee X_3 \ar[r] &  S^0 \vee X_3,
}\]
is a homotopy \emph{pullback} in $\e{\infty}$-rings, so that 
applying $\gl_1$ (which is a \emph{right adjoint}) leads to a pullback square
\[ \xymatrix{
\gl_1(S^0 \vee X_1) \ar[d] \ar[r] &  \gl_1(S^0 \vee X_2) \ar[d]  \\
\gl_1(S^0 \vee X_3) \ar[r] &  \gl_1(S^0 \vee X_4),
}\]
and in particular, \eqref{pushout2} is homotopy cartesian too in $\sp_{\geq 0}$. Therefore, it is
homotopy cocartesian as well if we can show that the map
\[ \pi_0( \gl_1( S^0 \vee X_3)) \oplus \pi_0( \gl_1( S^0 \vee X_2))  \to
\pi_0( \gl_1( S^0 \vee X_4))  \]
is surjective. This, however, follows from the analogous fact that $\pi_0(X_3)
\oplus \pi_0(X_2) \to \pi_0(X_4)$ is surjective as \eqref{pushout1} is a pushout.

Therefore, as both $F$ commutes with colimits, $F$ is necessarily of the form
$X \mapsto X \otimes Y$ for some $Y \in \sp_{\geq 0}$, by \Cref{funL}. For $X = H \mathbb{Z}$,
we find $F(X) = H \mathbb{Z}$, so that $H \mathbb{Z} \otimes Y$ is concentrated
in degree zero and is isomorphic to $H \mathbb{Z}$. This forces $Y \simeq S^0$
and proves the claim. 
\end{proof}

\begin{proof}[Proof of \Cref{difftheorem}]
\Cref{gl1sqzero} implies that in the universal formula \eqref{univform}, the
constant $a = 1$. In fact, we know that if $X^\bullet$ is any cosimplicial
spectrum, then the cosimplicial spectra $\gl_1( S^0 \vee X^\bullet)$ and
$\gl_1(S^0) \vee X^\bullet$ are identified in a manner compatible with the
identifications of homotopy groups. In particular, the differentials in
the spectral sequence for $\gl_1(S^0 \vee X^\bullet)$ and in the spectral
sequence for $S^0 \vee X^\bullet$ are identified, forcing $a = 1$.

It remains to show that $\epsilon = 1$. 
For this, we need an example where the
two differentials do \emph{not} agree. 
This will be a generalization of \Cref{tylerexample}. 
Consider 
the $\e{\infty}$-ring $R$ of \Cref{gl1cubezero}, with $n = t$,
so that, in particular, $\gl_1( R) $ has homotopy groups in dimensions $t$
and $2t$ only.  \Cref{gl1cubezero} shows that the
$k$-invariant is \emph{nontrivial}.

Consider the space $X = K( \mathbb{F}_2, t+1)$, and consider the
Atiyah-Hirzebruch spectral sequences for the homotopy groups of $\gl_1(R)^X$
and $R^X$ (these can be identified with BKSS's by choosing simplicial
resolutions of $X$ by points). 
The latter clearly degenerates because $R$ is an Eilenberg-MacLane spectrum,
but we claim that the former does not. 

More precisely, we claim that there is no map 
of spectra
\[ \Sigma^{-1} \Sigma^{\infty } K(\mathbb{F}_2, t+1) \to \gl_1(R),   \]
inducing an isomorphism on $\pi_{t}$. 
The degeneration of the AHSS would certainly imply the existence of such a map. 
To see this, it is equivalent to showing that there is no map of (pointed) spaces
\[ K( \mathbb{F}_2, t+1) \to  BGL_1(R),  \]
with the same properties. If there existed such a map, then we could combine it
with the map $\tau_{\geq 2t+1} BGL_1(R) \simeq K(\mathbb{F}_2, 2t+1) \to
BGL_1(R)$ via the infinite loop structure to obtain a map
\[ K(\mathbb{F}_2, t+1) \times K( \mathbb{F}_2, 2t+1) \to BGL_1(R),  \]
which would be an equivalence by inspection of homotopy groups. 
However, this contradicts \Cref{gl1cubezero}, which shows that the
\emph{space} $BGL_1(R)$  
has a nontrivial $k$-invariant.

This completes the proof of \Cref{difftheorem}.
\end{proof}

\part{Computations}

\section{Picard groups of real $K$-theory and its variants}

Before we embark on the lengthy computations for the Picard groups of the
various versions of topological modular forms, let us work out in detail the case
of real $K$-theory, as well as the Tate $K$-theory spectrum $KO((q))$. In particular, these examples will illustrate our methodology without being computationally cumbersome.

\subsection{Real $K$ theory} \label{sec:PicKO}
In this subsection, we compute the Picard group of $KO$ using $C_2$-Galois
descent from the $C_2$-Galois extension $KO \to KU$ and the
Comparison Tool~\ref{toolGalois} (but not the universal formula of \Cref{difftheorem}).

We begin with the basic case of \emph{complex} $K$-theory.
\begin{example}[Complex $K$-theory]
\label{KUpic}
The complex $K$ theory spectrum has a very simple ring of homotopy groups $KU_*=\Z[u^{\pm 1}]$ with $u$ in degree $2$. In particular, $KU$ is even periodic with a regular noetherian $\pi_0$, so its Picard group is algebraic by \Cref{evenperiodicreg}. The inner workings of \Cref{evenperiodicreg} would use that the only (homogeneous) maximal ideals of $KU_*$ are generated by
prime numbers $p$; for each $p$, there is a corresponding residue field
spectrum, namely mod-$p$ $K$-theory, also known as an extension of the Morava
$K$-theory of height one at the given prime. As the Picard group of $ KU_0=\Z$ is trivial, and $\pic(KU_*) \simeq \Z/2 $,
any invertible $KU$-module is equivalent to either $KU$ or $\Sigma KU$. 
\end{example}

To compute $\pic(KO)$, we start with this knowledge that, thanks to \Cref{KUpic}, $\pi_0 \pics (KU) = \pic (KU)$ is $\Z/2$. We have the spectral sequence from
\eqref{ss:galDescent}
\[ H^*(C_2, \pi_*\pics (KU) ) \Rightarrow \pi_*( \pics (KU))^{hC_2} 
 \]
which will allow us to compute $\pi_0( \pics (KU))^{hC_2} \simeq \pic KO$. We
note that $\pi_1 \pics (KU) \simeq (KU_0)^\times = \Z/2$, and
\[ H^*(C_2,\Z/2) = \Z/2[x],\]
where $x$ is in cohomological degree 1.
The higher homotopy groups of $\pics (KU)$ coincide (as $C_2$-modules) with those of $KU$, suitably shifted by one.

Recall, moreover, that the $E_2$-page of the HFPSS for $\pi_* KO$
is given by 
the bigraded ring
\[ E_2^{*, *} = \mathbb{Z}[u^2, u^{-2}, h_1]/(2 h_1), \quad |u^2|= (4, 0),
\ | h_1 | = (1, 2),  \]
where $u^2$ is the square of the Bott class in $\pi_* KU \simeq
\mathbb{Z}[u^{\pm 1}]$, and $h_1$ detects in homotopy the Hopf map $\eta$. 
The class $h_1$ is in bidegree $(s,t) = (1, 2)$, so it is drawn using Adams indexing in
the $(1,1)$ place. 
The differentials are determined by $d_3(u^2) = h_1^3$ and the spectral
sequence collapses at $E_4$. 
For convenience, we reproduce a picture in \Cref{fig:KOdesc}; the interested reader can find the detailed computation of this spectral sequence in \cite[\S 5]{HeardStojanoska}.

\begin{figure}[h]
\includegraphics[scale=1]{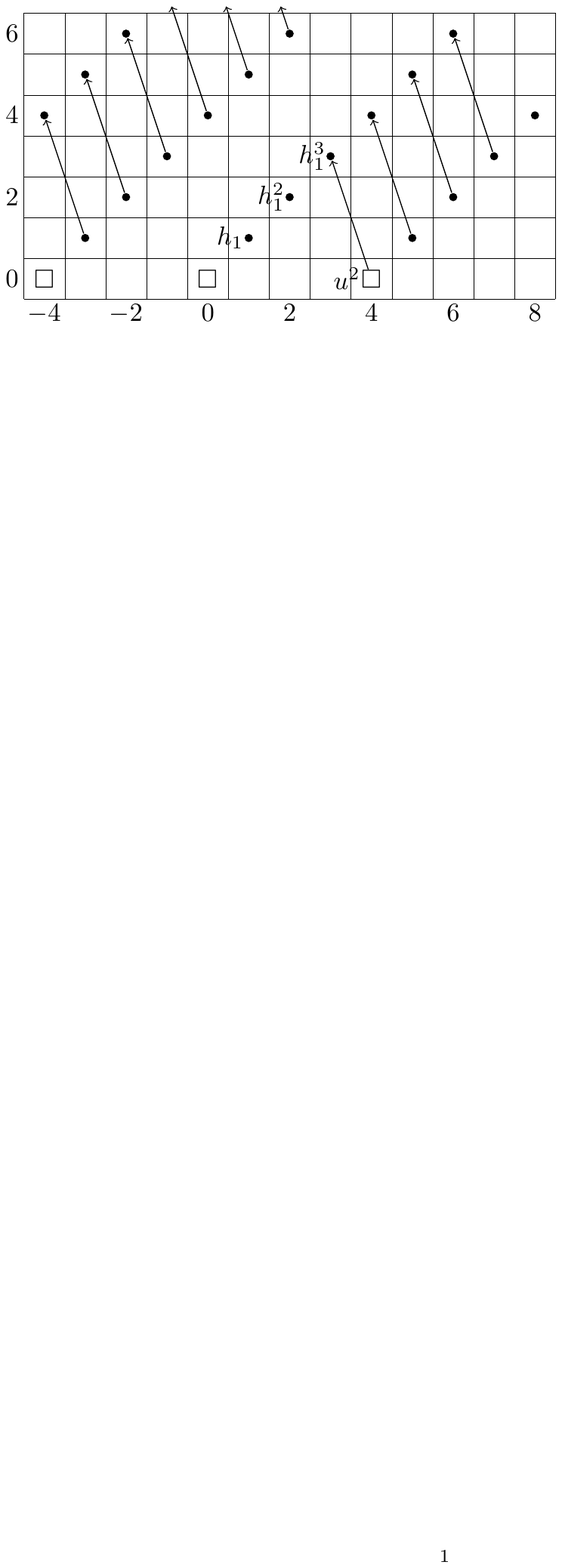}
\caption{Homotopy fixed point spectral sequence for
$\pi_* KO \simeq \pi_*(KU^{hC_2})$ }
\centering{($\bullet$ denotes $\Z/2$ and $\square $ denotes $\Z$)} \label{fig:KOdesc}
\end{figure}

Therefore, the $E_2$-page of the spectral sequence for $(\pics(KU))^{hC_2}$ is as
in~\Cref{fig:picKO}. To deduce differentials, we use our
Comparison Tool~\ref{toolGalois}: in the homotopy fixed point spectral sequence
for $KU$, there are only (non-trivial) $d_3$-differentials. By the
Comparison Tool~\ref{toolGalois}, we conclude that we can ``import" those differentials to the HFPSS for $\pics (KU)$ when they involve terms with $t\geq 4$. In particular, we see that the differentials drawn in \Cref{fig:picKO} are non-zero; moreover, everything that is above the drawn range and in the $s=t$ column either supports or is the target of a non-zero differential. Note that we are not claiming that there are no other non-zero differentials, but these suffice for our purposes.

\begin{figure}[h]
\includegraphics[scale=1]{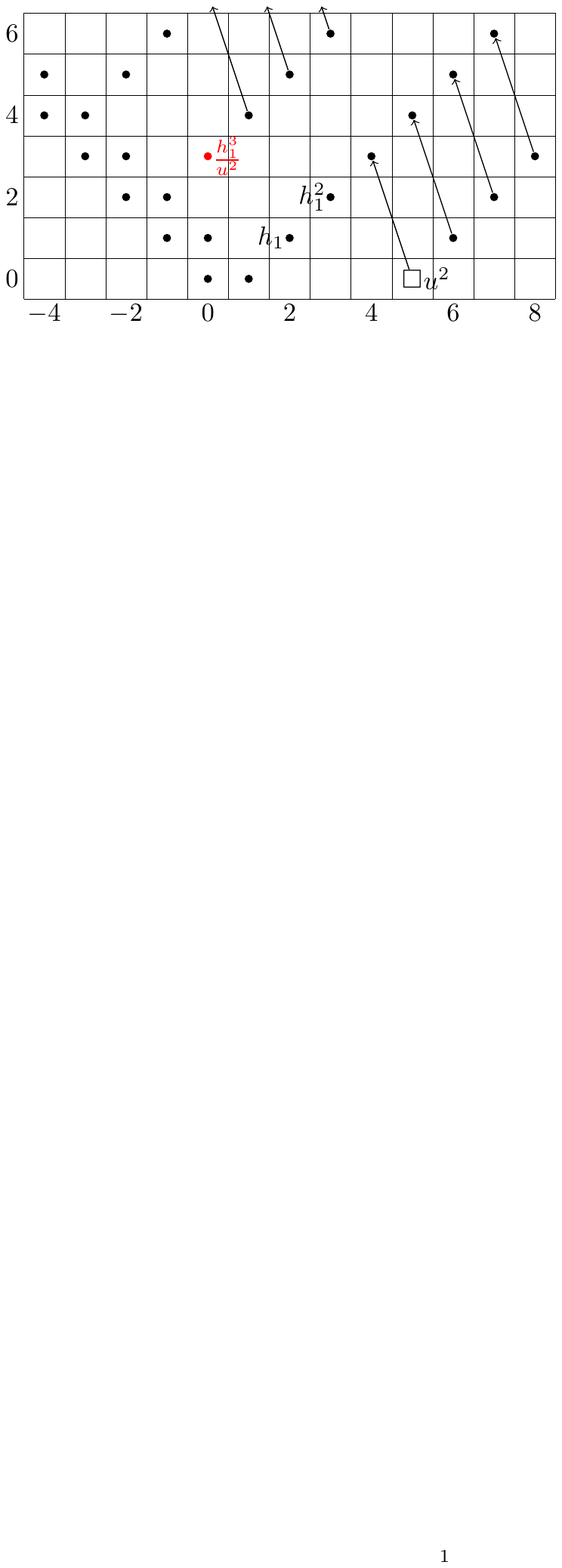}
\caption{Homotopy fixed point spectral sequence for $\pics (KU)^{hC_2}$}\label{fig:picKO}
\end{figure}

We deduce from this that $\pi_{0} \pics (KU)^{hC_2}=\pic (KO)$ has cardinality at most eight.
On the other hand, the fact that $KO$ is $8$-periodic gives us a lower bound
$\Z/8$ on $\pic (KO)$. Thus we get:

\begin{theorem}[Hopkins; Gepner-Lawson \cite{GL}]
\label{picko}
$\pic (KO)$ is precisely $\Z/8$, generated by $\Sigma KO$.
\end{theorem}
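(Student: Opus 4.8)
The plan is to compute $\pic(KO) = \pi_0(\pics(KU)^{hC_2})$ by a pincer argument, bounding this group below and above by $\Z/8$. The lower bound is elementary and comes from periodicity; the upper bound comes from running the homotopy fixed point spectral sequence \eqref{ss:galDescent} for the $C_2$-action on $\pics(KU)$ and using the Comparison Tool~\ref{toolGalois} to import essentially all of the relevant differentials from the (already understood) HFPSS for $KU$.

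For the lower bound, I would first note that $\Sigma KO$ generates a cyclic subgroup of $\pic(KO)$ whose order divides $8$, since $\Sigma^8 KO \simeq KO$ by Bott periodicity. To see that the order is exactly $8$, observe that $\pi_* KO$ is genuinely $8$-periodic but neither $2$- nor $4$-periodic: for instance $\pi_0 KO = \Z$ while $\pi_{-2} KO = \pi_{-4} KO = 0$, so neither $\Sigma^2 KO$ nor $\Sigma^4 KO$ is equivalent to $KO$. Hence $\langle \Sigma KO\rangle \cong \Z/8 \subseteq \pic(KO)$.

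For the upper bound I would use the $E_2$-page identified above: $\pi_0\pics(KU) = \pi_1\pics(KU) = \Z/2$ with trivial $C_2$-action, while for $s \geq 2$ one has $\pi_s \pics(KU) \simeq \pi_{s-1} KU$ with the complex-conjugation action $u \mapsto -u$. A direct inspection of the group cohomology of $C_2$ then shows that the only possibly nonzero entries on the diagonal $t = s$ (which is what abuts to $\pi_0$) occur at $s = 0$, at $s = 1$, and at $s \equiv 3 \pmod 4$, each a single copy of $\Z/2$. The heart of the argument is to annihilate every such entry with $s \geq 7$. For these the Comparison Tool~\ref{toolGalois} applies and identifies $d_3^{s,s}(\pics KU)$ with $d_3^{s,s-1}(KU)$, i.e. with the differential on the corresponding class of the $(t-s) = -1$ stem of the $KU$-HFPSS. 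Since $\pi_{-1} KO = 0$, every class on that stem must die, and as the $KU$-HFPSS collapses at $E_4$ it does so via a $d_3$; in either case (whether the $KU$ class supports or receives this $d_3$) the range condition $r = 3 \leq t-1$ of the Comparison Tool is met once $s \geq 7$, so the differential imports and kills $E_\infty^{s,s}(\pics KU)$. What survives on the diagonal is therefore at most $\Z/2$ at each of $s = 0, 1, 3$, giving $|\pic(KO)| \leq 8$. Combined with the lower bound, $\pic(KO) \cong \Z/8$, generated by $\Sigma KO$ (detected in filtration $0$ by the nontrivial class $\Sigma KU \in \pic(KU) \cong \Z/2$ of \Cref{KUpic}).

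The main obstacle — and what makes this example instructive — is the unstable diagonal entry at $(s,t) = (3,3)$. There the Comparison Tool would require $3 \leq t-1 = 2$, which fails, so the machinery of \Cref{sec:SSLemma} alone cannot pin down its differential; this is exactly the situation that the universal formula \eqref{univformula} of \Cref{difftheorem} is built to resolve. For $KO$, however, I can sidestep it entirely: the periodicity lower bound already forces $(3,3)$ to survive, so the pincer closes without ever computing the unstable differential. (For $\TMF$ at the prime $2$ no comparable periodicity shortcut is available, which is precisely why the universal formula becomes indispensable there.)
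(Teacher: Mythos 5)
Your argument is essentially the paper's own proof: the same pincer between the $8$-periodicity lower bound and the upper bound obtained by importing $d_3$'s into the HFPSS for $\pics(KU)^{hC_2}$ via Comparison Tool~\ref{toolGalois}, with the $(3,3)$ entry surviving by counting rather than by the universal formula --- exactly as in the remark following the theorem. One factual slip in your lower-bound paragraph: $\pi_{-4}KO \cong \pi_4 KO \cong \Z$ by Bott periodicity, not $0$; the conclusion $\Sigma^4 KO \not\simeq KO$ is still immediate, e.g.\ from $\pi_1 KO = \Z/2 \neq 0 = \pi_5 KO$.
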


\Cref{picko} was proved originally by Hopkins (unpublished) using related
techniques. The approach via descent theory is due to Gepner-Lawson in
\cite{GL}. Their identification of the differentials in the spectral sequence
is, however, different from ours: they use an explicit knowledge of the
structure of $\gl_1(KU)$ with its $C_2$-action (which one does not
have for $\TMF$).

\begin{remark}
In view of \Cref{rem:relativePic}, we conclude that the relative Picard group of the $C_2$-extension $KO\to KU$ is $\pi_{-1}(\gl_1 KU)^{hC_2} \simeq \Z/4$.
\end{remark}

\begin{remark} 
In the usual descent spectral sequence for $KO$, the class $\frac{h_1^3}{u^2}$
in red supports a $d_3$. By \Cref{difftheorem} and the multiplicative
structure of the usual SS, $\frac{h_1^3}{u^2}$ does \emph{not} support a $d_3$ in
the descent SS for $\pic$. 
We saw that above by counting: if $\frac{h_1^3}{u^2}$ did not survive, the
Picard group of $KO$ would be too small. For $2$-local $\TMF$, simple counting
arguments will not suffice and we will actually need to use \Cref{difftheorem}
as well. 
\end{remark} 

\begin{remark}
We can also deduce from the spectral sequence that the cardinality of the relative Brauer group for $KO/KU$, which is isomorphic to $\pi_{-1}(\pics (KU))^{hC_2}$, is most eight. However, we do not know how to construct necessarily non-trivial elements of this Brauer group in order to deduce a lower bound as in the Picard group case.
\end{remark}

\subsection{$KO[q], KO[[q]]$ and $KO((q))$}

We now include a variant of the above example where one adds a polynomial
 (resp. power series, Laurent series) generator, where we will also be able
 to confirm the answer using a different argument. 
This example can be useful for comparison with $\TMF$ using topological
$q$-expansion maps. 
We begin by introducing the relevant $\e{\infty}$-rings.
This subsection will not be used in the sequel and may be safely skipped by the
reader. 

\begin{definition} 
We write for $S^0[x]$ the suspension spectrum $\Sigma^\infty_+ \mathbb{Z}_{\geq
0}$. Since $\mathbb{Z}_{\geq 0}$ is an $\e{\infty}$-monoid in spaces (in fact,
a commutative topological monoid), $S^0[x]$ naturally acquires the structure of an
$\e{\infty}$-ring. 
Given an $\e{\infty}$-ring $R$, we will write $R[x] = R \wedge S^0[x]$. 

We can also derive several other variants: 
\begin{enumerate}
\item We will let $R[[x]]$ denote the $x$-adic completion of $R[x]$, so its
homotopy groups look like a power series ring over $\pi_*R$.  
\item We will let $R[x^{\pm 1}]$ denote the localization $R[x][1/x]$, so its
homotopy groups are given by Laurent polynomials in $\pi_* R$.
\item We will let $R((x)) = R[[x]][1/x]$, so that its homotopy groups look like
formal Laurent series over $\pi_* R$. 
\end{enumerate}
\end{definition} 

On the one hand, $\pi_* (R[x]) \simeq (\pi_*R)[x]$ is a polynomial ring over
$\pi_* R $ on a generator in degree zero. 
On the other hand, as an $\e{\infty}$-algebra under $R$, the universal property of $R[x]$ is
significantly more complicated than that of the ``free''
$\e{\infty}$-$R$-algebra on a generator (which is often denoted
$R\left\{x\right\}$). 
A map $R[x] \to R'$, for an $\e{\infty}$-$R$-algebra $R'$, is equivalent to an
$\e{\infty}$-map
\[ \mathbb{Z}_{\geq 0} \to \Omega^\infty R',\]
where $\Omega^\infty R'$ is regarded as an $\e{\infty}$-space under
\emph{multiplication.}
In general, given a class in $\pi_0 R'$, there is no reason to expect an
$\e{\infty}$-map $R[x] \to R'$ carrying $x$ to it, since $\mathbb{Z}_{\geq
0}$ as an $\e{\infty}$-monoid is quite complicated. 
Classes for which this is possible (together with the associated maps $R[x] \to R'$) have
been called ``strictly commutative'' by Lurie. 

\begin{example} 
There is a map $R[x] \to R$ sending $x \to 1$. This comes from the map of
$\e{\infty}$-spaces $\mathbb{Z}_{\geq 0} \to \ast \to \Omega^\infty
S^0$ where $\ast$ maps to the unit in $\Omega^\infty S^0$. 
\end{example} 
\begin{example} \label{0strictlycomm}
There is a map $R[x] \to R$ sending $x \to 0$.\footnote{We are grateful to the
referee for suggesting this argument over our previous one.} 

To obtain this in the universal case $R = S^0$, we consider the adjunction
\[ (\Sigma^\infty, \Omega^\infty): \mathcal{S}_*  \rightleftarrows \sp.  \]
Here $\mathcal{S}_*$ and $\sp$ are symmetric monoidal 
with the smash product and $\Sigma^\infty$ is a symmetric monoidal functor. 
In particular, $\Sigma^\infty$ carries commutative algebra objects in
$\mathcal{S}_*$ to $\e{\infty}$-ring spectra. 

We start with the commutative monoid $M $ 
with a single element $m$.
Then $M_+  = \left\{\ast, m\right\}\in \mathcal{S}_*$ is a commutative algebra object of $\mathcal{S}_*$
with respect to the smash product: in fact, it is the unit $S^0$ as a
pointed space. 
Similarly, $(\mathbb{Z}_{\geq 0})_+$ is a commutative algebra object of
$\mathcal{S}_*$. 
Now we have equivalences of $\e{\infty}$-ring spectra $\Sigma^\infty (M_+) \simeq S^0$
and $\Sigma^\infty( \mathbb{Z}_{\geq 0})_+ \simeq \Sigma^\infty_+
\mathbb{Z}_{\geq 0}$. 
There is a map of commutative monoids in $\mathcal{S}_*$
\[ (\mathbb{Z}_{\geq 0})_+ \to M_+,  \]
which carries $0 \in \mathbb{Z}_{\geq 0}$ to $m$  and everything else to $\ast$. 
After applying $\Sigma^\infty$, we obtain the desired map $S^0[x] \to S^0$ of
$\e{\infty}$-rings.

\end{example}

The map $R[x] \to R$ given in \Cref{0strictlycomm} has the property that it
exhibits the $R[x]$-module $R$ as the cofiber  $R[x]/x$. 
It follows in particular that if $R'$ is any $\e{\infty}$-$R$-algebra and $x'
\in \pi_0 R' $ is a strictly commutative element, then we can give the cofiber
$R'/x' \simeq R' \otimes_{R[x]} R$ the structure of an
$\e{\infty}$-$R'$-algebra. 

\begin{remark} Consider the sphere spectrum $S^0$. Then no cofiber $S^0/n$ for
$n \notin \left\{\pm 1, 0\right\}$ can admit the structure of an
$\e{\infty}$-ring by, for example, \cite[Remark 4.3]{nilpotence}.\footnote{It
is an unpublished result of Hopkins that no Moore spectrum can even admit the
structure of an $\e{1}$-algebra.} It follows that the only
element of $\pi_0 S^0 \simeq \mathbb{Z}$, besides $0$ and $1$, that can potentially be strictly commutative
is $-1$. Now, $-1$ is not strictly commutative in the $K(1)$-local
sphere $L_{K(1)} S^0$ at the prime $2$ because of the operator $\theta$ of
\cite{hopkinsk1local}: we
have $\theta(-1) = \frac{(-1)^2 - (-1)}{2} = 1 \neq 0$, while power
operations such as $\theta$ annihilate strictly commutative elements. 
Therefore, $-1$ cannot be strictly commutative in $S^0$. (One could have
applied a similar argument with power operations to every other integer, too.) However, we observe that it is strictly commutative in $S^0[1/2]$: the
obstruction is entirely 2-primary (\Cref{nthrootsstrictlycomm} below). 
\end{remark} 

\begin{example} 
Let $a, b \in \pi_0 R$ be strictly commutative elements for $R$ an
$\e{\infty}$-ring. Then $ab$ is also strictly commutative. If $a$ is  a unit, then
$a^{-1}$ is strictly commutative. This follows because there is a natural
addition on $\e{\infty}$-maps $\mathbb{Z}_{\geq 0}  \to \Omega^\infty R$. 
\end{example}

\begin{proposition} 
\label{nthrootsstrictlycomm}
Let $R$ be an $\e{\infty}$-ring with $n$ invertible. Then any $u \in \pi_0 R$
with $u^n = 1$ (i.e., an $n$th root of unity) admits the structure of a
strictly commutative element.
\end{proposition}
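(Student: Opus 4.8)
The plan is to realize $u$ through a finite \'etale extension of $R$ --- namely the spherical group ring of the cyclic group $\mathbb{Z}/n$ --- and then to restrict along the monoid map $\mathbb{Z}_{\geq 0}\to \mathbb{Z}/n$. Recall from the discussion preceding \Cref{0strictlycomm} that a strictly commutative structure on $u$ is precisely an $\e{\infty}$-$R$-algebra map $R[x]\to R$ carrying $x\mapsto u$, equivalently a map of $\e{\infty}$-monoids $\mathbb{Z}_{\geq 0}\to \Omega^\infty R$ (for the multiplicative structure on the target) sending the generator to $u$. So it suffices to produce such a map.

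First I would introduce the group ring $R[\mathbb{Z}/n] = R\wedge \Sigma^\infty_+ (\mathbb{Z}/n)$, an $\e{\infty}$-$R$-algebra, and record that $\pi_*R[\mathbb{Z}/n]\simeq (\pi_* R)[t]/(t^n-1)$ is free of rank $n$, hence flat, over $\pi_* R$. The crux of the argument --- and the only place the hypothesis on $n$ is used --- is that $R[\mathbb{Z}/n]$ is \emph{\'etale} over $R$. Flatness is immediate from the computation of homotopy, and $\pi_0 R[\mathbb{Z}/n]\simeq (\pi_0 R)[t]/(t^n-1)$ is \'etale over $\pi_0 R$ precisely because inverting $n$ makes $t^n-1$ separable: its derivative $nt^{n-1}$ is a unit multiple of $t^{n-1}$, and since $t\cdot t^{n-1}=(t^n-1)+1$ the element $t^n-1$ and its derivative already generate the unit ideal.

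Next I would build the map on $\pi_0$. Since $u^n=1$, the assignment $t\mapsto u$ defines a morphism of $\pi_0 R$-algebras $\epsilon\colon (\pi_0 R)[t]/(t^n-1)\to \pi_0 R$. By the rigidity of \'etale extensions of $\e{\infty}$-rings \cite[\S 8.5]{higheralg}, which identifies the space of $\e{\infty}$-$R$-algebra maps out of an \'etale $R$-algebra with the discrete set of $\pi_0 R$-algebra maps on $\pi_0$, this $\epsilon$ lifts (uniquely) to an $\e{\infty}$-$R$-algebra map $\rho\colon R[\mathbb{Z}/n]\to R$ inducing $\epsilon$ on $\pi_0$. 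Finally, the homomorphism of discrete commutative monoids $\mathbb{Z}_{\geq 0}\to \mathbb{Z}/n$ sending $1$ to a generator induces an $\e{\infty}$-ring map $R[x]\to R[\mathbb{Z}/n]$; composing with $\rho$ yields an $\e{\infty}$-$R$-algebra map $R[x]\to R$ with $x\mapsto u$, which is the desired strictly commutative structure.

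The main obstacle is the \'etale step: one must recognize that inverting $n$ turns the cyclotomic-type extension into a separable, hence \'etale, algebra, and then invoke \'etale rigidity to obtain the coherent lift of $\epsilon$ for free. This is exactly what sidesteps the potentially nonvanishing obstructions --- which by the preceding remark are $p$-primary for primes $p\mid n$ (e.g.\ the operation $\theta$ obstructing $-1$ in $L_{K(1)}S^0$ at the prime $2$) --- since those are annihilated once $n$ is a unit. Everything else is formal: the identification of strictly commutative structures with maps out of $R[x]$, the adjunction $\Sigma^\infty_+\dashv \Omega^\infty$, and the restriction along $\mathbb{Z}_{\geq 0}\to \mathbb{Z}/n$.
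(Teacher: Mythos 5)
Your proof is correct and follows essentially the same route as the paper's: factor $R[x]\to R$ through the group ring $R\wedge\Sigma^\infty_+(\mathbb{Z}/n)$, observe that inverting $n$ makes this extension \'etale, and invoke \'etale rigidity to lift $t\mapsto u$ to an $\e{\infty}$-map. The only difference is that you spell out the separability check for $t^n-1$, which the paper leaves implicit.
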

\begin{proof} 
We consider the map of $\e{\infty}$-monoids
$\mathbb{Z}_{\geq 0} \to \mathbb{Z}/n\mathbb{Z}$ and the induced map
of $\e{\infty}$-ring spectra
\begin{equation} \label{modn} R[x] \to R \wedge \Sigma^\infty_+
\mathbb{Z}/n\mathbb{Z}.  \end{equation}
Since $\frac{1}{n} \in \pi_0 R$, 
$R \wedge \Sigma^\infty_+ \mathbb{Z}/n\mathbb{Z}$ is \'etale over $R$ and the
homotopy groups are given by $\pi_* R [x]/(x^n - 1)$. We can thus produce a map
of $\e{\infty}$-rings
$R \wedge \Sigma^\infty_+ (\mathbb{Z}/n\mathbb{Z}) \to R$ sending $1 \in
\mathbb{Z}/n\mathbb{Z} $ to $u$ by \'etaleness.\footnote{The \'etale
obstruction theory has been developed by a number of authors; a convenient
reference for the result that we need is \cite[Thm. 8.5.4.2]{higheralg}.} Composing with 
\eqref{modn} gives us the strictly commutative structure on $u$. 
\end{proof} 

Using these ideas, we will be able to give a direct computation of the Picard
group of the $\e{\infty}$-ring $KO[[q]]$. (We have renamed the power series
variable to ``$q$'' in accordance with ``$q$-expansions.'')
\begin{proposition}
\label{KO[[q]]}
The map $\pic (KO)\to \pic (KO[[q]] )$ is an isomorphism, where $q$ is in degree zero.
\end{proposition}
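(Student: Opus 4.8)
The plan is to reduce the whole statement to a single derived Nakayama argument for the $q$-adically complete $\e{\infty}$-ring $KO[[q]]$. First I would record the splitting: the unit $KO \to KO[[q]]$ admits a retraction $KO[[q]] \to KO$ sending $q \mapsto 0$, which is the extension to the $q$-adic completion of the map of \Cref{0strictlycomm}, and the composite $KO \to KO[[q]] \to KO$ is the identity. Hence the induced map $\pic(KO) \to \pic(KO[[q]])$ is split injective (and sends $\Sigma KO \mapsto \Sigma KO[[q]]$), so it remains only to prove surjectivity, i.e.\ that every invertible $KO[[q]]$-module is equivalent to $\Sigma^j KO[[q]]$ for some $j$.

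The key structural input is that the retraction $KO[[q]] \to KO$ is a map of $\e{\infty}$-rings exhibiting $KO$ as the quotient $KO[[q]]/q$: since $q$ is strictly commutative (\Cref{0strictlycomm} and the discussion following it) the cofiber of multiplication by $q$ acquires an $\e{\infty}$-$KO[[q]]$-algebra structure, and as $q$ acts injectively on $\pi_* KO[[q]] = \pi_*(KO)[[q]]$ this cofiber is identified with $KO$. Thus $(-)\otimes_{KO[[q]]} KO \colon \md(KO[[q]]) \to \md(KO)$ is literally ``reduction mod $q$.'' Now let $M$ be an invertible $KO[[q]]$-module. Then $M$ is perfect, hence $q$-complete (the $q$-complete modules are closed under finite colimits and retracts and contain $KO[[q]]$), and $M\otimes_{KO[[q]]}KO$ is an invertible $KO$-module, so by \Cref{picko} it is $\Sigma^j KO$ for a unique $j\in\Z/8$. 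Twisting $M$ by the invertible module $\Sigma^{-j}KO[[q]]$, I may assume $M\otimes_{KO[[q]]}KO \simeq KO$.

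It then suffices to show such an $M$ is equivalent to $KO[[q]]$. I would first check $\pi_{-1}M = 0$: the long exact sequence for multiplication by $q$ shows that $\pi_{-1}(M/q) = \pi_{-1}KO = 0$ forces $q$ to act surjectively on $\pi_{-1}M$, and since $\pi_{-1}M$ is a $q$-complete $\Z[[q]]$-module the classical Nakayama lemma gives $\pi_{-1}M = 0$. Consequently $\pi_0 M \to \pi_0(M/q)\cong \Z$ is surjective, so a lift of a generator yields a map of $KO[[q]]$-modules $u\colon KO[[q]] \to M$ whose reduction mod $q$ is an equivalence $KO \xrightarrow{\sim} M/q$. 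The cofiber $C$ of $u$ is perfect, hence $q$-complete, and satisfies $C\otimes_{KO[[q]]}KO \simeq 0$; by derived Nakayama for the $q$-complete module $C$ this gives $C\simeq 0$, so $u$ is an equivalence and $M\simeq KO[[q]]$.

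The hard part is the completeness bookkeeping concentrated in the last paragraph: one must know that invertibility forces perfectness and hence $q$-completeness, that reduction mod $q$ is therefore conservative on perfect modules (the derived Nakayama statement $C/q\simeq 0 \Rightarrow C\simeq 0$), and that the needed generator of $\pi_0 M$ actually lifts — which is exactly what $\pi_{-1}M = 0$ secures. Everything else is formal: the retraction handles injectivity, \Cref{picko} supplies $\pic(KO)=\Z/8$, and the strict commutativity of $q$ is what lets us treat $KO$ as the genuine $\e{\infty}$-quotient $KO[[q]]/q$.
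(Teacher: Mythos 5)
Your proof is correct and follows essentially the same route as the paper's first proof: reduce by the retraction and $\pic(KO)=\Z/8$ to the case $M/qM\simeq KO$, lift a generator of $\pi_0(M/qM)$, and conclude from $q$-adic completeness of the perfect module $M$. The only cosmetic difference is that the paper performs the lift by an induction up the tower $M/q^kM$ (tracking $\pi_{-1}(M/q^kM)=0$ and surjectivity onto $\pi_0(M/qM)$), whereas you kill $\pi_{-1}M$ directly by Nakayama for the derived $q$-complete group $\pi_{-1}M$ — both are the same Bockstein/derived-Nakayama argument.
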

\begin{proof}
Suppose $M$ is an invertible $KO[[q]]$-module such that $M/qM \simeq M
\otimes_{KO[[q]]} KO$ is equivalent to $KO$. We will show that then $M$ is equivalent to $KO[[q]]$ using Bocksteins. 
Specifically, consider the generating class in $\pi_0(M/qM) \simeq \mathbb{Z}$;
we will lift this to a class in $\pi_0 M $. 
It will follow that the induced map $KO[[q]] \to M$ becomes an equivalence after
tensoring with $KO \simeq KO[[q]]/q$. Since $M$ is $q$-adically complete, it
will follow that $KO[[q]] \simeq M$.

By induction on $k$, suppose that:
\begin{enumerate}
\item $\pi_{-1} (M/q^k M) = 0$. 
\item $\pi_0(M/q^k M) \to \pi_0(M/qM)$ is a surjection.
\end{enumerate}
These conditions are clearly satisfied for $k = 1$. 
If these conditions are satisfied for $k$, then the cofiber sequence
of $KO[[q]]$-modules
\[ M/q^k M \to M/q^{k+1} M \to M/qM  \]
shows that they are satisfied for $k+1$. 
In the limit, we find that there is a map $KO[[q]] \to M$ which lifts the
generator of $\pi_0(M/qM)$, which proves the claim. 
\end{proof}

\Cref{KO[[q]]} can also be proved using Galois descent, but
unlike for $KO$, we need to use \Cref{difftheorem}. 

\begin{proof}[Second proof of \Cref{KO[[q]]}]
The faithful $C_2$-Galois extension $KO \to KU$ induces upon base-change a
faithful $C_2$-Galois extension $KO[[q]] \to KU[[q]]$. 
The Picard group of $KU[[q]]$, again by \Cref{evenperiodicreg}, is
$\mathbb{Z}/2$ generated by the suspension. Consider now the descent spectral
sequence for $(\pics(KU[[q]]))^{hC_2}$, which is a modification of the descent spectral
sequence for $KU^{hC_2}$ in \Cref{fig:picKO}. One difference is
that every term with $t \geq 2$ is replaced by its tensor product over
$\mathbb{Z}$ with $\mathbb{Z}[[q]]$; the other is that the $t=1$ line now contains the 
$C_2$-cohomology of the units in $\pi_0 KU[[q]]$, which is a bigger module than $(\pi_0KU)^\times = \Z/2$. Namely, these units are $\Z/2 \oplus q\Z[[q]] $, with trivial $C_2$-action. The resulting $E_2$-page is displayed in \Cref{fig:picKO[[q]]}.

\begin{figure}[h]
\includegraphics[scale=1]{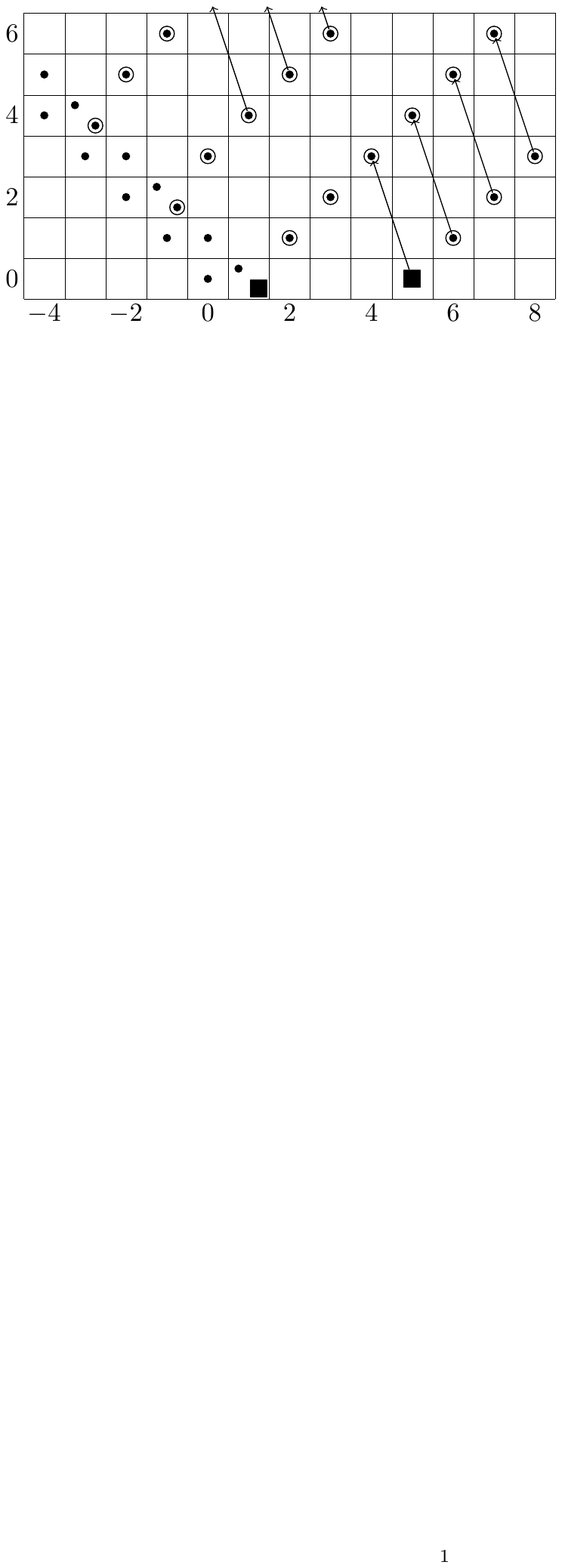}
\caption{Homotopy fixed point spectral sequence for $\pics (KU[[q]])^{hC_2}$}
\centering{$\bullet$ denotes $\Z/2$,  $\cbull$ denotes $\Z/2[[q]]$, and $\blacksquare$ denotes $\Z[[q]]$}\label{fig:picKO[[q]]}
\end{figure}

Since the $d_3$ is the only differential
in the ordinary HFPSS for $\pi_* KO[[q]]$, as before, it follows that the only
contributions to $\pic(KO[[q]])$ can come from the $\mathbb{Z}/2$ with $t=s=0$
(the suspension), the $\mathbb{Z}/2$ with $(s,t) = (1,1)$ (i.e. the algebraic
Picard group), and the $\Z/2[[q]]$ in bi-degree $(s,t) = (3, 3)$. 

But here, $E_2^{3,3}= \mathbb{Z}/2[[q]] \frac{h_1^3}{u^{2}}$ is infinite, so unlike previously, we do not
get the automatic upper bound of eight on $|\pic(KO[[q]])|$. On the other hand,
we can use \Cref{difftheorem} to determine the $d_3$ supported here.
Note that
in the HFPSS for $(KU[[q]])^{hC_2}$, we have
\[ d_3\left( f(q)  \frac{h_1^3}{u^{2}}\right) =  f(q) \frac{h_1^6}{u^4}, \quad f(q) \in
\mathbb{Z}/2[[q]].\]
Therefore, in view of \eqref{univformula}, in the HFPSS for
$\pics(KU[[q]])^{hC_2}$, we have
\[ d_3\left(  f(q)  \frac{h_1^3}{u^{2}}\right) 
= \left( f(q) + f(q)^2\right)\frac{h_1^6}{u^4}.
\]
(Note that a crucial point here is that in the HFPSS for $KO$, squaring or applying $d_3$ to $ \frac{h_1^3}{u^2}$ yields the same result.)
It follows from this that in the HFPSS, the kernel of $d_3$ on $E_2^{3,3}$ is $\mathbb{Z}/2$ generated by $1 \frac{h_1^3}{u^2}$: the equation $f(q) +
f(q)^2 = 0$ has only the solutions $f(q) \equiv 0, 1$. 
Therefore, we do get an upper bound of eight on the cardinality of $\pic(KO[[q]])$ after all,
as nothing else in $E_2^{3,3}$ lives to $E_4$. 
\end{proof} 

\begin{corollary} 
\label{KO((q))}
The maps $KO \to KO[q]$, $KO \to KO((q))$ induce isomorphisms on Picard groups.
\end{corollary}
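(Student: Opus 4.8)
The plan is to run $C_2$-Galois descent for the extensions $KO[q] \to KU[q]$ and $KO((q)) \to KU((q))$, exactly as in the second proof of \Cref{KO[[q]]}, and to compare the resulting homotopy fixed point spectral sequences with the one for $KO \to KU$ in \Cref{fig:picKO}. Both of these $q$-variants are again faithful $C_2$-Galois extensions, being base-changed from $KO \to KU$; moreover $KU[q]$ and $KU((q))$ are weakly even periodic with $\pi_0$ equal to $\Z[q]$ and $\Z((q))$. Both of these rings are regular noetherian unique factorization domains (the first obviously, the second as a localization of the regular local ring $\Z[[q]]$), so they have trivial Picard group and \Cref{evenperiodicreg} gives $\pic(KU[q]) \simeq \pic(KU((q))) \simeq \Z/2$, generated by the suspension.

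First I would set up the HFPSS for $\pics(KU[q])^{hC_2}$ and $\pics(KU((q)))^{hC_2}$. The only differences from the $KU^{hC_2}$ computation are the coefficients on the lines $t \geq 3$ (which acquire a factor of $\Z[q]$, resp.\ $\Z((q))$) and the $t=1$ line, now $H^\ast(C_2, \pi_0^{\times})$ for the larger unit groups. The contribution to $\pi_0 = \pic$ from this line comes from the $(s,t)=(1,1)$ entry $H^1(C_2, \pi_0^{\times})$, and the crucial point is that this is $\Z/2$ in every case, i.e.\ the new units are invisible. Indeed $\Z[q]^{\times} = \{\pm 1\}$ is unchanged, while $\Z((q))^{\times} \simeq \{\pm 1\} \times q^{\Z} \times (1 + q\Z[[q]])$; since $H^1(C_2,-)$ for the trivial action detects exactly the $2$-torsion, the free factor $q^{\Z} \simeq \Z$ contributes nothing, and $1 + q\Z[[q]]$ is $2$-torsion-free (if $(1+qf)^2 = 1$ then $2f + qf^2 = 0$, which is impossible for $f \neq 0$ by comparing lowest-degree terms in the domain $\Z[[q]]$).

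Next, the Comparison Tool~\ref{toolGalois} imports the $d_3$-differentials of the $KU(\text{-})^{hC_2}$ spectral sequence into the $\pics$-spectral sequence for all entries with $t \geq 4$, killing everything on the diagonal $t=s$ above the stable range just as in \Cref{fig:picKO}. The one differential this does not control is the first unstable $d_3$ out of $E_2^{3,3}$, which is now the infinite group $\Z/2[q]\,\tfrac{h_1^3}{u^2}$, resp.\ $\Z/2((q))\,\tfrac{h_1^3}{u^2}$; here \Cref{difftheorem} applies verbatim as in the second proof of \Cref{KO[[q]]}, giving $d_3(f\,\tfrac{h_1^3}{u^2}) = (f+f^2)\tfrac{h_1^6}{u^4}$, whose kernel is cut out by $f + f^2 = 0$. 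As $\Z/2[q]$ and $\Z/2((q)) = \mathbb{F}_2((q))$ are integral domains, the only solutions are $f \in \{0,1\}$, so exactly a $\Z/2$ survives at $(3,3)$. Assembling the surviving diagonal entries at $(0,0)$, $(1,1)$, $(3,3)$, each a $\Z/2$, shows $|\pic(KO[q])|, |\pic(KO((q)))| \leq 8$.

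Finally I would conclude by comparison with $KO$. For $KO[q]$ the map $\pic(KO) \to \pic(KO[q])$ is split injective via the retraction $q \mapsto 0$ of \Cref{0strictlycomm}, so a split injection of $\Z/8$ into a group of order at most $8$ must be an isomorphism. For $KO((q))$ there is no such retraction (one cannot set $q \mapsto 1$ $q$-adically), so instead I would observe that $KO \to KO((q))$ induces a map of descent spectral sequences which, by the computations above, is an isomorphism on every $E_\infty$-term along the diagonal $t=s$: all are $\Z/2 \to \Z/2$, and on the $(1,1)$ line the inclusion $\{\pm 1\} \hookrightarrow \Z((q))^{\times}$ induces an isomorphism on $H^1$. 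This yields $\pic(KO) \xrightarrow{\sim} \pic(KO((q))) \simeq \Z/8$ directly. The main obstacle is precisely the bookkeeping in the $t=1$ line — verifying that the enlarged unit groups contribute nothing new to $H^1(C_2, \pi_0^{\times})$ — together with checking that \Cref{difftheorem} still cuts the infinite $(3,3)$-entry down to $\Z/2$ over these larger coefficient domains.
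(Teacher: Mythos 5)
Your proposal is correct and follows essentially the same route as the paper, which deduces the corollary from the second proof of \Cref{KO[[q]]}: import the stable differentials via Comparison Tool~\ref{toolGalois}, apply \Cref{difftheorem} to get $d_3(f\,\tfrac{h_1^3}{u^2})=(f+f^2)\tfrac{h_1^6}{u^4}$ on $E_2^{3,3}$, note that $f+f^2=0$ has only the solutions $0,1$ in the integral domains $\Z/2[q]$ and $\Z/2((q))$, and conclude the upper bound of eight, with $8$-periodicity supplying the lower bound. Your extra bookkeeping (unit groups, the splitting $q\mapsto 0$) is sound; the only slip is calling $\Z[[q]]$ a regular \emph{local} ring (it is regular noetherian and a UFD, but not local), which does not affect the argument.
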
 
\begin{proof} 
This result is not a corollary of \Cref{KO[[q]]} but rather of its second
proof. In fact, the same argument shows that $d_3$ has a $\mathbb{Z}/2$ as
kernel on the relevant term $E_2^{3,3}$,  which gives an upper bound of
cardinality eight on the
Picard group of $KO[q]$ or $KO((q))$ as before.
\end{proof} 

\begin{remark} 
\Cref{KO((q))} cannot be proved using the Bockstein spectral sequence argument
used in the first proof of \Cref{KO[[q]]}. However, a knowledge of the Picard
group  of $KO[[q]]$ can be used to describe enough of the $C_2$-descent spectral
sequence to make it possible to prove \Cref{KO((q))} without the explicit
formula \eqref{univformula}. We leave this to the reader. 
\end{remark}


\newcommand{\GL}[1]{GL_2(\Z/#1)}

\section{Picard groups of topological modular forms}

In the rest of the paper we proceed to use descent to compute the Picard groups
of various versions of topological modular forms.  We will analyze the following
descent-theoretic  situations:
\begin{itemize}
\item The Galois extension $TMF[1/2]\to TMF(2)$, with structure group $\GL{2}$, also known as the symmetric group on three letters.
\item The Galois extension $TMF[1/3]\to TMF(3)$, with structure group $\GL{3}$; this is a group of order $48$, which is a nontrivial extension of the binary tetrahedral group and $C_2$.
\item \'Etale descent from the (derived) moduli stack of elliptic curves or its compactification.
\end{itemize}

In each of these cases, we will start with the knowledge of the original
descent spectral sequence, computing the homotopy groups of the global sections or homotopy fixed point spectrum. This information plus some additional computation of the differing cohomology groups will provide the data for the $E_2$-page of the descent spectral sequence for the Picard spectrum. The additional computations are somewhat lengthy, hence we are including them separately in the Appendix. 

\subsection{The Picard group of $TMF[1/2]$}\label{sec:TMF1/2}

When $2$ is inverted, the moduli stack of elliptic curves $\mell$ has a
$\GL{2}$-Galois cover by $\mell(2)$, the moduli stack of elliptic curves with
full level $2$ structure. This remains the case for the derived versions of
these stacks, and on global sections gives a faithful Galois extension
$TMF[1/2] \to TMF(2)$ by \cite[Theorem 7.6]{affine}. The extension is useful for the purposes of descent as the homotopy groups of $TMF(2)$ are cohomologically very simple.

To be precise, we have that
\[ TMF(2)_* = \Z[1/2][\lambda_1^{\pm 1},\lambda_2^{\pm 1}][(\lambda_1-\lambda_2)^{-1} ],\]
where the (topological) degree of each $\lambda_i$ is four. To see this, one can use the
presentation of the moduli stack $\mell(2)$ from \cite[\S 7]{TmfDualityp3}. 
There it is computed that $\mellc(2)$ is equivalent to (the stacky) $\Proj\,
\Z[1/2][\lambda_1,\lambda_2]$. Moreover, the substack classifying smooth curves,
i.e., $\mell(2)$, is the locus of non-vanishing of $ \lambda_1^2 \lambda_2^2 (\lambda_1-\lambda_2)^2$.
More precisely, $\mell(2)$, as a stack, is the $\mathbb{G}_m$-quotient of the
ring $\mathbb{Z}[1/2][\lambda_1, \lambda_2, ( \lambda_1^2 \lambda_2^2
(\lambda_1 - \lambda_2))^{-1}]$, where the $\mathbb{G}_m$-action is as follows:
a unit $u$ acts as $\lambda_i \mapsto u^2 \lambda_i$ for $i = 1, 2$, so that
it is an open substack of a \emph{weighted projective stack}. 

In particular, $TMF(2)_*$ has a unit in degree $4$, and is zero in degrees not divisible by $4$. It will be helpful to write $TMF(2)_*$ differently, so as to reflect this periodicity more explicitly; for example, we have that $TMF(2)_* = TMF(2)_0 [\lambda_2^{\pm 1}]$, and
\begin{align}\label{eq:pi_0TMF(2)}
TMF(2)_0 = \Z[1/2][s^{\pm 1}, (s-1)^{-1}], 
\end{align}
where $s=\frac{\lambda_1}{\lambda_2}$. Therefore, \Cref{even2} applies to give the following conclusion.
\begin{lemma}\label{picTMF(2)}
The Picard group of $TMF(2)$ is $\Z/4$, generated by the suspension $\Sigma TMF(2)$.
\end{lemma}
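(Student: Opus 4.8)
The plan is to reduce the computation to pure algebra by exhibiting $TMF(2)$ as a periodic $\e{\infty}$-ring to which \Cref{even2} applies, and then to compute the resulting algebraic Picard group. First I would verify the three hypotheses of \Cref{even2} with $2k = 4$. From the description recalled above, $\pi_i TMF(2) = 0$ whenever $4 \nmid i$, giving hypothesis (3); the class $\lambda_2 \in \pi_4 TMF(2)$ is a unit, so $\pi_4 TMF(2) = TMF(2)_0 \cdot \lambda_2$ is free of rank one and in particular an invertible $\pi_0$-module, giving hypothesis (2); and by \eqref{eq:pi_0TMF(2)} we have $\pi_0 TMF(2) = \Z[1/2][s^{\pm 1}, (s-1)^{-1}]$, which is a localization of the polynomial ring $\Z[1/2][s]$ and hence regular noetherian, giving hypothesis (1). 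Consequently $\pic(TMF(2))$ is algebraic, i.e. $\pic(TMF(2)) \cong \pic(TMF(2)_*)$.

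Next I would feed this into \Cref{rem:AlgGradPic}, which supplies a short exact sequence
\[ 0 \to \pic(TMF(2)_0) \to \pic(TMF(2)_*) \to \Z/4 \to 0. \]
The remaining input is the ordinary Picard group $\pic(TMF(2)_0)$. Since $\Z[1/2]$ is a principal ideal domain, the polynomial ring $\Z[1/2][s]$ is a unique factorization domain, and hence so is its localization $TMF(2)_0 = \Z[1/2][s^{\pm 1}, (s-1)^{-1}]$; therefore its Picard group vanishes. Thus the short exact sequence collapses to an isomorphism $\pic(TMF(2)) \cong \Z/4$.

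Finally I would identify the generator. The surjection in the displayed sequence records the degree shift of a graded invertible module modulo the period $4$, so the suspension $\Sigma TMF(2)$ maps to $1 \in \Z/4$; as the kernel $\pic(TMF(2)_0)$ is trivial, $\Sigma TMF(2)$ generates the whole group. The extension data in \Cref{rem:AlgGradPic} provides an independent consistency check: the fourth tensor power $(\Sigma TMF(2))^{\otimes 4}$ is identified with the invertible module $\pi_4 TMF(2)$, which is trivial in $\pic(TMF(2)_0)$ since $\lambda_2$ is a unit, confirming that $\Sigma TMF(2)$ has order exactly $4$. The argument is essentially an application of the general machinery, so I do not expect a serious obstacle; the only points requiring genuine attention are that $\pi_0 TMF(2)$ is regular (so that \Cref{even2} applies) and that it is a unique factorization domain (so that its Picard group vanishes), both of which are read off directly from the explicit presentation \eqref{eq:pi_0TMF(2)}.
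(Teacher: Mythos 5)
Your proof is correct and follows the same route as the paper, which simply observes that \Cref{even2} applies (with $2k=4$) and that $TMF(2)_0$ as given in \eqref{eq:pi_0TMF(2)} is a localization of a UFD, so the algebraic Picard group reduces to the $\Z/4$ of \Cref{rem:AlgGradPic}. You have merely spelled out the details the paper leaves implicit; there is no gap and no difference in method.
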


\begin{remark}
The proof of \Cref{even2} relies on the construction of ``residue field"
spectra; let us specify what they are in the case at hand. The maximal ideals
in $TMF(2)_0$ are $\mathfrak{m}=(p,f(s))$, where $p$ is an odd prime and $f(s)$
a monic polynomial irreducible modulo $p$ (and not congruent mod $p$ to $s,
s-1$).
For each of these ideals, we have an associative ring spectrum (the ``residue
field'') with homotopy groups $TMF(2)_*/\mathfrak{m}$ by \cite{angeltveit}; denote it temporarily by $TMF(2)/\mathfrak{m}$. After extending scalars so that $f$ splits, we get that $TMF(2)/\mathfrak{m}$ is a product of (extensions of) mod $p$ Morava $K$-theory spectra at height one or two, one for each zero of $f$. By \cite[V.4.1]{Silverman}, the factor associated to the zero $a$ of $f$ has height two precisely when 
\[ \sum_{i=0}^{(p-1)/2} \binom{(p-1)/2}{i} a^i \]
is zero modulo $p$. 
\end{remark}

Next we use descent from $TMF(2)$ to $TMF[1/2]$ to obtain the following result.

\begin{theorem}\label{thm:picTMF1/2}
The Picard group of $TMF[1/2]$ is $\Z/72$, generated by the suspension $\Sigma TMF[1/2]$. In particular, this Picard group is algebraic.
\end{theorem}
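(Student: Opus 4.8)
The plan is to run the homotopy fixed point spectral sequence \eqref{ss:galDescent} for the faithful $\GL{2}$-Galois extension $TMF[1/2] \to TMF(2)$, taking \Cref{picTMF(2)} as input, and to read off $\pic(TMF[1/2]) = \pi_0(\pics TMF(2))^{h\GL{2}}$ from the $t = s$ diagonal. Here $\GL{2} \cong S_3$ acts trivially on $\pi_0 \pics TMF(2) = \pic(TMF(2)) = \Z/4$, since the generating suspension is canonically Galois-fixed. Because $|S_3| = 6$ and $2$ is inverted, the positive-degree group cohomology $H^s(\GL{2}, M)$, $s \ge 1$, is $3$-torsion for every $\Z[1/2]$-module $M$; consequently the only $2$-primary contributions to the diagonal come from the rows $t = 0$ and $t = 1$, governed by $\pic(TMF(2)) = \Z/4$ and by $\pi_1 \pics TMF(2) = (TMF(2)_0)^{\times}$.

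First I would establish the lower bound directly from periodicity: the spectrum $TMF[1/2]$ is $72$-periodic, with $\Delta^3 \in \pi_{72}$ invertible and no proper power of $\Delta$ invertible ($3$-locally $\Delta$ is not a permanent cycle while $\Delta^3$ is). Hence $\Sigma TMF[1/2]$ has order exactly $72$, giving a cyclic subgroup $\Z/72 \subseteq \pic(TMF[1/2])$. It then suffices to prove the matching bound $|\pic(TMF[1/2])| \le 72$; algebraicity follows formally, since the generator $\Sigma TMF[1/2]$ is a suspension of the unit and therefore lies in the image of the Baker--Richter monomorphism $\Phi$, so that $\Phi$ is onto and hence an isomorphism.

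For the upper bound I would analyze the diagonal of the Picard descent spectral sequence. In rows $t \ge 2$ one has $\pi_t \pics TMF(2) \simeq \pi_{t-1} TMF(2)$, so after the degree shift the diagonal classes with $t \ge 2$ are exactly the total-degree $(-1)$ three-torsion classes of the known descent spectral sequence for $\pi_* TMF[1/2]$; equivalently, by \Cref{rem:relativePic}, they assemble the relative Picard group $\pi_{-1}\bigl((\gl_1 TMF(2))^{h\GL{2}}\bigr)$. The differentials among them I would determine via the Comparison Tool \Cref{toolGalois}: in the stable range they coincide with the differentials of the homotopy descent spectral sequence $TMF(2) \Rightarrow TMF[1/2]$, i.e. the $3$-primary $\Delta$-differentials. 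The key point that makes this clean is that the correction term $x^2$ in the first unstable differential \eqref{univformula} of \Cref{difftheorem} is annihilated by $2$, and so vanishes on all $3$-torsion classes; thus the $3$-primary differentials of the Picard spectral sequence are imported verbatim from the homotopy one, with no genuinely new unstable contribution. The requisite $E_2$-input, namely the group cohomology $H^*(\GL{2}, (TMF(2)_0)^{\times})$ together with that of the higher groups $TMF(2)_*$, is supplied by the computations in the Appendix.

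The main obstacle I anticipate is the $3$-primary bookkeeping: using the imported differentials to truncate the a priori infinite $\beta$-tower so that exactly a group of order $9$ survives on the diagonal, and then resolving the ensuing extension problem. The $2$-primary part is by contrast straightforward, since $\pi_* TMF[1/2]$ is $2$-torsion-free and the diagonal $2$-primary classes, the $\Z/4$ at $(0,0)$ and the $\Z/2$ at $(1,1)$, support no differentials (there is no $2$-torsion in the higher rows for them to interact with), so they survive to $E_\infty$; the cyclic lower bound then forces their extension to be $\Z/8$ rather than $\Z/4 \oplus \Z/2$. Assembling the $2$- and $3$-primary contributions gives an $E_\infty$ of total order $72$ on the diagonal, hence $|\pic(TMF[1/2])| \le 72$. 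Combined with the lower bound this yields $\pic(TMF[1/2]) \cong \Z/72$ generated by $\Sigma TMF[1/2]$, and the algebraicity observed above completes the proof.
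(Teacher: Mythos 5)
Your overall strategy is the same as the paper's: run the $\GL{2}$-homotopy fixed point spectral sequence for $\pics(TMF(2))$, get the lower bound $\Z/72$ from $72$-periodicity, and match it with an upper bound read off the $s=t$ diagonal ($\Z/4$ at $s=0$, $\Z/6$ at $s=1$, $\Z/3$ at $s=5$, product $72$). The $E_2$-identifications, the use of \Cref{picTMF(2)} and the appendix computation of $H^*(\GL{2},(TMF(2)_0)^\times)$, and the final counting all agree with the paper.

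However, there is a genuine gap in your treatment of the $3$-primary differentials. You assert that they are ``imported verbatim from the homotopy spectral sequence, with no genuinely new unstable contribution,'' justified by the observation that the correction term $x^2$ in \eqref{univformula} is $2$-torsion. That observation only governs the \emph{first} unstable differential out of a diagonal spot (for the class in Picard bidegree $(s,t)=(5,5)$ this is a $d_5$ in the $\gl_1$-indexing, equivalently the $d_{t+1}$ of \Cref{difftheorem}); it says nothing about later differentials, and Comparison Tool~\ref{toolGalois} only applies for $r\leq t-1$. The relevant class $ab^2\Delta^{-1}$ generating the $\Z/3$ at $(5,5)$ supports no $d_5$ in the homotopy spectral sequence but does support the $d_9$ of \eqref{diffd9}, which sits at $t=5$ and hence is \emph{not} importable. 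If you did import it verbatim, as your stated principle demands, the $(5,5)$ class would die, your diagonal would have order $4\times 6=24$, and you would contradict your own lower bound of $72$. So your two claims --- ``all $3$-primary differentials import verbatim'' and ``a group of order $9$ of $3$-torsion survives on the diagonal'' --- are mutually inconsistent. The paper's resolution is to explicitly \emph{refrain} from importing that $d_9$, use only the upper bound $\Z/3$ at $(5,5)$, and let the periodicity lower bound force survival (so that $d_9$ in fact vanishes in the Picard spectral sequence, a genuine discrepancy with the homotopy one). You should restructure the argument this way rather than claiming verbatim importation.

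A smaller point: your claim that the $2$-primary classes at $(0,0)$ and $(1,1)$ ``support no differentials because there is no $2$-torsion in the higher rows'' is not justified as stated --- the row $t=1$ has plenty of $2$-torsion in higher cohomological degrees (e.g.\ $H^2(\GL{2},(TMF(2)_0)^\times)$ contains $\Z/2\oplus\Z/2$), and differentials out of $(0,0)$ land there. But this does not matter for the proof: as in the paper, you only need the entries on the diagonal as upper bounds, and the lower bound then forces everything to survive and resolves the extensions.
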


\begin{proof}
We use the homotopy fixed point spectral sequence \eqref{ss:galDescent}
\begin{align}\label{ss:TMF(2)}
H^s(\GL{2}, \pi_t\pics (TMF(2))) \Rightarrow \pi_{t-s}\pics (TMF(2))^{h\GL{2}}.
\end{align}
To begin with, note that the homotopy groups $\pi_t \pics (TMF(2))$ for $t\geq 2$ are isomorphic to $\pi_{t-1} TMF(2)$ as $\GL{2}$-modules. This tells us that the $t\geq 2$ part of the $E_2$-page of the HFPSS \eqref{ss:TMF(2)} for $\pics (TMF(2))$ is a shifted version of the corresponding part for $TMF(2)$.

The latter is immediately obtained from the analogous computation for $Tmf(2)$ in \cite{TmfDualityp3} (depicted in Figure 2 of loc. cit.), as we now describe. Recall that $TMF(2) \simeq Tmf(2)[\Delta^{-1}]$; the non-negative homotopy groups $\pi_{\geq 0} Tmf(2) $ are the graded polynomial ring $\Lambda=\Z[1/2][\lambda_1, \lambda_2]$ \cite[Proposition 8.1]{TmfDualityp3}, and the class $\Delta \in \pi_{24}Tmf(2)$ is 
\[ \Delta = 16 \lambda_1^2 \lambda_2^2 (\lambda_2 - \lambda_1)^2\] 
by \cite[Proposition 10.3]{TmfDualityp3}. Now, by \cite[Prop. 10.8]{TmfDualityp3} we have that 
\[ H^*\big(\GL{2}, \pi_* TMF(2) \big) = H^*\big(\GL{2}, \Lambda \big)[\Delta^{-1}]. \]
 In particular, the invariants  $H^0\big(\GL{2}, \Lambda \big)[\Delta^{-1}] $
 are the ring of $\Delta$-inverted modular forms \[\Z[1/2][c_4,c_6,\Delta^{\pm 1}]/ (12^3 \Delta -c_4^3 +c_6^2) .\] The higher cohomology $ H^{>0}\big(\GL{2},\Lambda \big)$ is computed in \cite[\S 10.1]{TmfDualityp3}, and in particular is killed by $c_4$ and $c_6$. Consequently, 
 \[H^{>0}\big(\GL{2}, \pi_{\geq 0} TMF(2)\big) = H^{>0}\big(\GL{2},\Lambda \big) = H^{>0}(\GL{2}, \pi_{\geq 0}Tmf(2)). \] 
 
Let us recall (the names of) certain interesting classes in these cohomology groups: 
\begin{enumerate}
\item 
There is $a $ in $H^1(\GL{2}, \pi_4 TMF(2))=\Z/3$, hence in $H^1\big(\GL{2},
\pi_5 \pics(TMF(2))\big)$ (so, $a $ is in bidegree $(s,t)=(1,5)$ in the Picard
HFPSS, and depicted in position $(s,t-s)=(1,4) $ using the Adams convention).
In homotopy, this element detects the Greek letter element $\alpha_1 $ in the Hurewicz image in $TMF[1/2]$.
\item There is $b $ in $H^2\big(\GL{2}, \pi_{13} \pics(TMF(2))\big) = \Z/3$ ($b$ is in bidegree $(2,13)$ or position $(2,11)$); in homotopy it detects $\beta_1$.
\end{enumerate}

Then, $H^{>0}\big(\GL{2}, TMF(2)_* \big)$ is precisely the ideal of $\Z/3 [a, b][\Delta^{\pm 1}]/(a^2)$ of positive cohomological degree. For example
\[H^5\big(\GL{2}, \pi_5 \pics (TMF(2))\big) =H^5\big(\GL{2}, \pi_4 TMF(2)\big) = \Z/3, \]
generated by $a b^2 \Delta^{-1}$. We see this class depicted red below in \Cref{fig:picTMF(2)}.

Next, we turn to the information which is new for the Picard HFPSS, i.e., the group cohomology of $\pi_0 $ and $\pi_1$ of the spectrum $\pics (TMF(2))$. By \Cref{picTMF(2)}, we know that the zeroth homotopy group is $\Z/4$, and since it is generated by the suspension $\Sigma TMF(2)$, the action of $\GL{2}$ on this $\Z/4$ is trivial. Even though for our purposes only the invariants $H^0\big(\GL{2}, \pi_0\pics (TMF(2)) \big)$ are necessary, we can in fact compute all the cohomology groups. This is done in \Cref{G2trivialZ4} of \Cref{sec:grpcohTMF(2)}.

The last piece of data needed for the determination of the $E_2$-page of the Picard HFPSS is the group cohomology with coefficients in $\pi_1 \pics(TMF(2)) = (\pi_0 TMF(2))^\times $. This is done in \Cref{G2pi0units}. The range $s\leq 15$ and $-6\leq t-s \leq 7$ of spectral sequence is depicted in \Cref{fig:picTMF(2)}. Note that in this range, the $t-s=0$ column has three non-zero entries: there is a $\Z/4$ for $s=0$, $\Z/6$ for $s=1$, and $\Z/3$ for $s=5$.

\begin{figure}[h!]
\includegraphics[scale=0.9]{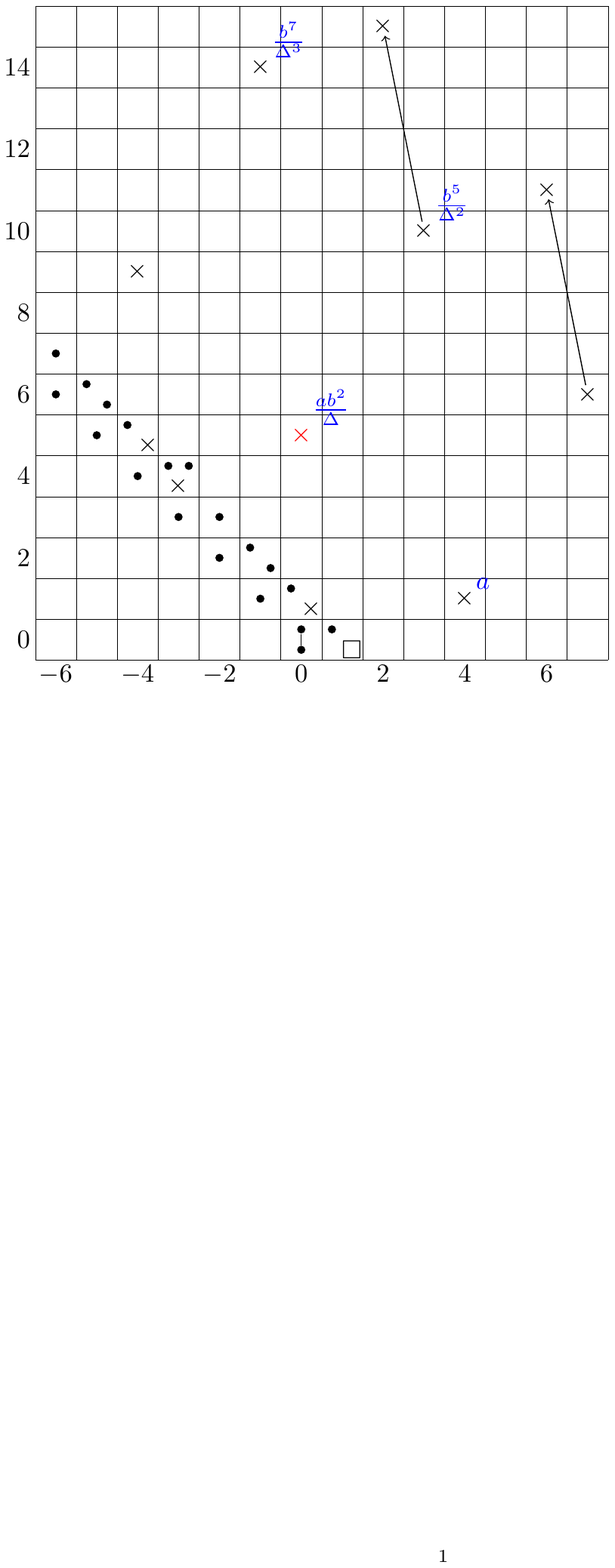}
\caption{Homotopy fixed point spectral sequence for $(\pics (TMF(2)))^{h\GL{2}}$
}
\vspace{-4mm}
\caption*{($\square$ denotes $\Z$, $\bullet$ denotes $\Z/2$, and $\times$ denotes $\Z/3$) }
\label{fig:picTMF(2)}
\end{figure}

Now we are ready to study the differentials in the HFPSS for $\pics
(TMF(2))^{h\GL{2}}$. Comparison with the HFPSS for the $\GL{2}$-action on
$TMF(2)$ gives a number of differentials, using our Comparison
Tool~\ref{toolGalois}. To distinguish between the differentials in the two spectral sequences, let us denote by $d_r^o$ those in the HFPSS of $TMF(2)$. The superscript $o$ stands for ``original."

Recall that in the HFPSS for $TMF(2)$, there are non-zero $d_5^o$ and $d_9^o$ differentials, which are obtained, for example, by a comparison with the HFPSS for $Tmf(2)$ which is fully determined in \cite{TmfDualityp3}. In particular, in the HFPSS for $TMF(2)$, the first differential is $d^o_5(\Delta)=a b^2$, and the rest of the $d^o_5$'s are determined by multiplicativity and the fact that $a$ and $b$ are permanent cycles. In particular, we have
\begin{align}\label{diffd5}
d^o_5\left(\frac{b^5}{\Delta^2}\right) =  \frac{a b^7}{\Delta^3}, \qquad \qquad 
d^o_5\left(\frac{ b^3}{\Delta}\right) = -  \frac{a b^5}{\Delta^2}.
\end{align}
Next (and last) is $d^o_9$; we have that $d_9^o(a \Delta^2)=b^5$. Consequently, we also have
\begin{align}\label{diffd9}
d^o_9\left(\frac{a b^2}{\Delta}\right) =  \frac{ b^7}{\Delta^3}.
\end{align}

Let us now see which of these differentials also occur in the HFPSS for $\pics
(TMF(2))$; according to Comparison
Tool~\ref{toolGalois}, the $d_5$-differentials are imported
in the range $t>5$, and the $d_9$-differentials in the $t>9$ range. In
particular, the differentials in \eqref{diffd5} are the same in the Picard
HFPSS; these are the two differentials drawn in \Cref{fig:picTMF(2)}. Moreover,
everything in the zero column and above the depicted region, i.e., such that $s=t>16$, either supports a differential or is killed by one which originates in the $t>9$ range. Hence, everything above the depicted region is killed in the spectral sequence and nothing survives to the $E_\infty $-page.

Note, however, that we cannot (and should not attempt to) import the differential \eqref{diffd9}; this would be a $d_9$-differential with $t=5$, so it does not satisfy the hypothesis of Comparison
Tool~\ref{toolGalois}.

Let us analyze the potentially remaining contributions to $\pi_0 \pics(TMF(2))^{\GL{2}}$; regardless of what the rest of the differentials could possibly be, we have:
\begin{itemize}
\item a group of order at most $4$ (and dividing $4$) in position $(0,0)$,
\item a group of order at most $6$ (and dividing $6$) in position $(0,1)$, and 
\item a group of order at most $3$ (and dividing $3$) in position $(0,5)$.
\end{itemize}
Therefore $\pic (TMF [1/2] )= \pi_0  \pics(TMF(2))^{\GL{2}} $ has order at most $4\times 6 \times 3 = 72$, and dividing $72$. This is an upper bound. But we also have a well-known lower bound: the suspension $\Sigma TMF[1/2]$ generates a nontrivial element of $\pic (TMF[1/2])$ of order $72$ because $TMF[1/2]$ is $72$-periodic. Thus we have proven the result.
\end{proof}

\begin{remark}\label{rem:Pic1/2}
 Our computations give an independent proof of the result of Fulton-Olsson
 \cite{FultonOlsson} that the Picard group of the classical moduli stack of
 elliptic curves $\mell$ over $\Z[1/2]$ is $\Z/12$. (Fulton-Olsson carry out
 the analysis over any base, though.)
 This is a toy analog of the above analysis, as we now see.

The Picard \emph{groupoid} of the moduli stack $\mell[1/2]$ is the
 homotopy fixed points of the $GL_2(\mathbb{Z}/2)$-action on the Picard
 groupoid of $\mell(2)$. Now the Picard \emph{group} of $\mell(2)$ is
 $\mathbb{Z}/2$, as $\mell(2)$ is an open subset in a  weighted projective
 stack over a UFD, so that quasi-coherent sheaves on $\mell(2)$ correspond
 simply to graded modules over $\mathbb{Z}[1/2, \lambda_1,
 \lambda_2, (\lambda_1^2 \lambda_2^2 (\lambda_1 - \lambda_2))^{-1}]$ and the
 only nontrivial invertible object is the shift by one of the unit. Note that
 this is the \emph{algebraic} setting: the generator of $\pic (\mell(2))$ would correspond to the \emph{two-fold} suspension of $TMF(2)$.

 Next, in the HFPSS for computing $\pic( \mell[1/2])$,
 we see by the above computation of \[H^1\left(\GL{2}, \Gamma(
 \mell(2), \mathcal{O}^{\times})\right) \] that one gets a contribution of order $6$.
 Together with $\pic(\mell(2))=\Z/2$ from the previous paragraph, we get that $|\pic( \mell[1/2])| \leq 12$, but we know that $\omega$ has order
 twelve, so we are done. 
 \end{remark}

\subsection{The Picard group of $TMF[1/3]$}\label{sec:TMF1/3}

This section will be similar to \Cref{sec:TMF1/2}, but with more complicated computations as is to be expected from $2$-torsion. In this case we will use the $\GL{3}$-Galois extension $TMF[1/3] \to TMF(3)$, coming from the Galois cover $\mell(3)\to \mell[1/3]$ of the moduli stack of elliptic curves with $3$ inverted by the moduli stack of elliptic curves equipped with a full level $3$-structure.
	
From \cite[\S 4.2]{TmfDescentp2}, we can immediately compute the homotopy groups of $TMF(3)$: the moduli stack $\mell(3)$ is affine, and is given as the locus of non-vanishing of 
\[ \Delta = 3^{-5}\zeta (1-\zeta)\gamma_1^3 \gamma_2^3(\gamma_1+\zeta \gamma_2)^3(\gamma_2-\zeta \gamma_1)^3 \]
in the compact moduli stack $\mellc(3) = \proj\,
\Z[1/3,\zeta][\gamma_1,\gamma_2]$. Here $\gamma_i$ are variables in (topological) degree $2$,
and $\zeta$ is a primitive third root of unity, whose appearance is due to the
fact that the Weil pairing on the $3$-torsion points of an elliptic curve
equips $\mellc(3)$ with a map to $\spec \,\Z[1/3,\zeta]$.\footnote{The map is given by the usual Weil pairing on the locus of smooth curves; for what it does at the cusps, see for example \cite[IV.3.21]{DeligneRapoport}.} Hence the descent spectral sequence computing $TMF(3)_*$ collapses to give
\[
TMF(3)_* =\Z[1/3,\zeta][\gamma_1^{\pm 1},\gamma_2^{\pm 1}] [(\gamma_1+\zeta\gamma_2)^{-1}, (\gamma_2-\zeta\gamma_1)^{-1}].
\]
Written differently, we have that $TMF(3)_* = TMF(3)_0[\gamma_2^{\pm 1}]$, and
\begin{align}\label{eq:TMF(3)_0}
TMF(3)_0 = \Z[1/3,\zeta][t^{\pm 1}, (1-\zeta t)^{-1}, (1+\zeta^2 t)^{-1} ] , 
\end{align}
for $t=\frac{\gamma_1}{\gamma_2}$. In particular $TMF(3)_0$ is regular noetherian, and $TMF(3)$ is even periodic. Thus, \Cref{evenperiodicreg} 
(together with the fact that the ring $\mathbb{Z}[\zeta, t]$ and hence any of
its localizations has unique factorization)
implies the following conclusion.
\begin{lemma}
The Picard group $\pic (TMF(3))$ is $\Z/2$, generated by $\Sigma TMF(3)$.
\end{lemma}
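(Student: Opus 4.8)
The plan is to deduce this lemma as a direct application of \Cref{evenperiodicreg}, reducing the topological computation to an algebraic one and then computing the algebraic graded Picard group explicitly from the presentation of $TMF(3)_*$ recorded above. Since $TMF(3)$ is even periodic and its $\pi_0 = TMF(3)_0$ (as displayed in \eqref{eq:TMF(3)_0}) is regular noetherian, \Cref{evenperiodicreg} applies and tells us that the Picard group is algebraic, i.e. the monomorphism $\Phi\colon \pic(TMF(3)_*) \to \pic(TMF(3))$ is an isomorphism. Thus it suffices to identify $\pic(TMF(3)_*)$ as a group of \emph{graded} modules.

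To compute the graded Picard group, I would invoke \Cref{rem:AlgGradPic} (applicable because $TMF(3)$ satisfies the hypotheses of \Cref{even2} with $2k=2$: it is even periodic with $\pi_2$ invertible and $\pi_0$ regular noetherian). This yields a short exact sequence
\[ 0 \to \pic(TMF(3)_0) \to \pic(TMF(3)_*) \to \Z/2 \to 0, \]
so the whole computation reduces to showing that the ungraded Picard group $\pic(TMF(3)_0)$ vanishes.

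The vanishing of $\pic(TMF(3)_0)$ follows from unique factorization, exactly as flagged in the sentence preceding the lemma. The base ring $\Z[1/3,\zeta]$ is a localization of the ring of Eisenstein integers $\Z[\zeta]$, which is a PID; hence $\Z[1/3,\zeta][t]$ is a polynomial ring over a PID and so is a UFD by Gauss's lemma, and therefore so is its localization $TMF(3)_0 = \Z[1/3,\zeta][t^{\pm 1}, (1-\zeta t)^{-1}, (1+\zeta^2 t)^{-1}]$. Since every invertible module over a UFD is free, $\pic(TMF(3)_0) = 0$, and the short exact sequence collapses to give $\pic(TMF(3)_*) \cong \Z/2$.

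It remains to identify the generator, which I would do by showing that $\Sigma TMF(3)$ maps to a generator of the quotient $\Z/2$ and has order exactly $2$. By even periodicity (period $2$) we have $\Sigma^2 TMF(3) \simeq TMF(3)$, so $\Sigma TMF(3)$ has order dividing $2$; and it is nontrivial because $TMF(3)$ has homotopy concentrated in even degrees while $\Sigma TMF(3)$ has homotopy concentrated in odd degrees, so the two are inequivalent. There is no genuinely hard step here: the content is entirely in checking the hypotheses of \Cref{evenperiodicreg} and \Cref{even2} and in the elementary observation that $TMF(3)_0$ is a UFD. If anything, the only point requiring a little care is confirming that all the denominators $t$, $1-\zeta t$, $1+\zeta^2 t$ being inverted keep us within the class of localizations of the UFD $\Z[1/3,\zeta][t]$, which is immediate.
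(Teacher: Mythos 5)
Your proof is correct and follows essentially the same route as the paper: the paper likewise invokes \Cref{evenperiodicreg} (even periodicity plus $\pi_0$ regular noetherian) to reduce to the algebraic graded Picard group, and disposes of $\pic(TMF(3)_0)$ by noting that $\Z[\zeta,t]$ and all its localizations are UFDs. Your write-up merely makes explicit the intermediate short exact sequence from \Cref{rem:AlgGradPic} and the identification of the generator, both of which the paper leaves implicit.
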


Naturally, we will use this lemma as an input in computing the HFPSS for the associated Picard spectra.

\begin{theorem}\label{thm:picTMF1/3}
The Picard group of $TMF[1/3]$ is $\Z/192$, generated by the suspension $\Sigma TMF[1/3]$. In particular, this Picard group is algebraic.
\end{theorem}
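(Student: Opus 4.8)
The plan is to repeat the strategy of \Cref{thm:picTMF1/2} verbatim, now running the homotopy fixed point spectral sequence \eqref{ss:galDescent} for the $\GL{3}$-action (a group of order $48$) on $\pics(TMF(3))$,
\[
H^s\left(\GL{3}, \pi_t \pics(TMF(3))\right) \Rightarrow \pi_{t-s}\pics(TMF(3))^{h\GL{3}},
\]
whose $t-s=0$ abutment is $\pic(TMF[1/3])$. First I would assemble the $E_2$-page. For $t\geq 2$ one has $\pi_t \pics(TMF(3)) \cong \pi_{t-1}TMF(3)$ as $\GL{3}$-modules, so this portion of the page is the one-line shift of the (known, $2$-primary) descent spectral sequence computing $\pi_* TMF[1/3]$ from \cite{TmfDescentp2}. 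The genuinely new input lives on two lines: the $t=0$ line is $H^s(\GL{3},\Z/2)$ with trivial action (since $\pic(TMF(3))=\Z/2$ is generated by $\Sigma TMF(3)$), and the $t=1$ line is $H^s\left(\GL{3}, (TMF(3)_0)^\times\right)$, computed from the explicit ring \eqref{eq:TMF(3)_0}. Both group-cohomology computations I would carry out in the Appendix, exactly as \Cref{G2trivialZ4,G2pi0units} handle the $TMF[1/2]$ case. I note that, unlike the additive coefficient systems (which are $3$-torsion-free after inverting $3$), the unit group $(TMF(3)_0)^\times$ contains the Weil-pairing roots of unity $\mu_3=\langle\zeta\rangle$, and it is the corresponding cohomology on the $s=t=1$ spot that will contribute the $3$-primary part of the answer.

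Next I would propagate differentials. By Comparison Tool~\ref{toolGalois}, every differential $d_r$ in the Picard spectral sequence with $r\leq t-1$ agrees, after the degree shift, with the corresponding differential $d_r^{o}$ in the descent spectral sequence for $TMF(3)$ itself, whose ($2$-primary) differential pattern is fully known. As in the $TMF[1/2]$ argument, this imports the bulk of the differentials and shows that everything on the $t=s$ diagonal sufficiently far above the relevant range either supports or receives such an imported differential, and hence does not survive. What remains is to control the finitely many surviving contributions to $\pi_0$ on the $t=s$ line.

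The hard part will be the first ``unstable'' differential, precisely as flagged at the end of \Cref{sec:PicKO}: since $TMF[1/3]$ is $2$-primary, a naive count of the surviving $E_2$-contributions overshoots, and the Comparison Tool alone does not suffice. Here I would invoke the universal formula \eqref{univformula} of \Cref{difftheorem}: for the relevant edge class $x$ one has $\overline{d}_{t+1}(\overline{x})=\overline{d_{t+1}(x)+x^2}$, and the extra quadratic term changes which classes are annihilated, typically through a solution count of an equation of the form $f+f^2=0$ exactly as in the second proof of \Cref{KO[[q]]}. Pinning down which $2$-primary classes on the $t=s$ diagonal are spared or killed by this correction is the crux of the computation.

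Finally, assembling the surviving contributions yields the upper bound $|\pic(TMF[1/3])|\leq 192$, with the $3$-primary part of order $3$ (from $\mu_3$ on the $t=1$ line) and the $2$-primary part of order $64$. For the matching lower bound I would use that $TMF[1/3]$ is $192$-periodic (via $\Delta^8$, of degree $192$), so $\Sigma TMF[1/3]$ already generates a cyclic subgroup of order $192$. Combining the two bounds gives $\pic(TMF[1/3])\cong\Z/192$ generated by the suspension; in particular the group is exhausted by suspensions and is therefore algebraic.
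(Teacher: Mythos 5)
Your outline follows the paper's proof essentially verbatim: the same $\GL{3}$-homotopy fixed point spectral sequence, the same assembly of the $t=0,1$ lines from appendix group-cohomology computations (the paper obtains $H^1\left(\GL{3},(TMF(3)_0)^\times\right)\cong\Z/12$ and $H^*(\GL{3},\Z/2)$ via Quillen), the same importation of differentials via Comparison Tool~\ref{toolGalois}, the same appeal to \eqref{univformula} at the edge, and the same $192$-periodicity lower bound. One caution on the step you leave open: the analogy with the second proof of \Cref{KO[[q]]} is not literal, since here $E_2^{3,3}=\Z/2[j]\cdot h_1^3\tfrac{c_4c_6}{\Delta}$ and the corrected differential sends $f(j)$ to $f(j)+j\,f(j)^2$, whose kernel in $\Z/2[j]$ is trivial (not $\Z/2$ as for $KO[[q]]$); this is precisely what forces $E_4^{3,3}=0$ and hence the $2$-primary bound of $64$ that your final count tacitly assumes.
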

\begin{proof}
As is to be expected, we use the HFPSS \Cref{ss:galDescent}
\begin{align}\label{ss:TMF(3)}
H^s( \GL{3}, \pi_t \pics (TMF(3)) ) \Rightarrow \pi_{t-s} \pics (TMF(3))^{h\GL{3}}.
\end{align}
The homotopy groups $\pi_t (\pics (TMF(3)))$ for $t\geq 2$ are isomorphic to $\pi_{t-1} TMF(3)$ as $\GL{3}$-modules, hence the $t\geq 2$ part of the $E_2$-page of the HFPSS for $\pics (TMF(3))$ is same as the corresponding part in the HFPSS for $TMF(3)$. We will use the fact that $TMF(3) \simeq Tmf(3)[\Delta^{-1}]$ to identify this part of the spectral sequence for $TMF(3)$ and therefore for $\pics (TMF(3))$.

The $E_2$-page of the HFPSS computing the homotopy groups of $Tmf\hat{ _2}$ as $(Tmf(3)\hat{ _2})^{h\GL{3}}$ is computed in \cite{TmfDescentp2}, and depicted in Figure 9 of loc. cit. Since we are working with $3$ inverted, and $2$ and $3$ are the only primes dividing the order of $\GL{3}$, we conclude that
 \[H^{>0}(\GL{3}, \pi_* Tmf(3)) = H^{>0}(\GL{3}, \pi_* Tmf(3)\hat{ _2}) .\] 
The invariants $H^0(\GL{3}, \pi_{\geq 0}Tmf(3))$ are the ring of modular forms $\Z[1/3][c_4,c_6,\Delta]/(12^3 \Delta - c_4^3 +c_6^2) $.

Let $\Gamma$ denote the graded ring $\Z[1/3,\zeta][\gamma_1,\gamma_2] $. As in the case of level $2$-structures, we have that
\[ H^*(\GL{3}, \pi_* TMF(3) ) = H^*(\GL{3}, \Gamma)[\Delta^{-1}].\]
In the group cohomology of $\Gamma$, computed and depicted in Figure 7 of
\cite{TmfDescentp2}, there are a number of interesting torsion classes,
including:
\begin{enumerate}
\item $h_1 $ in bidegree $(s,t)=(1,2)$, depicted in position $(s,t-s)=(1,1)$, which detects (the Hurewicz image of) the Hopf map $\eta$ in homotopy.
\item $h_2$ in position $(1,3)$, which detects (the Hurewicz image of) the Hopf map $\nu$.
\item $d$ in position $(2,14)$, which detects in homotopy the class known as $\kappa$.
\item $g$ in position $(4,20)$, which detects in homotopy the class $\bar \kappa$.
\item $c$ in position $(2,8)$, which detects in homotopy the class $\epsilon$.
\end{enumerate}
The homotopy elements detected by these classes satisfy some relations; for example, 
\[\eta^3 = 4\nu, \quad \kappa \nu^2 = 4 \bar \kappa.\]
Let us also name one of the less famous elements in the descent spectral
sequence for $tmf_{(2)}$, which also appears in the HFPSS for $TMF[1/3]$.
Namely, there is a $\Z/2$ in position $(1,5)$; we will denote the generating class by
the generic name $x$ (in \cite{computation} it bears the name $a_1^2h_1$).

All torsion classes with the exception of (powers of) $h_1$ are annihilated by $c_4$ and $c_6$. In the Picard spectral sequence, all of these classes appear shifted by one to the right; we have labeled some such classes in \Cref{fig:picTMF(3)z1}. A ``zoomed in" portion of the Picard spectral sequence is depicted in \Cref{fig:picTMF(3)z2}. There, and in all of the related spectral sequences, lines of slope 1 denote $h_1 $-multiplication, and lines of slope $1/3$ denote $h_2$-multiplication.

\begin{figure}[h!]
\includegraphics[scale=0.9]{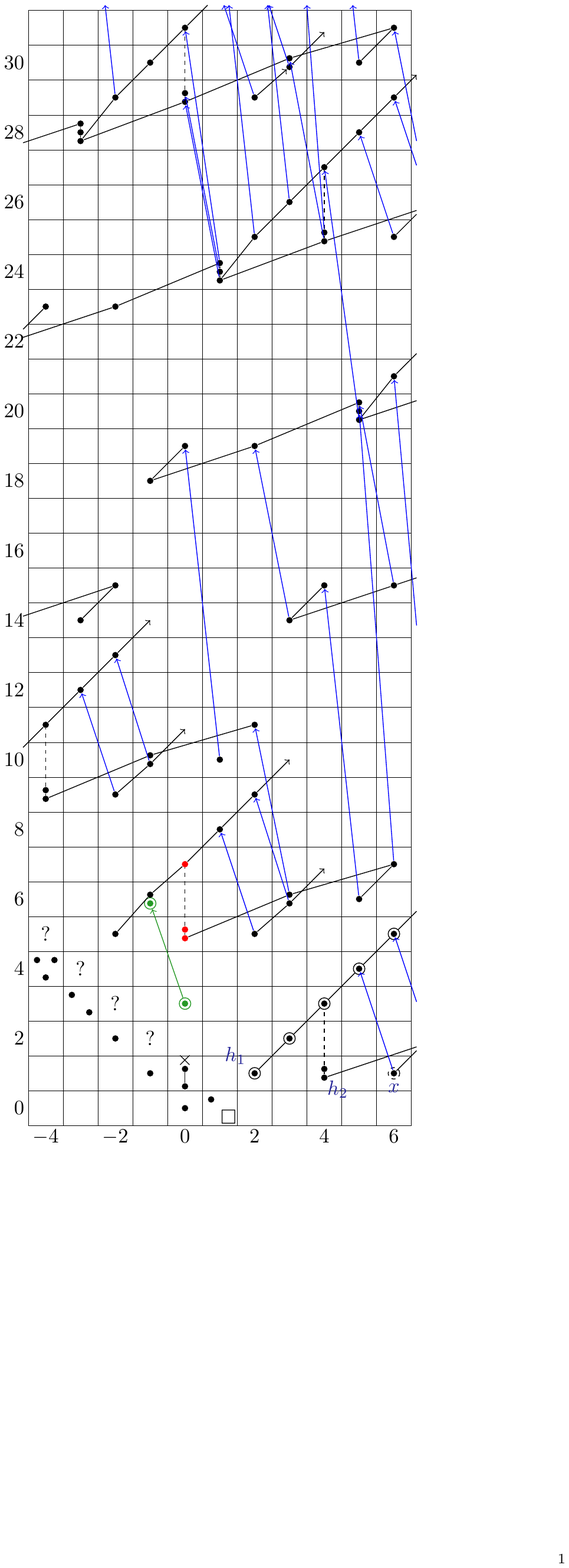}
\caption{Homotopy fixed point spectral sequence for
$\pics(TMF(3))^{h\GL{3}}$ }\vspace{-4mm}
\caption*{\emph{``Zoomed out" version with some $h_1$-omissions}}
\vspace{-4mm}
\caption*{($\square$ denotes $\Z$, $\bullet$ denotes $\Z/2$, $\cbull$ denotes $\Z/2[j]$, and $\times$ denotes $\Z/3$) }\label{fig:picTMF(3)}
\end{figure}

A ``zoomed out" portion of the Picard HFPSS \eqref{ss:TMF(3)} is depicted in \Cref{fig:picTMF(3)}; the elements that are to the right of the $t=2$ line are, of course, a shift of the corresponding elements in the spectral sequence for $TMF[1/3]$. However, to avoid cluttering the picture, a family of classes is not shown.  The family consists precisely of the $h_1$-power multiples of non-torsion classes. An exception is made for the elements depicted in green, namely $h_1^3\frac{c_4c_6}{\Delta}$ and $h_1^6\frac{c_4^2}{\Delta}$, as well as the tower supported on $1$, which do belong to this family, but are nonetheless depicted. In the ``zoomed in" \Cref{fig:picTMF(3)z1} this family is also not shown.

More specifically, the non-torsion subring of the $E_2$-page of the
$TMF[1/3]$-spectral sequence is precisely the part in cohomological degree $0$
and consists of the ring of modular forms
$MF_*[1/3]=\Z[1/3][c_4,c_6,\Delta^{\pm 1}]/(12^3 \Delta - c_4^3+c_6^2) $. On
the $E_2$-page, these support infinite $h_1$-multiples, i.e., $MF_*[1/3] [h_1]/(2h_1)$ is a subring of the $E_2$-page. Note that in degree zero, $MF_0[1/3] = \Z[1/3, j]$, where $j=\frac{c_4^3}{\Delta}$ is the classical $j$-invariant.
What we have omitted drawing in \Cref{fig:picTMF(3)} and \ref{fig:picTMF(3)z1}
are all of the elements coming from this subring, with the exception of the
mentioned classes. For comparison, these elements are drawn in the
smaller-range \Cref{fig:picTMF(3)z2}.

\begin{figure}[h!]
\includegraphics[scale=.9]{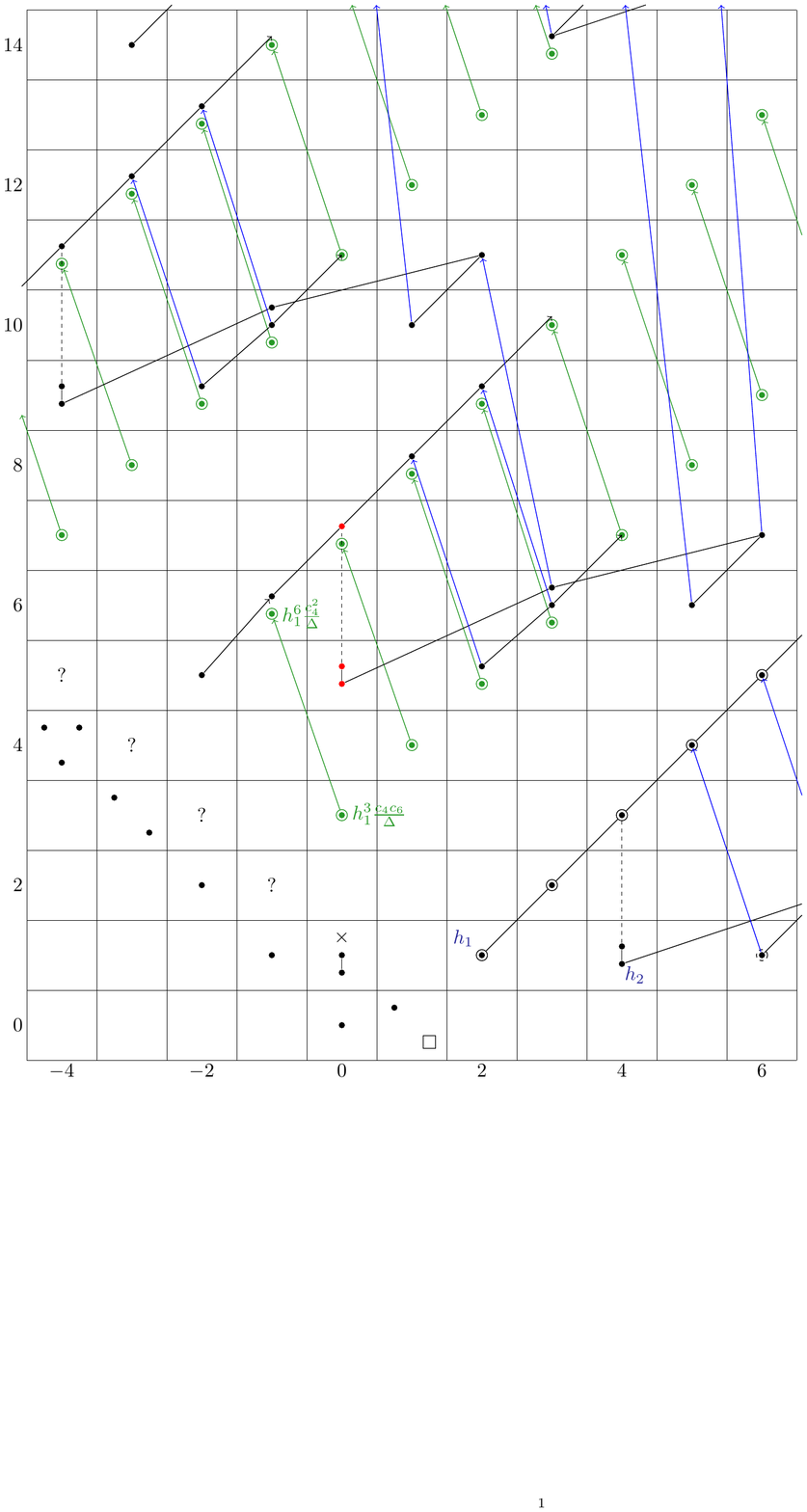}
\caption{Homotopy fixed point spectral sequence for
$\pics(TMF(3))^{h\GL{3}}$}
\vspace{-4mm}
\caption*{\emph{``Zoomed in" version without omissions}}
\vspace{-4mm}
\caption*{  ($\square$ denotes $\Z$, $\bullet$ denotes $\Z/2$, $\cbull$ denotes $\Z/2[j]$, and $\times$ denotes $\Z/3$) }
\label{fig:picTMF(3)z2}
\end{figure}

\begin{remark}
The two classes $h_1^3\frac{c_4c_6}{\Delta}$ and $h_1^6\frac{c_4^2}{\Delta}$, which we have depicted in green, do not appear in the spectral sequence for $Tmf[1/3]$, since they involve a negative power of $\Delta$.
Another difference between the $Tmf$ and $TMF$ situation is that in the $E_2$-page of the latter, there are infinite groups, isomorphic to $\Z/2[j]$ and generated by $h_1$, $h_1^2$, $h_1^3$, etc., in positions $(1,1)$, $(2,2)$, $(3,3)$, etc. Moreover, the element $x$ in position $(1,5)$ also generates an infinite $\Z/2[j]$, as do all of its $h_1$-multiples.
\end{remark}

\begin{figure}[h!]
\includegraphics[scale=0.75]{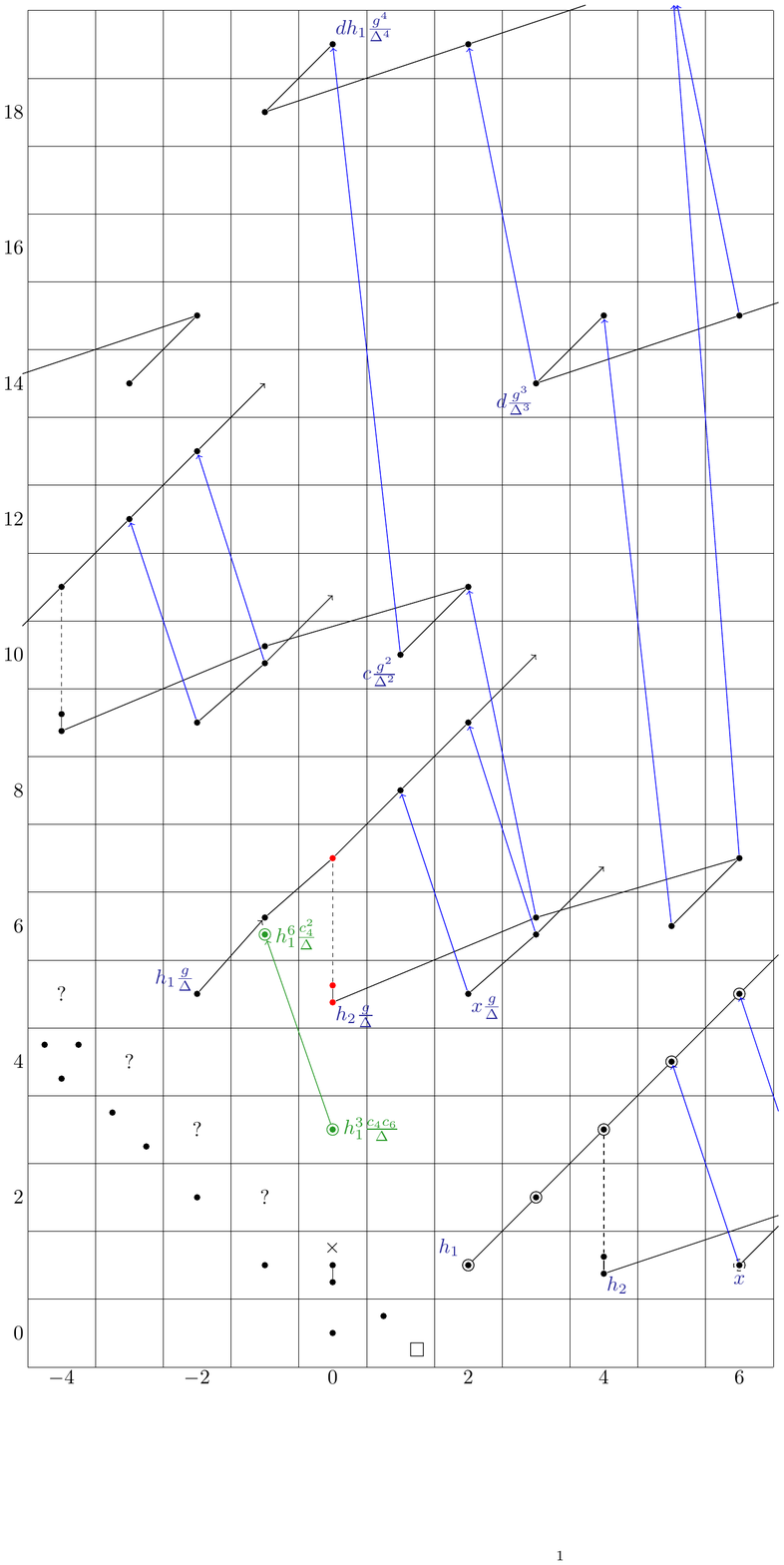}
\caption{Homotopy fixed point spectral sequence for
$\pics(TMF(3))^{h\GL{3}}$}
\vspace{-4mm}
\caption*{\emph{``Zoomed in" version with some $h_1$-omissions}}
\vspace{-4mm}\caption*{  ($\square$ denotes $\Z$, $\bullet$ denotes $\Z/2$, $\cbull$ denotes $\Z/2[j]$, and $\times$ denotes $\Z/3$) }
\label{fig:picTMF(3)z1}
\end{figure}

Note that in the range that we are considering (namely, $t>1$), the HFPSS for the ${\GL{3}}$-action on $Tmf(3)$ coincides with the descent spectral sequence for $Tmf[1/3]$ as the sections of $\mathcal{O}^{top}$ over $\mellbc[1/3]$, and the differentials in the latter have been fully determined in Johan Konter's master thesis \cite{konter}. Of course, these differentials really come from the connective $tmf$, whose descent spectral sequence is fully computed in \cite{computation}. In these spectral sequences, $d^o_3$ is the first non-trivial differential, followed by $d^o_5, d^o_7, d^o_9\dots d^o_{23}$. In particular, we have the following differentials \cite[Sec. 8]{computation}:
\begin{equation}\label{diffd3-11}
\begin{aligned}
d^o_3(c_6)&= c_4h_1^3 \qquad \qquad &d^o_3(x)&= h_1^4\\
d^o_5(\Delta) & =  g h_2 \qquad \qquad &d^o_7(4\Delta) &=g h_1^3\\
d^o_9(\Delta^2h_1) &=   g^2 c \qquad\qquad  &d^o_{11}(d \Delta^2) &=g^3 h_1,
\end{aligned}
\end{equation}
and a number of others.

Let us see now which of these differentials we can import using our
Comparison Tool~\ref{toolGalois}. In the $TMF[1/3]$ spectral sequence, we have that $d_3^o
(h_1^3\frac{c_4c_6}{\Delta}) = h_1^6\frac{c_4^2}{\Delta} $; in the Picard SS,
the element corresponding to $h_1^3\frac{c_4c_6}{\Delta}$ has $t=3$, thus we
\emph{cannot} import this differential. We deal with this class later, i.e., in
the next paragraph. However, all the other classes which are on the $s=t$
column and are $h_1$-power multiples of non-torsion classes, i.e., members of
the family which we have not drawn in \Cref{fig:picTMF(3)}, are well within the
$t>3$ range, so that we can indeed conclude by Comparison Tool~\ref{toolGalois} that they
either support a differential or are killed by one. For example, the
$h_1$-multiple of the differential just discussed does happen, i.e., in the Picard SS we have $ d_3(h_1^4\frac{c_4c_6}{\Delta}) = h_1^7\frac{c_4^2}{\Delta} $. In particular, we need not worry about these omitted classes any more.

Now we turn to the question of whether any differentials are supported on the $(s,t-s)= (3,0)$ position in the HFPSS for $\pics (TMF(3))^{h\GL{3}}$. For this purpose we use the universal formula \eqref{univformula} of \Cref{difftheorem}, just as we did in the second proof of \Cref{KO[[q]]}. We have that $E_2^{3,3}$ of the Picard spectrum HFPSS is $\Z/2[j]$ generated by $h_1^3 \frac{c_4c_6}{\Delta}$; the corresponding element in the original HFPSS has
\[ d_3^o \left(h_1^3 \frac{c_4c_6}{\Delta} \right)  = h_1^6 \frac{c_4^2}{\Delta}.\] 
Now we have that 
\[ \left( h_1^3 \frac{c_4c_6}{\Delta} \right)^2 = h_1^6 \frac{c_4^2 c_6^2}{\Delta^2} = (j-12^3)h_1^6\frac{c_4^2}{\Delta} = j h_1^6 \frac{c_4^2}{\Delta},\]
using the fact that $12^3\Delta = c_4^3-c_6^2 $ and that by definition, $j=\frac{c_4^3}{\Delta}.$ Therefore, we conclude by  \eqref{univformula} that in the Picard HFPSS, the differential $d_3:E_3^{3,3} \to E_3^{6,5}$ is given by
\[ d_3 \left(f(j) h_1^3 \frac{c_4c_6}{\Delta} \right) = (f(j)+j f(j)^2) h_1^6\frac{c_4^2}{\Delta},  \] 
where $ \left(f(j) h_1^3 \frac{c_4c_6}{\Delta} \right)  $ is an arbitrary element of $E_3^{3,3}$. However, $(f(j)+j f(j)^2) $ in $\Z/2[j]$ is zero only if $f(j)$ is zero, hence this $d_3$ is injective and has trivial kernel. (Note this is an interesting difference between the present situation and the one in \Cref{KO[[q]]}.) Consequently, $E_4^{3,3}$ is zero.

Further use of Comparison Tool~\ref{toolGalois} determines that all the differentials we have
drawn in blue in \Crefrange{fig:picTMF(3)}{fig:picTMF(3)z1} are non-zero. Note that of the classes in the $s=t$
column, i.e., the one which contributes to the Picard group of $TMF[1/3]$, everything with $s\geq 8$ is killed. However, $h_2 \frac{g}{\Delta}$, generating a $\Z/4$ in $s=5$, and $h_1^3 \frac{g }{\Delta}$ generating a $\Z/2$ in $s=7$, remain. In the original spectral sequence, the first one of these supported a $d^o_5$ and a $d_{13}^o$, and the second supported a $d^o_{23}$.

Next we need to determine the rest of the spectral sequence, i.e., the part which involves $\pi_0 $ and $\pi_1$ of the Picard spectrum of $TMF(3)$. Detailed computations for this are deferred to \Cref{sec:computeTMF(3)}. The piece in which we are most interested is $ H^1(\GL{3}, \pi_1\pics(TMF(3)) ) $, a cyclic group of order $12$ according to \Cref{grpcohGL3}; we have also determined $H^*(\GL{3}, \pi_0 \pics(TMF(3)) )$ in \Cref{Z2cohG3} using a more general result of Quillen's.

At this point we are ready to make conclusions about the Picard group of
$TMF[1/3]$: in the $t=s$ vertical line of the HFPSS, i.e., the one that abuts to $\pi_0 \pics (TMF[1/3] )=\pic (TMF[1/3])$, nothing above the $s=7$ line survives the spectral sequence. The following might survive:
\begin{itemize}
\item at most a group of order $2$ in position $ (0,0)$,
\item at most a group of order $12$ in $(1,0)$,
\item at most a group of order $4$ in $(5,0)$, and
\item at most a group of order $2$ in $(7,0)$.
\end{itemize}
The upshot is that we get an upper bound of $2\times 12\times 4\times 2=192$ on the order of the Picard group. But $TMF[1/3]$ is $192$-periodic, so this upper bound must also be a lower bound. In conclusion, \[\pic(TMF[1/3]) = \Z/192\]  as claimed, generated by $\Sigma TMF[1/3]$.
\end{proof}

\begin{remark}\label{rem:Pic1/3}
As in \Cref{rem:Pic1/2}, we can use some of our computations to reprove Fulton-Olsson's \cite{FultonOlsson} result that the moduli stack of elliptic curves $\mell[1/3]$ also has a Picard group $\Z/12$. Namely, we start with the knowledge that $\pic (\mell(3))$ is trivial, as $\mell(3)$ is the prime spectrum of a UFD. Then, we consider the Picard HFPSS for the algebraic stack $\mell[1/3]$, which must collapse. The only contribution towards the Picard group is 
\[ H^1\left(\GL{3}, \Gamma(\mell(3),{\mathcal{O}}^\times) \right), \]
which we saw by \Cref{grpcohGL3} has order $12$. But $\omega$ has order $12$, hence $\pic (\mell[1/3])$ is cyclic of order $12$.
\end{remark}


\subsection{Calculation of $\pic(\TMF)$}\label{subs:TMF}

In this section we will compute the Picard group of the integral periodic version of topological modular forms $TMF$. 
The result, as stated in the introduction, is:

\begin{introtheoremA}
The Picard group of integral $TMF$ is $\mathbb{Z}/576$, generated by $\Sigma
TMF$.
\end{introtheoremA}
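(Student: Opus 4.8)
The plan is to assemble the integral answer from the two localizations treated in \Cref{thm:picTMF1/2} and \Cref{thm:picTMF1/3} by an arithmetic gluing. Since $\pi_0\TMF=\Z$ and $2,3$ generate the unit ideal, the map $\TMF\to\TMF[1/2]\times\TMF[1/3]$ is faithfully flat, and its cobar construction is the \v{C}ech nerve of the Zariski cover $\spec\Z=D(2)\cup D(3)$, with $\TMF[1/2]\otimes_{\TMF}\TMF[1/3]\simeq\TMF[1/6]$. Thus \Cref{thm:ffdescent} identifies $\md(\TMF)$ with the pullback $\md(\TMF[1/2])\times_{\md(\TMF[1/6])}\md(\TMF[1/3])$, and applying the limit-preserving functor $\pics$ of \Cref{piclimits} gives $\pics(\TMF)\simeq\tau_{\geq0}\big(\pics(\TMF[1/2])\times_{\pics(\TMF[1/6])}\pics(\TMF[1/3])\big)$. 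Using $\pi_0\pics(R)=\pic(R)$ and $\pi_1\pics(R)=(\pi_0R)^{\times}$, the Mayer--Vietoris sequence of this pullback contains
\[ (\Z[1/6])^{\times}\xrightarrow{\ \delta\ }\pic(\TMF)\to\pic(\TMF[1/2])\oplus\pic(\TMF[1/3])\xrightarrow{\ \partial\ }\pic(\TMF[1/6]). \]
The map preceding $\delta$ is $(\Z[1/2])^{\times}\oplus(\Z[1/3])^{\times}\to(\Z[1/6])^{\times}$, $(u,v)\mapsto uv^{-1}$, which is surjective; hence $\delta=0$ and $\pic(\TMF)\cong\ker\partial$.

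The one new input is $\pic(\TMF[1/6])$. Because $\pi_2\TMF[1/6]=0$ this ring is \emph{not} even periodic, so \Cref{evenperiodicreg} does not apply and I would instead run the descent spectral sequence of \Cref{descentss} for the regular stack $\mellb[1/6]$. Once $6$ is inverted the descent spectral sequence for $\pi_*\TMF[1/6]$ collapses onto its zero-line, equivalently $H^{s}(\mell[1/6],\omega^{\otimes k})=0$ for all $s\geq1$; consequently the only entries on the diagonal $t=s$ are $H^{0}(\mell[1/6],\Z/2)=\Z/2$ at $(0,0)$ and $H^{1}(\mell[1/6],\mathcal{O}^{\times})=\pic(\mell[1/6])=\Z/12$ at $(1,1)$, the latter by Fulton--Olsson (compare \Cref{rem:Pic1/3}). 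Hence $|\pic(\TMF[1/6])|$ divides $24$. On the other hand $\Sigma^{d}\TMF[1/6]\simeq\TMF[1/6]$ holds if and only if $\pi_d\TMF[1/6]$ contains a unit, i.e. if and only if $24\mid d$, since the only homogeneous units are $\pm2^{a}3^{b}\Delta^{n}$, lying in degrees $24n$. Therefore $\Sigma$ has order exactly $24$ and $\pic(\TMF[1/6])=\Z/24$, generated by the suspension.

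It remains to compute $\ker\partial$, which is pure bookkeeping. Both localization maps send $\Sigma$ to $\Sigma$, hence are the reductions $\Z/72\twoheadrightarrow\Z/24$ and $\Z/192\twoheadrightarrow\Z/24$, so $\partial(x,y)=\bar x-\bar y$ and $\ker\partial=\{(x,y):x\equiv y \bmod 24\}$. As the reduction on the first factor is already surjective, $|\ker\partial|=72\cdot192/24=576$; and $\Sigma\TMF$ maps to the pair of generators $(1,1)$, whose order is $\operatorname{lcm}(72,192)=2^{6}\cdot3^{2}=576$. Thus $\ker\partial$ is cyclic, generated by $\Sigma\TMF$, and $\pic(\TMF)\cong\Z/576$. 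The genuinely new work is the $\TMF[1/6]$ computation; its only delicate points are the failure of even periodicity (circumvented by descending from the stack rather than invoking Baker--Richter) and the vanishing $H^{\geq1}(\mell[1/6],\omega^{\otimes k})=0$. I expect no hidden differentials in the $\TMF[1/6]$ spectral sequence, since the order-$24$ element $\Sigma$ already saturates the upper bound forced by the $E_2$-page; the surjectivity of the unit map guaranteeing $\delta=0$ should also be checked carefully, as it is what makes $\pic(\TMF)$ the honest kernel.
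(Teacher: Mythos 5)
Your proof is correct, but it takes a genuinely different route from the paper's. The paper runs the integral descent spectral sequence of \Cref{descentss} over $\mellb$ directly: it computes the $E_2$-page $H^s(\mell,\omega^{(t-1)/2})$ as a direct sum of the $\mell[1/2]$- and $\mell[1/3]$-contributions (using that $\mell[1/6]$ has no higher cohomology), re-imports the differentials from \Cref{sec:TMF1/2,sec:TMF1/3} via \Cref{toolGalois}, and matches the resulting upper bound of $576$ against the periodicity of $\TMF$. You instead perform Zariski descent along the faithfully flat map $\TMF\to\TMF[1/2]\times\TMF[1/3]$, identify $\md(\TMF)$ with the pullback over $\md(\TMF[1/6])$, and read off $\pic(\TMF)$ from the resulting Mayer--Vietoris sequence; the only new computation is $\pic(\TMF[1/6])=\Z/24$, which is genuinely easy because $H^{\geq 1}(\mell[1/6],\omega^{\otimes k})=0$ and $\Delta$ gives $24$-periodicity. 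What your approach buys is that \Cref{hopkins1} becomes an essentially formal consequence of \Cref{thm:picTMF1/2,thm:picTmf} -- no differentials need to be re-analyzed integrally; what the paper's approach buys is a single uniform spectral sequence (and chart) over $\mellb$ that also sets up the $\Tmf$ computation. Two small points you should make explicit: (i) the identification of the totalization of the cobar construction with the two-fold pullback for a two-element Zariski cover (standard, but it is the step that turns \Cref{thm:ffdescent} into your Mayer--Vietoris sequence -- alternatively, the \v{C}ech-indexed descent spectral sequence is concentrated in filtrations $0$ and $1$ and gives the same five-term exact sequence); and (ii) $\pi_1\pics(\TMF[1/n])=(\pi_0\TMF[1/n])^{\times}$ may be larger than $(\Z[1/n])^{\times}$ since $\pi_0$ contains $j$ and possibly torsion, but this only makes the surjectivity you need for $\delta=0$ easier, as the constants already surject onto $(\pi_0\TMF[1/6])^{\times}=(\Z[1/6])^{\times}$.
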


\begin{proof}

There is no nontrivial Galois extension of the integral $TMF$ by
\cite[Theorem 10.1]{galois}, 
but we can use \'etale descent, as $TMF$ is obtained as the global sections of the sheaf $\mathcal{O}^{top}$ of even-periodic $E_\infty$-rings on the moduli stack of elliptic curves. 
Namely, we can use \Cref{descentss} because the map $M_{ell} \to M_{FG}$ is
known to be affine. 
The spectral sequence is
\[ H^s(M_{ell}, \pi_t \pics \mathcal{O}^{top}) \Rightarrow \pi_{t-s} \Gamma  (\pics \mathcal{O}^{top}), \]
and we are interested in $\pi_{0}$. Using \Cref{descentss}, the $E_2$-page of this spectral sequence is given by 
(for $t-s \geq 0$)
\[E_2^{s,t} = \begin{cases}
\begin{aligned}
&\Z/2 & t=s=0\\
&H^s(\mell, \mathcal{O}_{\mell}^\times) & t=1\\
&H^s(\mell, \omega^{(t-1)/2}) & t\geq 3, \text{ odd}\\
&0 & \text{otherwise.}
\end{aligned}
\end{cases}
 \]

Over a field $k$ of characteristic $\neq 2, 3$, Mumford \cite{Mumford} showed that \[H^1((\mell)_k,  \mathcal{O}_{\mell}^\times)
\simeq \mathbb{Z}/12,\]
i.e., the Picard group of the moduli stack is $ \mathbb{Z}/12$, generated by the line bundle $\omega$ that assigns to an elliptic curve the dual of its Lie algebra. 
This result is also true over $\mathbb{Z}$ by the work of Fulton-Olsson
\cite{FultonOlsson}. However, using descent we can reprove that result. Namely,
in \Cref{rem:Pic1/2,rem:Pic1/3} we saw that the Picard groups of both
$\mell[1/2]$ and $\mell[1/3]$ are $\Z/12$, both generated by $\omega$. Cover
the integral stack $\mell$ by these two; their intersection is $\mell[1/6]$,
which is the weighted projective stack $\proj \, \Z[1/6][c_4,c_6]$ (with $c_4$
and $c_6$ in degrees\footnote{These are the algebraic degrees, which get
doubled in topology.} 4 and 6 respectively), and which therefore has Picard group $\Z/12$ also generated by $\omega$. The descent spectral sequence for $\pics$ associated to this cover gives the result.

Because $\mell[1/6]$ has no higher cohomology, the groups $H^s(\mell, \omega^{(t-1)/2})$, when $s>0$, are given as the direct sum of the corresponding cohomology groups of $\mell[1/2]$ and $\mell[1/3]$. These groups, in turn, are isomorphic to 
\[H^s(\GL{p}, \pi_{t-1}  TMF( p)) = H^s(\GL{p}, H^{0}(\mell( p ), \omega^{(t-1)/2}) ) ,\] where $p$ is $2$ or $3$, as the map $\mell( p) \to \mell[1/p]$ is Galois, and $\mell( p)$ has no higher cohomology. We computed these groups in the previous examples.

The machinery of \Cref{sec:SSLemma} now allows us to compare this Picard
descent spectral sequence to the one which computes the homotopy groups of
$TMF$. From \Cref{gl1RvsR} and an analogue of Comparison Tool~\ref{toolGalois}, we conclude that the differentials involving $3$-torsion classes wipe out everything above the $s=5$ line, and those involving $2$-torsion classes wipe out everything above the $s=7$ line. These differentials are identical to what happens in the homotopy fixed point spectral sequences in the previous two examples. We conclude that the following are the only groups that can survive:
\begin{itemize}
\item at most a group of order $2$ in $(t-s,s)= (0,0)$,
\item at most a group of order $12$ in $(0,1)$,
\item at most a group of order $12$ in $(0,5)$, and
\item at most a group of order $2$ in $(0,7)$.
\end{itemize}
This gives us an upper bound $2^6 3^2=576$ on the cardinality of $\pi_{0}$, which is exactly the periodicity of $TMF$. The spectral sequence is depicted in \Cref{fig:picTMF}.
\end{proof}

\begin{figure}[h!]
\includegraphics[scale=0.75]{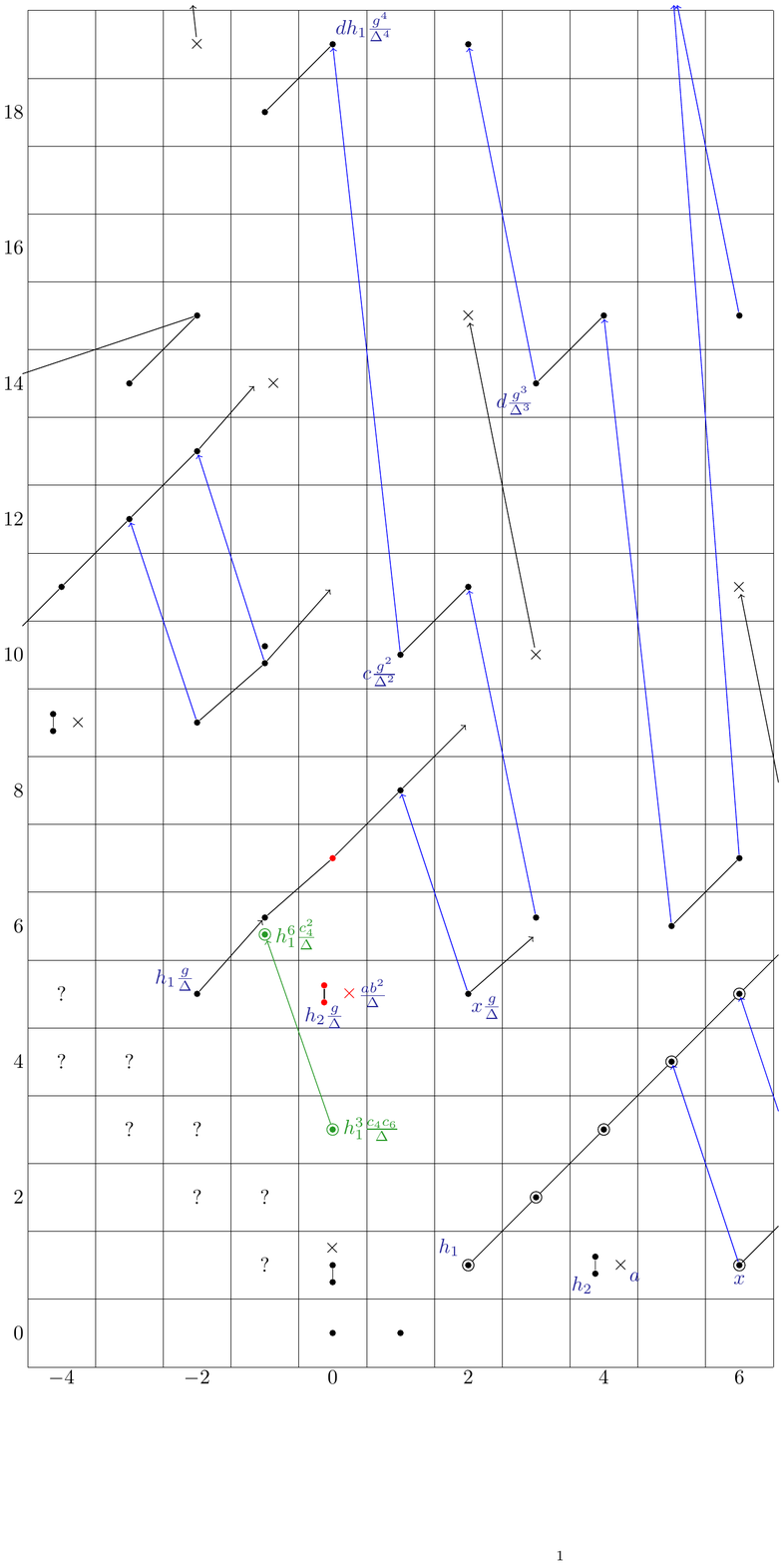}
\caption{Descent spectral sequence for $\Gamma(\pics \mathcal{O}^{top})$ on
$\mellb$} \vspace{-4mm} 
\caption*{\emph{with some $h_1$-omissions as in \Cref{fig:picTMF(3)}}}
\vspace{-4mm}
\caption*{  ($\square$ denotes $\Z$, $\bullet$ denotes $\Z/2$, $\cbull$ denotes $\Z/2[j]$, and $\times$ denotes $\Z/3$) }\label{fig:picTMF}
\end{figure}

\subsection{Calculation of $\pic(\Tmf)$}

We will now prove the following result stated in the introduction.
\begin{introtheoremB}
The Picard group of $Tmf$ is $\mathbb{Z} \oplus \mathbb{Z}/24$, generated by
$\Sigma Tmf$ and a certain 24-torsion invertible module. 
\end{introtheoremB}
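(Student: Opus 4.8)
The plan is to run the descent spectral sequence of \Cref{descentss} directly on the refined compactified stack $\mellbc$, since by \cite[Theorem 10.1]{galois} there is no nontrivial Galois extension of $\Tmf$ to exploit. As $\mellc$ is a regular Deligne--Mumford stack and the map $\mellc \to M_{FG}$ is affine, the hypotheses apply, and the abutment is $\pi_{t-s}\Gamma(\mellbc, \pics\,\otop)$, whose $\pi_0$ is the group we want. First I would assemble the $E_2$-page from the cohomology of $\mellc$ with coefficients in $\mathbb{Z}/2$ (for $t=0$), in $\mathcal{O}^{\times}$ (for $t=1$), and in $\omega^{(t-1)/2}$ (for $t\geq 3$ odd). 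To compute these I would cover $\mellc$ by $\mellc[1/2]$ and $\mellc[1/3]$ with overlap $\mellc[1/6]=\Proj\,\mathbb{Z}[1/6][c_4,c_6]$, reusing the group-cohomology computations already carried out for \Cref{thm:picTMF1/2} and \Cref{thm:picTMF1/3}; for $t\geq 3$ the entries coincide termwise with those of the ordinary descent spectral sequence computing $\pi_*\Tmf$, which is the essential input.

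The free summand originates on the $t=1$ line. Unlike the open stack $\mell$, on the compactification the discriminant $\Delta$ vanishes at the cusp, so $\omega$ is no longer $12$-torsion; reproving Fulton--Olsson \cite{FultonOlsson} by the same gluing, I would verify that $H^1(\mellc,\mathcal{O}^{\times})=\pic(\mellc)\simeq\mathbb{Z}$, generated by $\omega$. The suspension $\Sigma\Tmf$ lies in filtration $0$ at $(s,t)=(0,0)$, mapping to the generator of $H^0(\mellc,\mathbb{Z}/2)=\mathbb{Z}/2$, while $\Sigma^2\Tmf$ corresponds, via even periodicity, to the line bundle $\omega$ generating $E_2^{1,1}=\mathbb{Z}$. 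Since $\omega$ is realized by an honest invertible module it is a permanent cycle and $E_\infty^{1,1}=\mathbb{Z}$, so the $(0,0)$- and $(1,1)$-entries on the $t-s=0$ diagonal assemble into a nonsplit extension $0\to\mathbb{Z}\to F\to\mathbb{Z}/2\to 0$ realized by $\Sigma\Tmf$ (with $\Sigma^2\Tmf=\omega$). This yields the $\mathbb{Z}$ summand generated by the suspension, of infinite order precisely because $\Tmf$ is not periodic; no other diagonal entry contributes free rank.

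For the torsion I would import differentials from the known $\pi_*\Tmf$ spectral sequence using the analogue of Comparison Tool~\ref{toolGalois} furnished by \Cref{gl1RvsR}, valid for $d_r$ with $r\leq t-1$; as in the $\TMF$ case these wipe out the diagonal contributions above a finite filtration. The remaining diagonal entries correspond, under the shift of \Cref{gl1RvsR}, to the $\pi_{-1}$-line of the homotopy spectral sequence, which abuts to $\pi_{-1}\Tmf=0$ and is therefore entirely killed \emph{in homotopy}. The key point is that the comparison transports only the differentials of length $\leq t-1$, and the first out-of-range differential is governed by the universal formula \eqref{univformula} of \Cref{difftheorem}, whose correction term $x^2$ (nonzero only $2$-locally) alters which classes survive. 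Carrying out this analysis one prime at a time -- invoking \Cref{difftheorem} at $p=2$ exactly as in the second proof of \Cref{KO[[q]]} and in \Cref{thm:picTMF1/3} -- I would bound the torsion subgroup of $\pic(\Tmf)$ by a group of order dividing $24$.

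Finally, the suspension $\Sigma\Tmf$ supplies free rank exactly one, the spectral sequence bounds the torsion by order dividing $24$, and Construction~\ref{construct} produces, by an unusual gluing of locally trivial modules, an invertible $\Tmf$-module of order exactly $24$ that is not a suspension. Since a finitely generated abelian group of free rank one whose torsion subgroup has order $24$ must be $\mathbb{Z}\oplus\mathbb{Z}/24$, the theorem follows. I expect the main obstacle to be the tight upper bound on the torsion: unlike for $\TMF$, naive counting does not suffice, and one must combine the restricted-range comparison of \Cref{gl1RvsR} with the precise first-unstable differential of \Cref{difftheorem} to rule out larger surviving groups in the negative cone, where the modified differentials and the resulting quadratic relations at $p=2$ must be analyzed carefully.
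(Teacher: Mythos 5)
Your proposal follows the paper's proof essentially step for step: the descent spectral sequence of \Cref{descentss} over $\mellbc$, Fulton--Olsson's $\pic(\mellc)\simeq\mathbb{Z}$ on the $t=1$ line, comparison with the $\pi_*\Tmf$ spectral sequence to kill everything above filtration $7$ and bound the torsion by $24$, and Construction~\ref{construct} for the lower bound. Two small remarks. First, you anticipate having to invoke the universal formula of \Cref{difftheorem} at $p=2$, but on the compactified stack this is unnecessary: the problematic entry $E_2^{3,3}$, which for $\TMF$ is $\mathbb{Z}/2[j]$ generated by $h_1^3\frac{c_4c_6}{\Delta}$, vanishes for $\Tmf$ because that class involves a negative power of $\Delta$ --- this is exactly why the $\Tmf$ computation is easier than the $\TMF[1/3]$ one, and the paper points it out explicitly. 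Second, your closing inference ``a finitely generated abelian group of free rank one whose torsion subgroup has order $24$ must be $\mathbb{Z}\oplus\mathbb{Z}/24$'' is false as a general statement (e.g.\ $\mathbb{Z}\oplus\mathbb{Z}/2\oplus\mathbb{Z}/12$); what you actually need, and already have, is that the torsion subgroup contains an element of order exactly $24$ while having order at most $24$, hence is cyclic of order $24$, after which the extension by $\mathbb{Z}$ splits because $\mathbb{Z}$ is free.
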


Note that while $Tmf[1/n]$, for $n=2,3$, can be described as the homotopy fixed point spectrum $Tmf(n)^{hGL_2(\Z/n)}$ just as in the periodic case, the extension $Tmf[1/n]\to Tmf(n)$ is \emph{not} Galois, and therefore we cannot use Galois descent to compute the Picard group. However, we can use Theorem \ref{descentss} for the compactified moduli stack $\mellc$. 

First, we need a lemma.
\begin{lemma} \label{lem:omega12}
Let $\mathcal{L}$ be the line bundle on $\mellc$ obtained by gluing the trivial
line bundles on $\mell = \mellc[\Delta^{-1}]$ and $\mellc[c_4^{-1}]$ via the
clutching function $j$. Then $\mathcal{L} \simeq \omega^{-12}$.
\end{lemma}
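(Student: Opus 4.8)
The plan is to compute the isomorphism class of $\mathcal{L}$ directly from \v{C}ech gluing data for the open cover $\{\mell, \mellc[c_4^{-1}]\}$ and to match it with the corresponding data for $\omega^{-12}$. First I would check that these two opens genuinely cover $\mellc$. Since $\mell = \mellc[\Delta^{-1}]$, the complement $\mellc \setminus \mell$ is the cuspidal (boundary) locus $V(\Delta)$, so it suffices to observe that $c_4$ is a unit along this locus: the boundary point is the Tate curve (nodal cubic), for which $c_4$ is invertible, whence $V(\Delta) \cap V(c_4) = \emptyset$ and $\mell \cup \mellc[c_4^{-1}] = \mellc$. On the overlap $\mellc[\Delta^{-1}, c_4^{-1}]$ both $\Delta$ and $c_4$ are invertible, so $j = c_4^3/\Delta$ is a well-defined unit there and the clutching construction indeed produces a line bundle on all of $\mellc$.

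Next I would produce the two natural trivializations of $\omega^{-12}$ and read off its transition function. The section $\Delta \in H^0(\mellc, \omega^{\otimes 12})$ is nowhere vanishing on $\mell$, so $\Delta^{-1}$ trivializes $\omega^{-12}$ over $\mell$; likewise $c_4^3 \in H^0(\mellc, \omega^{\otimes 12})$ is nowhere vanishing on $\mellc[c_4^{-1}]$, so $c_4^{-3}$ trivializes $\omega^{-12}$ there. On the overlap the two trivializing sections are related by $\Delta^{-1} = (c_4^3/\Delta)\,c_4^{-3} = j\,c_4^{-3}$, so the transition function of $\omega^{-12}$ relative to these trivializations is exactly $j$. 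This is precisely the gluing datum defining $\mathcal{L}$, whence $\mathcal{L} \simeq \omega^{-12}$. Equivalently, under the Mayer--Vietoris boundary map $H^0(\mellc[\Delta^{-1},c_4^{-1}], \mathcal{O}^\times) \to \pic(\mellc)$ attached to this cover, the class $[\mathcal{L}]$ is the image of $j$, and the same computation identifies that image with $[\omega^{-12}]$.

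The main thing to get right, and the likeliest source of error, is the bookkeeping of weights together with the orientation convention for the clutching function, which is what pins the answer to $\omega^{-12}$ rather than $\omega^{+12}$: one must use that $j = c_4^3\Delta^{-1}$ has weight $0$ and that it arises as the ratio of the $\mell$-trivialization to the $\mellc[c_4^{-1}]$-trivialization of $\omega^{-12}$ (if one instead trivialized $\omega^{12}$ by $\Delta$ and $c_4^3$ the transition would be $j^{-1}$). A secondary point requiring care is the integral validity of the cover: at supersingular points in characteristics $2$ and $3$ the section $c_4$ may vanish, but those points lie in $\mell$, where the curves are smooth and $\Delta$ is invertible, so they are already covered by $\mell$ and do not obstruct the cover; the only locus where $\Delta$ vanishes is the cusp, where $c_4$ remains a unit, so the argument goes through over $\Z$.
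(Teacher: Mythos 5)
Your proof is correct and is essentially the paper's argument: the paper exhibits $(\Delta, c_4^3)$ as a nowhere-vanishing section of $\mathcal{L}\otimes\omega^{12}$, which is the same computation as your identification of $j = c_4^3/\Delta$ as the transition function of $\omega^{-12}$ relative to the trivializations $\Delta^{-1}$ and $c_4^{-3}$. Your added check that $\{\mell, \mellc[c_4^{-1}]\}$ actually covers $\mellc$ (since $c_4$ is a unit on the cuspidal locus) is a worthwhile detail the paper leaves implicit.
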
 
\begin{proof} 
To give a section of $\mathcal{L} \otimes \omega^{12}$ over $\mellc$ is equivalent to giving
sections $s_1 \in \Gamma( \mell, \omega^{12})$ and $s_2 \in \Gamma(
\mellc[c_4^{-1}], \omega^{12})$ such that $(j s_1)|_{\mell[c_4^{-1}]} =
(s_2)|_{\mell[c_4^{-1}]}$. We take $s_1  = \Delta$ and $s_2 = c_4^3$, and we
get a nowhere vanishing section of $\mathcal{L} \otimes \omega^{12}$.
\end{proof}

\begin{proof}[Proof of Theorem~\ref{thm:PicTmf}]
The relevant part of the Picard descent spectral sequence is similar to that
of
$TMF$, with the following exceptions: the algebraic part $H^1(\mellc,
\mathcal{O}^\times)$ is now $\Z$ generated by $\omega$, according to Fulton-Olsson \cite{FultonOlsson}, and all
the torsion groups are now finite, i.e., there are no $\Z/2[j]$'s appearing. In particular, $E_2^{3,3}$ is zero, and we have 
\begin{itemize}
\item at most a group of order $2$ in $(t-s,s)= (0,0)$,
\item a sub-quotient of $\Z$ in $(0,1)$,
\item at most a group of order $12$ in $(0,5)$, and
\item at most a group of order $2$ in $(0,7)$,
\end{itemize}
as potential contributions to the $s=t$ line of the $E_\infty$-page. The depiction is in \Cref{fig:picTMFcpt}.

Note that the $\Z/2$ in $(0,0)$, which corresponds to a single suspension of the even-periodic spectra that $Tmf$ is built from, is represented by $\Sigma Tmf$ in the Picard group of $Tmf$. Similarly, the element $1\in \Z = E_2^{0,1} = \pic (\mellc)$ corresponds to the line bundle $\omega$, which topologically is represented by $\Sigma^2 Tmf$. Thus these groups survive to the $E_\infty$-page and are related by an extension. The rest of the $E_\infty$-filtration now tells us that $\pic( Tmf)$ sits in an extension
\[0\to A \to \pic (Tmf) \to \Z \to 0, \]
where $A$ is a finite group of order at most $24$.

We claim that $A=\Z/24$ and therefore $\pic(Tmf) =\Z\oplus \Z/24$. To see this, we will construct a line bundle $\mathcal{I}$ such that $\mathcal{I}^{\otimes 24} \simeq \mathcal{O}^{top}$, but no lower power of $\mathcal{I} $ is equivalent to $\mathcal{O}^{top}$.

In order to proceed with the construction, we make the preliminary observation that the modular function $j=\frac{c_4^3}{\Delta}$ is a homotopy class in $\pi_0 TMF[c_4^{-1}]$, i.e. it survives the descent spectral sequence
\[ H^*(\mellc[\Delta^{-1}, c_4^{-1}], \omega^* ) \cong H^*(\mell, \omega^*)[c_4^{-1}]  \Rightarrow \pi_*TMF[c_4^{-1}]. \]
In fact, it is an invertible element of $\pi_0 TMF[c_4^{-1}]$. We reason as follows.
The torsion in the $E_2$-page consists only of $h_1$-towers supported on the non-torsion classes, since all other torsion classes in $H^*(\mell, \omega^*)$ are annihilated by $c_4$. Therefore, when $c_4$ is inverted only $d_3$-differentials can be non-zero, and they wipe out everything above the line $s=3$. As $\Delta$ and $c_4$ do not support any of those differentials, $j$ is a permanent cycle, as is $j^{-1}$.

\begin{construction}\label{construct}
Consider the cover of $\mellc$ by $\mellc [\Delta^{-1}] = \mell$ and $\mellc[c_4^{-1}]$ which fit in the pushout diagram
\[ \xymatrix{ 
\mellc [\Delta^{-1}, c_4^{-1}] \ar[r] \ar[d] & \mellc[\Delta^{-1}] \ar[d] \\
\mellc [c_4^{-1}] \ar[r] & \mellc.
}\]
Let $\mathcal{J}$ be the line bundle on the \emph{derived} moduli stack
$\mellbc = (\mellc, \otop)$ obtained by gluing $\mathcal{O}^{top}$ on $\mellc[\Delta^{-1}]$
and $\mathcal{O}^{top} $ on $\mellc[c_4^{-1}]$ using the clutching function
$j=\frac{c_4^3}{\Delta}$ on $\mellc[\Delta^{-1},c_4^{-1}]$. 
\end{construction}

\begin{figure}[h!]
\includegraphics[scale=0.75]{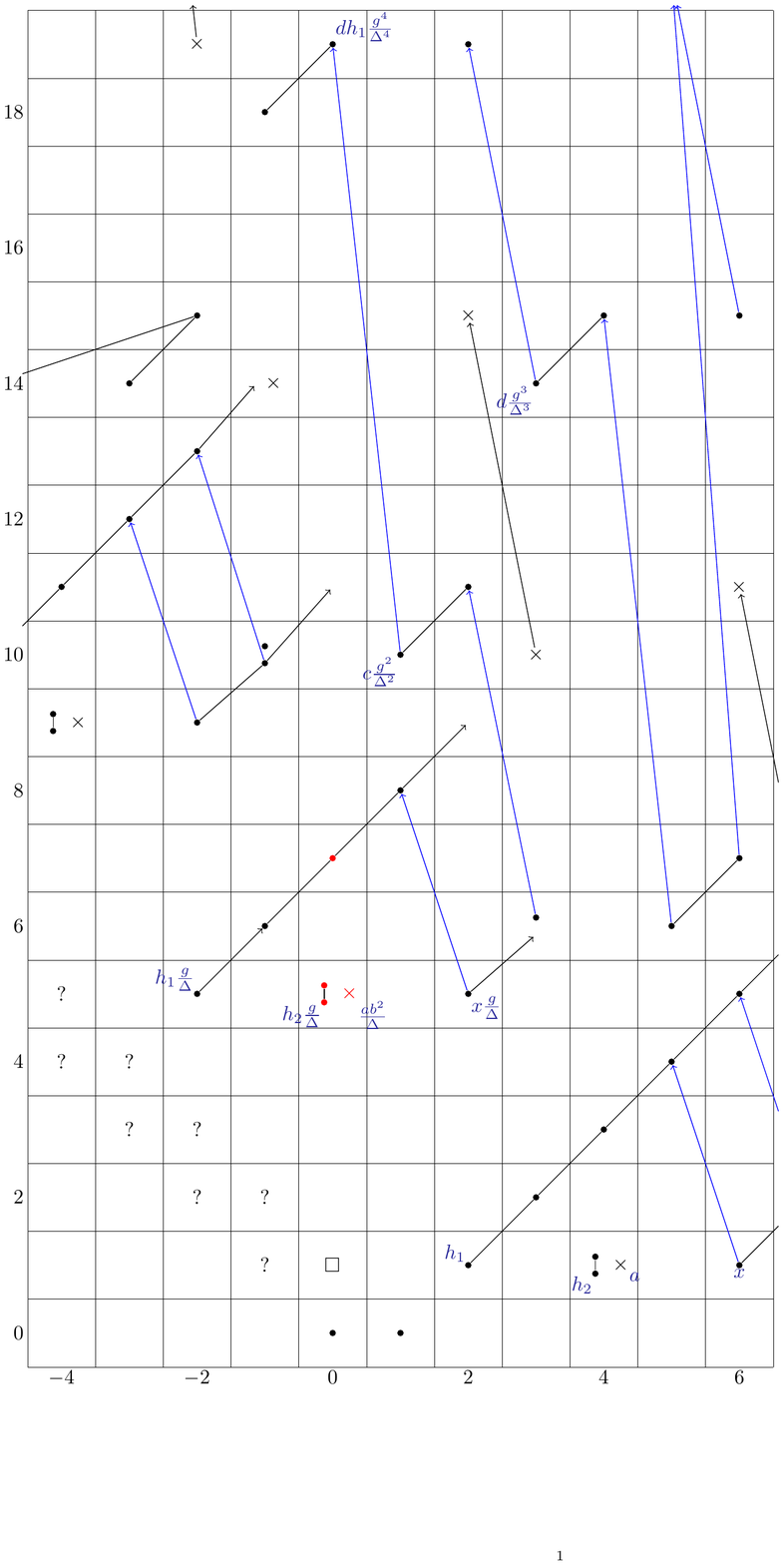}
\caption{Descent spectral sequence for $\Gamma(\pics \mathcal{O}^{top})$ on
$\mellbc$} \vspace{-4mm} \caption*{ ($\square$ denotes $\Z$, $\bullet$ denotes $\Z/2$, and $\times$ denotes $\Z/3$) }\label{fig:picTMFcpt}
\end{figure}

We claim that $\mathcal{J}$ is not a suspension of $\mathcal{O}^{top}$, and that $\mathcal{I} = \Sigma^{24}\mathcal{J}$ is an element of the Picard group of order $24$.

To see the first assertion, note that by \Cref{lem:omega12}, $\pi_0\mathcal{J} $ is $\omega^{ -12}$, so if
$\mathcal{J}$ is a suspension of $\mathcal{O}^{top}$, it ought to be
$\Sigma^{-24}\mathcal{O}^{top}$. However, $\Sigma^{-24}\mathcal{O}^{top}$
restricted to $\mellc[\Delta^{-1}]$ is $\Sigma^{-24}\mathcal{O}^{top}|_{\mellc[\Delta^{-1}]}$, whereas
$\mathcal{J}$ restricts to $\mathcal{O}^{top}|_{\mellc[\Delta^{-1}]}$. 

This argument can be repeated with any power $\mathcal{J}^{\otimes m}$ such that $m$ is not divisible by $24$. In this case, $\pi_0 \mathcal{J}^{\otimes m}$ is $\omega^{-12m}$, so if $\mathcal{J}^{\otimes m}$ were a suspension of $\mathcal{O}^{top}$, it would be the $(-24)m$-th suspension. At the same time, $\mathcal{J}^{\otimes m}$ restricts to \[(\mathcal{O}^{top})^{\otimes m}|_{\mellc[\Delta^{-1}]} = \mathcal{O}^{top}|_{\mellc[\Delta^{-1}]}\] upon inverting $\Delta$. If $\mathcal{J}^{\otimes m}$ were a suspension, therefore, one would have that $\Sigma^{-24m} \mathcal{O}^{top} |_{\mellc[\Delta^{-1}]} \simeq \mathcal{O}^{top}|_{\mellc[\Delta^{-1}]}$. By Theorem~\ref{hopkins1}, this holds if and only if $m$ is divisible by $24$. 

This shows that the order of $\mathcal{J}$ in $\pic(\mathcal{O}^{top})/\Z $,
where the $\Z $ is generated by $\Sigma\mathcal{O}^{top}$, is at least $24$. The spectral sequence argument above, however, showed that this quotient has order at most $24$. 
\end{proof}

The same analysis shows that $\pic (Tmf_{(2)} )= \Z\oplus \Z/8$ and $\pic
(Tmf_{(3)}) = \Z\oplus \Z/3$, the torsion being generated by the respective
localizations of $\mathcal{I}$. Moreover, $\pic( Tmf_{( p)})=\Z$ when $p>3$.

\subsection{Relation to the $E_2$-local Picard group}

Notice that $\mathcal{I} $ is the only ``exotic" element in all of our examples
involving the various forms of topological modular forms. Let us see how it
relates to the exotic piece of the Picard group of the category of $E_2$-local
spectra, i.e. modules over the $E_2$-local sphere spectrum. The exotic phenomena only occur at $p=2$ and $p=3$, but since only the $3$-primary $E_2$-local Picard group is known, let us concentrate on that case for the remainder of this section.

In \cite{GHMR}, the authors compute $\kappa_2$, the exotic part of the Picard group of the category of $3$-primary $K(2)$-local spectra; they show $\kappa_2 = \Z/3 \times \Z/3$.

In addition, they look at the localization map from the $E_2$-local category to the $K(2)$-local category, and show that it induces an isomorphism $\kappa_{\mathcal{L}_2} \to \kappa_2 $, where $\kappa_{\mathcal{L}_2} $ denotes the exotic $E_2$-local Picard group. 

Consider now the commutative diagram
\[ \xymatrix{
\kappa_{\mathcal{L}_2} \ar[r] \ar[d]_{t} &\kappa_2 \ar[d]^{t_{K(2)}}\\
\pic (Tmf_{(3)}) \ar[r] & \pic (Tmf_{K(2)}),
} \]
in which the horizontal maps are given by $K(2)$-localization, and the vertical
maps are given by smashing with $Tmf$ and $Tmf_{K(2)}$, respectively. 
In \cite[Theorem 5.5]{GHMR}, the authors show that there is an element $P$ of $\kappa_2$
such that $L_{K(2)}(P \wedge Tmf_{K(2)}) \simeq \Sigma^{48} Tmf_{K(2)}$, i.e.,
$t_{K(2)}P = 48 \in \Z/72 \subseteq \pic (Tmf_{K(2)})$. Under the top
horizontal isomorphism, this $P$ lifts to an element $\widetilde P$ of
$\kappa_{\mathcal{L}_2}$, such that $t(\widetilde P)$ has order three in
$\pic(\Tmf_{(3)})$ and such that the $K(2)$-localization of $t(\widetilde P)$ is $L_{K(2)}(\Sigma^{48} Tmf)$. Therefore, $t(\widetilde P) $ must be twice the class of $\mathcal{I}$. In other words, the exotic element $\widetilde P$ of $\kappa_{\mathcal{L}_2}$ is detected as an exotic element of $\pic (Tmf_{(3)})$.

The other $\Z/3$ in $\kappa_2$, i.e., $\kappa_2 $ modulo the subgroup generated
by $P$, is generated by a spectrum $Q$ such that $t_{K(2)} Q = 0$. This $Q$
lifts to $\widetilde Q \in \kappa_{\mathcal{L}_2}$, still of order $3$, which
must map under $t$ to an element of order $3$ in $\pic (Tmf_{(3)})$ which is in the kernel of the bottom localization map. But there are no non-trivial elements of finite order in this kernel, hence $\widetilde Q$ is not detected in $\pic( Tmf_{(3)})$.

Perhaps at the prime $2$ as well there is an element of the exotic $E_2$-local Picard group which is detected in the torsion of $\pic (Tmf_{(2)})$.

\counterwithout{lm}{subsection}
\counterwithin{lm}{section}

\part*{Appendices}


\appendix
\section{Group cohomology computations for $TMF(2)$}\label{sec:grpcohTMF(2)}

In this section of the appendix we compute the group cohomology for the $\GL{2}$-action on $\pi_0 \pics (TMF(2)) = \Z/4$ (with trivial action), and on $\pi_1 \pics( TMF(2)) = TMF(2)_0^\times$ with the natural action. The group $\GL{2}$ is the symmetric group on three letters, so it has a (unique) normal subgroup of order $3$, which we denote $C_3$, with quotient $C_2$. We can therefore use the associated Lyndon-Hochschild-Serre spectral sequence (LHSSS)
\begin{align}\label{ss:GL2(Z2)}
 H^p(C_2, H^q(C_3, M)) \Rightarrow H^{p+q}(\GL{2}, M) 
 \end{align}
for $\GL{2}$-modules $M$. 

Let us first deal with the easier case.

\begin{lemma}\label{G2trivialZ4}
The group cohomology for the $\GL{2}$-action on the trivial module $\Z/4$ is
\[ H^*(\GL{2}, \pi_0 \pics( TMF(2))) = \begin{cases}
\Z/4, \qquad *=0\\	
\Z/2, \qquad *>0.
\end{cases}
\]
\end{lemma}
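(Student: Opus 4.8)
The plan is to run the Lyndon--Hochschild--Serre spectral sequence \eqref{ss:GL2(Z2)} attached to the extension $C_3 \to \GL{2} \to C_2$, applied to the trivial module $M = \Z/4$. The whole computation hinges on a single numerical observation: $\Z/4$ is a $2$-group, whereas the normal subgroup $C_3$ has order prime to $2$.

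First I would compute the inner cohomology $H^q(C_3, \Z/4)$. Since $C_3$ acts trivially we have $H^0(C_3, \Z/4) = \Z/4$. For $q > 0$, the group $H^q(C_3, \Z/4)$ is annihilated by $|C_3| = 3$; but multiplication by $3$ is an automorphism of $\Z/4$, and hence of any of its subquotients, so $H^q(C_3, \Z/4) = 0$ for all $q > 0$. Consequently the $E_2$-page of \eqref{ss:GL2(Z2)} is concentrated on the single row $q = 0$, the spectral sequence degenerates for dimension reasons (there is no room for differentials), and we obtain isomorphisms $H^n(\GL{2}, \Z/4) \cong H^n(C_2, \Z/4)$ for every $n$.

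It then remains to compute the cohomology of the cyclic group $C_2$ with trivial coefficients in $\Z/4$, which I would read off from the standard $2$-periodic free resolution, whose differentials alternate between multiplication by the norm $N = 2$ and by $t - 1 = 0$. This gives $H^0(C_2, \Z/4) = \Z/4$, while in positive degrees $H^n(C_2, \Z/4) \cong \ker(2\colon \Z/4 \to \Z/4) \cong \Z/2$ for $n$ odd and $H^n(C_2, \Z/4) \cong (\Z/4)/2(\Z/4) \cong \Z/2$ for $n$ even. Combined with the degeneration above, this yields the claimed answer: $\Z/4$ in degree $0$ and $\Z/2$ in every positive degree.

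There is essentially no obstacle to overcome here; once the coprimality of $3$ and $4$ is invoked, the entire calculation is forced, and this is the easy companion to the genuinely involved computation of $H^*(\GL{2}, TMF(2)_0^\times)$ in \Cref{G2pi0units}. The only external input required is that the $\GL{2}$-action on $\pi_0 \pics(TMF(2)) \cong \Z/4$ is trivial, which holds because this group is generated by the Galois-equivariant suspension $\Sigma TMF(2)$; this is already built into the hypothesis ``trivial module'' in the statement of the lemma.
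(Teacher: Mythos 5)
Your proposal is correct and follows exactly the paper's argument: both run the Lyndon--Hochschild--Serre spectral sequence for $C_3 \to \GL{2} \to C_2$, kill the higher $C_3$-cohomology because $3$ is invertible in $\Z/4$, and reduce to the standard computation of $H^*(C_2,\Z/4)$. Your write-up merely spells out the periodic resolution for $C_2$ that the paper leaves implicit.
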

\begin{proof}
Since $3$ is invertible in $\Z/4$, we have that $H^*(C_3, \Z/4) = \Z/4$ concentrated in degree zero, and with trivial action by $C_2 = \GL{2}/C_3$. Hence the LHSSS \eqref{ss:GL2(Z2)} collapses, giving 
\[ H^s(\GL{2},\Z/4) = H^s(C_2, \Z/4), \]
which is $\Z/4$ for $s=0$ and $\Z/2$ otherwise.
\end{proof}

Next we will compute the group cohomology for the action of $\GL{2}$ on $\pi_1 \pics (TMF(2))$ which is the multiplicative group of units in $\pi_0 TMF(2)$. For brevity, we call this module $M$, and to begin with, we explicitly describe the action of $\GL{2}$ on $M$.

Let $\sigma$ and $\tau$ be the generators of $\GL{2}$ of order $3$ and $2$ respectively as chosen in \cite[Lemma 7.3]{TmfDualityp3}; of course, $\sigma $ generates the normal subgroup $C_3$. It follows from \Cref{eq:pi_0TMF(2)} that $M$ is isomorphic to $\Z/2 \oplus \Z^{\oplus 3}$, where $\Z/2$ is multiplicatively generated by $-1$, and the $\Z$'s are multiplicatively generated by $2,s$ and $(s-1)$.
The action is determined by \cite[Lemma 7.3]{TmfDualityp3}, where it is shown that the chosen generators $\sigma $ and $\tau$ act as
\begin{align*}
\sigma: s \mapsto \frac{s-1}{s}, \qquad \qquad \tau:  s\mapsto \frac{1}{s}.
\end{align*}
Written additively, so that $m = (\epsilon, k, a, b) \in M$ represents $(-1)^{\epsilon} 2^k s^a (s-1)^b  \in TMF(2)_0^\times$, the action is given by
\begin{align*}
\sigma: m &\mapsto (\epsilon+b, k , -a-b, a), \\ 
\tau: m & \mapsto ( \epsilon+b, k, -a-b, b).
\end{align*}

We use this information to compute $H^*(C_3,M)$ as a $C_2$-module. We get that
\begin{align*}
H^s(C_3, M) = \begin{cases}
\Z/2 \oplus \Z,\quad & s=0,\\
(\Z/3), & s\equiv 0,1(4), \,s>0,\\
(\Z/3)_{sgn}, & s\equiv 2,3 (4),\, s>0.
\end{cases}
\end{align*} 
This gives the $E_2$-page of the LHSSS \cref{ss:GL2(Z2)}, which must collapse and give that
\begin{equation}\label{eq:grpcohTMF(2)}
\begin{aligned}
H^s(\GL{2},M) =\begin{cases}
\Z/2\oplus \Z, \quad &\text{for } s=0,\\
\Z/2\oplus \Z/3, \quad &\text{for } s\equiv 1 (4),\\
\Z/2\oplus \Z/2, \quad &\text{for } s\equiv 2 (4),\\
\Z/2, \quad &\text{for } s\equiv 3 ( 4),\\
\Z/2\oplus \Z/2\oplus \Z/3, \quad &\text{for } s\equiv 0 ( 4),\, s>0.
\end{cases}
\end{aligned}
\end{equation}
We have thus proven the following result.

\begin{proposition}\label{G2pi0units}
The group cohomology for the $\GL{2}$-action on $\pi_0 \pics(TMF(2))$ is as in \Cref{eq:grpcohTMF(2)}. In particular, we have that $H^1(\GL{2},TMF(2)_0^\times)=\Z/6$.
\end{proposition}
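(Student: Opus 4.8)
The plan is to compute $H^*(C_3, M)$ first as a $C_2$-module and then feed the result into the Lyndon--Hochschild--Serre spectral sequence \eqref{ss:GL2(Z2)}. Write $M = \langle 2\rangle \oplus M'$, where $\langle 2\rangle \cong \Z$ is the (trivial) $C_3$-submodule generated by the class of $2$, and $M' = \langle -1, s, s-1\rangle$. The quotient $\langle s, s-1\rangle$ of $M'$ is, as a $C_3$-module, isomorphic to $\Z[\zeta_3]$: the matrix of $\sigma$ on the basis $(s, s-1)$ has characteristic polynomial $\lambda^2 + \lambda + 1$, so $\sigma$ acts as multiplication by a primitive cube root of unity. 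Since $\sigma$ fixes $-1$, the submodule $\langle -1\rangle \cong \Z/2$ is $C_3$-trivial, and $M'$ sits in a short exact sequence of $C_3$-modules $0 \to \Z/2 \to M' \to \Z[\zeta_3] \to 0$.

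First I would record the cohomology of the two building blocks using the $2$-periodicity of cyclic group cohomology. For the trivial module, $H^0(C_3, \Z) = \Z$, $H^{2i}(C_3, \Z) = \Z/3$ for $i > 0$, and $H^{\mathrm{odd}}(C_3, \Z) = 0$. For $\Z[\zeta_3]$ the norm $N = 1 + \sigma + \sigma^2$ vanishes and there are no nonzero fixed points, so $H^{2i}(C_3, \Z[\zeta_3]) = 0$ for $i > 0$ while $H^{\mathrm{odd}}(C_3, \Z[\zeta_3]) = \Z[\zeta_3]/(\zeta_3 - 1) \cong \Z/3$. Because $3$ is invertible on $\Z/2$ we have $H^{>0}(C_3, \Z/2) = 0$, so the long exact sequence of the displayed extension gives $H^0(C_3, M') = \Z/2$ and $H^s(C_3, M') \cong H^s(C_3, \Z[\zeta_3])$ for $s > 0$. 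Adding back $\langle 2\rangle$ shows that $H^0(C_3, M) = \Z/2 \oplus \Z$ and, for $s > 0$, that $H^s(C_3, M) \cong \Z/3$ --- contributed by $M'$ in odd degrees and by $\langle 2\rangle$ in even degrees.

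The crux of the argument is to pin down the residual $C_2 = \GL{2}/C_3$-action on these copies of $\Z/3$, since whether each survives to the $E_2$-edge $H^0(C_2, -)$ is exactly what produces the period-$4$ pattern. For $s = 0$ the generators $-1$ and $2$ are both $\tau$-fixed, so $C_2$ acts trivially on $\Z/2 \oplus \Z$. For even $s = 2i > 0$ the class lives in $H^{2i}(C_3, \langle 2\rangle)$; conjugation by $\tau$ inverts $\sigma$ while fixing the coefficient $2$, and inversion acts by $-1$ on $H^2(C_3, \Z)$, so $\tau$ acts by $(-1)^i$ --- trivially when $s \equiv 0 \pmod 4$ and by the sign when $s \equiv 2 \pmod 4$. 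The harder case, and the main obstacle, is the odd degrees $H^{\mathrm{odd}}(C_3, \Z[\zeta_3])$, where $\tau$ acts simultaneously by inversion on $C_3$ and by the nontrivial involution $s \mapsto s^{-1}$ on the coefficient lattice $\langle s, s-1\rangle$; I would compute the resulting sign on $H^1$ by an explicit $1$-cocycle representative, check that it is trivial, and propagate it through the degree-$2$ periodicity operator (cup product with a generator of $H^2(C_3,\Z)$, on which $\tau$ again acts by $-1$) to conclude that $\tau$ acts by $(-1)^{(s-1)/2}$, i.e. trivially for $s \equiv 1 \pmod 4$ and by the sign for $s \equiv 3 \pmod 4$. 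This establishes the stated form of $H^*(C_3, M)$ as a $C_2$-module.

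Finally I would run \eqref{ss:GL2(Z2)}. The $3$-torsion sits entirely on the $p = 0$ column and the $2$-torsion entirely on the $q = 0$ row, so for degree reasons no differential can connect them and, by coprimality, there are no extension problems; the spectral sequence collapses at $E_2$. The $2$-primary part of $H^n(\GL{2}, M)$ is therefore $H^n(C_2, H^0(C_3, M)) = H^n(C_2, \Z/2 \oplus \Z)$, which is $\Z/2 \oplus \Z/2$ for $n$ even and positive and $\Z/2$ for $n$ odd, while the $3$-primary part is $H^0(C_2, H^n(C_3, M)) = \Z/3$ exactly when the $C_2$-action is trivial, i.e. when $n \equiv 0, 1 \pmod 4$. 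Assembling these gives \eqref{eq:grpcohTMF(2)}; in particular the case $n = 1$ yields $H^1(\GL{2}, TMF(2)_0^\times) = \Z/2 \oplus \Z/3 = \Z/6$, as claimed.
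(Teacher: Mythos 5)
Your proposal is correct and follows essentially the same route as the paper: compute $H^*(C_3,M)$ as a $C_2$-module and feed it into the Lyndon--Hochschild--Serre spectral sequence \eqref{ss:GL2(Z2)}, which collapses for coprimality/degree reasons. The only difference is that you supply the details (the splitting off of $\langle 2\rangle$, the identification of $\langle s,s-1\rangle$ with $\Z[\zeta_3]$, and the explicit cocycle check of the residual $C_2$-action) that the paper states without proof, and these all check out.
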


\section{Group cohomology computations for $TMF(3)$}\label{sec:computeTMF(3)}

This section of the appendix is devoted to computing the group cohomology for $\GL{3}$ acting on $\pi_1\pics(TMF(3)_0)^\times$; we also determine the cohomology of  $\pi_0 \pics(TMF(3))=\Z/2$ as a simple consequence of a result of Quillen \cite{QuillenGL}. The group $\GL{3}$ has order $48$ and has the binary tetrahedral group as normal subgroup, in the guise of $SL_2(\Z/3)$. We have found it difficult to compute the higher cohomology groups of $(TMF(3)_0)^\times$, but since we are only using $H^1(\GL{3}, (TMF(3)_0)^\times)$ in \Cref{sec:TMF1/3}, we will concentrate on computing this group only.

In this section, we denote $(TMF(3)_0)^\times$ by $M$. From
\Cref{eq:pi_0TMF(2)}, we see that $M \subset TMF(3)_0$ is isomorphic to $\Z/2\oplus\Z/3 \oplus \Z^{\oplus 4}$ multiplicatively generated by $-1, \zeta, (1-\zeta), t, (1-\zeta t),$ and $(1+\zeta^2 t)$. (To see the appearance of $(1-\zeta)$, note that $(1-\zeta)^2 = -3\zeta$.) The $\GL{3}$-module structure is determined in \cite[\S 4.3]{TmfDescentp2}; to describe it, let $x,y,z$ be the elements of $\GL{3}$ chosen in loc. cit. Explicitly,
\[ x=\begin{pmatrix}
0 & -1\\ 1 & 0
\end{pmatrix}, \qquad y = \begin{pmatrix}
-1 & -1 \\
-1 & 1
\end{pmatrix}, \qquad z=\begin{pmatrix}
0 &-1\\
1 & -1
\end{pmatrix}.
 \]
Then $x$ and $y$ generate a quaternion group $Q_8$, and $x,y,z$ generate $SL_2(\Z/3)$. Let $\sigma$ be the matrix $\begin{pmatrix}
1 & 0\\ 0 & -1
\end{pmatrix}
$. These generate the whole group, and their action on the element $t=\frac{\gamma_1}{\gamma_2}$ is as determined in loc. cit.\footnote{Actually, the formulas in loc. cit. determine a \emph{right} action, although the \emph{left} action that we include here is almost the same.} to be
\[ x(t) = - \frac{1}{t}, \qquad y(t) = \zeta^2\frac{1-\zeta t}{1+\zeta^2 t}, \qquad z(t) = \zeta \frac{t}{1+\zeta^2 t}, \qquad \sigma(t) = \frac{1}{t}.  \]
The rest is is determined by the fact that everything fixes $\Z[1/3]\subset TMF(3)_0$, a matrix $A$ in $\GL{3}$ takes $\zeta$ to $\zeta^{\det A}$, and the action respects the ring structure.

To be brutally explicit, let $m=(\epsilon, \alpha, \beta, a, b, c)  \in M$ denote the element $(-1)^\epsilon \zeta^\alpha (1-\zeta)^\beta t^a (1-\zeta t)^b (1+\zeta^2t)^c$. Then the generators $x,y,z,\sigma \in \GL{3}$ act as
\begin{equation}\label{action}
\begin{aligned}
x:m\mapsto & (\epsilon+a+c, \alpha+b-c, \beta, -a-b-c, c,b),\\
y:m\mapsto & (\epsilon+b+c, \alpha-a-c, \beta, b,a,-a-b-c),\\
z:m\mapsto & (\epsilon, \alpha + a,\beta, a,c, -a-b-c ),\\
\sigma:m\mapsto & (\epsilon+\beta+b,-\alpha-\beta-b+c,\beta,-a-b-c,b,c).	
\end{aligned}
\end{equation}

Since we know a set of generators and relations for $\GL{3}$, and the action is given explicitly, we can compute $H^1$ directly as crossed homomorphisms modulo coboundaries. We found it a little bit simpler, however, to do this for $SL_2(\Z/3)$, and then use the Lyndon-Hochschild Serre spectral sequence for the extension
\[ 1\to SL_2(\Z/3) \to \GL{3} \to C_2 \to 1, \]
in which $C_2$ is generated by the image of $\sigma \in \GL{3}$.
The contributions to $H^1(\GL{3}, M)$ are from $H^1(SL_2(\Z/3), M )^{C_2}$ and $H^1(C_2, M^{SL_2(\Z/3)})$, and there is a potential differential
\begin{align}\label{eq:d2diff}
 d_2: H^1(SL_2(\Z/3), M )^{C_2} \to H^2(C_2, M^{SL_2(\Z/3)}). 
 \end{align}

To compute these groups and the differential, we note that the invariants $M^{SL_2(\Z/3)} $ are the submodule $\Z/2\oplus \Z/3 \oplus \Z$ with $a=b=c=0$. Here, $\ker(1+\sigma) = \im (1-\sigma)$, so that $H^1(C_2, M^{SL_2(\Z/3)}) = 0$.

Next, suppose $f: SL_2(\Z/3) \to M$ represents a class in $H^1(SL_2(\Z/3), M )^{C_2}$, i.e. it is a crossed homomorphism which is $\sigma$-invariant modulo coboundaries. Since each $f(g)$ is in the kernel of the norm of $g$, we must have that 
\begin{align*}
f(x) &= (\epsilon_x, c_x, 0, a_x, -c_x, c_x),\\
f(y) & = (\epsilon_y, -a_y-c_y, 0, a_y, -a_y, c_y),\\
f(z) &= (0, \alpha_z, 0,0, b_z, c_z).
\end{align*}
The relations $x^2=y^2$, $xyx=y$, $xz=zy^3$, and $zyx=yz$, imply that 
\begin{align*} 
a_x+c_x = a_y + c_y, \qquad b_z = -c_x, \qquad c_z= c_y, \qquad
\epsilon_x = c_x + c_y, \qquad \epsilon_y = a_x.
\end{align*}
One directly checks that any crossed homomorphism of this form is $\sigma$-invariant modulo coboundaries. 
Finally, suppose an $f$ of this form is itself a coboundary, i.e. there is an $m = (\epsilon, \alpha, \beta, a, b, c)\in M$, such that $f(g) = gm - m$ for all $g\in SL_2(\Z/3)$. Then $ 4b = a_x + 3 c_x - 2 c_y$, $a = b-a_x-c_x + 2c_y$, $c= b- c_x$, and $\alpha_z = a$. Consequently, 
\begin{align}
H^1(SL_2(\Z/3), M)^{C_2} = \Z/12. 
\end{align}

It remains to compute the differential \eqref{eq:d2diff}. This is a transgression, and we have an explicit formula for it, for example in \cite[\S 3.7]{Koch} or \cite[\S I.6]{Neukirch}. One checks that this formula gives that $d_2$ is zero in our case.
Thus we have proved the following.

\begin{proposition}\label{grpcohGL3}
The cohomology group $H^1(\GL{3}, TMF(3)_0^\times )$ is cyclic of order $12$.
\end{proposition}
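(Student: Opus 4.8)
The plan is to exploit the normal subgroup $SL_2(\Z/3) \trianglelefteq \GL{3}$ of index two, together with the Lyndon-Hochschild-Serre spectral sequence for the extension
\[ 1 \to SL_2(\Z/3) \to \GL{3} \to C_2 \to 1, \]
where $C_2$ is generated by the image of $\sigma$. Since we only want $H^1$, the relevant portion of the spectral sequence has just two potentially contributing terms, $E_2^{0,1} = H^1(SL_2(\Z/3), M)^{C_2}$ and $E_2^{1,0} = H^1(C_2, M^{SL_2(\Z/3)})$, linked by a single transgression $d_2 \colon E_2^{0,1} \to E_2^{2,0} = H^2(C_2, M^{SL_2(\Z/3)})$. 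So the computation reduces to three pieces: the two $E_2$-terms above and the vanishing of $d_2$.

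First I would compute the invariants $M^{SL_2(\Z/3)}$ directly from the explicit action \eqref{action}. Inspection shows that the coordinates $a, b, c$ (the exponents of $t$, $1-\zeta t$, $1+\zeta^2 t$) cannot survive the $SL_2(\Z/3)$-action, so the invariant submodule is $\Z/2 \oplus \Z/3 \oplus \Z$, consisting of the classes with $a = b = c = 0$. On this submodule $\sigma$ acts, and a direct check gives $\ker(1+\sigma) = \im(1-\sigma)$, whence $H^1(C_2, M^{SL_2(\Z/3)}) = 0$. This disposes of the $E_2^{1,0}$ term entirely.

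The heart of the argument is the computation of $H^1(SL_2(\Z/3), M)^{C_2}$. I would work with the standard presentation of $SL_2(\Z/3)$ via the generators $x, y, z$ and relations $x^2 = y^2$, $xyx = y$, $xz = zy^3$, $zyx = yz$, describing a class in $H^1$ as a crossed homomorphism $f$ modulo coboundaries. Using that $f(g)$ lies in the kernel of the norm element for each generator $g$ (which constrains the coordinates of $f(x), f(y), f(z)$ to a small number of parameters), the four relations translate into a system of linear equations among those parameters. Solving the system, and then quotienting by the subgroup of coboundaries $g \mapsto g m - m$, I expect to land on a cyclic group $\Z/12$; imposing the $C_2$-invariance condition should turn out to be automatic for crossed homomorphisms of this form, so that $E_2^{0,1} = \Z/12$.

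The main obstacle, and the step requiring the most care, is this direct crossed-homomorphism computation: the bookkeeping with the six coordinates of $M$ under the explicit matrices in \eqref{action}, the reduction of the relations to linear congruences, and the careful identification of exactly which such cocycles are coboundaries. The final loose end is the transgression $d_2$, for which I would invoke the standard explicit transgression formula (e.g. as in the references cited in the surrounding text) and verify by direct substitution that it vanishes on the generator of $\Z/12$; since the target $H^2(C_2, M^{SL_2(\Z/3)})$ may be nonzero, this check is genuinely necessary and not merely formal. Granting these, the spectral sequence collapses at $E_2$ in total degree one, giving $H^1(\GL{3}, M) \cong \Z/12$.
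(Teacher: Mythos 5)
Your proposal follows the paper's own proof essentially step for step: the same Lyndon--Hochschild--Serre spectral sequence for $SL_2(\Z/3) \trianglelefteq \GL{3}$, the same identification of $M^{SL_2(\Z/3)}$ and vanishing of $H^1(C_2, M^{SL_2(\Z/3)})$, the same crossed-homomorphism computation with the relations $x^2=y^2$, $xyx=y$, $xz=zy^3$, $zyx=yz$ yielding $\Z/12$, and the same explicit-formula check that the transgression vanishes. The approach and all key steps match the paper's argument.
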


Although not directly affecting the computation of $\pic(TMF[1/3])$, we record the following result of Quillen that determines a few more entries in the spectral sequence \eqref{ss:TMF(3)}. 

\begin{proposition}{\cite[Lemma 11]{QuillenGL}}\label{Z2cohG3}
The cohomology ring $H^*(\GL{3},\Z/2)$ is $\Z/2[c_1, c_2] \otimes \Lambda(e_1,e_2)$, where the cohomological degrees are $|c_i|=2i$, and $|e_i|=2i-1$.
\end{proposition}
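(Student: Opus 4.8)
The plan is to compute $H^*(\GL{3};\Z/2)$ by hand, exploiting that $\GL{3}=GL_2(\mathbb{F}_3)$ is small and that $2\mid 3-1$. I would use the \emph{split} extension $1\to SL_2(\mathbb{F}_3)\to GL_2(\mathbb{F}_3)\xrightarrow{\det}C_2\to 1$ and its Lyndon--Hochschild--Serre spectral sequence $H^p(C_2;H^q(SL_2(\mathbb{F}_3);\Z/2))\Rightarrow H^{p+q}(GL_2(\mathbb{F}_3);\Z/2)$. The first input is $H^*(SL_2(\mathbb{F}_3);\Z/2)$. Since $SL_2(\mathbb{F}_3)$ is the binary tetrahedral group $Q_8\rtimes C_3$, this equals $H^*(Q_8;\Z/2)^{C_3}$; inspecting $H^*(Q_8;\Z/2)=\Z/2[x,y,w]/(x^2+xy+y^2,\ x^2y+xy^2)$ (with $|x|=|y|=1$, $|w|=4$) shows that $C_3$ permutes the degree-one classes freely, kills all invariants in degrees $1$ and $2$, but fixes the degree-three class $x^2y$ and the periodicity class $w$. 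Hence $H^*(SL_2(\mathbb{F}_3);\Z/2)=\Z/2[w]\otimes\Lambda(b)$ with $|b|=3$, $|w|=4$, the expected period-four pattern for a group with quaternion Sylow. As $C_2=\GL{3}/SL_2(\mathbb{F}_3)$ acts trivially on these one-dimensional coefficient groups, the $E_2$-page is $\Z/2[t]\otimes\Z/2[w]\otimes\Lambda(b)$ with $|t|=1$.

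I would then match this against the target. Setting $e_1=t$, $c_2=w$, $e_2=b$ and $c_1=e_1^2=t^2$, the $E_2$-page has Poincaré series $(1+s^3)/((1-s)(1-s^4))$, which coincides with that of $\Z/2[c_1,c_2]\otimes\Lambda(e_1,e_2)$; note that over $\Z/2[c_1]$ the elements $1,e_1$ reassemble $\Z/2[c_1]$ and $\Lambda(e_1)$ into $\Z/2[e_1]$, so the stated presentation is just a redundant way of recording $\Z/2[e_1,c_2]\otimes\Lambda(e_2)$. In particular, $e_1$ is genuinely a polynomial (not exterior) generator: restricting to the diagonal $\{\mathrm{diag}(\pm1,\pm1)\}\cong(C_2)^2$, the class $\det^*t$ goes to $u_1+u_2$, so $e_1^2\mapsto u_1^2+u_2^2\neq 0$. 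The claim is thus equivalent to the degeneration of the spectral sequence at $E_2$.

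The main obstacle is exactly this degeneration. Because the extension splits, $\det^*\colon H^*(C_2;\Z/2)\to H^*(\GL{3};\Z/2)$ is a split injection, so $t$ and all of $\Z/2[t]$ survive automatically, and the only possibly nonzero differentials are the $\Z/2[t]$-linear derivations $d_2(w)\in E_2^{2,3}$ and $d_4(b)\in E_4^{4,0}$. I would kill these by producing explicit bottom-edge permanent cycles, i.e. classes in $H^4$ and $H^3$ of $\GL{3}$ restricting to the generators $w,b$ of $H^*(SL_2(\mathbb{F}_3))$. For $w$ this is clean: the standard two-dimensional $\mathbb{F}_3$-representation, Brauer-lifted to a virtual complex representation, has $c_2\in H^4(\GL{3};\Z/2)$ restricting to the generator of $H^4(SL_2(\mathbb{F}_3))=H^4(2T)$ (the top Chern class of the defining $SU(2)$-representation), forcing $d_2(w)=0$. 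The genuinely delicate point is the degree-three class $e_2$ restricting to $b$ — note $e_1^3=t^3$ restricts to $0$ since $\det$ is trivial on $SL_2(\mathbb{F}_3)$, so $e_2$ must come from elsewhere; I would obtain it either as an integral Bockstein partner of the degree-four Chern class or by a transfer/stable-element argument on the semidihedral $2$-Sylow $SD_{16}$, exhibiting $b$ as a stable element and hence a permanent cycle, so $d_4(b)=0$. Once both generators are permanent cycles the derivation property forces total degeneration, $E_\infty=E_2$, and the multiplicative extensions are pinned down by the restrictions to $(C_2)^2$ and to $SL_2(\mathbb{F}_3)$.

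Finally, as a conceptual cross-check and the route that Quillen \cite{QuillenGL} takes, I would note that since $2\mid 3-1$ the Adams operation $\psi^3-1$ acts by $3^i-1\equiv 0$ mod $2$ on $H^{2i}(BU(2);\Z/2)$, so in the fibration $U(2)\to F\to BU(2)$ with $F$ the homotopy fibre of $\psi^3-1$ the Serre spectral sequence collapses, giving $H^*(F;\Z/2)=\Z/2[c_1,c_2]\otimes\Lambda(e_1,e_3)$ with $|e_3|=3$. Quillen's comparison of $H^*(GL_2(\mathbb{F}_3);\Z/2)$ with $H^*(F;\Z/2)$ then yields the stated ring and transparently explains both the polynomial Chern classes and the exterior odd classes, matching the hands-on computation above.
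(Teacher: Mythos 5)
The paper does not actually prove this proposition: it is recorded as a direct citation of Quillen's Lemma 11, so your hands-on argument is necessarily a different route. Your route — the Lyndon--Hochschild--Serre spectral sequence for $1 \to SL_2(\mathbb{F}_3) \to \GL{3} \to C_2 \to 1$, with the input $H^*(SL_2(\mathbb{F}_3);\Z/2) = H^*(Q_8;\Z/2)^{C_3} = \Z/2[w]\otimes\Lambda(b)$, $|b|=3$, $|w|=4$ — is sound and gives the additive answer, and it has the virtue of being self-contained and parallel to the computations the paper already does in Appendix B (which uses the same extension for $H^1$ of the unit coefficients). You can streamline the degeneration step: since $\det$ is split, $\det^*$ is a split injection, so the entire bottom row $E_2^{*,0}=\Z/2[t]$ survives to $E_\infty$ and \emph{no} differential can land in it; this disposes of $d_4(b)$ (and also of $d_5(w)$, which you omitted from your list) with no transfer or Bockstein argument needed. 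The only differential genuinely at stake is $d_2(w)\in E_2^{2,3}$, and your Chern-class argument for it is correct: the Brauer lift of the standard representation restricts on $SL_2(\mathbb{F}_3)=2T\subset SU(2)$ to the defining representation, whose $c_2$ generates $H^4(2T;\Z)\cong\Z/24$ and hence generates $H^4(2T;\Z/2)$, so $w$ lifts to $H^4(\GL{3};\Z/2)$ and is a permanent cycle. What Quillen's route buys instead is uniformity in $n$ and $q$ and a conceptual source for the generators; what yours buys is a five-line spectral sequence with one differential to check.

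The one thin spot is the multiplicative structure. Your observation that $e_1^2=\det^*(t)^2\neq 0$, hence $e_1^2=c_1$ (both being the unique nonzero class in the one-dimensional $H^2$), is correct and worth making — it means the displayed tensor decomposition can only be read additively, a known subtlety of Quillen's $\ell=2$, $q\equiv 3\pmod 4$ case. But your claim that the remaining extensions are ``pinned down by the restrictions to $(C_2)^2$ and to $SL_2(\mathbb{F}_3)$'' is not justified as stated: in degree $6$, for instance, the restriction to $SL_2(\mathbb{F}_3)$ is identically zero (since $H^6(2T;\Z/2)=0$), so it gives no information about $e_2^2$, and you have not established that restriction to the diagonal $(C_2)^2$ is injective on that kernel (the Sylow $2$-subgroup is $SD_{16}$, and joint detection on these two subgroups is not automatic). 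So the relation $e_2^2=0$ is asserted rather than proved. Since the paper only uses the additive groups $H^s(\GL{3},\Z/2)$ as entries in the Picard spectral sequence — and indeed only records this proposition for completeness — this gap has no downstream effect, but a complete proof of the ring statement would need either a detection argument on $SD_{16}$ or Quillen's identification with $H^*(F\psi^3;\Z/2)$.
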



\section{Derived functors of the symmetric square}
\label{app:symm}
The purpose 
of this appendix is to prove the necessary auxiliary results on symmetric
squares of cosimplicial abelian groups. 

\begin{definition} 
\label{symmdef}
Let $A$ be an abelian group. We let $\sym_2(A) = (A
\otimes A)_{C_2}$ be the coinvariants for the $C_2$-action on $A \otimes A$
given by permuting the factors. 
We also let $\ssym_2(A)$ denote the $C_2$-coinvariants in $(A \otimes A)
\otimes \mathbb{Z}_{\epsilon}$ where the first factor is given the permutation
action and $\mathbb{Z}_{\epsilon}$ is the sign representation.
Note that if $A$ is a free abelian group, then the 2-torsion in $\ssym_2(A)$ is
canonically isomorphic 
to $A \otimes_{\mathbb{Z}} \mathbb{F}_2$ via the ``Frobenius'' map
\[ A/2A \to \ssym^2(A), \quad a \mapsto a \otimes a. \]
\end{definition} 

In \cite{Priddy73}, Priddy gives a complete description of the actions of the
symmetric algebra functor on cosimplicial vector spaces, or equivalently the
analog of the Steenrod algebra for cosimplicial algebras. 
We will only need a small piece of this, which we state next. 
We note that the generators in question are the Steenrod squares applied to
the fundamental class $\iota$.
For example, the generator in maximal degree is the cup square.

\begin{proposition}{\cite[Theorem 4.0.1]{Priddy73}}
\label{priddythm}
Let $A^\bullet$ be a cosimplicial $\mathbb{F}_2$-vector space. Suppose that
$H^{t+1}(A^\bullet) \simeq \mathbb{F}_2$ and the cohomology of $A^\bullet$ is
concentrated in degree $t + 1$ by a class $\iota$. 
Then 
$$ 
H^i(\sym^2 A^\bullet) \simeq \begin{cases}
\mathbb{F}_2  & \text{if } t+1 \leq i \leq 2(t + 1) \\
0 & \text{otherwise} 
\end{cases} .$$
\end{proposition}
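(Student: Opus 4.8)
The plan is to compute $H^*(\sym^2 A^\bullet)$ by first reducing to a universal model, and then exploiting the two natural short exact sequences that relate $\sym^2$ to the tensor square, whose cohomology is accessible by the Künneth theorem. The generators will be produced as the Steenrod squares $\mathrm{Sq}^j(\iota)$, $0 \leq j \leq t+1$, with $\mathrm{Sq}^j(\iota)$ in cohomological degree $(t+1)+j$.

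First I would invoke homotopy-invariance of the derived symmetric square. Via Dold--Kan, $H^*(A^\bullet)$ is the cohomology of the normalized cochain complex, and $\sym^2$ is a quadratic (degree-two polynomial) functor. Over the field $\mathbb{F}_2$ every cosimplicial vector space is levelwise free, so applying $\sym^2$ levelwise computes the Dold--Puppe derived functor; in particular $H^*(\sym^2 A^\bullet)$ depends only on the graded object $H^*(A^\bullet)$. Hence I may replace $A^\bullet$ by any fixed cosimplicial model whose normalized complex is $\mathbb{F}_2$ concentrated in degree $t+1$, generated by the fundamental class $\iota$.

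Next I would set up the two natural exact sequences of functors over $\mathbb{F}_2$. Writing $\tau$ for the swap on $A \otimes A$, the Frobenius $\phi(a) = a \cdot a$ is additive in characteristic $2$ and gives $0 \to \mathrm{id} \xrightarrow{\phi} \sym^2 \to \Lambda^2 \to 0$, while the norm $1 + \tau$ gives $0 \to \Lambda^2 \xrightarrow{1+\tau} (-)^{\otimes 2} \to \sym^2 \to 0$. Applying each levelwise to $A^\bullet$ and passing to cohomology yields two long exact sequences. The tensor square is computed by the (dual) Eilenberg--Zilber theorem together with Künneth: $H^*(A^\bullet \otimes A^\bullet) \cong H^*(A^\bullet)^{\otimes 2}$ is $\mathbb{F}_2$ concentrated in degree $2(t+1)$, spanned by $\iota \otimes \iota$. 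This supplies the top generator $\iota^2 = \mathrm{Sq}^{t+1}\iota$ of $H^{2(t+1)}(\sym^2 A^\bullet)$, while the bottom generator $\iota = \mathrm{Sq}^0\iota$ of $H^{t+1}(\sym^2 A^\bullet)$ is the image of $\phi_*$ (which is injective there, since $H^t(\Lambda^2 A^\bullet) = 0$ by connectivity). Chasing the two long exact sequences away from the junction degrees $t+1$ and $2(t+1)$ gives isomorphisms $H^i(\sym^2) \cong H^i(\Lambda^2)$ and $H^i(\sym^2) \cong H^{i+1}(\Lambda^2)$, so the groups form a "staircase" determined by the behaviour at the two junctions.

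The hard part is that this formal chase does not by itself terminate the staircase: it is consistent both with the desired finite answer and with an infinite constant tower — the latter being exactly what the homotopy orbits $(A^\bullet \otimes A^\bullet)_{hC_2}$ would produce, so the distinction is genuine and encodes the fact that $\sym^2$ is \emph{strict} rather than homotopy coinvariants. The real content is therefore (i) vanishing above degree $2(t+1)$, and (ii) one-dimensionality in each intermediate degree. For (i) I would use the standard vanishing range for Dold--Puppe derived functors of a polynomial functor of degree $d$ on an Eilenberg--MacLane object in degree $n$, namely concentration in $[n, dn]$; with $d = 2$, $n = t+1$ this forces vanishing outside $[t+1, 2(t+1)]$ (the lower bound being connectivity, the upper bound the cup-square cutoff, i.e. the instability relation $\mathrm{Sq}^j\iota = 0$ for $j > t+1$). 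For (ii), with finiteness in hand, the two endpoint classes together with the long exact sequences pin down a single copy of $\mathbb{F}_2$ in each degree, the intermediate generators being the $\mathrm{Sq}^j\iota$. I expect this combined step — equivalently the explicit identification of the connecting homomorphisms with Steenrod operations — to be the main obstacle, while everything preceding it is formal; an alternative to the endpoint-and-LES bookkeeping is to cite the classical computation of the derived functors of the symmetric square directly (Priddy, or Nakaoka's homology of the symmetric groups), which delivers precisely $\mathbb{F}_2$ in each degree of $[t+1, 2(t+1)]$.
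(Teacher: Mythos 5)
The paper does not prove this proposition at all: it is quoted verbatim as Priddy's Theorem 4.0.1, and the citation in the proposition header is the entire ``proof.'' Your fallback option at the end of the proposal --- ``cite the classical computation \dots (Priddy \dots) directly'' --- is therefore exactly what the paper does, so there is nothing in the paper to compare your argument against step by step.

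Taken on its own terms, your sketch is essentially a correct self-contained proof, and a reasonable one. The formal skeleton is right: over $\mathbb{F}_2$ the two natural short exact sequences $0 \to \mathrm{id} \to \sym^2 \to \Lambda^2 \to 0$ (Frobenius) and $0 \to \Lambda^2 \to (-)^{\otimes 2} \to \sym^2 \to 0$ (norm) are exact on vector spaces, Eilenberg--Zilber/K\"unneth computes the tensor square, and the downward chase already forces the connectivity bound $H^i = 0$ for $i < t+1$ without any extra input. You are also right that the only genuinely non-formal ingredient is the vanishing above degree $2(t+1)$. Two refinements: first, the Dold--Puppe bound you invoke (concentration in $[n, dn]$ for a degree-$d$ functor with $F(0)=0$) is a statement about the \emph{normalized complex} being concentrated in degrees $\leq n$, not about cohomology, so your reduction in step 1 to a minimal Dold--Kan model is not cosmetic --- it is precisely what makes that bound applicable, and you should say so; the cross-effect argument then transfers to the cosimplicial setting without change. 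Second, once the upper vanishing is in hand you do not need to separately produce the intermediate generators or verify that $\iota \otimes \iota$ survives: the two long exact sequences give $H^{t+1}(\sym^2) \cong H^{t+1}(A^\bullet) = \mathbb{F}_2$, isomorphisms $H^{i}(\sym^2) \cong H^{i-1}(\sym^2)$ for $t+2 \leq i \leq 2t+1$, and at the top junction an exact sequence $0 \to \mathbb{F}_2 \to V \to \mathbb{F}_2 \to V \to 0$ with $V = H^{2t+2}(\sym^2)$, whose Euler characteristic forces $\dim V = 1$ and simultaneously shows the image of $\iota \otimes \iota$ generates. So the ``hard part'' you isolate is real, but it is exactly one lemma (the $[n,2n]$ concentration), after which everything else is bookkeeping. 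Writing this out would make the appendix independent of Priddy, at the modest cost of importing the Dold--Puppe degree bound.
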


\begin{proposition} 
\label{mainsymmcalculation}
\label{symmsquare}
Let $t \geq 2$ and let $A^\bullet$ be a levelwise free, finitely generated cosimplicial abelian group with
$H^*(A^\bullet) $ concentrated in degree $\ast = t+1$ and $H^{t+1}(A^\bullet) =
\mathbb{Z}$ generated by $\iota$. Then: 
\begin{enumerate}
\item If $t$ is even, $H^{2t+2} ( \sym_2 A^\bullet) \simeq \mathbb{Z}/2$,
generated by  $\iota^2$. 
\item If $t$ is odd, then $H^{2t+2}( \ssym_2 A^\bullet) \simeq \mathbb{Z}/2$,
generated by $\iota^2$. 
\end{enumerate}
\end{proposition}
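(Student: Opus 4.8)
The plan is to prove both cases uniformly by first reducing modulo $2$ to invoke Priddy's computation (\Cref{priddythm}), and then to pin down the integral answer by a universal coefficients / long exact sequence argument. The statement concerns the \emph{top} cohomology group $H^{2t+2}$, which is exactly the range where the square $\iota^2$ lives, so I expect the key input to be that $\iota^2$ is a nonzero class of order dividing $2$. First I would establish that $\iota^2$ is always $2$-torsion: in $\sym_2$ (resp.\ $\ssym_2$) the element $\iota^2 = \iota \otimes \iota$ satisfies $2(\iota \otimes \iota) = (\iota \otimes \iota) + (\iota \otimes \iota)$, and the transposition identifies the two summands up to the sign coming from $\mathbb{Z}_\epsilon$; in the symmetric case ($t$ even) this gives $2\iota^2 = 0$ after noting the relevant Koszul signs vanish, and in the signed-symmetric case ($t$ odd) the Frobenius map $A/2A \to \ssym_2(A)$, $a \mapsto a \otimes a$ of \Cref{symmdef} already exhibits $\iota \otimes \iota$ as a $2$-torsion class. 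Thus in either case the candidate generator is killed by $2$, which is consistent with the claimed answer $\mathbb{Z}/2$.

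The main computation is the mod-$2$ reduction. Let $A^\bullet_{\mathbb{F}_2} = A^\bullet \otimes_{\mathbb{Z}} \mathbb{F}_2$. Since $A^\bullet$ is levelwise free and finitely generated with cohomology concentrated in degree $t+1$ and equal to $\mathbb{Z}$ there, the universal coefficient theorem shows $H^*(A^\bullet_{\mathbb{F}_2})$ is concentrated in degree $t+1$ with value $\mathbb{F}_2$, generated by the reduction $\overline{\iota}$ (there is no $\mathrm{Tor}$ contribution because $H^*(A^\bullet)$ is torsion-free). Now I would compare the functors $\sym_2$ and $\ssym_2$ with the \emph{symmetric square} over $\mathbb{F}_2$: after tensoring with $\mathbb{F}_2$ the sign representation $\mathbb{Z}_\epsilon$ becomes trivial, so both $\sym_2(A^\bullet) \otimes \mathbb{F}_2$ and $\ssym_2(A^\bullet) \otimes \mathbb{F}_2$ receive maps from $\sym^2(A^\bullet_{\mathbb{F}_2})$. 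Applying \Cref{priddythm} to $A^\bullet_{\mathbb{F}_2}$ with fundamental class $\overline{\iota}$ gives $H^i(\sym^2 A^\bullet_{\mathbb{F}_2}) \simeq \mathbb{F}_2$ for $t+1 \le i \le 2(t+1)$; in particular the top group $H^{2t+2}$ is $\mathbb{F}_2$, generated by $\overline{\iota}^2$, the cup square.

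Finally I would assemble the integral statement from these pieces. Because we are only asserting something about the single top group $H^{2t+2}$ and we already know $\iota^2$ is $2$-torsion, it suffices to show that the group $H^{2t+2}(\sym_2 A^\bullet)$ (resp.\ $\ssym_2$) is exactly $\mathbb{Z}/2$ and not larger, and that $\iota^2$ generates it. For this I would run the long exact sequence associated to $0 \to A^\bullet \xrightarrow{2} A^\bullet \to A^\bullet_{\mathbb{F}_2} \to 0$, or rather its symmetric-square analogue, using that $\sym_2$ and $\ssym_2$ of a levelwise free group fit into a short exact sequence with the mod-$2$ symmetric square as the quotient (the Frobenius sequence of \Cref{symmdef}); the boundary maps then bound $H^{2t+2}$ between its $2$-torsion, computed above as $\mathbb{F}_2$, and the comparison with Priddy's $\mathbb{F}_2$ answer forces it to be precisely $\mathbb{Z}/2$ with generator $\iota^2$. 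The main obstacle I anticipate is the bookkeeping of the parity ($t$ even versus $t$ odd) in matching $\sym_2$ against $\ssym_2$: the two cases are distinguished precisely so that the cosimplicial Koszul signs arrange for $\iota^2$ to be the surviving top class, and verifying that the correct variant of the symmetric square (plain versus signed) carries a nonzero square in each parity is where the sign representation $\mathbb{Z}_\epsilon$ must be tracked carefully.
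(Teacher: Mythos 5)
Your overall skeleton (reduce mod $2$, apply \Cref{priddythm}, pass back to the integral answer via universal coefficients) is the same as the paper's, but there is a genuine gap in the final step: mod-$2$ information alone cannot identify the group. What Priddy's theorem plus the Bockstein/universal-coefficient sequence gives you is $H^{2t+2}(\sym_2 A^\bullet)\otimes\mathbb{F}_2\cong\mathbb{F}_2$ (together with mod-$2$ vanishing above degree $2t+2$), and this is equally consistent with $H^{2t+2}$ being $\mathbb{Z}$, $\mathbb{Z}/4$, $\mathbb{Z}/2^k$, or $\mathbb{Z}/2$ plus odd torsion. You try to close this by observing that the single class $\iota^2$ is $2$-torsion, but that constrains one element, not the group; and even knowing both $H/2H$ and the $2$-torsion subgroup would not separate $\mathbb{Z}/2$ from $\mathbb{Z}/4$. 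The missing ingredient is a proof that the \emph{entire} group $H^{2t+2}(\sym_2 A^\bullet)$ is annihilated by $2$. The paper gets this from the norm maps $\sym_2 A^\bullet\to A^\bullet\otimes A^\bullet\to\sym_2 A^\bullet$, whose composite is multiplication by $2$: by the K\"unneth theorem $H^{2t+2}(A^\bullet\otimes A^\bullet)\cong\mathbb{Z}$ with $C_2$ acting by the sign when $t$ is even (since $t+1$ is odd), so the first map is zero on cohomology (its image lies in the invariants), forcing $2=0$ on all of $H^*(\sym_2 A^\bullet)$. With that in hand, $H^{2t+2}\otimes\mathbb{F}_2\cong\mathbb{F}_2$ does pin down $\mathbb{Z}/2$, and reduction mod $2$ identifies $\iota^2$ as a generator. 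You need to add this integral input (or an equivalent one) before the conclusion is justified.

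A second, smaller issue concerns the odd case: $\ssym_2(A)$ is \emph{not} torsion-free for $A$ free (the Frobenius $A/2A\to\ssym_2(A)$ of \Cref{symmdef} is injective onto the $2$-torsion), so $\ssym_2 A^\bullet$ is not levelwise free and the universal-coefficient/Bockstein sequence you invoke does not apply verbatim. The paper replaces $\ssym_2 A^\bullet\otimes\mathbb{F}_2$ by the derived tensor product, notes that the extra Tor term is a shifted copy of $A^\bullet\otimes\mathbb{F}_2$, and checks that it does not contribute in the degrees relevant to $H^{2t+2}$; your uniform treatment of the two parities skips this. Finally, your assertion that for $t$ even ``the relevant Koszul signs vanish'' is backwards: it is precisely because the transposition acts by $-1$ on $H^{t+1}\otimes H^{t+1}$ that $\iota^2$ becomes $2$-torsion in $\sym_2$.
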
 
\begin{proof} 
Consider first the 
case $t$ even. 
In this case, we have maps
of cosimplicial abelian groups
\[ \sym_2 A^\bullet \to A^\bullet \otimes
A^\bullet \to \sym_2 A^\bullet  \]
where the first map is the norm map and the second map is projection. 
The composite is multiplication by two. Note that $H^{2t+2}( A^\bullet \otimes
A^\bullet) \simeq \mathbb{Z}$, but since $t$ is even, the $C_2$-action is the
sign representation, so that the map $H^{*}(\sym_2 A^\bullet) \to
H^{*}(A^\bullet \otimes A^\bullet)$ must be the zero map as it lands in the
$C_2$-invariants on cohomology. 
In particular, the cohomology of $\sym_2 (A^\bullet)$ is all annihilated by 2. 
By the universal coefficient theorem, it suffices to show that $H^{2t+2}(\sym_2
A^\bullet \otimes_{\mathbb{Z}} \mathbb{Z}/2) \simeq \mathbb{Z}/2$ and 
$H^{k}(\sym_2
A^\bullet \otimes_{\mathbb{Z}} \mathbb{Z}/2) = 0$ for $k > 2t + 2$,
which is the statement of \Cref{priddythm}. In addition, we see that $\iota^2$
is a generator, as desired, by working modulo $2$. 

Now suppose $t$ is odd.
Again, using the norm maps
\[ \ssym_2 A^\bullet \to A^\bullet \otimes A^\bullet \otimes \epsilon \to
\ssym_2 A^\bullet,  \]
we find that the cohomology of $\ssym_2 A^\bullet$ is annihilated by two.
We note that $\ssym_2 A^\bullet \otimes_{\mathbb{Z}}
\mathbb{F}_2 \simeq \sym_2 A^\bullet \otimes_{\mathbb{Z}} \mathbb{F}_2$ at
the level of cosimplicial abelian groups, but working with the underived
tensor product is problematic here as $\ssym_2 A^\bullet$ has 2-torsion.
If we take the derived tensor product $\ssym_2(A^\bullet)
\stackrel{\mathbb{L}}{\otimes} \mathbb{F}_2$, we obtain in addition a copy of
$A^\bullet \otimes_{\mathbb{Z}} \mathbb{F}_2$ (i.e., the 2-torsion in
$\ssym_2 A^\bullet$)  in $\pi_1$ that does not contribute in the
relevant dimensions, so we may ignore it. 
Now, by \Cref{priddythm}, we know that $H^{k}(\ssym_2 A^\bullet \otimes_{\mathbb{Z}}
\mathbb{F}_2) \simeq \mathbb{F}_2$ for $k = 2t+2$ and $0$ for $k > 2t+2$, so
that we can apply the universal coefficient theorem as in the previous case. 
We conclude that $\iota^2$ is a generator similarly. 
\end{proof}

\bibliographystyle{amsalpha}
\bibliography{biblio}

\newcommand{\etalchar}[1]{$^{#1}$}
\providecommand{\bysame}{\leavevmode\hbox to3em{\hrulefill}\thinspace}
\providecommand{\MR}{\relax\ifhmode\unskip\space\fi MR }
\providecommand{\MRhref}[2]{%
  \href{http://www.ams.org/mathscinet-getitem?mr=#1}{#2}
}
\providecommand{\href}[2]{#2}
\begin{thebibliography}{GHMR12}

\bibitem[ABG{\etalchar{+}}14]{5author}
Matthew Ando, Andrew~J. Blumberg, David Gepner, Michael~J. Hopkins, and Charles
  Rezk, \emph{Units of ring spectra, orientations and {T}hom spectra via rigid
  infinite loop space theory}, J. Topol. \textbf{7} (2014), no.~4, 1077--1117.
  \MR{3286898}

\bibitem[Ang08]{angeltveit}
Vigleik Angeltveit, \emph{Topological {H}ochschild homology and cohomology of
  {$A\sb \infty$} ring spectra}, Geom. Topol. \textbf{12} (2008), no.~2,
  987--1032. \MR{2403804 (2009e:55012)}

\bibitem[AP76]{AdamsPriddy}
J.~F. Adams and S.~B. Priddy, \emph{Uniqueness of {$B{\rm SO}$}}, Math. Proc.
  Cambridge Philos. Soc. \textbf{80} (1976), no.~3, 475--509. \MR{0431152 (55
  \#4154)}

\bibitem[Bal10]{balmerICM}
Paul Balmer, \emph{Tensor triangular geometry}, Proceedings of the
  {I}nternational {C}ongress of {M}athematicians. {V}olume {II}, Hindustan Book
  Agency, New Delhi, 2010, pp.~85--112. \MR{2827786 (2012j:18016)}

\bibitem[Bau08]{computation}
Tilman Bauer, \emph{Computation of the homotopy of the spectrum {\tt tmf}},
  Groups, homotopy and configuration spaces, Geom. Topol. Monogr., vol.~13,
  Geom. Topol. Publ., Coventry, 2008, pp.~11--40. \MR{2508200 (2010h:55008)}

\bibitem[Beh09]{behrensCongruences}
Mark Behrens, \emph{Congruences between modular forms given by the divided
  {$\beta$} family in homotopy theory}, Geom. Topol. \textbf{13} (2009), no.~1,
  319--357. \MR{2469520 (2009i:55016)}

\bibitem[BR05]{BakerRichter}
Andrew Baker and Birgit Richter, \emph{Invertible modules for commutative
  {$\Bbb S$}-algebras with residue fields}, Manuscripta Math. \textbf{118}
  (2005), no.~1, 99--119. \MR{2171294 (2006f:55009)}

\bibitem[CLM76]{homology_iter}
Frederick~R. Cohen, Thomas~J. Lada, and J.~Peter May, \emph{The homology of
  iterated loop spaces}, Lecture Notes in Mathematics, Vol. 533,
  Springer-Verlag, Berlin-New York, 1976. \MR{0436146 (55 \#9096)}

\bibitem[CT05]{Torsionendo}
Jon~F. Carlson and Jacques Th{\'e}venaz, \emph{The classification of torsion
  endo-trivial modules}, Ann. of Math. (2) \textbf{162} (2005), no.~2,
  823--883. \MR{2183283 (2006f:20012)}

\bibitem[Dad78]{Dade2}
Everett Dade, \emph{Endo-permutation modules over {$p$}-groups. {II}}, Ann. of
  Math. (2) \textbf{108} (1978), no.~2, 317--346. \MR{506990 (80a:13008b)}

\bibitem[DR73]{DeligneRapoport}
P.~Deligne and M.~Rapoport, \emph{Les sch\'emas de modules de courbes
  elliptiques}, Modular functions of one variable, {II} ({P}roc. {I}nternat.
  {S}ummer {S}chool, {U}niv. {A}ntwerp, {A}ntwerp, 1972), Springer, Berlin,
  1973, pp.~143--316. Lecture Notes in Math., Vol. 349. \MR{0337993 (49
  \#2762)}

\bibitem[FO10]{FultonOlsson}
William Fulton and Martin Olsson, \emph{The {P}icard group of {$\mathcal
  M_{1,1}$}}, Algebra Number Theory \textbf{4} (2010), no.~1, 87--104.
  \MR{2592014 (2011d:14013)}

\bibitem[Fre66a]{Freyd2}
Peter Freyd, \emph{Splitting homotopy idempotents}, Proc. {C}onf. {C}ategorical
  {A}lgebra ({L}a {J}olla, {C}alif., 1965), Springer, New York, 1966,
  pp.~173--176. \MR{0206069 (34 \#5894)}

\bibitem[Fre66b]{Freyd}
\bysame, \emph{Stable homotopy}, Proc. {C}onf. {C}ategorical {A}lgebra ({L}a
  {J}olla, {C}alif., 1965), Springer, New York, 1966, pp.~121--172. \MR{0211399
  (35 \#2280)}

\bibitem[GHMR12]{GHMR}
Paul Goerss, Hans-Werner Henn, Mark Mahowald, and Charles Rezk, \emph{On
  {H}opkins' {P}icard groups for the prime 3 and chromatic level 2}, Preprint
  available at \url{http://arxiv.org/abs/1210.7033}, 2012.

\bibitem[GL]{GL}
D.~Gepner and T.~Lawson, \emph{Brauer groups and {G}alois cohomology of
  commutative {$\mathbb{S}$}-algebras}, in preparation.

\bibitem[GM95]{GMcompletion}
J.~P.~C. Greenlees and J.~P. May, \emph{Completions in algebra and topology},
  Handbook of algebraic topology, North-Holland, Amsterdam, 1995, pp.~255--276.
  \MR{1361892 (96j:55011)}

\bibitem[Goe10]{Goerss}
Paul~G. Goerss, \emph{Topological modular forms [after {H}opkins, {M}iller and
  {L}urie]}, Ast\'erisque (2010), no.~332, Exp. No. 1005, viii, 221--255,
  S{\'e}minaire Bourbaki. Volume 2008/2009. Expos{\'e}s 997--1011. \MR{2648680
  (2011m:55003)}

\bibitem[Gro66]{EGAIV}
A.~Grothendieck, \emph{\'{E}l\'ements de g\'eom\'etrie alg\'ebrique. {IV}.
  \'{E}tude locale des sch\'emas et des morphismes de sch\'emas. {III}}, Inst.
  Hautes \'Etudes Sci. Publ. Math. (1966), no.~28, 255. \MR{0217086 (36 \#178)}

\bibitem[Har77]{hartshorne}
Robin Hartshorne, \emph{Algebraic geometry}, Springer-Verlag, New
  York-Heidelberg, 1977, Graduate Texts in Mathematics, No. 52. \MR{0463157 (57
  \#3116)}

\bibitem[Hea14]{Heard}
Drew Heard, \emph{Morava modules and the {$K(n)$}-local {P}icard group}, Ph.D.
  thesis, University of Melbourne, August 2014.

\bibitem[HKR00]{HKR}
Michael~J. Hopkins, Nicholas~J. Kuhn, and Douglas~C. Ravenel, \emph{Generalized
  group characters and complex oriented cohomology theories}, J. Amer. Math.
  Soc. \textbf{13} (2000), no.~3, 553--594 (electronic). \MR{1758754
  (2001k:55015)}

\bibitem[hl]{tyler_MO}
Tyler~Lawson (http://mathoverflow.net/users/360/tyler lawson), \emph{Strictly
  commutative elements of {$E_\infty$}-spaces}, MathOverflow,
  URL:http://mathoverflow.net/q/135712 (version: 2013-07-04).

\bibitem[HM15]{HillMeier}
Michael Hill and Lennart Meier, \emph{All about {$Tmf_1(3)$}}, Available at
  \url{http://arxiv.org/pdf/1507.08115.pdf}, 2015.

\bibitem[HMS94]{HMS}
Michael~J. Hopkins, Mark Mahowald, and Hal Sadofsky, \emph{Constructions of
  elements in {P}icard groups}, Topology and representation theory ({E}vanston,
  {IL}, 1992), Contemp. Math., vol. 158, Amer. Math. Soc., Providence, RI,
  1994, pp.~89--126. \MR{1263713 (95a:55020)}

\bibitem[Hop14]{hopkinsk1local}
Michael~J. Hopkins, \emph{{$K(1)$}-local {$E\sb \infty$}-ring spectra},
  Topological modular forms, Math. Surveys Monogr., vol. 201, Amer. Math. Soc.,
  Providence, RI, 2014, pp.~287--302. \MR{3328537}

\bibitem[HS99]{HoveySadofsky}
Mark Hovey and Hal Sadofsky, \emph{Invertible spectra in the {$E(n)$}-local
  stable homotopy category}, J. London Math. Soc. (2) \textbf{60} (1999),
  no.~1, 284--302. \MR{1722151 (2000h:55017)}

\bibitem[HS14]{HeardStojanoska}
Drew Heard and Vesna Stojanoska, \emph{{$K$}-theory, reality, and duality}, J.
  K-Theory \textbf{14} (2014), no.~3, 526--555. \MR{3349325}

\bibitem[ILL{\etalchar{+}}07]{24hrs}
Srikanth~B. Iyengar, Graham~J. Leuschke, Anton Leykin, Claudia Miller, Ezra
  Miller, Anurag~K. Singh, and Uli Walther, \emph{Twenty-four hours of local
  cohomology}, Graduate Studies in Mathematics, vol.~87, American Mathematical
  Society, Providence, RI, 2007. \MR{2355715 (2009a:13025)}

\bibitem[KM85]{katzmazur}
Nicholas~M. Katz and Barry Mazur, \emph{Arithmetic moduli of elliptic curves},
  Annals of Mathematics Studies, vol. 108, Princeton University Press,
  Princeton, NJ, 1985. \MR{772569 (86i:11024)}

\bibitem[Koc02]{Koch}
Helmut Koch, \emph{Galois theory of {$p$}-extensions}, Springer Monographs in
  Mathematics, Springer-Verlag, Berlin, 2002, With a foreword by I. R.
  Shafarevich, Translated from the 1970 German original by Franz Lemmermeyer,
  With a postscript by the author and Lemmermeyer. \MR{1930372 (2003f:11181)}

\bibitem[Kon]{konter}
Johan Konter, \emph{The homotopy groups of the spectrum tmf}, arXiv:1212.3656
  [math.AT].

\bibitem[KS07]{KamiyaShimomura}
Yousuke Kamiya and Katsumi Shimomura, \emph{Picard groups of some local
  categories}, Publ. Res. Inst. Math. Sci. \textbf{43} (2007), no.~2, 303--314.
  \MR{2341012 (2008e:55011)}

\bibitem[Lur09]{highertopos}
Jacob Lurie, \emph{Higher topos theory}, Annals of Mathematics Studies, vol.
  170, Princeton University Press, Princeton, NJ, 2009. \MR{2522659
  (2010j:18001)}

\bibitem[Lur10]{luriechromatic}
Jacob Lurie, \emph{Chromatic homotopy theory}, 2010, Lecture notes available at
  \url{http://math.harvard.edu/~lurie}.

\bibitem[Lur11a]{DAGIX}
\bysame, \emph{Derived algebraic geometry {IX}: Closed immersions}, Available
  at \url{http://math.harvard.edu/~lurie}, 2011.

\bibitem[Lur11b]{DAGVII}
\bysame, \emph{Derived algebraic geometry {VII}: Spectral schemes}, Available
  at \url{http://math.harvard.edu/~lurie}, 2011.

\bibitem[Lur11c]{DAGVIII}
\bysame, \emph{Derived algebraic geometry {VIII}: Quasi-coherent sheaves and
  {T}annaka duality theorems}, Available at
  \url{http://math.harvard.edu/~lurie}, 2011.

\bibitem[Lur12]{higheralg}
Jacob Lurie, \emph{Higher algebra}, Preprint available at
  \url{http://www.math.harvard.edu/~lurie}, 2012.

\bibitem[Mat14a]{galois}
Akhil Mathew, \emph{The {G}alois group of a stable homotopy theory}, Preprint
  available at \url{http://arxiv.org/abs/1404.2156}, 2014.

\bibitem[Mat14b]{rational}
\bysame, \emph{Residue fields for a class of rational $\mathbf{E}_\infty$-rings
  and applications}, Preprint available at
  \url{http://arxiv.org/abs/1406.4947}, 2014.

\bibitem[May72]{mayiterated}
J.~P. May, \emph{The geometry of iterated loop spaces}, Springer-Verlag,
  Berlin-New York, 1972, Lectures Notes in Mathematics, Vol. 271. \MR{0420610
  (54 \#8623b)}

\bibitem[May77]{MQRT}
J.~Peter May, \emph{{$E\sb{\infty }$} ring spaces and {$E\sb{\infty }$} ring
  spectra}, Lecture Notes in Mathematics, Vol. 577, Springer-Verlag, Berlin-New
  York, 1977, With contributions by Frank Quinn, Nigel Ray, and J{\o}rgen
  Tornehave. \MR{0494077 (58 \#13008)}

\bibitem[May01]{May}
J.~P. May, \emph{Picard groups, {G}rothendieck rings, and {B}urnside rings of
  categories}, Adv. Math. \textbf{163} (2001), no.~1, 1--16. \MR{1867201
  (2002k:18011)}

\bibitem[Mei12]{meier}
Lennart Meier, \emph{United elliptic homology}, Ph.D. thesis, University of
  Bonn, 2012.

\bibitem[Mil71]{MilnorK}
John Milnor, \emph{Introduction to algebraic {$K$}-theory}, Princeton
  University Press, Princeton, N.J.; University of Tokyo Press, Tokyo, 1971,
  Annals of Mathematics Studies, No. 72. \MR{0349811 (50 \#2304)}

\bibitem[MM15]{affine}
Akhil Mathew and Lennart Meier, \emph{Affineness and chromatic homotopy
  theory}, J. Topol. \textbf{8} (2015), no.~2, 476--528. \MR{3356769}

\bibitem[MNN15]{nilpotence}
Akhil Mathew, Niko Naumann, and Justin Noel, \emph{On a nilpotence conjecture
  of {J.P.} {M}ay}, {J. Topol.} \textbf{8} (2015), no.~4, 917--932.

\bibitem[MS16]{tots}
Akhil Mathew and Vesna Stojanoska, \emph{Fibers of partial totalizations of a
  pointed cosimplicial space}, Proc. Amer. Math. Soc. \textbf{144} (2016),
  no.~1, 445--458. \MR{3415610}

\bibitem[MT68]{MT}
Robert~E. Mosher and Martin~C. Tangora, \emph{Cohomology operations and
  applications in homotopy theory}, Harper \& Row, Publishers, New York-London,
  1968. \MR{0226634 (37 \#2223)}

\bibitem[Mum65]{Mumford}
David Mumford, \emph{Picard groups of moduli problems}, Arithmetical
  {A}lgebraic {G}eometry ({P}roc. {C}onf. {P}urdue {U}niv., 1963), Harper \&
  Row, New York, 1965, pp.~33--81. \MR{0201443 (34 \#1327)}

\bibitem[Nee01]{neeman}
Amnon Neeman, \emph{Triangulated categories}, Annals of Mathematics Studies,
  vol. 148, Princeton University Press, Princeton, NJ, 2001. \MR{1812507
  (2001k:18010)}

\bibitem[NSW08]{Neukirch}
J{\"u}rgen Neukirch, Alexander Schmidt, and Kay Wingberg, \emph{Cohomology of
  number fields}, second ed., Grundlehren der Mathematischen Wissenschaften
  [Fundamental Principles of Mathematical Sciences], vol. 323, Springer-Verlag,
  Berlin, 2008. \MR{2392026 (2008m:11223)}

\bibitem[Pri73]{Priddy73}
Stewart Priddy, \emph{{${\rm Mod}-p$} right derived functor algebras of the
  symmetric algebra functor}, J. Pure Appl. Algebra \textbf{3} (1973),
  337--356. \MR{0342592 (49 \#7338)}

\bibitem[Qui72]{QuillenGL}
Daniel Quillen, \emph{On the cohomology and {$K$}-theory of the general linear
  groups over a finite field}, Ann. of Math. (2) \textbf{96} (1972), 552--586.
  \MR{0315016 (47 \#3565)}

\bibitem[Rav92]{ravenelorange}
Douglas~C. Ravenel, \emph{Nilpotence and periodicity in stable homotopy
  theory}, Annals of Mathematics Studies, vol. 128, Princeton University Press,
  Princeton, NJ, 1992, Appendix C by Jeff Smith. \MR{1192553 (94b:55015)}

\bibitem[Rez98]{rezknotes}
Charles Rezk, \emph{Notes on the {H}opkins-{M}iller theorem}, Homotopy theory
  via algebraic geometry and group representations ({E}vanston, {IL}, 1997),
  Contemp. Math., vol. 220, Amer. Math. Soc., Providence, RI, 1998,
  pp.~313--366. \MR{1642902 (2000i:55023)}

\bibitem[Rie14]{riehl}
Emily Riehl, \emph{Categorical homotopy theory}, New Mathematical Monographs,
  Cambridge University Press, 2014.

\bibitem[Rog08]{rognes}
John Rognes, \emph{Galois extensions of structured ring spectra. {S}tably
  dualizable groups}, Mem. Amer. Math. Soc. \textbf{192} (2008), no.~898,
  viii+137. \MR{MR2387923 (2009c:55007)}

\bibitem[Seg74]{segal}
Graeme Segal, \emph{Categories and cohomology theories}, Topology \textbf{13}
  (1974), 293--312. \MR{0353298 (50 \#5782)}

\bibitem[Ser53]{Serrecoh}
Jean-Pierre Serre, \emph{Cohomologie modulo {$2$} des complexes
  d'{E}ilenberg-{M}ac{L}ane}, Comment. Math. Helv. \textbf{27} (1953),
  198--232. \MR{0060234 (15,643c)}

\bibitem[Sil86]{Silverman}
Joseph~H. Silverman, \emph{The arithmetic of elliptic curves}, Graduate Texts
  in Mathematics, vol. 106, Springer-Verlag, New York, 1986. \MR{817210
  (87g:11070)}

\bibitem[Sto12]{TmfDualityp3}
Vesna Stojanoska, \emph{Duality for topological modular forms}, Doc. Math.
  \textbf{17} (2012), 271--311. \MR{2946825}

\bibitem[Sto14]{TmfDescentp2}
\bysame, \emph{Calculating descent for 2-primary topological modular forms}, An
  Alpine Expedition through Algebraic Topology, Contemp. Math., vol. 617, Amer.
  Math. Soc., Providence, RI, 2014, pp.~241--258.

\end{thebibliography}

\end{document}